\documentclass[a4paper,reqno]{amsart}

\usepackage{amsmath,amsthm,amssymb}
\usepackage{mathrsfs}
\usepackage[shortlabels]{enumitem}
\usepackage{graphicx}
\usepackage[font=small]{caption}
\usepackage{xcolor}
\usepackage{url}

\newcommand{\N}{\mathbb{N}}
\newcommand{\R}{{\mathbb{R}}}
\newcommand{\C}{{\mathbb{C}}}

\newcommand{\dd}{{{\rm d}}}
\newcommand{\ii}{{\rm i}}
\newcommand{\e}{{\rm e}}

\newcommand{\ov}{\overline}
\newcommand\wt{\widetilde}
\newcommand\wh{\widehat}

\newcommand{\la}{\lambda}

\newcommand{\eps}{\varepsilon}

\newcommand{\spd}{\sigma_{\rm disc}}

\newcommand{\se}[1]{\sigma_{\rm e#1}}
\newcommand{\spp}{\sigma_{\rm p}}

\newcommand{\essinf}{\operatorname*{ess \,inf}}

\newcommand{\Dom}{{\operatorname{Dom}}}

\newcommand{\Ran}{{\operatorname{Ran}}}

\renewcommand{\Re}{\operatorname{Re}}
\renewcommand{\Im}{\operatorname{Im}}

\newcommand{\sgn}{\operatorname{sgn}}
\newcommand{\supp}{\operatorname{supp}}

\newcommand{\loc}{\mathrm{loc}}
\newcommand{\BigO}{\mathcal{O}}
\newcommand{\diag}{\operatorname{diag}}

\newcommand{\Rd}{\mathbb{R}^d}

\newcommand{\LtlocOm}{{L^2_{\loc}(\Omega)}}

\newcommand{\LOm}{{L^2(\Omega)}}

\newcommand{\LolocOm}{{L^1_{\loc}(\Omega)}}

\newcommand{\CcOm}{{C_0^{\infty}(\Omega)}}

\newcommand{\ls}{\lesssim}
\newcommand{\gs}{\gtrsim}

\theoremstyle{plain}

\newtheorem{theorem}{Theorem}[section]
\newtheorem{lemma}[theorem]{Lemma}

\newtheorem{proposition}[theorem]{Proposition}
\newtheorem{corollary}[theorem]{Corollary}

\theoremstyle{definition}
\newtheorem{example}[theorem]{Example}
\newtheorem{remark}[theorem]{Remark}
\newtheorem{asm-sec}[theorem]{Assumption}

\newcommand\cA{\mathcal A}
\newcommand\cB{\mathcal B}

\newcommand\cD{\mathcal D}

\newcommand\cH{\mathcal H}

\newcommand\cS{\mathcal S}

\newcommand\cV{\mathcal V}
\newcommand\cW{\mathcal W}

\usepackage{mathtools}
\usepackage{hyperref}

\newcommand{\Loneloc}{L^1_{\operatorname{loc}}}

\newcommand{\iprod}[2]{\langle #1,#2 \rangle}
\newcommand{\Coo}{C_0^\infty}

\newcommand{\norm}[1]{\lVert#1\rVert}

\newcommand{\abs}[1]{\lvert#1\rvert}

\newcommand{\Aa}{\mathcal{A}}

\newcommand{\Dd}{\mathcal{D}}

\newcommand{\Bb}{\mathcal{B}}

\newcommand{\Hh}{\mathcal{H}}
\newcommand{\Ww}{\mathcal{W}}

\newcommand{\Ss}{\mathcal{S}}

\newcommand{\Vv}{\mathcal{V}}

\newcommand{\Ltw}{L^2_{w}(\Omega)}

\newcommand{\re}{\operatorname{Re}}
\newcommand{\im}{\operatorname{Im}}

\newcommand\ec{\eps_{\rm crit}}

\mathtoolsset{showonlyrefs}

\title[Schr\"odinger operators in weighted spaces]{Schr\"odinger operators with accretive potentials in weighted spaces}

\begin{document}
	
\numberwithin{equation}{section}
	
\graphicspath{{../Figures/}}

\author{Borbala Gerhat}
\address[Borbala Gerhat]{Institute of Science and Technology Austria, Am Campus 1, 3400 Klosterneuburg, Austria}
\email{borbala.gerhat@ista.ac.at}

\author{Petr Siegl}
\address[Petr Siegl]{Institute of Applied Mathematics, Graz University of Technology, Steyrergasse 30, 8010 Graz, Austria}
\email{siegl@tugraz.at}

\subjclass[2020]{35J10, 47A10, 35P10, 81Q12}

\keywords{Schr\"odinger operators, complex potentials, weighted $L^2$-space, generalised coercivity, completeness of eigensystems, Schur complement dominant operator matrices}

\thanks{B.~Gerhat has received funding from the European Union’s Horizon 2020 research and innovation
		programme under the Marie Sk\l odowska--Curie Grant Agreement No.~101034413, the Czech Science Foundation EXPRO Grant No.~0-17749X and the Swiss National Science Foundation Grant No.~169104. She further expresses her gratitude to the SEMP student exchange programme for their support and the Queen's University Belfast for their hospitality.}

\begin{abstract}
	We analyse Schr\"odinger operators with accretive potentials in weighted spaces. We find conditions on potentials and weights for which the Dirichlet realisation, introduced by generalised form methods, has non-empty resolvent set. We establish a domain and graph norm separation property, as well as sufficient conditions for the compactness and Schatten class of the resolvent. Moreover, we investigate the relation between discrete spectra and eigenfunctions of operators in standard and weighted spaces. As applications we extend results on the completeness of eigensystems of operators with accretive potentials from standard to weighted spaces and analyse operator matrices exhibiting a Schur dominance property, in particular, related to a wave equation with strong accretive damping.
\end{abstract}

\date{September 21, 2026}
\maketitle

\section{Introduction}
\label{sec:intro}

The spectral properties of Schr\"odinger operators
\begin{equation}
	T = -\Delta + V
\end{equation}
in the space $L^2(\Omega)$ with (possibly unbounded) complex potentials $V$ on an open set $\Omega \subset \Rd$ have been studied extensively, see e.g.~\cite{Caliceti-1980-75,Davies-2007,EE,Exner-1983-24,Helffer-2013-book,Kato-1966,Trefethen-2005}. These operators arise in several applications, ranging from superconductivity~\cite{Almog-2008-40,Almog-2013-365,Rubinstein-2010-195}, Bloch--Torrey equations \cite{Almog-2022-4,Grebenkov-2018-50}, hydrodynamics \cite{Almog-2021-241,Almog-2016-48,Gallagher-2009-2009},  optics with gains and losses~\cite{Dohnal-2016-57,Zezyulin-2012-85} to damped wave equations \cite{Arifoski-2020-52,Freitas-2018-264,Ikehata-2020-63,Arnal-Gerhat-Royer-Siegl-2026-arxiv} and many more.

In particular for accretive potentials $V: \Omega \to \C$ with $\Re V \geq 0$ an m-accretive Dirichlet realisation in $L^2(\Omega)$ was found in \cite{Brezis-1979-58}, see also~\cite{Kato-1978-5}, \cite[Sec.~VII.2]{EE} and \cite{Grinshpun-1994-124}. For more regular potentials which satisfy $V \in W^{1,\infty}_{\rm loc}(\ov \Omega)$, cf.~Assumption~\ref{asm:main.gn}, and
\begin{equation}\label{Q.asm.sep}
	\exists~\eps_\nabla \in \big[0,2-\sqrt2\big), \qquad  \exists~M_{\nabla} \geq 0, \qquad  
	|\nabla V| \leq \eps_\nabla |V|^\frac 32 + M_{\nabla} \quad \text{a.e.~in~} \Omega,
\end{equation}
the operator domain exhibits the separation property and is thus substantially more explicit. More precisely,
\begin{equation}
	\Dom(T) = \Dom(-\Delta_{\rm D}) \cap \Dom(V),
\end{equation}
where $\Dom(-\Delta_{\rm D})$ is the domain of the self-adjoint Dirichlet Laplacian in $L^2(\Omega)$, and the corresponding separation of the graph norm holds
\begin{equation}\label{T.norm.est}
\|T f\| + \|f\| \geq a_\nabla (\|\Delta f\| + \|V f\|+\|f\|), \qquad f \in \Dom(T),
\end{equation}
with $a_\nabla=a_\nabla(\eps_\nabla,M_\nabla)>0$; for details see~\cite{Evans-1978-80,Everitt-1978-79,Boegli-2017-42,Helffer-2019,Krejcirik-2017-221,Semoradova-2022-54} where also further extensions, e.g.~for an additional magnetic field, can be found.

The separation \eqref{T.norm.est} allows for reducing questions on the compactness and Schatten class of the resolvent of $T$ to the properties of the self-adjoint operator $-\Delta + |V|$, which can be further employed to investigate the completeness of the eigensystem of $T$, see~\cite{Almog-2015-40,Siegl-2012-86,Tumanov-2021-280}. Moreover, \eqref{T.norm.est} is also a key step in the analysis of the spectral convergence of domain truncations, see~\cite{Boegli-2017-42,Semoradova-2022-54}.

In this paper we analyse $T_w = -\Delta + V$ with accretive potential $V$ in a weighted space $L_{w}^2(\Omega)$. The potential satisfies a more general version of \eqref{Q.asm.sep} and we impose an admissibility condition on $w: \Omega \to (0,\infty)$ depending on the strength of $V$, see Assumption~\ref{asm:main.LM} for details, which allows for (super)-exponential weights if the potential is unbounded at infinity. The study of Schr\"odinger operators in weighted spaces dates back to Whittaker \cite{Whittaker-1914-33} and Sommerfeld \cite{Sommerfeld-1929}. Among many other instances, it appears in the spectral analysis of the bi-stable potential in quantum mechanics \cite{Razavy-1980-48}, the hypoelliptic Laplacian studied by Bismut and Lebeau \cite[Chap.~16]{Bismut-2008-167}, see also \cite{Mityagin-2021-22}, the Hill operator with a two-term potential \cite{Djakov-2007-242}, the Ornstein--Uhlenbeck operator \cite{Metafune-2002-196} or the Black--Scholes operator~\cite{Baaquie-2020-book}; a recent analysis of Schr\"odinger operators with non-negative potentials in $L^p$ spaces with exponentially decaying and growing weights can be found in \cite{Bailey-2021}.

It is crucial to observe that, due to the possible unboundedness of $w$ and/or $w^{-1}$, the connection between $T_w$ in $L_{w}^2(\Omega)$ and $T:=T_1$ in $L^2(\Omega)$ can be quite loose in general. Equivalently, this applies to the relation between $T$ and the unitary transformation $S := w^\frac12 T_w w^{-\frac12}$ of $T_w$ to $L^2 (\Omega)$. On the level of formal differential expressions, on the other hand, $-\Delta + V$ and the conjugation $w^\frac12(-\Delta+V)w^{-\frac12}$ seem to have a closer link since they can be related via an ``unbounded similarity transform'' or adding a complex rotation-free magnetic potential
\begin{equation}
	w^\frac12(-\Delta+V)w^{-\frac12} = \left(-\ii \nabla - A \right)^2 + V, \qquad A = - \frac \ii 2 \frac{\nabla w}{w},
\end{equation}
see also~\cite{Krejcirik-2019-51, Krejcirik-Nguyen-Raymond-2026-constantMF, Krejcirik-Nguyen-Raymond-2026}.

In order to illustrate non-trivial effects in the relation between $T$ and $T_w$, consider first the self-adjoint realisation of the one-dimensional
\begin{equation}
	T =-\partial_x^2 + |x|^\alpha, \qquad \alpha \geq 2,
\end{equation}
in $L^2(\R)$ and the weight $w (x)= \exp(x)$, $x \in \R$; see \cite{Mityagin-2017-272} for a thorough study of $S = w^\frac12 T_w w^{-\frac12}$ in this case. The spectra of $T$ and $T_w$ coincide, consist of discrete, simple and real eigenvalues and the corresponding eigenfunctions are complete both in $L^2(\R)$ and $L^2_{w}(\R)$.  Nevertheless, the eigenfunctions do not form a basis in $L^2_{w}(\R)$ and the norms of the corresponding spectral projections $P_k$ diverge at the rate
\begin{equation}
	\lim_{k \to \infty} \frac{\log \|P_k\|}{k^{\frac2{2+\alpha}}} = C_\alpha>0;
\end{equation}
more details, e.g.~the explicit constant $C_\alpha$ and analogous results for various potentials and weights (even of very slow growth) can be found in \cite{Mityagin-2017-272}.

For a second ill-behaved example related to an advection-diffusion operator, see \cite{Davies-2002-34,Krejcirik-2015-56,Reddy-1994-54}, consider the one-dimensional self-adjoint $T = - \partial_x^2 + V$ in $L^2(\R)$ with compactly supported real-valued $V \in L^\infty(\R;\R)$ and the weights
\begin{equation}
	w_\alpha(x) = \exp(\alpha x), \qquad \alpha \in \R, \qquad x \in \R.
\end{equation}
Not only do the essential spectra of $T$ and $T_{w_\alpha}$ differ, but more importantly, their point spectra might not coincide either, see Example~\ref{ex:spe} below for details.

Our analysis is focused on fundamental properties of weighted second order operators in a divergence form
	\begin{equation}
		T_w = -\nabla \cdot P \nabla + V
	\end{equation}
with accretive potentials $V$ and a sectorial coefficient matrix $P$. We identify conditions on $V$, $P$ and the weight $w$ allowing for a densely defined Dirichlet realisation in $L^2_{w}(\Omega)$ with non-empty resolvent set, see Theorem~\ref{thm:t}, as well as the graph norm separation, see Theorem~\ref{thm:T.graph.sep}. As the key technical ingredient we employ the notion of generalised coercivity of the associated sesquilinear form introduced in \cite{Almog-2015-40}, cf.~Section~\ref{ssec:gen.coer}. In Theorem \ref{thm:comp}, the boundedness of compositions of the type
\begin{equation}
	B_1(- \nabla \cdot (P\nabla) + V + 1)^{-1} B_2
\end{equation}
is addressed, where $B_1$ and $B_2$ are first order operators. Furthermore, in Theorem~\ref{thm:inv.Sp}, we study the compactness and Schatten class of the resolvent of $T_w$. The invariance of discrete spectra and eigenfunctions is treated in Theorem~\ref{thm:spd.inv}. As a corollary, a (super-)exponential decay of eigenfunctions is established and the obtained exponential rates are optimal in known cases, see Example~\ref{ex:optimality.EF}; this improves in particular the results in \cite{Krejcirik-2017-221}. Moreover, a natural set of eigenvalues and eigenfunctions can be covered due to a different proof strategy employed here (based on O'Connors Lemma, see \cite{OConnor-1973-32}, \cite[p.~196]{Reed4} and the proof of Theorem~\ref{thm:spd.inv}).

As examples for applications of our results, we first investigate the completeness of eigensystems of Schr\"odinger operators with unbounded accretive potentials, see Section~\ref{ssec:complete}, in particular extending the results in \cite{Almog-2015-40} and \cite{Siegl-2012-86}. Next, we show how our results enter the analysis of operator matrices with non-symmetric (in the sense of their strength) differential entries which exhibit Schur complement dominance, see Sections~\ref{ssec:ex.1}--\ref{ssec:dwe}. In particular, the last example in Section~\ref{ssec:dwe} deals with a wave equation in a weighted space subject to strong accretive damping, for which we prove the generation of a $C_0$-semigroup (and thereby generalise results in \cite{Freitas-2018-264,Gerhat-2024-286,Ikehata-2020-63}).

The paper is organised as follows. The main results are presented in Section~\ref{sec:results} and their proofs are given in Section~\ref{sec:proofs}. Section~\ref{sec:appl} contains examples of applications. Some known used results are collected in Appendix~\ref{app:prelim}.

\subsection{Notation}
\label{ssec:notation}
	Throughout the paper, $\Omega \subset \R^d$ is an open and non-empty set.
	We write
	\begin{equation}
		\langle x \rangle := (1+|x|^2)^\frac 12, \qquad x \in \Rd.
	\end{equation}

	If not specified otherwise, all inequalities between equivalence classes of measurable functions $\Omega \to \R$ are understood almost everywhere in $\Omega$. We write $a \lesssim b$ for  $a \le Cb$ with a constant $C>0$ which is independent of any relevant variable or parameter. The convention for $a \gtrsim b$ is analogous and we write $a \approx b$ if both inequalities hold.
	
	The essential support of a  measurable function $m: \Omega \to \C$ is
	\begin{equation}
		\operatorname{ess} \supp m = \left\{x \in \Omega \, : \, \forall~r>0, \, \, \lambda^d \big(\{ y \, : \, f(y) \neq 0\} \cap B_r (x)\big) > 0 \right\}.
	\end{equation}

	The norm and inner product on $L^2(\Omega)$ are denoted by $\|\cdot\|$ and  $\langle \cdot, \cdot \rangle$.
	Given a weight $0 < w \in C(\Omega)$, we introduce the Hilbert space
	\begin{equation}
		\begin{aligned}
			L^2_{w}(\Omega) & : = \left\{ f: \Omega \to \C \, : \, f \text{ is measurable and } \|f\|_{w} <\infty\right\}, \\
			\|f\|_{w} & : = \left(\int_{\Omega} |f|^2 w \, \dd x \right)^\frac12 = \|w^\frac12 f\|,
		\end{aligned}
	\end{equation}
	with the corresponding inner product denoted by
	\begin{equation}
		\langle f,g \rangle_{w} := \int_{\Omega} f \ov{g} w \, \dd x = \langle w^\frac12  f , w^\frac12 g \rangle;
	\end{equation}
		notice that $ w \in W^{1,\infty}_{\rm loc}(\Omega)$ in Assumption~\ref{asm:main.LM} implies that $w$ is locally Lipschitz continuous, see e.g.~\cite[p.~280]{Evans-1998}.
	
	The inner product notation is used whenever the integral converges; e.g.~for $f, g \in \Coo(\Omega)$ and $V \in \Loneloc (\Omega)$ we write
	\begin{equation}
		\langle Vf,g \rangle_{w} := \int_{\Omega}V f \ov{g} w \, \dd x.
	\end{equation}
	
	For $f : \Omega \to \C^d$, recall that $f \in L^2(\Omega)^d$ if $|f| \in L^2(\Omega)$, and we write
	\begin{equation}
		\|f\|_{L^2(\Omega)^d} = \| |f| \|_{L^2(\Omega)}.
	\end{equation} Analogously, for matrix valued functions $M : \Omega \to \C^{d\times d}$ we write $M \in L^2(\Omega)^{d\times d}$ if the matrix norm induced by the Euclidean norm is square integrable, i.e.~if
	\begin{equation}
		|M(\cdot)| := \sup_{ 0 \neq\xi \in \C^d}  \frac{|M(\cdot) \xi|}{|\xi|} \in L^2(\Omega), \qquad \|M\|_{L^2(\Omega)^{d\times d}} := \| |M| \|_{L^2(\Omega)}.
	\end{equation}
	Whenever it is clear from the context, we use $\| \cdot\|$ and $\langle \cdot, \cdot \rangle$ also for the norm and inner product on $L^2(\Omega)^d$ and $L^2(\Omega)^{d\times d}$. 
	Moreover, the same conventions are used for spaces of (locally) $p$-integrable vector or matrix valued functions with $p \in [1, \infty]$, as well as their weighted versions.

	Distributions in $\cD'(\Omega)$, as well as elements of the dual $\mathcal X^*$ of a normed space $(\mathcal X, \|\cdot\|_{\mathcal X})$, are by convention antilinear functional.
	
	The essential spectra $\sigma_{{\rm e}j}(T)$, $j=1,\dotsc,5$, of a densely defined, closed (non-self-adjoint) operator $T$ in a Hilbert space are defined as in~\cite[Sec.~IX.1]{EE}. 
	The discrete spectrum is
	\begin{equation}
		\begin{aligned}
		\spd (T) & := \big \{ \la \in \spp (T)\, : \, \la \,\, \text{is an isolated point of} \,\, \sigma(T) , \,\, m_{\rm a} (\la,T) < \infty \big\}
		\\ 
		& = \sigma (T) \setminus \se{5} (T) ,
		\end{aligned}
	\end{equation}
	where the algebraic multiplicity of an isolated spectral point $\la_0$ is defined as
	\begin{equation}
		m_{\rm a} (\la_0,T) = \dim \Ran(P_{\la_0}), \qquad P_{\la_0} = -\frac{1}{2 \pi \ii} \int_{\partial B_\eps (\la_0) } (T-\la)^{-1}\dd \la;
	\end{equation}
	for details see~\cite[Cor.~8.4]{Gohberg-1990} and \cite[Chap.~I \S 2]{Gohberg-1969} (where the relevant results also hold for unbounded operators), and notice that if $m_{\rm a} (\la_0,T)<\infty$ then $T-\la_0 $ has an approximate left inverse and hence its range is closed, see \cite[Thm.~I.3.13]{EE} and \cite[Chap.~III Equ.~(6.34)]{Kato-1966}.

\section{Main results}
\label{sec:results}

For an open set $\Omega \subset \R^d$, we introduce a Dirichlet realisation of the second order partial differential expression
\begin{equation}\label{T.def.1}
	T_w = -\nabla\cdot (P \nabla) + V 
\end{equation}
with an accretive potential $V: \Omega \to \C$ in the space $L^2_{w}(\Omega)$ with a suitable weight $w: \Omega \to (0,\infty)$ and show that it has dense domain and non-empty resolvent set. Employing the constructed $T_w$, we discuss bounded extensions for compositions of the type
	\begin{equation}\label{comp}
		(b_1 \cdot \nabla + c_1) (-\nabla\cdot (P \nabla) + V -\la)^{-1} (\nabla \cdot b_2 + c_2).
\end{equation}
Sufficient conditions for the Schatten class of the resolvent are derived, as well as for the invariance of discrete spectra and generalised eigenfunctions when passing from $T:=T_1$ in $L^2(\Omega)$ to $T_w$ in $L^2_w(\Omega)$. Finally, we establish the domain and graph norm separation and thereby generalise the result \eqref{T.norm.est} for Schr\"odinger operators to more general second order operators and, most importantly, to weighted spaces.

\subsection{Dirichlet realisation}

The construction of the Dirichlet realisation $T_w$ and subsequent theorems on the Schatten class of its resolvent, invariance of its spectra and eigenfunctions, as well as the boundedness of the compositions in~\eqref{comp}, are based on the following main set of assumptions.

\begin{asm-sec}\label{asm:main.LM}
	Let $\emptyset \neq \Omega \subset \R^d$ be open and let the following hold.
		\begin{enumerate}[\upshape (i)]
			\item \label{item:LM.reg} \emph{Regularity of coefficients and weight}: Assume that
			\begin{equation}
				V \in \Loneloc(\Omega), \qquad P \in \Loneloc (\Omega)^{d \times d}, 
			\end{equation}
			and let the weight
			\begin{equation}
				w \in W^{1,\infty}_{\rm loc}(\Omega)
			\end{equation}
			be uniformly positive on compact subsets of $\Omega$ (such that $w^{-1}$ has the same properties). Write 
			\begin{equation}\label{eq:def.P1.P2}
				\begin{aligned}
					V_1 & := \re V, & \quad  P_1 & := \Re P = \frac{1}{2} (P + P^*), \\
					V_2 & := \im V, & \quad P_2 & := \Im P = \frac{1}{2 \ii} (P - P^*),
				\end{aligned}
			\end{equation}
			and assume that $P_1$ is positive definite a.e.~in $\Omega$ with 
			\begin{equation}
				P_1^{-1} \in \Loneloc(\Omega)^{d\times d}.
			\end{equation}
			\item \label{item:asm.P}\emph{Sectoriality of $P$}: Assume there exists $C_P \geq 0$ such that for a.e.~$x \in \Omega$ it holds that
			\begin{equation}\label{asm:P.sect}
				\forall ~\xi \in \C^d \,\, \, : \,\, \, |\iprod{P_2 (x)  \xi}{\xi}_{\C^d}| \leq C_P  \iprod{P_1 (x)  \xi}{\xi}_{\C^d}.
			\end{equation}
			\item \label{item:asm.V.accr} \emph{Accretivity of $V$}: Assume that
			\begin{equation}
				 V_1 \ge 0.
			\end{equation}  
			
			\item \label{asm:LM.nabla.Phi}
			\emph{Control of multiplier $\Phi$}: Let $\Phi : \Omega \to [-1,1]$ satisfy $\nabla \Phi \in \Loneloc(\Omega)$
			and assume that for every $\eps >0$ there exists $C_{\Phi,\eps} \ge 0$ such that
			\begin{equation}\label{asm:Phi.sep}
				|P_1^{\frac{1}{2}} \nabla \Phi| \le \eps |V|^\frac12 + C_{\Phi,\eps}.
			\end{equation}
			
		\item \label{asm:LM.Phi.V2}
		\emph{Smallness of $\sgn V_2 - \Phi$:} Assume that $0 \le R \in \Loneloc(\Omega)$ satisfies
		\begin{equation}
			\label{eq:phi.sgn.rem}
			\Phi V_2 \ge |V_2| - R
		\end{equation}
		and for every $\eps>0$ there exists $C_{R,\eps}\ge 0$ such that for all  $f \in \Coo(\Omega)$
		\begin{equation}
			\label{eq:rem.rel.bdd.eps}
			\| R^\frac12 f\|^2_w \le \eps \|P_1^\frac12 \nabla f\|^2_w +  \eps \||V|^\frac12 f\|^2_w + C_{R,\eps} \|f\|_w^2.
		\end{equation}

		\item \label{asm:LM.w} \emph{Admissibility of $w$}: Let $\kappa_w, \tau_w, C_w \ge 0$ such that
		\begin{equation}\label{asm:w.sep}
			\frac{|P_1^{\frac{1}{2}} \nabla w|}{w} 		\le 
			\kappa_w V_1^\frac12 + \tau_w |V_2|^\frac12 + C_w;
		\end{equation}
		notice that $w$ is admissible if and only if $w^{-1}$ is admissible (with the same constants $\kappa_w, \tau_w$ and $C_w$).
	\end{enumerate}
\end{asm-sec}

We remark that Assumption~{\rm\ref{asm:main.LM}}~\ref{asm:LM.nabla.Phi} and \ref{asm:LM.Phi.V2} are formulated (implicitly) in terms of the multiplier $\Phi$ to allow for choosing a suitable $\Phi$ in various situations. Explicit sufficient conditions in terms of $V$ are in Section~\ref{sssec:Phi.V} below.

In Theorem~\ref{thm:t} below, we introduce $T_w$ using the representation theorems in Section~\ref{ssec:gen.coer}. Thereby the form 
	\begin{equation}\label{t.def}
			\mathbf t_w(f,g) : = \langle P \nabla f, \nabla (gw) \rangle + \langle  V f,  g \rangle_w, \qquad \Dom(\mathbf t_w) : = \cV_{w,0},
	\end{equation}
in $\Ltw$ is employed, with its domain being the closure of $\Coo(\Omega)$ in an ambient Hilbert space (a subspace of $L^2_{w} (\Omega)$)
\begin{equation}\label{def.big.cV}
	\Vv_w   :=  \left\{ f \in L^2_{w} (\Omega) \, : \,   \nabla f \in \Loneloc(\Omega)^d, \, \, |V|^\frac12 |f|  + |P_1^\frac12 \nabla f| \in L^2_{w}(\Omega) \right\}
\end{equation}
equipped with the scalar product
\begin{equation}\label{cV.def}
	\langle f, g \rangle_{\cV_w} : = \langle P_1^\frac12 \nabla f, P_1^\frac12 \nabla g \rangle_w +  \langle |V|^\frac12  f, |V|^\frac12  g \rangle_w +  \langle f,g \rangle_w.
\end{equation}
More precisely,
\begin{equation}\label{cV.0.def}
	\cV_{w,0} := \overline{\Coo(\Omega)}^{\cV_w} \subset \cV_w;
\end{equation}
see Lemmas~\ref{lem.cV.compl} and~\ref{lem:t.bdd} for the completeness of $\cV_w$ and boundedness of $\mathbf t_w$ on $\cV_{w,0}$ (and the fact that $\mathbf t_w$ is well-defined on $\cV_{w,0}$).

\begin{theorem}[Dirichlet realisation]
\label{thm:t}
Let Assumption~{\rm\ref{asm:main.LM}} hold with $\kappa_w$, $\tau_w$, and $C_P$ small enough such that there exists $0<\beta<1/C_P$ (with the convention $1/0 := \infty$) satisfying 
\begin{equation}\label{ellipse}
	\beta \kappa_w^2  + \tau_w^2  < \frac{4 \beta }{ 1 + \beta^2} \,\frac{1 - \beta C_P}{(1+C_P)^2}.
\end{equation}
Let $\Vv_{w,0}$ be as in~\eqref{cV.0.def}. Then the operator in $\Ltw$
\begin{equation}\label{T.descr}
	\begin{aligned}
		\Dom(T_w) & := \Big\{  f \in {\cV_{w,0}} \,: \, - \nabla \cdot (P \nabla f) + V f \in L^2_{w}(\Omega) \Big\}, \\
		T_w f & := - \nabla \cdot (P \nabla f) + V f,
	\end{aligned}
\end{equation}
with its action understood in $\Dd'(\Omega)$, is closed, has non-empty resolvent set and its domain is dense both in $\cV_{w,0}$ and in $L^2_{w}(\Omega)$.

\end{theorem}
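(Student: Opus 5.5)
The plan is to apply the generalised coercivity representation theorem of Section~\ref{ssec:gen.coer} (from \cite{Almog-2015-40}) to the form $\mathbf t_w$ on $\cV_{w,0}$. By Lemmas~\ref{lem.cV.compl} and~\ref{lem:t.bdd}, $\cV_{w,0}$ is a Hilbert space continuously and densely embedded in $\Ltw$, and $\mathbf t_w$ is a bounded sesquilinear form on it. The representation theorem requires one more ingredient: bounded operators $\Psi_1,\Psi_2$, acting in $\cV_{w,0}$ and extending to bounded operators on $\Ltw$, such that
\begin{equation}
|\mathbf t_w(f,\Psi_1 f)| + |\mathbf t_w(\Psi_2 f, f)| + \|f\|_w^2 \gtrsim \|f\|_{\cV_w}^2, \qquad f\in\cV_{w,0}.
\end{equation}
Once this holds, the theorem supplies a closed operator with non-empty resolvent set whose domain is dense in $\cV_{w,0}$ and in $\Ltw$. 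It remains to check that this domain is $\{f\in\cV_{w,0}: \mathbf t_w(f,\cdot)\text{ is }\Ltw\text{-bounded}\}$ and that the operator acts as \eqref{T.descr}. Testing with $g\in\Coo(\Omega)$ gives $\mathbf t_w(f,g)=\langle -\nabla\cdot(P\nabla f)+Vf, g w\rangle$ in the distributional sense, and $gw$ runs over a dense enough set because $w$ is locally Lipschitz and locally uniformly positive. Hence the two descriptions coincide.

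The coercivity estimate I would obtain with the multipliers $\Psi f = (1-\ii\beta' \Phi)f$ or $(\beta-\ii\Phi)f$, that is, by combining $\Re\mathbf t_w(f,f)$ with $\Im \mathbf t_w(f,\Phi f)$. Expanding $\mathbf t_w(f,f)=\langle P\nabla f,\nabla f\rangle_w+\langle P\nabla f, f\nabla w\rangle+\langle Vf,f\rangle_w$ gives the following:
\begin{enumerate}[(a)]
\item the main positive contributions are $\langle P_1\nabla f,\nabla f\rangle_w$, $\langle V_1 f,f\rangle_w$ from the real part, and $\langle \Phi V_2 f,f\rangle_w\ge \||V_2|^{1/2}f\|_w^2-\|R^{1/2}f\|_w^2$ from the $\Phi$-multiplier;
\item the remainder $R$ is absorbed by \eqref{eq:rem.rel.bdd.eps};
\item terms containing $\nabla\Phi$ are absorbed via \eqref{asm:Phi.sep} with small $\eps$;
\item the term $\Im\langle P\nabla f,\nabla f\rangle_w \Phi$ is controlled by $C_P\|P_1^{1/2}\nabla f\|_w^2$ through \eqref{asm:P.sect};
\item the weight terms $\langle P\nabla f, f\nabla w\rangle$ are bounded using $|P\xi\cdot\eta|\le(1+C_P)|P_1^{1/2}\xi||P_1^{1/2}\eta|$ together with \eqref{asm:w.sep}, giving $(1+C_P)\|P_1^{1/2}\nabla f\|_w(\kappa_w\|V_1^{1/2}f\|_w+\tau_w\||V_2|^{1/2}f\|_w+C_w\|f\|_w)$.
\end{enumerate}
All estimates are first done for $f\in\Coo(\Omega)$ and then extended by density and continuity.

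The main obstacle is the optimisation of the constants. One has to take the combination $\beta\Re\mathbf t_w(f,f)+\Im\mathbf t_w(f,\Phi f)$ (up to sign), which is positive on the $P_1$-gradient part only when $\beta C_P<1$. The cross weight terms are then handled by weighted Young inequalities $ab\le \frac{\gamma}{2}a^2+\frac{1}{2\gamma}b^2$, and these must leave a positive quadratic form in $(\|P_1^{1/2}\nabla f\|_w,\|V_1^{1/2}f\|_w,\||V_2|^{1/2}f\|_w)$. Checking that the resulting $3\times3$ (effectively $2\times2$ after Young) matrix is positive definite exactly under \eqref{ellipse} is the computational core. The factor $4\beta/(1+\beta^2)$ should come from optimising the Young parameter against the norm of the multiplier $\beta-\ii\Phi$ (which has modulus at most $\sqrt{1+\beta^2}$), since both $\beta$ and $\Phi$ appear in the weight term. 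The final step is to verify that the multiplication operator by $\beta-\ii\Phi$ maps $\cV_{w,0}$ to itself; this uses \eqref{asm:Phi.sep} and the approximation of $\Phi g$ by test functions.
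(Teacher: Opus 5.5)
Your architecture matches the paper's: Proposition~\ref{prop:g.coer}, then Theorems~\ref{thm:lm.0} and~\ref{thm:lm.1}, then identification with \eqref{T.descr}. Your mechanism for \eqref{ellipse} is also the paper's. Merging $\Re\langle P\nabla f,f\nabla w\rangle$ with $\beta\Im\langle\Phi P\nabla f,f\nabla w\rangle$ into $\Re\langle(1-\ii\beta\Phi)P\nabla f,f\nabla w\rangle$ costs a factor $\sqrt{1+\beta^2}(1+C_P)$. Positivity of the resulting quadratic form in $(\|P_1^{1/2}\nabla f\|_w,\|V_1^{1/2}f\|_w,\||V_2|^{1/2}f\|_w)$ is then exactly \eqref{ellipse}, cf.\ Lemma~\ref{lem:ellipse}. (Mind the normalisation. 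For $\Re\mathbf t_w(f,f)+\beta\Im\mathbf t_w(f,\Phi f)$ the gradient coefficient is $1-\beta C_P$, but for your $\beta\Re\mathbf t_w(f,f)+\Im\mathbf t_w(f,\Phi f)$ it is $\beta-C_P$. The two are related by $\beta\mapsto1/\beta$ and give the same condition.)

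The gap is in the abstract criterion you state. Take the shifted form $\mathbf a=\mathbf t_w+\gamma\|\cdot\|_w^2$. A single bound $|\mathbf a(f,\Psi_1f)|+|\mathbf a(\Psi_2f,f)|\ge m\|f\|_{\cV}^2$ only yields $\|\widehat Af\|_{\cV^*}+\|\widehat A^*f\|_{\cV^*}\gtrsim\|f\|_{\cV}$, where $\widehat A^*$ is the operator of the adjoint form. This does not imply invertibility: a unilateral shift $\widehat A$ with $\Psi_1=\widehat A$ and $\Psi_2=0$ satisfies it.

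Theorem~\ref{thm:lm.0} needs two \emph{separate} bounds:
\begin{itemize}
\item one with the multiplier in the second slot, which makes $\widehat A$ injective with closed range;
\item one with the multiplier in the first slot, which does the same for the adjoint and hence makes $\widehat A$ onto.
\end{itemize}
You only carry out the combination with $\Im\mathbf t_w(f,\Phi f)$, i.e.\ the second line of \eqref{t.gcoer.1}.

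The first line, $\Re\mathbf t_w(f,f)+\Im\mathbf t_w(\beta\Phi f,f)+\gamma_1\|f\|_w^2\ge m_1\|f\|_{\cV_w}^2$, is not a mirror image. Expanding $\langle P\nabla(\Phi f),\nabla(fw)\rangle$ produces the extra term $\langle fP\nabla\Phi,f\nabla w\rangle$, in which both derivatives fall on coefficients. It must be absorbed as in \eqref{Im.est.2}, using \eqref{asm:Phi.sep} with small $\eps$, \eqref{asm:w.sep}, and Young's inequality on cross terms such as $C_{\Phi,\eps}(\kappa_wV_1^{1/2}+\tau_w|V_2|^{1/2})|f|^2$. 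This only shifts the coefficients by $O(\eps+\delta)$, so \eqref{ellipse} still suffices. Without this second estimate, however, the representation theorem cannot be invoked.

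Building the identity into the multipliers is fine once the bounds are separated. Take $1+\ii\beta\Phi$ in the second slot and $1-\ii\beta\Phi$ in the first. Then use $|\mathbf a|\ge\Re\mathbf a$ and $\Re\langle f,(1+\ii\beta\Phi)f\rangle_w=\|f\|_w^2$, which holds because $\Phi$ is real.

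A minor point on the identification step. Testing with $g\in\Coo(\Omega)$ pairs $P\nabla f$ with $\nabla(gw)$, and $gw$ is only Lipschitz. You would first have to extend the distributional identity to $W^{1,\infty}_{\rm comp}$ test functions. The paper avoids this by testing with $g=w^{-1}\varphi$, $\varphi\in\Coo(\Omega)$; these lie in $\cV_{w,0}$ and are dense there by Lemma~\ref{lem:W.1.infty}.
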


\begin{remark}
	\label{rem:gcoer}
	\begin{enumerate}[\upshape (i),wide]
		\item \label{item.rem.opt} When selecting $\tau_w=0$ in~\eqref{asm:w.sep}, condition~\eqref{ellipse} becomes
		\begin{equation}
			\kappa_w^2 < \sup_{0 < \beta < 1/C_P} \frac{1-\beta C_P}{1+\beta^2} \, \frac{4}{(1+C_P)^2} = \frac{4}{(1+C_P)^2}.
		\end{equation}
		Similarly, setting $\kappa_w=0$  leads to the condition
		\begin{equation}\label{delta.w.beta.im}
			\tau_w^2 < \max_{0< \beta < 1/C_P} \frac{\beta (1-\beta C_P)}{1+\beta^2} \, \frac{4}{(1+C_P)^2}= \frac{2}{ ( \sqrt{1+C_P^2} + C_P)(1+C_P)^2 }.
		\end{equation}
		In particular, if $C_P=0$, then the above restrictions read $\kappa_w<2$ and $\tau_w<\sqrt{2}$, which lead to optimal exponents in eigenfunction decay rates for Schr\" odinger operators, see Example~\ref{ex:optimality.EF}.
		\item \label{asm.gen.LM} In fact, we prove a more flexible result, see the proof of Proposition~\ref{prop:g.coer}. Indeed, the statements of Theorem~\ref{thm:t} (and also of Corollary~\ref{cor:res.bd}, Theorems~\ref{thm:comp} and \ref{thm:inv.Sp}, Corollary~\ref{cor:Sp} and Theorem \ref{thm:spd.inv} below) hold if one replaces~\eqref{eq:rem.rel.bdd.eps} by
		\begin{equation}
			\label{eq:rem.rel.bdd}
			\| R^\frac12 f\|^2_w \le \vartheta_R \|P_1^\frac12 \nabla f\|^2_w + \kappa_R \|V_1^\frac12 f\|^2_w + \tau_R \||V_2|^\frac12 f\|^2_w + C_R \|f\|_w^2
		\end{equation}
		with constants  $\vartheta_R, \kappa_R, C_R \ge 0$ and $0 \le \tau_R <1$, and the condition~\eqref{ellipse} by the existence of
		\begin{equation}
			\label{ellipse.gen.beta}
			0< \beta < \min \left\{ \frac{1}{C_P+\vartheta_R}, \frac{1}{\kappa_R} \right\}
		\end{equation}
		satisfying the inequality
		\begin{equation}\label{ellipse.gen}
			\beta \kappa_w^2  (1-\tau_R)+ \tau_w^2 (1-\beta\kappa_R) < \frac{4 \beta }{ (1 + \beta^2)} \, \frac{(1 - \beta (C_P+\vartheta_R)) (1-\beta \kappa_R) (1-\tau_R)}{(1+C_P)^2 }.
		\end{equation}
		(Notice that the condition \eqref{ellipse.gen} is not void in the following sense. Given $\tau_R \in[0,1)$ and $C_P,\vartheta_R,\kappa_R \ge 0$, there exist (sufficiently small) $\kappa_w,\tau_w>0$ and $\beta >0$ with~\eqref{ellipse.gen.beta} and \eqref{ellipse.gen}.)

		\item \label{item.ecrit} For the statement of Theorem~\ref{thm:t} to hold, it is sufficient to assume that~\eqref{asm:Phi.sep} is true with some $\eps \in (0, \ec)$ and $C_{\Phi,\eps} \ge 0$. Given the constants $\kappa_w$, $\tau_w$, $C_P$, $\vartheta_R$, $\kappa_R$, $\tau_R$, see point \ref{asm.gen.LM}, as well as $\beta$ in~\eqref{ellipse} or~\eqref{ellipse.gen}, the critical value $\ec$ can be obtained from a thorough analysis of the inequalities in the proof of Proposition~\ref{prop:g.coer}; see Remark~\ref{rem:ecrit} (the condition $\eps_V + \eps_w < (2- \sqrt 2)/(1+\delta_P^{-1} \|P_2\|_{L^\infty})$ in Theorem~\ref{thm:T.graph.sep} arises in a similar way, see the proof of Lemma~\ref{lem:graph.norm}).
	\end{enumerate}
\end{remark}

If the weight is trivial, $T_1 = T$ is m-accretive in $L^2(\Omega)$. While this might be lost for $T_w$ in general, the next corollary shows that one still has the expected control over the resolvent norm in the left half plane as long as the weight is suitably small. 

\begin{corollary}
	\label{cor:res.bd}
	Let Assumption~{\rm\ref{asm:main.LM}} be satisfied with $\tau_w$ and $\kappa_w$ arbitrarily small. Then there exists an arbitrarily small $\beta>0$ satisfying~\eqref{ellipse}, such that the operator $T_w$ in $L^2_w(\Omega)$ defined in~\eqref{T.descr} is densely defined in $\cV_{w,0}$ and $\Ltw$, see \eqref{cV.0.def}, and has non-empty resolvent set. Moreover, 
	\begin{equation}
		\forall~\varphi \in \left(\frac{\pi}{2}, \frac{3\pi}{2}\right) \,\,\, : \, \,\, \|(T_w-r \e^{\ii \varphi})^{-1}\| \ls \frac1r, \qquad r \to \infty.
	\end{equation}
\end{corollary}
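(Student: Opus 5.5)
First, the choice of $\beta$ in \eqref{ellipse}. Since $\kappa_w,\tau_w$ can be taken arbitrarily small, fix any small $\beta\in(0,\min\{1,1/C_P\})$ with $\beta C_P\le 1/2$. The right-hand side of \eqref{ellipse} is then bounded below by a multiple of $\beta$, namely $2\beta/(1+\beta^2)(1+C_P)^2$. We therefore need $\beta\kappa_w^2+\tau_w^2\lesssim\beta$, which holds for $\kappa_w\lesssim 1$ and $\tau_w^2\lesssim\beta$. Consequently, for any prescribed small $\beta$, \eqref{ellipse} is satisfied once $\kappa_w,\tau_w$ are small enough. Theorem~\ref{thm:t} then gives that $T_w$ is closed, densely defined in $\cV_{w,0}$ and $\Ltw$, and has non-empty resolvent set.

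For the resolvent estimate, I would revisit the generalised coercivity estimate underlying Theorem~\ref{thm:t} (Proposition~\ref{prop:g.coer}). The key point is that with $\beta$ small the multiplier is essentially $1+\ii\beta\Phi$, close to $1$, so the coercivity is with respect to a direction close to the real axis. Concretely, for $f\in\Coo(\Omega)$ I would expand $\mathbf t_w(f,(1+\ii\beta\Phi)f)$ or the analogous expression. The real part of the principal term $\langle P\nabla f,\nabla(fw)\rangle$ produces $\|P_1^{1/2}\nabla f\|_w^2$. The cross terms involving $\nabla w/w$ and $\beta\nabla\Phi$ are controlled via \eqref{asm:w.sep}, \eqref{asm:Phi.sep} and \eqref{eq:rem.rel.bdd.eps} by small multiples of $\|P_1^{1/2}\nabla f\|_w^2+\|V_1^{1/2}f\|_w^2+\beta\||V_2|^{1/2}f\|_w^2$ plus $C\|f\|_w^2$. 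The outcome I aim for is
\[
\Re\,\mathbf t_w\big(f,(1+\ii\beta\Phi)f\big)\ge c\big(\|P_1^{1/2}\nabla f\|_w^2+\|V_1^{1/2}f\|_w^2+\beta\||V_2|^{1/2}f\|_w^2\big)-C\|f\|_w^2 ,
\]
with $c>0$.

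Next, for $\lambda=r\e^{\ii\varphi}$ one has $\Re\,\big(-\lambda\langle f,(1+\ii\beta\Phi)f\rangle_w\big)= -r\cos\varphi\|f\|_w^2 - r\beta\sin\varphi\langle\Phi f,f\rangle_w$. Since $|\Phi|\le1$, this is at least $r(|\cos\varphi|-\beta|\sin\varphi|)\|f\|_w^2$. For fixed $\varphi\in(\pi/2,3\pi/2)$ I would choose $\beta<|\cot\varphi|/2$, which gives a lower bound $\tfrac r2|\cos\varphi|\|f\|_w^2$. Combining with the previous estimate and Cauchy--Schwarz, using $\|(1+\ii\beta\Phi)f\|_w\le 2\|f\|_w$, yields
\[
\big(\tfrac r2|\cos\varphi|-C\big)\|f\|_w^2\le 2\|(T_w-\lambda)f\|_w\|f\|_w .
\]
This extends from $\Coo(\Omega)$ to $\Dom(T_w)$ by the density of $\Dom(T_w)$ in $\cV_{w,0}$ (the form pairing is continuous there), giving $\|(T_w-\lambda)f\|_w\gtrsim r\|f\|_w$ for large $r$. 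The same argument for the adjoint, or the fact that $\lambda$ lies in the connected component of $\C\setminus\overline{\text{numerical-type region}}$ containing a resolvent point, gives surjectivity. The latter uses the structure of the representation theorem from Section~\ref{ssec:gen.coer}: every $\lambda$ for which generalised coercivity holds with the shifted form lies in $\rho(T_w)$. This gives the claimed $\|(T_w-r\e^{\ii\varphi})^{-1}\|\lesssim 1/r$.

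The main obstacle is the first estimate. One must check that, as $\beta\to0$ and the weight constants shrink, the constant $C$ (depending on $C_{\Phi,\eps}$, $C_w$, $C_{R,\eps}$) stays finite for fixed $\beta$. One must also ensure that the $V_2$ terms, whose only coercive control comes with the factor $\beta$, are absorbed. The latter requires $\tau_w^2\lesssim\beta$, which is exactly why $\tau_w$ must be small relative to the chosen $\beta$. I would first fix $\varphi$, then $\beta$, then take $\kappa_w,\tau_w$ small, and note that $C$ may depend on $\beta$ without harm since only $r\to\infty$ matters.
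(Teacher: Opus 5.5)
Your proposal is correct and follows the paper's proof. Your quantity $\Re\,\mathbf t_w(f,(1+\ii\beta\Phi)f)$ equals $\Re\mathbf t_w(f,f)+\Im\mathbf t_w(f,\beta\Phi f)$, so the estimate you plan to re-derive is the second inequality of Proposition~\ref{prop:g.coer} once \eqref{ellipse} holds for your $\beta$. The paper likewise shifts by $-\la\|\cdot\|_w^2$, uses $|\Phi|\le 1$ with $\beta|\sin\varphi|<|\cos\varphi|$, gets $\la\in\rho(T_w)$ from Theorems~\ref{thm:lm.0} and~\ref{thm:lm.1} (your last surjectivity option, which needs both inequalities in \eqref{t.gcoer.1}), and concludes by Cauchy--Schwarz.

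One small correction: passing from $\Coo(\Omega)$ to $\Dom(T_w)$ uses the density of $\Coo(\Omega)$ in $\cV_{w,0}\supset\Dom(T_w)$ together with the continuity of $\mathbf t_w$ and of $\Phi$ on $\cV_{w,0}$. It does not use the density of $\Dom(T_w)$ in $\cV_{w,0}$.
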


\subsubsection{Assumption~{\rm\ref{asm:main.LM}}~\ref{asm:LM.nabla.Phi} and \ref{asm:LM.Phi.V2} in terms of $V$}
\label{sssec:Phi.V}

Assumption~\ref{asm:main.LM} is formulated in terms of the multiplier $\Phi$ to allow for choosing a suitable $\Phi$ in various situations. The main challenge in finding such $\Phi$ are sign changes of $\Im V$. In the simple case where $d=1$ and the sign changes are contained in a bounded interval, a convenient choice of $\Phi$ is possible; this is demonstrated in the following example. 
\begin{example}
	\label{ex:Phi.1d}
	Let $\Omega = \R$, $P=1$ and $V \in L^1_{\mathrm{loc}}(\R)$ with $V_1 = \re V \ge 0$ such that for $V_2 = \im V$, with some $x_0>0$,
	\begin{equation}
		V_2(x) > 0 \quad \text{if } \  x >x_0, \qquad V_2(x) < 0 \quad \text{if } \ x < -x_0.  
	\end{equation}	
	Select $\Phi \in C^\infty(\R)$ such that $- 1 \leq \Phi \leq 1$ and 
	\begin{equation}
		\Phi(x) = \begin{cases}
			1, & x > x_0, \\
			-1, & x <-x_0.
		\end{cases}
	\end{equation}	
	This $\Phi$ clearly satisfies \eqref{asm:Phi.sep}. Moreover, 
	\begin{equation}
		\Phi V_2 = |V_2| + (\Phi V_2 - |V_2|) \geq |V_2| - |\Phi V_2 - |V_2||,
	\end{equation}
	such that we may set $R:= |\Phi V_2 - |V_2|| \in L^1(\R)$ with $\operatorname{ess}\supp R \subset [-x_0,x_0]$. In Lemma~\ref{lem:R} below we verify that this $R$ satisfies~\eqref{eq:rem.rel.bdd.eps} for any admissible weight $w$.
\end{example}

Without specific information on $\sgn \, (\im V)$, a modified version of the multiplier $\Phi$ in \eqref{Phi.AH} can be used, which leads to explicit conditions on the gradient of $V$. 
\begin{proposition}\label{prop:Phi.AH-like}
	Let $\Omega$, $P$ and $V$ be as in Assumption~{\rm \ref{asm:main.LM}}. Let $V_1$ and $V_2$ be decomposed as (to real-valued functions)
	\begin{equation}
		\begin{aligned}
			V_1 & = V_{1,r} + V_{1,s}, \\
			V_2 & = V_{2,r} + V_{2,s}, 
		\end{aligned}
		\qquad V_{1,s}, V_{2,s} \in \Loneloc (\Omega), \qquad \nabla V_{1,r}, \nabla V_{2,r} \in L^1_{\rm loc}(\Omega),
	\end{equation}
	such that $V_{1,r},V_{1,s} \geq 0$ and let $\delta \in [0,1]$ be such that
	\begin{equation}\label{V1.rs.delta}
		V_{1,r} \leq \delta V_1.
	\end{equation}
	Suppose that the following conditions hold.
	\begin{enumerate}[\upshape (i)]
		\item \label{item.LM.V.1} For every $\eps >0$ there exists $C_{\nabla,\eps} \ge 0$ such that
		\begin{equation}\label{V.nabla.asm}
			\begin{aligned}
				& (1+V_{1,r}^2)  |P_1^{\frac{1}{2}} \nabla V_{2,r}| +	V_{1,r} |V_{2,r}| |P_1^{\frac{1}{2}} \nabla V_{1,r}| 
				\\
				& \qquad \qquad \qquad \qquad \qquad \quad \le (1+V_{1,r}^2 + V_{2,r}^2)^\frac32 \left(\eps |V|^\frac12 + C_{\nabla,\eps}\right). \\
			\end{aligned}
		\end{equation}
		\item \label{item.LM.V.2} There exist constants $\vartheta_s, \kappa_s, C_s \ge 0$ and $0 \le \tau_s <1/2$ such that for all $f \in C_0^\infty(\Omega)$
		\begin{equation}\label{V2s.rel.bdd}
			| |V_{2,s}|^\frac12 f\|^2_w \le  \vartheta_s \|P_1^\frac12 \nabla f\|^2_w + \kappa_s \|V_1^\frac12 f\|^2_w + \tau_s \||V_2|^\frac12 f\|^2_w + C_s \|f\|_w^2.
		\end{equation}
	\end{enumerate}
	Then 
	\begin{equation}\label{Phi.AH-like}
		\Phi := \frac{V_{2,r}}{\sqrt{1+ V_{1,r}^2 + V_{2,r}^2}}: \Omega \to [-1,1]
	\end{equation}
	satisfies \eqref{asm:Phi.sep}, \eqref{eq:phi.sgn.rem} and \eqref{eq:rem.rel.bdd} with
	\begin{equation}
		\vartheta_R = 2 \vartheta_s, \quad \kappa_R = 2 \kappa_s + \delta, \quad \tau_R = 2 \tau_s, \quad C_R = 2C_s + 1.
	\end{equation} 
\end{proposition}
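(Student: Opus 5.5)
The plan is to verify the three required properties of $\Phi$ directly, writing $N := (1+V_{1,r}^2+V_{2,r}^2)^{1/2} \ge 1$ so that $\Phi = V_{2,r}/N$. Clearly $|\Phi| \le |V_{2,r}|/N \le 1$. Moreover $\Phi = G(V_{1,r},V_{2,r})$ with $G(s,t) = t(1+s^2+t^2)^{-1/2}$. The map $G$ is smooth with bounded first derivatives, so the chain rule for Sobolev functions applies. This gives $\nabla \Phi \in \Loneloc(\Omega)$ together with the formula
\begin{equation}
	\nabla \Phi = \frac{\nabla V_{2,r}}{N} - \frac{V_{2,r}\,(V_{1,r}\nabla V_{1,r} + V_{2,r}\nabla V_{2,r})}{N^3} = \frac{(1+V_{1,r}^2)\nabla V_{2,r} - V_{1,r}V_{2,r}\nabla V_{1,r}}{N^3}.
\end{equation}
The one point needing care is the justification of the chain rule, since $V_{1,r}$, $V_{2,r}$ are only assumed to have locally integrable gradients. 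I would argue via $W^{1,1}_{\rm loc}$ and the boundedness of $\nabla G$ (approximating by mollification if needed).

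For \eqref{asm:Phi.sep}, apply $P_1^{1/2}$ to the formula above and use the triangle inequality. Assumption \eqref{V.nabla.asm} then gives immediately
\begin{equation}
	|P_1^{1/2}\nabla\Phi| \le \frac{(1+V_{1,r}^2)|P_1^{1/2}\nabla V_{2,r}| + V_{1,r}|V_{2,r}||P_1^{1/2}\nabla V_{1,r}|}{N^3} \le \eps|V|^{1/2} + C_{\nabla,\eps}.
\end{equation}
This is the easy step; the algebraic identity for $\nabla\Phi$ is exactly what the form of \eqref{V.nabla.asm} is designed for.

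For \eqref{eq:phi.sgn.rem}, split $\Phi V_2 = \Phi V_{2,r} + \Phi V_{2,s} \ge V_{2,r}^2/N - |V_{2,s}|$, and use $|V_2| \le |V_{2,r}| + |V_{2,s}|$. It then suffices to bound the deficit $|V_{2,r}| - V_{2,r}^2/N$. We have
\begin{equation}
	|V_{2,r}| - \frac{V_{2,r}^2}{N} = \frac{|V_{2,r}|(N^2-V_{2,r}^2)}{N(N+|V_{2,r}|)} \le \frac{1+V_{1,r}^2}{N} \le (1+V_{1,r}^2)^{1/2} \le 1 + V_{1,r}.
\end{equation}
Hence $\Phi V_2 \ge |V_2| - R$ with
\begin{equation}
	R := 2|V_{2,s}| + 1 + V_{1,r} \ge 0.
\end{equation}
This $R$ lies in $\Loneloc(\Omega)$ since $V_{2,s} \in \Loneloc(\Omega)$ and $0 \le V_{1,r} \le V_1 \in \Loneloc(\Omega)$.

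Finally, for \eqref{eq:rem.rel.bdd}, take $f \in \Coo(\Omega)$ and write $\|R^{1/2}f\|_w^2 = 2\||V_{2,s}|^{1/2}f\|_w^2 + \|f\|_w^2 + \|V_{1,r}^{1/2}f\|_w^2$. Apply \eqref{V2s.rel.bdd} to the first term and \eqref{V1.rs.delta} to the last term, which gives $\|V_{1,r}^{1/2}f\|_w^2 \le \delta\|V_1^{1/2}f\|_w^2$. Collecting terms yields exactly $\vartheta_R = 2\vartheta_s$, $\kappa_R = 2\kappa_s + \delta$, $\tau_R = 2\tau_s < 1$ and $C_R = 2C_s + 1$. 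I expect no real obstacle beyond the chain-rule regularity remark above; the remaining steps are direct computations.
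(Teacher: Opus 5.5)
Your proof is correct and follows the paper's argument essentially step by step. It uses the same quotient-rule formula for $\nabla\Phi$ fed directly into \eqref{V.nabla.asm}, the same lower bound $\Phi V_2 \ge |V_2| - V_{1,r} - 2|V_{2,s}| - 1$ defining $R$, and the same bookkeeping via \eqref{V1.rs.delta} and \eqref{V2s.rel.bdd} for the constants. Your justification of the chain rule in $W^{1,1}_{\rm loc}$ fills in a detail the paper leaves implicit: $G$ has bounded gradient, and $V_{j,r} = V_j - V_{j,s} \in \Loneloc(\Omega)$.
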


\begin{remark}
	\label{eps.crit}\label{rem:im.V}\label{rem:V.sec}
	 For regular purely imaginary potentials $V = \ii V_{2,r}$ (i.e.~for $V_1 = V_{2,s} =0$), the assumptions~\ref{item.LM.V.1} and~\ref{item.LM.V.2} in Proposition~\ref{prop:Phi.AH-like} simplify to assuming that for every $ \eps>0$ there exists $C_{\nabla,\eps} \geq 0$ such that
		\begin{equation}\label{nab.Vr.im}
			|P_1^{\frac{1}{2}} \nabla V_{2,r}| \le \eps |V_{2,r}|^\frac72 + C_{\nabla,\eps};
		\end{equation}
		this improves the previously used assumption \eqref{Q.asm.sep} in case $P=I_{\C^d}$ and $w=1$.
\end{remark}

\subsection{Boundedness of compositions}

The following theorem allows us to construct bounded extensions for compositions of the resolvent of $T=T_1$ with suitable first order differential operators.

\begin{theorem}[Boundedness of compositions]\label{thm:comp}
	Let the assumptions of  Theorem~{\rm\ref{thm:t}} hold and let $T=T_1$ in $L^2(\Omega)$ be as in~\eqref{T.descr} with trivial weight. Suppose that
	\begin{equation}\label{eq:ext.reg}
		b_1, \, b_2 \in L^2_{\rm loc}(\Omega)^d, \qquad \nabla \cdot b_2 ,\,  c_1, \,  c_2 \in L^2_{\rm loc}(\Omega),
	\end{equation}
	and that there exists an admissible weight $w$ such that
	\begin{equation}\label{comp.asm}
		\frac{w^{-\frac12} c_1}{(V| + 1)^\frac12}, \,\,  \frac{w^\frac12 c_2}{(|V| + 1)^\frac12}, \, \,  w^{-\frac12} |P_1^{-\frac12} \overline b_1|, \,  \, w^{\frac12}|P_1^{-\frac12} b_2| \,\, \in L^\infty (\Omega).
	\end{equation}
	Define the composition 
	\begin{equation}
		\begin{aligned}
			F_{\la} &:= (b_1 \cdot \nabla + c_1) (T-\lambda)^{-1} (\nabla \cdot b_2 + c_2), \qquad \la \in \rho(T),
			\\
			\Dom(F_\la) &:= \CcOm.
		\end{aligned}
	\end{equation}
	Then $\Ran(F_\la) \subset \LolocOm$ and there exists $\lambda_0 \in \rho(T)$ such that $F_{\la_0}$ extends to a bounded operator on 	$L^2(\Omega)$ (cf.~Lemma {\rm\ref{lem:bdd.ext}} for the construction of the extension).
\end{theorem}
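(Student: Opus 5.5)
The plan is a duality argument: factor $F_\la$ through the weighted form space $\cV_{w,0}$ and use the generalised coercivity of $\mathbf t_w$. The map $f\mapsto w^{1/2}f$ is unitary from $L^2_w(\Omega)$ to $L^2(\Omega)$, and by admissibility of $w$ the multiplication by $w^{\pm 1}$ is compatible with the form domains. I will therefore relate $(T-\la)^{-1}$ to the resolvent of $T_w$ on functions of the form $w^{-1}\cdot$, or handle $T$ directly via the weighted form. The key objects are the two bounded maps
\begin{equation}
	J_2: L^2(\Omega) \to \cV_{w^{-1},0}^*\ (\text{or the relevant dual}), \qquad g \mapsto (\nabla\cdot b_2 + c_2) g,
\end{equation}
understood weakly: $\langle (\nabla\cdot b_2 + c_2)g, \phi\rangle = -\langle g, b_2\cdot\nabla\phi\rangle + \langle g, \ov{c_2}\phi\rangle$. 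The bound comes from $|b_2\cdot \nabla\phi| \le |P_1^{-1/2}b_2|\,|P_1^{1/2}\nabla\phi|$ and $|c_2\phi|\le \frac{|c_2|}{(|V|+1)^{1/2}}(|V|+1)^{1/2}|\phi|$. The conditions \eqref{comp.asm} with the factor $w^{1/2}$ are exactly what makes this a bounded functional in the weighted form norm \eqref{cV.def}. Dually, $B_1 := b_1\cdot\nabla + c_1$ is bounded from $\cV_{w,0}$ (with the inverse weight) into $L^2(\Omega)$, by the same pointwise estimates using $w^{-1/2}|P_1^{-1/2}\ov b_1|$ and $w^{-1/2}c_1/(|V|+1)^{1/2}\in L^\infty$.

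The core step is to show that for some $\la_0\in\rho(T)$ the resolvent $(T-\la_0)^{-1}$, restricted to $\Coo$-type data, extends to a bounded map from the dual of the weighted form space to the weighted form space. For this I use Proposition~\ref{prop:g.coer}, which gives generalised coercivity of $\mathbf t_w$ under \eqref{ellipse}, combined with the representation theorems of Section~\ref{ssec:gen.coer}. Generalised coercivity means $|\mathbf t_w(f,\Bb f) - \la_0\langle f,\Bb f\rangle_w| \gtrsim \|f\|_{\cV_w}^2$ for a bounded isomorphism $\Bb$ of $\cV_{w,0}$ built from the multiplier $\Phi$. By Lax--Milgram, this yields an isomorphism $\cV_{w,0}\to\cV_{w,0}^*$, and I write $\mathcal T_w$ for it.

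Next I check that this isomorphism is consistent with $T$. For $g\in \Coo$, the functional $J_2 g$ lies in $L^2(\Omega)$, and I need $u:=(T-\la_0)^{-1}J_2 g$ to coincide with $\mathcal T_w^{-1}$ applied to the corresponding functional, so that $u\in\cV_{w,0}$. The natural route is to note that $\mathbf t_w(f,\phi)$ with $\phi = \psi w^{-1}$ equals $\mathbf t_1(f,\psi)$. If the two solutions live in different spaces ($\cV_{1,0}$ versus $\cV_{w,0}$), uniqueness is not automatic.

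This consistency is the main obstacle. I expect to resolve it by passing through Lemma~\ref{lem:bdd.ext} and choosing $\la_0$ in the common resolvent set given by Corollary~\ref{cor:res.bd}, using a cut-off/approximation argument in $\Coo$ that exploits local equivalence of the weights. Both solutions solve the same distributional equation, and I intend to show they lie in a common space where the form is coercive. Once that is done, $F_{\la_0} = B_1\,\mathcal T_w^{-1} J_2$ on $\Coo$ is a composition of bounded maps, which gives the bounded extension. The inclusion $\Ran(F_\la)\subset L^1_{\rm loc}$ follows since $(T-\la)^{-1}$ maps into $\cV_{1,0}$, whose elements have gradients in $L^1_{\rm loc}$ and which pair with $b_1, c_1\in L^2_{\rm loc}$ by Cauchy--Schwarz and $P_1^{-1}\in L^1_{\rm loc}$.
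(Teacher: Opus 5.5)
Your factorisation is the one the paper uses (Lemma~\ref{lem:bdd.ext}). The bounds for $b_1\cdot\nabla+c_1$ on $\cV_{w,0}$ and for $\nabla\cdot b_2+c_2$ are right, and your $J_2$ into $\cV_{w^{-1},0}^*$ (unweighted pairing) is the paper's $B_2$ into $\cV_{w,0}^*$ after the substitution $\varphi\mapsto w\varphi$.

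The genuine gap is the step you flag yourself. You never prove $(T-\la_0)^{-1}h=(T_w-\la_0)^{-1}h$ for $h=(\nabla\cdot b_2+c_2)g$, $g\in\CcOm$, and the route you sketch cannot give it. Cut-offs and the local equivalence of $w$ and $1$ only see compact subsets of $\Omega$, whereas $\cV_{1,0}$ and $\cV_{w,0}$ differ only in their global behaviour. The difference of the two solutions is merely known to lie in $\cV_{1,0}+\cV_{w,0}$ and to solve the homogeneous equation.

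Nor does an arbitrary $\la_0\in\rho(T)\cap\rho(T_w)$ work. Take $\Omega=\R$, $P=1$, $V=0$, $w=\e^{\alpha x}$ with $\alpha>0$ (admissible with $C_w=\alpha$), and $\la=\ii\eps$ with $0<\eps<\alpha^2/2$.
\begin{itemize}
\item Since $\sigma(T_w)=\{(k-\ii\alpha/2)^2: k\in\R\}$, one has $\la\in\rho(T)\cap\rho(T_w)$.
\item Here $|\Im\sqrt{\la}|<\alpha/2$, so no nontrivial solution of $-u''-\la u=0$ is in $L^2_w$ near $+\infty$. Hence $(T_w-\la)^{-1}h$ vanishes to the right of $\supp h$.
\item By contrast, $(T-\la)^{-1}h$ generically does not vanish there.
\end{itemize}
So both the choice of $\la_0$ and a global uniqueness argument are essential.

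The missing idea is an intermediate weight $w_1:=\min\{w,1\}$, which is admissible with the same constants (Lemma~\ref{lem:w.cut-off}). Since $w_1\le w$ and $w_1\le 1$, you get $\cV_{1,0}\cup\cV_{w,0}\subset\cV_{w_1,0}$. Lemma~\ref{lem:ext} then shows that $T_{w_1}$ extends both $T$ and $T_w$; this is the ``common space'' you were looking for.

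Now take $\la_0<0$ with $|\la_0|$ large. Proposition~\ref{prop:g.coer} gives $\la_0\in\rho(T)\cap\rho(T_w)\cap\rho(T_{w_1})$; shifting each form by $|\la_0|$ times the respective squared norm leaves the $\Phi$-term unchanged because $\Phi$ is real. Both $(T-\la_0)^{-1}h$ and $(T_w-\la_0)^{-1}h$ lie in $\Dom(T_{w_1})$ and solve $(T_{w_1}-\la_0)u=h$, so they coincide by injectivity. Together with $B_2 g=h$ in $\cV_{w,0}^*$, checked on the dense set $w^{-1}\CcOm$ (Lemma~\ref{lem:W.1.infty}), this yields $F_{\la_0}\subset G_{\la_0}$.

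Two smaller points:
\begin{itemize}
\item Corollary~\ref{cor:res.bd} is not available here. It requires $\kappa_w,\tau_w$ to be small, which Theorem~\ref{thm:comp} does not assume.
\item Your argument for $\Ran(F_\la)\subset\LolocOm$ fails as written. The bound $|b_1|\,|P_1^{-1}|^{\frac12}\,|P_1^{\frac12}\nabla f|$ is a product of three $L^2_{\rm loc}$ functions. Use \eqref{comp.asm} instead: it gives $|P_1^{-\frac12}\ov b_1|\lesssim w^{\frac12}\in L^\infty_{\rm loc}(\Omega)$, hence $b_1\cdot\nabla f\in L^2_{\rm loc}(\Omega)$.
\end{itemize}
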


\subsection{Schatten class of resolvent}

Our next result gives a sufficient condition for the (weak) resolvent Schatten class of $T_w$. Independently of the admissible weight, it suffices that the embedding of the form domain of $T= T_1$ in $L^2(\Omega)$ is of the respective (weak) Schatten class.

\begin{theorem}[Schatten class of resolvent]\label{thm:inv.Sp}
	Let the assumptions of Theorem~{\rm\ref{thm:t}} be satisfied, let $T_w$ in $L^2_{w}(\Omega)$ be as in~\eqref{T.descr}, let $\cV_{1,0}$ and $\iota_1$ be as in \eqref{cV.0.def} and~\eqref{eq:T.hat.res} with trivial weight, let $\la \in \rho (T_w)$ and $p \in (0,\infty)$.
	\begin{enumerate}[\upshape (i)]
	\item If $\iota_1$ is compact, then $(T_w-\la)^{-1}$ is compact;
	\item if $\iota_1 \in \cS_{2p} (\cV_{1,0}, L^2 (\Omega))$, 
	then 
	$
		(T_w-\la)^{-1} \in \cS_p(L^2_{w}(\Omega));
	$
	\item if $\iota_1\in \cS_{2p,\infty} (\cV_{1,0}, L^2 (\Omega))$, 
	then
	$
		(T_w-\la)^{-1} \in \cS_{p,\infty}(L^2_{w}(\Omega)).
	$
	\end{enumerate}
\end{theorem}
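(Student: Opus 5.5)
The plan is to factor the resolvent of $T_w$ through the form domain $\cV_{w,0}$ and then transport the Schatten property of the embedding $\iota_1$ from the unweighted setting to the weighted one. The proof of Theorem~\ref{thm:t} uses the generalised coercivity of $\mathbf t_w$ (Proposition~\ref{prop:g.coer}). It therefore produces a representation $(T_w-\la)^{-1} = \iota_w \wh T_w(\la)^{-1} \iota_w^*$-type factorisation, cf.~\eqref{eq:T.hat.res}. Here $\wh T_w - \la\colon \cV_{w,0}\to \cV_{w,0}^*$ is the bounded isomorphism associated with the form, and $\iota_w\colon \cV_{w,0}\to L^2_w(\Omega)$ is the embedding. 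Consequently
\begin{equation}
	(T_w-\la)^{-1} = \iota_w\, (\wh T_w-\la)^{-1}\, \iota_w^{\ast},
\end{equation}
with $\iota_w^*\colon L^2_w(\Omega)\to \cV_{w,0}^*$ the adjoint embedding (after Riesz identification). If $\iota_w\in\cS_{2p}$, then $\iota_w^*\in\cS_{2p}$ as well. The Hölder inequality for Schatten classes, $\cS_{2p}\cdot\cS_{2p}\subset\cS_p$ (and its weak version $\cS_{2p,\infty}\cdot \cS_{2p,\infty}\subset\cS_{p,\infty}$), then yields (ii) and (iii). Compactness of $\iota_w$ gives (i) in the same way. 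The resolvent identity shows the statement is independent of $\la\in\rho(T_w)$.

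It thus remains to show that $\iota_w$ inherits the property of $\iota_1$. The natural tool is the unitary map $U\colon L^2_w(\Omega)\to L^2(\Omega)$, $Uf = w^{\frac12} f$. I would show that $U$ maps $\cV_{w,0}$ boundedly into $\cV_{1,0}$. Then $\iota_w = U^{-1}\iota_1 U|_{\cV_{w,0}}$, and ideal properties finish the argument. For $f\in\Coo(\Omega)$ we have $w^{\frac12}f$ compactly supported and locally Lipschitz. Also
\begin{equation}
	P_1^{\frac12}\nabla(w^{\frac12} f) = w^{\frac12} P_1^{\frac12}\nabla f + \tfrac12 w^{\frac12} f\,\frac{P_1^{\frac12}\nabla w}{w}.
\end{equation}
By admissibility~\eqref{asm:w.sep}, the last term is bounded pointwise by $\tfrac12 w^{\frac12}|f|(\kappa_w V_1^{\frac12}+\tau_w|V_2|^{\frac12}+C_w)\ls w^{\frac12}|f|(|V|^{\frac12}+1)$. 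Hence $\|w^{\frac12} f\|_{\cV_1}\ls \|f\|_{\cV_w}$, and the $|V|^{1/2}$ and $L^2$ parts transfer isometrically.

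The step needing the most care is density and membership in $\cV_{1,0}$. We need $w^{\frac12}f\in\cV_{1,0}$, the closure of $\Coo(\Omega)$, and not merely in $\cV_1$. For Lipschitz compactly supported functions I would mollify: $(w^{\frac12}f)*\rho_\eps \to w^{\frac12}f$ in $H^1$ with supports in a fixed compact set. To get convergence in the $\cV_1$-norm, one needs the $|V|^{\frac12}$ part to converge, and $V$ is only $\Loneloc$. I would handle this by a truncation argument, or by invoking the lemma (used for Lemma~\ref{lem:t.bdd}) that bounded compactly supported elements of $\cV_1$ with $\Loneloc$ gradient lie in $\cV_{1,0}$. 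With the bound on $\Coo(\Omega)$ in hand, the map $U$ extends by continuity to $\cV_{w,0}\to\cV_{1,0}$.
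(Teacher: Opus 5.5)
Your proof is correct and is essentially the paper's argument. The resolvent is factored through the form domain as $\iota\,(\widehat T-\la\iota^*\iota)^{-1}\iota^*$, and compactness, H\"older's inequality and the Ky Fan singular value inequality finish the job, with the weight removed by the bounded map $w^{\frac12}\colon\cV_{w,0}\to\cV_{1,0}$ of Lemma~\ref{lem:w.alph}. The paper conjugates the operator, passing to $S_1=w^{\frac12}T_w w^{-\frac12}$ in $L^2(\Omega)$ with form domain $\cV_{1,0}$ (Lemma~\ref{lem:Sw.form}), whereas you conjugate only the embedding, $\iota_w=U^{-1}\iota_1U$, which needs just one direction of that isomorphism.

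The membership $w^{\frac12}f\in\cV_{1,0}$ for $f\in\Coo(\Omega)$ is exactly Lemma~\ref{lem:W.1.infty}. Note that there the dominated convergence argument is needed for the $P_1^{\frac12}\nabla$ term as well as for the $|V|^{\frac12}$ term: $P_1$ is only locally integrable, so $H^1$-convergence of the mollifications does not suffice.
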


Since for $P=I_{\C^d}$ the form domain $\cV_{1,0}$ equals the form domain of $-\Delta + |V|$ in $L^2(\Omega)$, combining the above theorem with existing results in the self-adjoint case, cf.~Theorems~\ref{thm:Sp} and \ref{thm:Sp.inf} in the Appendix, we obtain sufficient conditions in terms of the potential for the resolvent to belong in a (weak) Schatten class.

\begin{corollary}
	\label{cor:Sp}
	Let the assumptions of Theorem~{\rm\ref{thm:t}} hold with  $P=I_{\C^d}$, let $T_w$ in $L^2_{w}(\Rd)$ be as in~\eqref{T.descr} and let $\la \in \rho (T_w)$. Assume that $\partial \Omega \in C^{2,\alpha}$ for some $\alpha>0$.
	\begin{enumerate}[\upshape (i)]
		\item If $V \in L^2_{\rm loc}(\Omega)$ and for some $p>0$
	\begin{equation}\label{Sp.suff}
	\int_{\Omega \times \R^d} (|\xi|^2 + |V(x)| + 1)^{-p} \, \dd x \, \dd \xi < \infty,
	\end{equation}
	then 
	\begin{equation}\label{Tw.Sp.cor}
	(T_w-\la)^{-1} \in \cS_p(L^2_{w}(\Omega)).
	\end{equation}
	\item If for some $\gamma > 0$
	\begin{equation}\label{V.lb.gamma}
	|V(x)| + 1 \gs \langle x \rangle^\gamma, \quad \text{a.e.}~x \in \Omega,	
	\end{equation}
	then with
	\begin{equation}\label{p.gam.d.def}
	p_{\gamma,d} := d \frac{\gamma +2}{2 \gamma}		
	\end{equation}	
	we have for any $p>p_{\gamma,d}$
	
	\begin{equation}\label{Tw.Sp.cor.gam}
	(T_w-\la)^{-1} \in \cS_{p}(L^2_{w}(\Omega)).
	\end{equation}
	If, in addition, $\Omega = \Rd$ and $\gamma>1$, then
	\begin{equation}\label{Tw.Sp.cor.gam.inf}
	(T_w-\la)^{-1} \in \cS_{p_{\gamma,d}, \infty}(L^2_{w}(\Rd)).	
	\end{equation}
	\end{enumerate}
\end{corollary}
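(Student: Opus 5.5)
The approach is to reduce everything to Theorem~\ref{thm:inv.Sp}. We need only show that, with $P=I_{\C^d}$, the embedding $\iota_1:\cV_{1,0}\to L^2(\Omega)$ lies in the appropriate (weak) Schatten class. For $P=I_{\C^d}$ and trivial weight, the norm~\eqref{cV.def} is
\begin{equation}
	\|f\|_{\cV_1}^2=\|\nabla f\|^2+\||V|^{\frac12}f\|^2+\|f\|^2 .
\end{equation}
This is exactly the form norm of the non-negative self-adjoint operator $H:=-\Delta_{\rm D}+|V|+1$ in $L^2(\Omega)$, namely the Friedrichs extension built from $\Coo(\Omega)$, and $\cV_{1,0}$ is its form domain. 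Hence $\iota_1$ is unitarily equivalent to $H^{-1/2}$, in the sense that $\iota_1=H^{-1/2}U$ with $U:\cV_{1,0}\to L^2(\Omega)$, $f\mapsto H^{1/2}f$, unitary. Its singular values are therefore those of $H^{-1/2}$, and the following equivalences hold:
\begin{equation}
	\iota_1\in\cS_{2p}\iff H^{-1}\in\cS_p,\qquad \iota_1\in\cS_{2p,\infty}\iff H^{-1}\in\cS_{p,\infty}.
\end{equation}
So the statement reduces to Schatten properties of the resolvent of the self-adjoint Schrödinger operator $-\Delta_{\rm D}+|V|$. These are exactly what Theorems~\ref{thm:Sp} and~\ref{thm:Sp.inf} of the Appendix provide.

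For (i), we apply Theorem~\ref{thm:Sp} to the non-negative potential $|V|+1\in L^2_{\rm loc}(\Omega)$. The phase-space condition~\eqref{Sp.suff} gives $H^{-1}\in\cS_p$, presumably using the boundary regularity $\partial\Omega\in C^{2,\alpha}$ required there. Theorem~\ref{thm:inv.Sp}(ii) then yields~\eqref{Tw.Sp.cor}. The point needing care is that the Appendix theorem is stated for the Dirichlet realisation with domain built from $\Coo(\Omega)$. This has to be matched with $\cV_{1,0}$, which holds by the definition~\eqref{cV.0.def} as a closure of $\Coo(\Omega)$.

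For the first part of (ii), we do not assume $V\in L^2_{\rm loc}$. Instead we compare forms. By~\eqref{V.lb.gamma}, $|V|+1\gtrsim\langle x\rangle^\gamma$, so the form of $H$ dominates, up to a constant, that of $H_\gamma:=-\Delta_{\rm D}+\langle x\rangle^\gamma$. Indeed $\cV_{1,0}$ embeds continuously into the form domain of $H_\gamma$, since both are closures of $\Coo(\Omega)$ and the norms compare. By the min-max principle, $\mu_n(H^{-1})\lesssim\mu_n(H_\gamma^{-1})$. For $H_\gamma$, with smooth potential in $L^2_{\rm loc}$, we verify~\eqref{Sp.suff}. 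Substituting $\xi=\langle x\rangle^{\gamma/2}\eta$ gives
\begin{equation}
	\int_{\Omega\times\R^d}(|\xi|^2+\langle x\rangle^\gamma)^{-p}\,\dd x\,\dd\xi\lesssim\int_{\R^d}\langle x\rangle^{\gamma(d/2-p)}\dd x\int_{\R^d}(|\eta|^2+1)^{-p}\dd\eta .
\end{equation}
The right-hand side is finite precisely when $p>d/2$ and $\gamma(p-d/2)>d$, that is, when $p>p_{\gamma,d}$ (note $p_{\gamma,d}>d/2$). Hence $H_\gamma^{-1}\in\cS_p$, so $H^{-1}\in\cS_p$ and $\iota_1\in\cS_{2p}$, and Theorem~\ref{thm:inv.Sp}(ii) gives~\eqref{Tw.Sp.cor.gam}.

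For the weak-class statement with $\Omega=\R^d$ and $\gamma>1$, we use the same domination. Theorem~\ref{thm:Sp.inf} applied to $-\Delta+\langle x\rangle^\gamma$ gives the Weyl-type asymptotics $N(\lambda)\approx\lambda^{p_{\gamma,d}}$. This means $H_\gamma^{-1}\in\cS_{p_{\gamma,d},\infty}$, which transfers to $H^{-1}$ and then to $\iota_1\in\cS_{2p_{\gamma,d},\infty}$. Theorem~\ref{thm:inv.Sp}(iii) concludes~\eqref{Tw.Sp.cor.gam.inf}. The main obstacle, mild as it is, is checking that the hypotheses of the Appendix theorems (regularity of the potential, the restriction $\gamma>1$, and the boundary regularity) fit the comparison operator. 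We also need the identification of $\cV_{1,0}$ with the Friedrichs form domain to be exact, so that the min-max comparison is legitimate.
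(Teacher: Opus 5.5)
Your argument is essentially the paper's proof. You identify $\iota_1$ with $(-\Delta+|V|+1)^{-1/2}$ via the second representation theorem, apply Theorem~\ref{thm:Sp} for (i), dominate by a power potential with a singular-value/min-max comparison for (ii), and obtain the weak class from the Weyl asymptotics of Theorem~\ref{thm:Sp.inf}. Your substitution $\xi=\langle x\rangle^{\gamma/2}\eta$ is a sharper replacement for the paper's bound $\langle\xi\rangle^2+\langle x\rangle^\gamma\gtrsim(\langle\xi\rangle\langle x\rangle)^{2\gamma/(2+\gamma)}$ and gives the same threshold $p>p_{\gamma,d}$.

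The hypothesis check you left open does require a small change in the weak-class step. The paper compares with $-\Delta+|x|^\gamma$, not $-\Delta+\langle x\rangle^\gamma$, and the choice matters. For $\gamma>2$, $Q=\langle x\rangle^\gamma$ violates the Lipschitz condition $|Q(x)-Q(y)|\le c_3\max\{|x|,|y|\}^{\gamma-1}|x-y|$ of Theorem~\ref{thm:Sp.inf} near the origin. Take $y=0$: the left side is $\sim\tfrac{\gamma}{2}|x|^2$, while the right side is $c_3|x|^\gamma$.

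Switching to $|x|^\gamma$ costs nothing, since $|x|^\gamma+1\le 2\langle x\rangle^\gamma\lesssim|V|+1$. Also, Theorem~\ref{thm:Sp.inf} is stated only for $d\ge 2$. For $d=1$ you must invoke the classical one-dimensional Weyl law separately, as the paper does by citation.
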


\subsection{Invariance of discrete spectra and decay of eigenfunctions}

In regions separated from the essential spectrum, the (discrete) spectra of $T$ and $T_w$ coincide and the corresponding (finite) algebraic multiplicities of the eigenvalues agree. Moreover, every generalised eigenfunction (root function) of $T$ is also a generalised eigenfunction  of $T_w$. In particular, this provides information on the decay of the eigenfunctions of the non-weighted operator $T$; an alternative proof of the latter, based on Agmon type decay estimates, can be found in \cite{Krejcirik-2017-221} in a setting with magnetic field. 

\begin{theorem}[Invariance of discrete spectra and eigenfunctions]\label{thm:spd.inv}
Let the assumptions of Theorem~{\rm\ref{thm:t}} be satisfied and, for every $\alpha \in [0,1]$, let $T_{w^\alpha}$ in $L^2_{w^\alpha}(\Omega)$ be defined as in~\eqref{T.descr} with weight $w^\alpha$ (which is admissible and satisfies the assumptions of Theorem~{\rm\ref{thm:t}}, see Lemma~{\rm\ref{lem:w.alph}}). Define the region
\begin{equation}
	\Sigma:= \overline{\bigcup_{\alpha \in [0,1]} \sigma_{{\rm e5}} (T_{w^\alpha})} 
\end{equation}
covered by essential spectra of $T_{w^\alpha}$, see \cite[Chap.~IX]{EE} for the definition of $\sigma_{{\rm e}5}(\cdot)$. Then the (discrete) spectra of $T$ and $T_w$ coincide outside $\Sigma$, i.e.
\begin{equation}\label{sp.inv}
	\sigma(T)\setminus \Sigma = \sigma(T_w) \setminus \Sigma \subset \sigma_{\rm disc}(T)\cap \sigma_{\rm disc}(T_w).
\end{equation}
Moreover, for all eigenvalues in the above set, the (finite) algebraic multiplicities with respect to $T$ and $T_w$ coincide and 
\begin{equation}\label{gen.ef}
	\ker (T-\lambda)^k = \ker (T_w-\lambda)^k, \qquad \la \in \sigma(T) \setminus \Sigma, \qquad k \in \N.
\end{equation}
In particular, all generalised eigenfunctions (root functions) of $T$ lie in $L^2_{w} (\Omega)$.
\end{theorem}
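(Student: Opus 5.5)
The plan is to connect $T$ and $T_w$ through the continuous family $T_{w^\alpha}$, $\alpha\in[0,1]$, unitarily transformed to the fixed space $L^2(\Omega)$, and to apply O'Connor's lemma (analytic/continuous dependence of discrete spectra and Riesz projections) along the path. First, by Lemma~\ref{lem:w.alph}, each $w^\alpha$ is admissible with constants $\alpha\kappa_w,\alpha\tau_w,\alpha C_w$, so Theorem~\ref{thm:t} applies uniformly in $\alpha$. Set $S_\alpha := w^{\alpha/2} T_{w^\alpha} w^{-\alpha/2}$ in $L^2(\Omega)$. For a suitable $\la_0$, chosen uniformly in $\alpha$ via the generalised coercivity of Proposition~\ref{prop:g.coer} with constants uniform in $\alpha$, we have $\la_0\in\rho(S_\alpha)$ for all $\alpha$.

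Second, I would show that $\alpha\mapsto (S_\alpha-\la)^{-1}$ is continuous, indeed holomorphic after complexifying $\alpha$, in operator norm for $\la$ in the common resolvent set. Formally,
\begin{equation}
S_\alpha = -\nabla\cdot P\nabla + V + \alpha\,(\text{first order terms with coefficients built from } \nabla w/w) + \alpha^2(\text{zeroth order}).
\end{equation}
Differences of resolvents are therefore compositions exactly of the type in Theorem~\ref{thm:comp}, with $b_j, c_j$ given by $P\nabla w/w$ and $|P_1^{1/2}\nabla w/w|^2$. The admissibility bound \eqref{asm:w.sep} ensures that these are controlled by $|V|^{1/2}$ and $P_1^{1/2}\nabla$, so they are bounded relative to the form norm. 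The cleanest route is to work at the level of forms: the sesquilinear forms of $S_\alpha$ on $\cV_{1,0}$ form a holomorphic family of type (a)-like forms in the generalised coercive sense, with $\cV_{1,0}=w^{\alpha/2}\cV_{w^\alpha,0}$. This gives a holomorphic family of bounded operators $\cV_{1,0}\to\cV_{1,0}^*$, whose inverses are holomorphic. Since the essential spectrum $\se5$ of $S_\alpha$ equals that of $T_{w^\alpha}$, $\Sigma$ is the closure of the union of the $\se5(S_\alpha)$.

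Third, fix a connected component $G$ of $\C\setminus\Sigma$. Every point of $G$ not in $\sigma(S_\alpha)$ lies in the resolvent set, and for each $\alpha$, $\sigma(S_\alpha)\cap G$ is discrete, because $G$ is connected, avoids $\se5$, and contains points of $\rho(S_\alpha)$. To see the latter, I would use Corollary~\ref{cor:res.bd}-type rays, or argue that the unbounded component contains $\la_0$; bounded components need care. O'Connor's lemma then states that for a holomorphic family with these properties, the set of $(\alpha,\la)$ with $\la\in\sigma(S_\alpha)\cap G$ is locally given by analytic data, and the Riesz projections $P_\la(\alpha)$ depend continuously on $\alpha$ near points of the resolvent. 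To obtain exact invariance rather than mere continuity, I would use the fact that the $S_\alpha$ are restrictions of one and the same differential expression, formally similar via $w^{\alpha/2}$. If $f\in\ker(T_{w^\alpha}-\la)^k$ lies also in $L^2_{w^{\alpha'}}$ for nearby $\alpha'$, it is in the kernel there as well. Combining this with the constancy of $\dim \Ran P_\la(\alpha)$ (a continuous family of finite-rank projections has locally constant rank) and a connectedness argument in $\alpha\in[0,1]$ yields equality of spectra, multiplicities and the root spaces \eqref{gen.ef}.

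The main obstacle is precisely the identification step: showing that a root function of $T_{w^\alpha}$ actually belongs to $L^2_{w^{\alpha'}}$ and to $\cV_{w^{\alpha'},0}$ for $\alpha'$ near $\alpha$, rather than just that the multiplicities match. I would attack it by writing $P_\la(\alpha')$ through resolvents and using the intertwining identity $(T_{w^{\alpha'}}-\mu)^{-1}g=(T_{w^\alpha}-\mu)^{-1}g$ for $g\in L^2_{w^\alpha}\cap L^2_{w^{\alpha'}}$ and $\mu$ in a common resolvent set. This identity follows from uniqueness in the generalised Lax--Milgram setting, applied on the space $\cV_{w^\alpha,0}\cap\cV_{w^{\alpha'},0}$ via approximation by $\Coo(\Omega)$. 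The resolvent-set condition on $\mu$ is obtained by the continuity established in the second step, and the identity is then integrated over a small circle around $\la$. This shows that the ranges of the Riesz projections agree on the dense intersection, and hence, having equal finite dimension, the ranges coincide.
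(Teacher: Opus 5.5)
Your identification step has a genuine gap. You need $(T_{w^{\alpha'}}-\mu)^{-1}g=(T_{w^{\alpha}}-\mu)^{-1}g$ for $g\in\Coo(\Omega)$ and $\mu$ on a small circle $\Gamma$ around $\la$. You derive it from ``uniqueness on $\cV_{w^\alpha,0}\cap\cV_{w^{\alpha'},0}$'', and this does not work. The functions $u=(T_{w^\alpha}-\mu)^{-1}g\in\cV_{w^\alpha,0}$ and $v=(T_{w^{\alpha'}}-\mu)^{-1}g\in\cV_{w^{\alpha'},0}$ solve the same distributional equation. However, $u-v$ lies in the \emph{sum} of the two spaces, i.e.\ in $\cV_{\tilde w,0}$ with $\tilde w=\min\{w^\alpha,w^{\alpha'}\}$, so you need $\mu\notin\spp(T_{\tilde w})$. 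Solving in the intersection instead requires something like $\mu\in\rho(T_{\max\{w^\alpha,w^{\alpha'}\}})$. Neither auxiliary weight is of the form $w^\beta$, so nothing in the hypotheses (in particular not $\Sigma$) controls these spectra near $\la$. Continuity of $\alpha\mapsto(S_\alpha-\mu)^{-1}$ only gives $\mu\in\rho(T_{w^\alpha})\cap\rho(T_{w^{\alpha'}})$, which is not enough. Proving directly that $u\in L^2_{w^{\alpha'}}(\Omega)$ is as hard as the eigenfunction decay you are after. Generalised coercivity makes $T_{\tilde w}-\mu$ invertible only far to the left; this is how $T_{\min\{w,1\}}$ is used in the proof of Theorem~\ref{thm:comp}, and it says nothing near an arbitrary eigenvalue. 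Note also that locally constant rank of $P_\la(\alpha)$ by itself only fixes the total multiplicity inside $\Gamma$, not the eigenvalue.

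The missing idea is the mechanism behind O'Connor's lemma, which you name but do not use; what you attribute to it is really Kato's analytic perturbation theory. For real $\alpha_0,s$ one has the unitary equivalence $S_{\alpha_0+\ii s}=w^{\ii s/2}S_{\alpha_0}w^{-\ii s/2}$, hence $w^{-\ii s/2}P_{\alpha_0+\ii s}=P_{\alpha_0}w^{-\ii s/2}$. Consequently the holomorphic functions $\alpha\mapsto\langle P_\alpha w^{\alpha/2}\phi,\ov{w^{-\alpha/2}}\varphi\rangle$ (for $\phi,\varphi\in\Coo(\Omega)$) and $\alpha\mapsto\operatorname{tr}\,(S_\alpha P_\alpha)^l$ are constant along vertical segments. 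By the identity theorem they are then constant on a complex neighbourhood of $[0,1]$. This gives precisely your intertwining identity at the level of Riesz projections in $\cD'(\Omega)$, fixes the eigenvalues with their multiplicities, and yields $\Ran P_0=\Ran Q_1$, with no connectivity requirement in $\mu$. Without this idea your approach can only be partially repaired. You can establish the identity at real $\mu\ll0$ via $T_{\tilde w}$ and Lemma~\ref{lem:ext}, then continue $\mu\mapsto\int_\Omega(T_{w^\alpha}-\mu)^{-1}g\,\ov\varphi\,\dd x$ analytically. This reaches eigenvalues in the component of $\C\setminus\Sigma$ containing the far-left real axis, and there it even works directly with $\alpha=0$, $\alpha'=1$. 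It does not reach eigenvalues in bounded components of $\C\setminus\Sigma$, which is exactly where your remark ``bounded components need care'' is left open. Discreteness of $\sigma(S_\alpha)\setminus\Sigma$, on the other hand, is automatic from the definition of $\se{5}$.
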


\begin{remark}
\begin{enumerate}[\upshape (i), wide]
	\item \label{item:comp.res} If  $\Vv_{1,0}$ is compactly embedded in $L^2(\Omega)$, then the $T_{w^\alpha}$ have compact resolvent for all $\alpha \in [0,1]$, see Theorem~\ref{thm:inv.Sp}, and Theorem~\ref{thm:spd.inv} gives invariance of the (discrete) spectra of $T$ and $T_w$ as well as the related (generalised) eigenfunctions, in whole $\C$ (including multiplicities). This is for instance the case if $P_1$ is uniformly elliptic, see Assumption~\ref{asm:main.gn}~\ref{item:asm.P1.ell} below, and
	\begin{equation}\label{V.unbdd}
		\lim_{R \to \infty} \essinf_{|x|>R,\,x \in \Omega} |V(x)|  = \infty;
	\end{equation} 
	see the standard compactness arguments based on Rellich's criterion e.g.~in~\cite[Thm.\ XIII.65, XIII.67]{Reed4}.
	
	\item It is in fact necessary to exclude the set $\Sigma$ in the statement of Theorem~\ref{thm:spd.inv}. Indeed, Example~\ref{ex:spe} below (related to an advection-diffusion operator, see \cite{Reddy-1994-54,Davies-2002-34}) shows that an eigenvalue can disappear when touched by the essential spectrum. 
	
	\item In Example~\ref{ex:optimality.EF} below, the eigenfunctions of one-dimensional Schr\"odinger operators in $L^2(\R)$ are discussed. In these particular examples, it is shown that the restrictions on the admissible weights in \eqref{asm:w.sep} and \eqref{ellipse} in Theorem~\ref{thm:spd.inv} lead to optimal eigenfunction decay rates.
\end{enumerate}
\end{remark}

\begin{example}\label{ex:spe}
We sketch and slightly adapt an example in \cite[Sec.\ VII.C]{Krejcirik-2015-56}. Consider the standard self-adjoint realisation of $T := - \partial_x^2 + V$ in $L^2(\R)$ with $V \in L^\infty(\R,\R)$ such that $\operatorname{ess} \supp V \subset [-1,1]$ and assume there exists a simple eigenvalue $0> \la_0 \in \spd(T)$; the existence of such potential $V$ follows by well-known min-max arguments, see~e.g.~\cite{Reed4}. It follows from the assumptions that an eigenfunction $\psi_0$ corresponding to $\la_0$ satisfies 
\begin{equation}
	\psi_0(x) = \begin{cases}
		C_- \e^{ \sqrt{|\la_0|}x}, & \,\, x <-1, \\
		C_+ \e^{- \sqrt{|\la_0|}x}, & \,\, x >1,
	\end{cases} \qquad \quad C_\pm \in \C.
\end{equation}

Consider the family of admissible weights $w_\alpha(x) = \exp(\alpha x)$ with $\alpha, x \in \R$ and the corresponding operators $T_{w_\alpha}$ in $L^2_{w_\alpha}(\R)$. Notice that, strictly speaking, the assumptions of Theorem~\ref{thm:t} are only satisfied after shifting the potential by $\|V\|_{L^\infty}$ (with multiplier $\Phi = 0$); the resulting $T_1$ with $\alpha = 0$ is exactly $T$ in $L^2(\R)$ as above. The essential spectra of $T_{w_\alpha}$ can be determined by passing to the family of unitarily equivalent operators 
\begin{equation}
S_\alpha := w_\alpha^{\frac12} T_{w_\alpha} w_\alpha^{-\frac12}
\end{equation}
in $L^2(\R)$, cf.~Lemma~\ref{lem:Sw.form}. Indeed, one can verify that 
\begin{equation}
	S_\alpha = - \partial_x^2 +  \alpha \partial_x - \frac{\alpha^2}4 + V, \qquad \Dom(S_\alpha) = W^{2,2}(\R),
\end{equation}
and further by employing the Fourier transform and the stability of essential spectra under relatively compact perturbations that
\begin{equation}\label{ex.ess.spec}
\se{\textit j}(T_{w_{\alpha}}) = \se{\textit j}(S_{\alpha}) = 	\left\{k^2-\alpha \ii k - \frac{ \alpha^2}4 \, : \, k \in \R \right\}, \qquad  j=1,\dotsc,4,
\end{equation}
see e.g.~\cite[Chap.~IX]{EE} for details. In fact, it follows from a Neumann series argument, the boundedness of $V$ and the structure of $\sigma_{{\rm e4}}(T_{w_{\alpha}})$ that also $\sigma_{{\rm e5}}(T_{w_{\alpha}}) = \sigma_{{\rm e4}}(T_{w_{ \alpha }})$ is as above. 

On the other hand, while for $|\alpha| < 2 \sqrt{|\la_0|}$ one still has $\la_0 \in \spp(T_{w_\alpha})$ (simple with the same eigenfunction $\psi_0$, see Theorem~\ref{thm:spd.inv}), the eigenvalue $\la_0$ is lost for $T_{w_\alpha}$ when it is touched by the essential spectrum (i.e.~for $|\alpha| = 2 \sqrt{|\la_0|}$). Indeed, for $|\alpha| \ge 2 \sqrt{|\la_0|}$ one has $\psi_0 \notin L^2_{w_\alpha}(\R)$ and it is elementary to verify that also no other (distributional) solution of $-\psi'' + V \psi = \la_0 \psi$ lies in $L^2_{w_\alpha}(\R)$.
\end{example}

\begin{example}[Optimality of \eqref{asm:w.sep} and \eqref{ellipse}] \label{ex:optimality.EF}
	In the following, let $\Omega = \R$, $P=1$ and consider $T = -\partial_x^2 + V$ in $L^2(\R)$. Theorem~\ref{thm:spd.inv} states that the eigenfunctions of $T$ lie in $L^2_w(\R)$ for every $w$ which is admissible according to Assumption~\ref{asm:main.LM} and~\eqref{ellipse}. For particular potentials $V$, we show that our restrictions on the weight correspond to the (optimal) ones arising from the known results.
	\begin{enumerate}[\upshape (i), wide]
		\item For real potentials $V(x)= x^{2n}$, $n \in \N$, it is known that for a fixed $\la \in \spp(T)$, any corresponding eigenfunction of $T$ satisfies
		\begin{equation}
			 |\psi(x)| \ls \exp\left(- \frac{|x|^{n+1}}{n+1}\right), \qquad x \in \R, 
		\end{equation}
		and the~power and constant in the exponential are optimal, see e.g.~\cite[Chap~8.2]{Titchmarsh-1962-book1}, \cite[Chap.~22.27]{Titchmarsh-1958-book2}. Thus $\psi \in L^2_w(\R)$ with the weight
		\begin{equation}\label{w.xn}
			w(x) = \exp \left(\kappa \frac{|x|^{n+1}}{n+1}  \right), \qquad x \in \R, \qquad \kappa <2.
		\end{equation}

		In view of Theorem~\ref{thm:spd.inv}, for real potentials, no multiplier $\Phi$ is needed, i.e.~$\Phi=0$ satisfies the conditions in Assumption~\ref{asm:main.LM}. The weight $w$ in \eqref{w.xn} is admissible in the sense of Assumption~\ref{asm:main.LM}~\ref{asm:LM.w} with $\kappa_w = \kappa$ and $\tau_w=0$. The restriction from \eqref{ellipse} reads $\kappa_w<2$, see Remark~\ref{rem:gcoer}~\ref{item.rem.opt}.
		\item  For the Davies oscillator, i.e.~$V(x) = \ii x^2$, the (countable) eigenvalues and eigenfunctions of $T$ are explicit. Namely, $\la_n = \e^{\ii \frac \pi 4}(2n+1)$ with $n \in \N_0$ and
		\begin{equation}\label{EF.Davies}
			\psi_n(x) = \exp\left(-\e^{\ii \frac \pi 4} \frac{x^2}{2} \right) H_n\big(\e^{\ii \frac \pi 8} x\big), \qquad x \in \R, \qquad n \in \N_0,
		\end{equation} 
		where $H_n$ are the Hermite polynomials, see e.g.~\cite[Sec.~14.5]{Davies-2007}. 
		Thus $\psi_n \in L^2_w(\R)$ with the weight
		\begin{equation}\label{w.Davies}
			w(x) = \exp \left(\tau \frac{x^2}{2}  \right), \qquad x \in \R, \qquad \tau <\sqrt 2.
		\end{equation}

		Since $V_2(x) = x^2$ does not change sign, the multiplier $\Phi=1$ satisfies the conditions in Assumption~\ref{asm:main.LM}. The weight $w$ in \eqref{w.Davies} is admissible in the sense of Assumption~\ref{asm:main.LM}~\ref{asm:LM.w} with $\kappa_w=0$,  $\tau_w = \tau$ and the restriction from \eqref{ellipse} reads $\tau_w<\sqrt 2$, see Remark~\ref{rem:gcoer}~\ref{item.rem.opt}. 
		\item For the imaginary cubic oscillator, i.e.~$V(x) = \ii x^3$, the spectrum of $T$ is non-empty and in fact real, see \cite{Dorey-2001-34, Shin-2002-229}. 
		For a fixed $\la \in \spp(T)$, the corresponding eigenfunction of $T$ satisfies
		\begin{equation}
		|\psi(x)| \ls \exp\left(- \frac1{\sqrt 2} \frac 25 |x|^{\frac 52} \right), \qquad x \in \R, 
		\end{equation}
		and the power and constant in the exponential are
		optimal, see e.g.~\cite[\S7.4]{Sibuya-1975} or \cite{Shin-2002-229}. Thus $\psi \in L^2_w(\R)$ with the weight
		\begin{equation}\label{w.ix3}
			w(x) = \exp \left(\tau \frac25 |x|^\frac 52  \right), \qquad x \in \R, \qquad \tau <\sqrt 2.
		\end{equation}

		For $V_2(x) = x^3$, the multiplier $\Phi$ in Example~\ref{ex:Phi.1d} or in \eqref{Phi.AH-like} satisfies the conditions in Assumption~\ref{asm:main.LM}. The weight $w$ in \eqref{w.ix3} is admissible in the sense of Assumption~\ref{asm:main.LM}~\ref{asm:LM.w} with $\kappa_w=0$ and $\tau_w =\tau$ and the restriction on $\tau_w$ in this case from \eqref{ellipse} reads $\tau_w<\sqrt 2$, see Remark~\ref{rem:gcoer}~\ref{item.rem.opt}.
	\end{enumerate}

\end{example}

\subsection{Domain and graph norm separation}

Our last result on the domain and graph norm separation for $T_w$ requires the following set of assumptions.

\begin{asm-sec}\label{asm:main.gn}
	Let $\emptyset \neq \Omega \subset \R^d$ be open and let the following hold.
	\begin{enumerate}[\upshape (i)]
		\item \label{item:asm.GN.reg} \emph{Regularity of coefficients and weight}:
		Assume that
		\begin{equation}\label{eq:asm.gn.reg}
			P_{ij} \in W^{1,\infty} (\Omega), \qquad 1 \le i,j \le d,
		\end{equation}
		that the potential satisfies
		\begin{equation}
			V \in W^{1,\infty}_{\rm loc} (\overline \Omega) := \left\{ f  \in L^\infty_{\rm loc} (\Omega) \, : \, \forall~R>0, \, \,  |f| + |\nabla f| \in L^\infty (\Omega \cap B_R(0)) \right\}
		\end{equation}
		and let the weight
		\begin{equation}
			w \in W^{1,\infty}_{\rm loc}(\Omega)
		\end{equation}
		be uniformly positive on compact subsets of $\Omega$ (such that $w^{-1}$ has the same properties).
		\item \label{item:asm.P1.ell} \emph{Uniform ellipticity of $P_1$}: With $P_1:=\re P$ and $P_2:= \im P$, see~\eqref{eq:def.P1.P2}, let $\delta_P>0$ such that for a.e.~$x \in \Omega$ one has
		\begin{equation}\label{asm:gn.P}
			\forall~\xi \in \C^d \,\,\, : \,\,\,  \langle P_1 (x) \xi, \xi \rangle_{\C^d} \geq \delta_P |\xi|^2.
		\end{equation}
		\item \emph{Generalised accretivity of $V$:} \label{item:VP.accr} Suppose that
		\begin{equation}\label{asm:VP.accr}
			\forall~\xi \in \C^d \,\,\, : \,\,\, \re \langle \e^{-\ii \arg V} P \xi,\xi \rangle_{\C^d} \geq 0.
		\end{equation}
		\item \emph{Control of $\nabla V$:} Assume there exist $\eps_V, C_{V,\eps} \ge 0$ with 		%
		\begin{equation}\label{asm:V.sep.gn}
			\max \left\{ |P_1^{\frac{1}{2}} \nabla V|, |P_1^{\frac{1}{2}} \nabla |V|| \right\}\le \eps_V \abs{V}^\frac{3}{2} + C_{V,\eps}.
		\end{equation}
		\item \emph{Admissibility of $w$}:  Assume there exist $\eps_w, C_{w,\eps} \ge 0$ with
	\begin{equation}\label{asm:w.sep.gn}
		\frac{|P_1^{\frac{1}{2}} \nabla w|}{w} 	\le 
		\eps_{w} |V|^\frac12 + C_{w,\eps};
	\end{equation}
	notice that $w$ is admissible if and only if $w^{-1}$ is admissible (with the same constants $\eps_w$ and $C_{w,\eps}$).
	\end{enumerate}
\end{asm-sec}

\begin{theorem}[Domain and graph norm separation]
	\label{thm:T.graph.sep}
	Let Assumption~{\rm\ref{asm:main.gn}} hold and assume that
	\begin{equation}\label{eps.asm}
		\eps_V + \eps_w < \frac{2 - \sqrt 2}{1 + \delta_P^{-1}\|P_2\|_{L^\infty}}.
	\end{equation}
	Then  $T_w$ introduced in~\eqref{T.descr} is well-defined, its domain satisfies
		\begin{equation}\label{eq:dom.T.gn}
			\Dom (T_w) = \Big\{ f \in \cV_{w,0} \, : \, - \nabla \cdot (P \nabla f) \in L^2_{w} (\Omega), \,  Vf \in L^2_{w} (\Omega) \Big\}
		\end{equation}
		and there exists $a_{V,w}>0$ such that
	\begin{equation}\label{gn.sep.main}
		\| T_w f\|_{w} + \| f\|_{w} \geq a_{V,w} \big(
		\| \nabla \cdot (P \nabla f)\| _{w}+ \| V f\|_{w} + \| f\|_{w}
		\big)
	\end{equation}
for all  $f \in \Dom(T_w)$. 

Moreover, if $\Omega= \Rd$, then $C_0^\infty(\Rd)$ is dense in $\big(\Dom(T_w),(\|T_w \cdot\|_w^2+\|\cdot\|^2_w)^\frac12\big)$.
\end{theorem}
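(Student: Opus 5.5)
The plan is to reduce to the unweighted-type separation argument for the conjugated operator $S = w^{\frac12} T_w w^{-\frac12}$ in $L^2(\Omega)$, or equivalently to work directly in $L^2_w$ with the weighted inner product. This makes the weight appear only as an additional first-order term whose coefficient is controlled by \eqref{asm:w.sep.gn}.

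\textbf{Step 1: well-definedness of $T_w$.} We first check that Assumption~\ref{asm:main.gn} implies Assumption~\ref{asm:main.LM} with suitable $\Phi$, $R$ and constants.
\begin{itemize}
\item Uniform ellipticity and boundedness of $P$ give sectoriality with $C_P \le \delta_P^{-1}\|P_2\|_{L^\infty}$.
\item We take $\Phi$ as in Proposition~\ref{prop:Phi.AH-like} with $V_{j,s}=0$. The gradient bound \eqref{asm:V.sep.gn} with $\eps_V$ small gives \eqref{V.nabla.asm} for every $\eps>0$, since $|\nabla V|(1+|V|^2)^{-3/2}\lesssim |V|^{\frac12}\cdot$small$\,+\,$const.
\item \eqref{asm:w.sep.gn} gives admissibility with $\kappa_w,\tau_w\lesssim\eps_w$.
\end{itemize}
The generalised-accretivity hypothesis \eqref{asm:VP.accr} replaces $V_1\ge0$ in the form estimates. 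So Theorem~\ref{thm:t} (in the flexible version of Remark~\ref{rem:gcoer}) yields a closed $T_w$ with nonempty resolvent set. This step is routine bookkeeping, but one must make sure the smallness in \eqref{eps.asm} suffices for \eqref{ellipse}.

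\textbf{Step 2: the a priori estimate for $f\in C_0^\infty(\Omega)$.} Let $f\in \Coo(\Omega)$ and set $g:=\e^{-\ii\arg V}|V|f\, w$, suitably regularised. We test $T_w f$ against $\ov{V} f$ in $L^2_w$:
\begin{equation}
\re\langle T_w f, Vf\rangle_w = \|Vf\|_w^2 + \re\langle P\nabla f, \nabla(Vfw)\rangle .
\end{equation}
Expanding $\nabla(Vfw)=V w\nabla f + fw\nabla V + Vf\nabla w$ produces three terms.
\begin{itemize}
\item The first term is $\re\langle \e^{-\ii\arg V}P\nabla f,\nabla f\rangle |V| w\ge0$ by \eqref{asm:VP.accr}; we discard it, or keep it as a controlled positive quantity $\||V|^{\frac12}P_1^{\frac12}\nabla f\|_w^2$ via ellipticity.
\item The terms with $\nabla V$ and $\nabla w$ are bounded by Cauchy--Schwarz, using $|P\xi\cdot\eta|\le (1+\delta_P^{-1}\|P_2\|_\infty)|P_1^{\frac12}\xi||P_1^{\frac12}\eta|$, together with \eqref{asm:V.sep.gn} and \eqref{asm:w.sep.gn}. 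This gives
\begin{equation}
(1+\delta_P^{-1}\|P_2\|_\infty)(\eps_V+\eps_w)\,\||V|^{\frac12}P_1^{\frac12}\nabla f\|_w\,\|Vf\|_w + \text{lower order}.
\end{equation}
\end{itemize}
To close, the quantity $\||V|^{\frac12}P_1^{\frac12}\nabla f\|_w^2$ must itself be controlled by $\|T_wf\|_w\|Vf\|_w$ plus $(\eps_V+\eps_w)\||V|^{\frac12}P_1^{\frac12}\nabla f\|_w\|Vf\|_w$. We obtain this by combining the real and imaginary parts of the identity with the rotation $\e^{-\ii\arg V}$, optimising over a parameter exactly as in the classical Everitt--Giertz/Evans argument. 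The constant $2-\sqrt2$ is what comes out of that optimisation.

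This yields $\|Vf\|_w\lesssim\|T_wf\|_w+\|f\|_w$. The bound for $\|\nabla\cdot(P\nabla f)\|_w$ then follows by the triangle inequality. This quadratic-inequality bookkeeping, with the extra $\nabla w$ term and non-symmetric $P$, is the main obstacle. I would first do it for $P=I$ and $w=1$ to track the $2-\sqrt2$ threshold, then insert the factor $1+\delta_P^{-1}\|P_2\|_\infty$.

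\textbf{Step 3: extension to $\Dom(T_w)$ and density.} Extending the estimate from $\Coo$ to $\Dom(T_w)$ requires approximation. We truncate $V$ (e.g.\ $V_n=V/(1+|V|/n)$, which preserves the structural bounds uniformly in $n$) and use cut-offs $\chi_R$ in space. For $f\in\Dom(T_w)$ the cut-off commutator terms are controlled by the form norm in $\cV_{w,0}$. We also use local elliptic regularity ($P\in W^{1,\infty}$, $V$ locally bounded) to get $f\in W^{2,2}_{\rm loc}$, then run the same integration by parts with $V_n$ and pass to the limit $n\to\infty$ using monotone convergence. This proves \eqref{gn.sep.main} and hence \eqref{eq:dom.T.gn}; the inclusion ``$\supset$'' is trivial.

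For $\Omega=\Rd$, the mollification and cut-off of $f\in\Dom(T_w)$ converge in graph norm. We use \eqref{gn.sep.main}, the local boundedness of $V$ and $w$, and the bounds $|\nabla\chi_R|\lesssim R^{-1}$ and $|\nabla w|/w\lesssim |V|^{\frac12}+1$ to control the commutators by $\|Vf\|_w+\|P_1^{\frac12}\nabla f\|_w+\|f\|_w$.
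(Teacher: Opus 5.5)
\textbf{The gap: the second integration by parts is missing.} Your first test is the paper's: test against $Vf$, drop $\Re\langle P\nabla f,V\nabla f\rangle_w\ge0$ by \eqref{asm:VP.accr}, and bound the $\nabla V$, $\nabla w$ terms by $(1+C_P)\langle|P_1^{1/2}\nabla f|,(\eps|V|^{3/2}+C|V|+C)|f|\rangle_w$ with $\eps=\eps_V+\eps_w$. The estimate that decides the threshold, namely control of $a:=\||V|^{1/2}P_1^{1/2}\nabla f\|_w$, is not there, and the mechanism you suggest cannot supply it.

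In the $Vf$-identity the only quadratic gradient term is $\int|V|w\,\e^{-\ii\arg V}\langle P\nabla f,\nabla f\rangle_{\C^d}\,\dd x$, whose phase varies with $x$. Taking global real and imaginary parts does not recover $\int|V|w\langle P_1\nabla f,\nabla f\rangle_{\C^d}\,\dd x$: for $P=I$, $V=\ii x^3$ the real part is $0$ and the imaginary part is indefinite. Also \eqref{asm:VP.accr} is not strict, so ``keeping it as $a^2$ via ellipticity'' is wrong.

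The missing idea is a second integration by parts against the real multiplier $|V|f$, which lies in $\cV_{w,0}$ for bounded-support $f$:
\begin{equation*}
a^2=\Re\langle P\nabla f,|V|w\nabla f\rangle\le|\langle\nabla\cdot(P\nabla f),|V|f\rangle_w|+|\langle P\nabla f,f(|V|\nabla w+w\nabla|V|)\rangle|.
\end{equation*}
This is where the bound on $|P_1^{1/2}\nabla|V||$ in \eqref{asm:V.sep.gn} is used; your proposal never uses it.

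It must be phrased with $\nabla\cdot(P\nabla f)$, not $T_wf$. Testing $T_wf$ against $|V|f$ produces $\int\Re V\,|V||f|^2w\,\dd x$, and Assumption~\ref{asm:main.gn} does not give $\Re V\ge0$ when $P_2\neq0$: for $d=1$, $P=1+\ii$, \eqref{asm:VP.accr} allows $\arg V$ up to $3\pi/4$. So the bound ``$a^2\lesssim\|T_wf\|_w\|Vf\|_w+\dots$'' you aim for is not available in general.

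With the correct inequality your scheme does close. Write $t,b,D$ for $\|T_wf\|_w$, $\|Vf\|_w$, $\|\nabla\cdot(P\nabla f)\|_w$ and $e=(1+C_P)\eps$. Up to lower-order terms absorbed by Young, you get $b\le t+ea$, $a^2\le Db+eab$ and $D\le t+b$. Hence $a^2\le 2(t+ea)^2$, which closes for $e<1/\sqrt2$, in particular under \eqref{eps.asm}.

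The paper instead expands $\|T_wf\|_w^2=D^2+b^2-2\Re\langle\nabla\cdot(P\nabla f),Vf\rangle_w$ and adds a test against $f$ to control $\|P_1^{1/2}\nabla f\|_w$. Its choice $\delta_1=\frac12$, $\delta_2=\frac{1+\sqrt2}2$ of Young parameters then gives exactly $2-\sqrt2$. Your claim that $2-\sqrt2$ ``comes out'' of your optimisation is therefore unsupported as written.

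\textbf{Steps 1 and 3 also need repair.} Step 1 is unnecessary: ``well-defined'' only means that \eqref{T.descr} makes sense, and this holds because Assumption~\ref{asm:main.gn}~(i),(ii) imply Assumption~\ref{asm:main.LM}~(i),(ii). It is also incorrect as stated. Theorem~\ref{thm:t} needs $\Re V\ge0$. Moreover, \eqref{asm:V.sep.gn} with fixed $\eps_V$ yields \eqref{V.nabla.asm} only for $\eps$ comparable to $\eps_V$, not for every $\eps>0$.

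In Step 3, $V_n=V/(1+|V|/n)$ does not satisfy \eqref{asm:V.sep.gn} uniformly in $n$. Where $\arg V$ varies, the phase part of $\nabla V_n$ is damped only by $(1+|V|/n)^{-1}$, whereas $|V_n|^{3/2}$ is damped by $(1+|V|/n)^{-3/2}$. The truncation is also unnecessary, since after a spatial cut-off $V$ is bounded on the support by $V\in W^{1,\infty}_{\rm loc}(\ov\Omega)$.

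More importantly, for a general open $\Omega$, interior $W^{2,2}_{\rm loc}$ regularity does not justify integrating by parts up to $\partial\Omega$. Nor is $\Coo(\Omega)$ known to be a core (the paper shows this only for $\Omega=\R^d$), so an a priori estimate on $\Coo(\Omega)$ is not enough. The paper proceeds in three steps:
\begin{enumerate}[\upshape (a)]
\item It proves the estimate on $\cD_w$, the elements of $\Dom(T_w)$ with bounded support, using the weak identity $\langle-\nabla\cdot(P\nabla f),g\rangle_w=\langle P\nabla f,\nabla(gw)\rangle$ for $g\in\cV_{w,0}$. This is applied with $g=Vf,\,f,\,|V|f$ after checking that these lie in $\cV_{w,0}$.
\item It shows, via the cut-off $f\phi(\cdot/n)$, that $\cD_w$ is a graph-norm core.
\item It uses the estimate to see that $Vf_n$ is Cauchy, which yields \eqref{eq:dom.T.gn} and \eqref{gn.sep.main} on all of $\Dom(T_w)$.
\end{enumerate}
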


\section{Proofs and lemmas}
\label{sec:proofs}

\subsection{Form domain}

\begin{lemma}[Completeness of $\Vv_w$]\label{lem.cV.compl}
	Let Assumption~{\rm \ref{asm:main.LM}}~\ref{item:LM.reg} be satisfied. Then the space $\cV_w$ in~\eqref{def.big.cV} equipped with the scalar product~\eqref{cV.def} is a Hilbert space. Consequently, also $\cV_{w,0}$ as in~\eqref{cV.0.def} is a Hilbert space which is dense and boundedly embedded in $L^2_{w} (\Omega)$. 
\end{lemma}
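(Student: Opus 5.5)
We prove completeness of $\cV_w$ directly: take a Cauchy sequence and identify its limit in the weighted $L^2$ spaces. Let $(f_n) \subset \cV_w$ be Cauchy with respect to \eqref{cV.def}. Then $(f_n)$ is Cauchy in $L^2_w(\Omega)$, $(|V|^\frac12 f_n)$ is Cauchy in $L^2_w(\Omega)$, and $(P_1^\frac12 \nabla f_n)$ is Cauchy in $L^2_w(\Omega)^d$. By completeness of these spaces there are limits
\begin{equation}
	f_n \to f, \qquad |V|^\frac12 f_n \to g, \qquad P_1^\frac12 \nabla f_n \to G.
\end{equation}
Passing to an a.e.~convergent subsequence gives $g = |V|^\frac12 f$ a.e., since $|V|^\frac12$ is finite a.e.~by $V \in \Loneloc(\Omega)$.

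The main step is to show that $\nabla f \in \Loneloc(\Omega)^d$ and $P_1^\frac12 \nabla f = G$. We first check that $(\nabla f_n)$ is Cauchy in $\Loneloc(\Omega)^d$. For a compact $K \subset \Omega$, Cauchy--Schwarz together with $\inf_K w>0$ gives
\begin{equation}
	\int_K |\nabla (f_n - f_m)| \, \dd x \le \Big(\int_K |P_1^{-\frac12}|^2 \, \dd x\Big)^{\frac12} \Big(\inf_K w\Big)^{-\frac12} \|P_1^\frac12 \nabla (f_n-f_m)\|_w .
\end{equation}
Since $P_1$ is positive definite, $|P_1^{-\frac12}|^2 = |P_1^{-1}|$ pointwise, and this is locally integrable by the assumption $P_1^{-1} \in \Loneloc(\Omega)^{d\times d}$. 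So $\nabla f_n \to h$ in $\Loneloc(\Omega)^d$ for some $h$. In addition, $f_n \to f$ in $L^1_\loc(\Omega)$, because $w^{-1}$ is locally bounded. Passing to the limit in the distributional pairing $\langle \nabla f_n, \phi\rangle = -\langle f_n, \nabla \phi\rangle$ for $\phi \in \Coo(\Omega)$ yields $\nabla f = h \in \Loneloc(\Omega)^d$. Along a further subsequence $\nabla f_n \to \nabla f$ a.e., hence $P_1^\frac12 \nabla f_n \to P_1^\frac12 \nabla f$ a.e., so $G = P_1^\frac12 \nabla f$. Therefore $f \in \cV_w$ and $f_n \to f$ in $\cV_w$. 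This establishes that $\cV_w$ is a Hilbert space. The fact that \eqref{cV.def} is a genuine inner product is clear, since it dominates $\|\cdot\|_w^2$.

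The remaining claims follow quickly. $\cV_{w,0}$ is a closed subspace of a Hilbert space, hence itself a Hilbert space. The embedding into $L^2_w(\Omega)$ is bounded because $\|f\|_w \le \|f\|_{\cV_w}$. For density it remains to note that $\Coo(\Omega) \subset \cV_{w,0}$ is already dense in $L^2_w(\Omega)$. Indeed, $\Coo(\Omega) \subset \cV_w$ because $V$ and $P_1$ are locally integrable and $w$ is locally bounded, so $|V|^\frac12 f, |P_1^\frac12 \nabla f| \in L^2_w(\Omega)$ for $f \in \Coo(\Omega)$. Moreover, $\Coo(\Omega)$ is dense in $L^2_w(\Omega)$ since $w\,\dd x$ is a Radon measure with continuous positive density.

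The only delicate point is the identification of the gradient limit in the second paragraph. It is handled exactly by the local integrability of $P_1^{-1}$ and the local bounds on $w$ and $w^{-1}$.
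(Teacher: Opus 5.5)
Your proof is correct and follows essentially the same route as the paper. You take the three weighted $L^2$ limits of a Cauchy sequence and identify them using $|P_1^{-1/2}|^2=|P_1^{-1}|\in L^1_{\rm loc}(\Omega)$ together with local bounds on $w^{\pm1}$. The only difference is in how the limits are identified: the paper tests against $\varphi$ and $w^{-1}P_1^{-1/2}\psi$ in $\cD'(\Omega)$, while you show that $\nabla f_n$ is Cauchy in $L^1_{\rm loc}(\Omega)^d$ and then pass to a.e.-convergent subsequences, which is equally valid.
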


\begin{proof}
	Clearly, $\Vv_w$ is a pre-Hilbert space; its completeness is justified by standard arguments. Let   $\{f_n\}_n \subset \cV_w$ be a Cauchy sequence.  By~\eqref{cV.def} and the completeness of $L^2_{w} (\Omega)$ and $L^2_{w} (\Omega)^d$, there exist $f, g \in L^2_{w}(\Omega)$ and $h \in L^2_{w}(\Omega)^d$ such that
	\begin{equation}\label{cV.Cauchy}
		\|f_n - f \|_{w} \to 0, \qquad \||V|^\frac12 f_n - g \|_{w}  \to 0, \qquad \|P_1^\frac12 \nabla f_n- h \|_{w}\to 0,
	\end{equation}
	as $n\to\infty$. It is sufficient to show that 
	\begin{equation}\label{cV.lim}
		g = |V|^\frac12 f, \qquad \nabla f \in \Loneloc (\Omega)^d, \qquad h = P_1^\frac12 \nabla f.
	\end{equation}
	Indeed, together with~\eqref{cV.Cauchy} this implies that $f \in \cV_w$ with $\|f_n - f\|_{\cV_w} \to 0$ and the claimed completeness follows. 
	
	To prove~\eqref{cV.lim}, for arbitrary $\varphi \in\Coo(\Omega)$ we use the second limit in~\eqref{cV.Cauchy} to argue
	\begin{equation}\label{Cauchy.V.lim.1}
		\begin{aligned}
				\langle w g, \varphi\rangle_{\cD'(\Omega) \times \cD(\Omega)} = 		\langle g, \varphi \rangle_{w}  = \lim_{n \to \infty} \langle |V|^\frac12 f_n, \varphi \rangle_{w} & = \langle  f,  |V|^\frac12  \varphi \rangle_{w} \\
				& =  ( w|V|^\frac12 f, \varphi )_{\cD'(\Omega) \times \cD(\Omega)}.
		\end{aligned}
	\end{equation}
	The third equality above is justified since $|V|^\frac12  \varphi \in \Ltw$ by  $V \in L^1_{\rm loc}(\Omega)$ and $w \in L^\infty_{\rm loc}(\Omega)$. It follows that $w g = w |V|^\frac12 f$ in $\cD'(\Omega)$ and thus in $L^1_{\rm loc}(\Omega)$. As $w>0$, we arrive at $g= |V|^\frac12 f$.
	
	It remains to show that $\nabla f$ is a regular distribution which satisfies the last identity in~\eqref{cV.lim}. For any $\psi \in \Coo(\Omega)^d$, by the first limit in~\eqref{cV.Cauchy} and $w^{-1} \in L^\infty_{\loc} (\Omega)$ (which guarantees that $w^{-1}(\nabla \cdot \psi) \in L^2_w(\Omega)^d$) we conclude
	\begin{equation}\label{Cauchy.nabla.1}
		- \int_\Omega f \overline{(\nabla \cdot \psi)} \, \dd x  = - \lim_{n\to \infty} \langle f_n, w^{-1} (\nabla \cdot \psi) \rangle_{w} = \lim_{n \to \infty} \langle \nabla f_n, \psi \rangle,
	\end{equation}
	i.e.~we obtain convergence $\nabla f_n \to \nabla f$ in $\Dd'(\Omega)^d$. Since $P_1^{-1}$ is positive definite and locally integrable, one moreover has
	\begin{equation}
		x \mapsto \|P_1^{-\frac12}(x)\|_2 = \|P_1^{-1}(x)\|_2^\frac12 \in \LtlocOm
	\end{equation}
	and thus the third limit in~\eqref{cV.Cauchy} gives (where again $w^{-1} P_1^{-\frac12} \psi \in L^2_w(\Omega)^d$ follows from $w^{-1} \in L^\infty_{\loc} (\Omega)$)
	\begin{equation}\label{Cauchy.nabla.2}
		\lim_{n\to \infty} \langle P_1^\frac12 \nabla f_n, w^{-1} P_1^{-\frac12}  \psi \rangle_{w} 
		=  \langle h,w^{-1}  P_1^{-\frac12}  \psi \rangle_{w}  =  \int_\Omega \langle P_1^{-\frac12} h ,  \psi \rangle_{\C^d} \, \dd x.
	\end{equation}
	From~\eqref{Cauchy.nabla.1} and~\eqref{Cauchy.nabla.2}, it follows that $\nabla f = P_1^{-\frac12} h \in \Loneloc(\Omega)^d$ and the last identity in \eqref{cV.lim} is immediate. 
	
	Since $\Coo(\Omega) \subset \cV_{w}$ as $P_1$ and $V$ are locally integrable and $w \in L^\infty_{\loc} (\Omega)$, the space $\cV_{w,0}$ is a well-defined Hilbert space. Clearly, it is boundedly embedded in $L^2_{w} (\Omega)$ and the density claim follows from the density of $\Coo(\Omega)$ in $L^2_{w} (\Omega)$, see e.g.~\cite[Ex.~1.5.3~(c)]{BEH}.
\end{proof}

\begin{lemma}[Cores of $\mathbf t_w$]
	\label{lem:W.1.infty}
	Let Assumption {\rm\ref{asm:main.LM}}~\ref{item:LM.reg} be satisfied, let $\Vv_{w,0}$ be as in \eqref{cV.0.def} and denote
	\begin{equation}\label{W.1.inf.comp}
		W^{1,\infty}_{\rm comp} (\Omega) := \Big\{ f \in W^{1,\infty} (\Omega) \, : \, \operatorname{ess}\supp f ~ {\rm is ~compact~ in } ~ \Omega\Big \} .
	\end{equation}
	Then one has the dense inclusions
	\begin{equation}
		w^{-1} \Coo(\Omega) \subset W^{1,\infty}_{\rm comp} (\Omega) \subset \Vv_{w,0}.
	\end{equation}
\end{lemma}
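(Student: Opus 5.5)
The plan is to prove the two inclusions separately and then deduce density from the fact that $\Coo(\Omega)$ is dense in $\cV_{w,0}$ by definition \eqref{cV.0.def}.

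\emph{First inclusion.} Let $\varphi \in \Coo(\Omega)$. Since $w \in W^{1,\infty}_{\rm loc}(\Omega)$ is uniformly positive on $K:=\supp\varphi$, the function $w^{-1}$ is Lipschitz on a neighbourhood of $K$ with $\nabla(w^{-1}) = -w^{-2}\nabla w$. Hence $w^{-1}\varphi \in W^{1,\infty}(\Omega)$, and its essential support lies in $K$, so $w^{-1}\varphi \in W^{1,\infty}_{\rm comp}(\Omega)$.

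\emph{Second inclusion.} Take $f \in W^{1,\infty}_{\rm comp}(\Omega)$ with essential support in a compact set $K$, and fix a compact neighbourhood $K'\subset\Omega$ of $K$. Mollify to get $f_\eps := \rho_\eps * f \in \Coo(\Omega)$, supported in $K'$ for small $\eps$. Then:
\begin{itemize}
\item $\|f_\eps\|_{L^\infty}\le \|f\|_{L^\infty}$ and $\|\nabla f_\eps\|_{L^\infty}\le \|\nabla f\|_{L^\infty}$;
\item $f_\eps\to f$ and $\nabla f_\eps\to\nabla f$ a.e. along a subsequence (in fact in every $L^p_{\rm loc}$, $p<\infty$).
\end{itemize}
We then check convergence in $\cV_w$ term by term, using that $w\le C$ on $K'$:
\begin{equation}
\||V|^{\frac12}(f_\eps - f)\|_w^2 \le C\int_{K'} |V|\,|f_\eps - f|^2\,\dd x,
\qquad
\|P_1^{\frac12}\nabla(f_\eps - f)\|_w^2 \le C\int_{K'} |P_1|\,|\nabla f_\eps - \nabla f|^2\,\dd x .
\end{equation}
The integrands are dominated by $4\|f\|_{W^{1,\infty}}^2(|V|+|P_1|)\,\mathbf 1_{K'}$, which lies in $L^1$ because $V, P\in \Loneloc$ (and $|P_1|\le|P|$). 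Dominated convergence therefore gives $f_\eps\to f$ in $\cV_w$, so $f\in\cV_{w,0}$. This dominated-convergence step is the main point, since $V$ and $P_1$ are only locally integrable. Working with $L^\infty$ bounds and a.e. convergence (rather than $L^2$ convergence of gradients) is exactly what makes it go through.

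\emph{Density.} $W^{1,\infty}_{\rm comp}(\Omega)$ contains $\Coo(\Omega)$, which is dense in $\cV_{w,0}$, so the second inclusion is dense. For the first, given $\varphi\in\Coo(\Omega)$ we have $w\varphi\in W^{1,\infty}_{\rm comp}(\Omega)$. We would approximate $w\varphi$ in $\cV_{w,0}$ by $\psi_n\in\Coo(\Omega)$ using the mollification argument above, and then use $\psi_n = w^{-1}(w\psi_n)$.

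This needs a little care, because $w\psi_n$ is not smooth. The cleaner route is to mollify $w$: set $w_\eps := \rho_\eps * w$, which is smooth and positive near $\supp\varphi$, and use
\begin{equation}
w^{-1}(w_\eps\varphi)\in w^{-1}\Coo(\Omega).
\end{equation}
Then $w^{-1}w_\eps\varphi\to\varphi$ with uniformly bounded $W^{1,\infty}$ norms on a fixed compact set and a.e. convergence of the functions and their gradients along a subsequence. The same dominated-convergence argument gives convergence in $\cV_w$. Hence $w^{-1}\Coo(\Omega)$ has closure containing $\Coo(\Omega)$, and therefore equal to $\cV_{w,0}$. It is consequently dense in $W^{1,\infty}_{\rm comp}(\Omega)$ with respect to the $\cV_w$-norm.
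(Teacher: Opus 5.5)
Your proof is correct and takes essentially the same approach as the paper. The second inclusion uses mollification with uniform $W^{1,\infty}$ bounds, a fixed compact support and dominated convergence against the locally integrable $|V|$ and $|P_1|$. The density of $w^{-1}\Coo(\Omega)$ comes from dividing a smooth approximation of $w\varphi$ by $w$. The only cosmetic difference is that you use $w^{-1}(w_\eps\varphi)$ with $w_\eps=\rho_\eps*w$, whereas the paper uses $w^{-1}\big((\varphi w)*\rho_\eps\big)$; both close with the same dominated-convergence argument.
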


\begin{proof}
	The first inclusion is obvious since $w^{-1} \in W^{1,\infty}_{\loc}(\Omega)$. We prove the second one. Fix $f \in W^{1,\infty}_{\rm comp} (\Omega)$, then  $f \in \Vv_w$ since $P_1$ and $V$ are locally integrable and $w \in L^\infty_{\loc}(\Omega)$, see~\eqref{def.big.cV}. With $\eps>0$, let $\phi_\eps$ be a standard mollifier on $\R^d$, see~\cite[Def.\ 2.28]{Adams-2003}. For all $\eps\le \eps_0$ with $\eps_0$ small enough, we have 
	\begin{equation}
		f_\eps := f * \phi_\eps \in \Coo (\Omega), \qquad \supp f_\eps \subset \overline{B_{\eps_0} (\operatorname{ess} \supp f)} =: K \subset \Omega.
	\end{equation}
	Moreover, as $\eps \to 0$, we have $f_\eps\to f$ in $L^2 (\Omega)$ (and in $\Ltw$ since $w \in L^\infty_{\loc}(\Omega)$) and $\nabla f_\eps \to \nabla f$ in $L^2(\Omega)^d$, see~\cite[Thm.\ 2.29]{Adams-2003}. It further holds that
	\begin{equation}\label{moll.bdd}
		\|f_\eps\|_{L^\infty} \le \|f\|_{L^\infty}, \qquad \|\nabla f_\eps\|_{L^\infty} = \|\nabla f * \phi_\eps \|_{L^\infty} \le \|\nabla f\|_{L^\infty}.
	\end{equation}
	From the dominated convergence theorem, see e.g.~\cite[Thm.\ 1.50]{Adams-2003}, it follows that
	\begin{equation}
		\|P_1^\frac12 \nabla (f-f_\eps)\|_{w}^2 + \||V|^\frac12 (f-f_\eps) \|_{w}^2 + \|f-f_\eps \|_{w}^2  \to 0.
	\end{equation}
	Indeed, after extracting a subsequence such that $f_\eps$ and $\nabla f_\eps$ converge pointwise a.e., the existence of an integrable upper bound follows from \eqref{moll.bdd}, the uniform inclusion $\supp f_\eps \subset K$, $w\in L^\infty_{\loc}(\Omega)$ and the local integrability of $P_1$ and $V$.

	To see the density, we approximate a given $f \in \Coo(\Omega)$ by a sequence in $ w^{-1} \Coo(\Omega)$ with respect to the norm $\|\cdot \|_{\Vv_w}$. The claim  then follows by definition, see~\eqref{cV.0.def}. Let $\phi_\eps$, $\eps>0$, again be a standard mollifier and consider the functions 
	\begin{equation}
		f_\eps := \frac{(fw)* \phi_\eps}{w} \in w^{-1} \Coo(\Omega)
	\end{equation}
	(with compact support in a fixed neighbourhood of $\operatorname{ess} \supp f$ if $\eps$ is small enough). By $w^{-1} \in L^\infty_{\loc} (\Omega)$, we have that $f_\eps \to f$ in both $L^2(\Omega)$ and $\Ltw$ as $\eps \to 0$.  Moreover, again by the local boundedness of $w$, $w^{-1}$ and $\nabla w$ (which implies $\nabla (fw) \in L^2(\Omega)^d$), we also have 
	\begin{equation}
		\nabla f_\eps = \frac{\nabla ((fw)*\phi_\eps)}{w} - \frac{((fw)*\phi_\eps)\nabla w}{w^2} \to  \frac{\nabla(fw)}{w} - \frac{f\nabla w}{w} = \nabla f,
	\end{equation}
	where the convergence is in $L^2(\Omega)^d$. The justification of $f_\eps \to f$ in $\Vv_w$ now follows by dominated convergence similarly to the first part of the proof.
\end{proof}

\subsection{Weighted coercivity}

\begin{lemma}[Boundedness of $\mathbf t_w$] \label{lem:t.bdd}
Let Assumption~{\rm\ref{asm:main.LM}}~\ref{item:LM.reg}--\ref{item:asm.V.accr} and~\ref{asm:LM.w} be satisfied. The form $\mathbf t_w$ in~\eqref{t.def} is well-defined and bounded on ${\cV_{w,0}}$, see~\eqref{cV.0.def}.
\end{lemma}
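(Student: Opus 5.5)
The plan is to establish the estimate
\[
|\mathbf t_w(f,g)| \ls \|f\|_{\cV_w}\|g\|_{\cV_w}
\]
first for $f,g \in \Coo(\Omega)$. The form is then defined on all of $\cV_{w,0}$ as the unique continuous extension. Where the integrands make sense, this extension agrees with the expression in \eqref{t.def}; the right-hand sides used below are absolutely convergent for $f,g\in\cV_w$, so the extension is given by the same integrals.

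For $f,g\in\Coo(\Omega)$, expand $\nabla(gw) = w\nabla g + g\nabla w$. This gives
\[
\mathbf t_w(f,g) = \langle P\nabla f,\nabla g\rangle_w + \langle P\nabla f, g\,\tfrac{\nabla w}{w}\rangle_w + \langle Vf,g\rangle_w .
\]
The potential term is bounded by $\||V|^{1/2}f\|_w\||V|^{1/2}g\|_w$ via Cauchy--Schwarz.

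For the principal term, the sectoriality \eqref{asm:P.sect} gives a pointwise bound
\[
|\langle P(x)\xi,\eta\rangle_{\C^d}| \le (1+C_P)\,|P_1^{1/2}\xi|\,|P_1^{1/2}\eta|.
\]
To see this, write $P = P_1^{1/2}(I+\ii B)P_1^{1/2}$ with $B = P_1^{-1/2}P_2P_1^{-1/2}$ Hermitian. By \eqref{asm:P.sect} the numerical range of $B$ is bounded by $C_P$, and since $B$ is Hermitian its operator norm is at most $C_P$. Hence $|\langle P\nabla f,\nabla g\rangle_w| \le (1+C_P)\|P_1^{1/2}\nabla f\|_w\|P_1^{1/2}\nabla g\|_w$.

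For the mixed term, the same pointwise bound gives
\[
|\langle P\nabla f, g\nabla w/w\rangle_w| \le (1+C_P)\,\|P_1^{1/2}\nabla f\|_w\,\big\| |P_1^{1/2}\nabla w|\,w^{-1} g\big\|_w .
\]
Admissibility \eqref{asm:w.sep}, together with $V_1\le|V|$ and $|V_2|\le|V|$, yields $|P_1^{1/2}\nabla w|/w \le (\kappa_w+\tau_w)|V|^{1/2}+C_w$. Therefore the last factor is at most $(\kappa_w+\tau_w)\||V|^{1/2}g\|_w + C_w\|g\|_w$. Collecting the three terms gives the claimed bound with a constant depending only on $C_P,\kappa_w,\tau_w,C_w$.

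The main point needing care is well-definedness on $\cV_{w,0}$ beyond the density extension: the symbol $\nabla(gw)$ must make sense, and the identity for $\mathbf t_w$ must hold for $f,g\in\cV_{w,0}$. For $g\in\cV_w$, $\nabla g\in\Loneloc$ and $w\in W^{1,\infty}_{\rm loc}$, so the product rule gives $\nabla(gw)=w\nabla g+g\nabla w\in\Loneloc$. Each integrand in the expanded expression is then dominated by products of $\cV_w$-controlled $L^2_w$ functions, using the pointwise estimates above. So the expanded form is an absolutely convergent, bounded sesquilinear form on $\cV_w\times\cV_w$, and it coincides with the density extension from $\Coo(\Omega)$. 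I expect the pointwise matrix inequality to be the only slightly delicate step, because $P_1$ is merely positive definite a.e. and not uniformly elliptic. This is handled pointwise for a.e.\ $x$, where $P_1(x)$ is invertible.
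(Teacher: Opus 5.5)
Your proof is correct and follows essentially the same route as the paper. Both arguments use the pointwise bound $|P_1^{-1/2}PP_1^{-1/2}|\le 1+C_P$ coming from sectoriality, together with the admissibility of $w$, to control the term $\langle P\nabla f, g\nabla w\rangle$, and both show boundedness on the larger space $\cV_w$. The differences are minor: you estimate the sesquilinear form directly instead of bounding the quadratic form and then polarising, and you spell out the product rule $\nabla(gw)=w\nabla g+g\nabla w$ for well-definedness, which the paper leaves implicit.
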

\begin{proof}
	Observe first the following pointwise identity
	\begin{equation}\label{eq:sec.matrix}
		P = P_1 + \ii P_2 = P_1^\frac12 \left(I_{\C^d} + P_1^{-\frac12} P_2 P_1^{-\frac12} \right) P_1^\frac12.
	\end{equation}
	It is then easily seen from~\eqref{asm:P.sect} that (cf.~also~\cite[Thm.~VI.3.2]{Kato-1966})
	\begin{equation}\label{eq:P.bounds}
		\|P_1^{-\frac12} P_2 P_1^{-\frac12}\|_{L^\infty} \le C_P, \qquad  \| P_1^{-\frac12} P P_1^{-\frac12} \|_{L^\infty} \le 1+C_P.
	\end{equation}

	We show that $\mathbf t_w$ is well-defined and bounded on the larger space $\cV_w$, see~\eqref{def.big.cV} and~\eqref{cV.def}. By a polarisation argument, see e.g.~\cite[Lem.~IV.2.1]{EE}, it is sufficient to show the boundedness of the quadratic form. For arbitrary $f \in \cV_{w}$ one computes
\begin{equation}\label{P.term.1}
		\langle P \nabla f, \nabla (f w) \rangle  =\|P_1^\frac12 \nabla f\|_{w}^2 + \ii \langle P_2  \nabla f, \nabla f \rangle_{w}  + \langle P \nabla f, f\nabla w  \rangle.
\end{equation}
Using \eqref{asm:P.sect}, we have 
\begin{equation}\label{P.term.0}
|\langle  P_2 \nabla f,  \nabla f \rangle_{w} | \leq C_P \| P_1^\frac12 \nabla f\|_{w}^2 .
\end{equation}
Moreover, by~\eqref{eq:P.bounds},~\eqref{asm:w.sep}, Cauchy--Schwarz' and Young's inequalities, we obtain
\begin{equation}\label{P.term.2}
	\begin{aligned}
		|\iprod{ P \nabla f}{f\nabla w}| 
		& \le \|P_1^{-\frac12} P P_1^{-\frac12} \|_{L^\infty} \iprod{  |P_1^\frac{1}{2} \nabla f|}{|f| |P_1^{\frac12} \nabla w| } \\ 
		& \ls \iprod{ |P_1^\frac{1}{2} \nabla f|}{(\kappa_w V_1^\frac12 + \tau_w |V_2|^\frac12 + C_w) |f|}_{w} \\
		&  \lesssim  \| P_1^\frac{1}{2} \nabla f \|_{w}^2 +  \| V_1^\frac12 f\|_{w}^2 +  \||V_2|^\frac12 f\|_{w}^2 + \|f\|_{w}^2 \ls \|f\|_{\Vv_w}^2.
	\end{aligned}
\end{equation}
From \eqref{P.term.1}, \eqref{P.term.0} and \eqref{P.term.2} we see that
\begin{equation}
	|\langle P \nabla f, \nabla (f w) \rangle| \ls \|f\|_{\Vv_w}^2.
\end{equation} 
The boundedness of $\mathbf t_w$ on ${\cV_{w}}$ finally follows from
\begin{equation*}
|\langle  V f,  f \rangle_{w} | \leq \| |V|^\frac12 f\|_{w}^2 \le \|f\|_{\cV_{w}}^2. \qedhere
\end{equation*}
\end{proof}

\begin{lemma}[Boundedness of $\Phi$ on ${\cV_{w,0}}$]\label{lem:Phi.bdd}
Let Assumption~{\rm\ref{asm:main.LM}}~\ref{item:LM.reg}--\ref{asm:LM.nabla.Phi} be satisfied. Then the multiplication by $\Phi$ is a bounded operator on ${\cV_{w,0}}$, see~\eqref{cV.0.def}. 
\end{lemma}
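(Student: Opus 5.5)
The plan is to show that multiplication by $\Phi$ is bounded on the dense subspace $\Coo(\Omega)$ with respect to $\|\cdot\|_{\cV_w}$, and then to extend it by continuity. There is one catch: $\Phi f$ with $f\in\Coo(\Omega)$ need not be smooth. So the first step is to check that $\Phi f \in \cV_{w,0}$, and not merely $\Phi f\in\cV_w$, before the closure argument can be run.

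\emph{The estimate.} Fix $f \in \Coo(\Omega)$. Since $|\Phi|\le 1$, the two zero-order terms are immediate:
\[
\||V|^{\frac12}\Phi f\|_w \le \||V|^{\frac12} f\|_w, \qquad \|\Phi f\|_w\le \|f\|_w .
\]
For the gradient, $\nabla(\Phi f) = \Phi\nabla f + f\nabla\Phi$, which lies in $\Loneloc(\Omega)^d$ because $\nabla\Phi\in\Loneloc$ and $f$ is bounded with compact support. Applying \eqref{asm:Phi.sep} with, say, $\eps=1$ gives
\[
\|P_1^{\frac12}\nabla(\Phi f)\|_w \le \|P_1^{\frac12}\nabla f\|_w + \|(|V|^{\frac12}+C_{\Phi,1})f\|_w \lesssim \|f\|_{\cV_w}.
\]
Together these yield $\Phi f\in\cV_w$ and $\|\Phi f\|_{\cV_w}\lesssim \|f\|_{\cV_w}$ for every $f\in\Coo(\Omega)$.

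\emph{Membership in $\cV_{w,0}$.} I expect this to be the main obstacle, because $\Phi$ is only assumed to satisfy $\nabla\Phi\in\Loneloc$, so $\Phi f$ is merely compactly supported with an integrable weighted gradient. The idea is to mollify: set $g:=\Phi f$ and $g_\eps := g*\phi_\eps\in\Coo(\Omega)$, with supports in a fixed compact $K\subset\Omega$. Then $|g_\eps|\le\|f\|_{L^\infty}$, so the $L^2_w$ and $|V|^{1/2}$-weighted parts converge by dominated convergence, exactly as in the proof of Lemma~\ref{lem:W.1.infty}.

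The delicate term is $\|P_1^{1/2}\nabla(g-g_\eps)\|_w$, since $P_1$ is merely $\Loneloc$ and $\nabla g$ is not bounded. To handle it I would proceed in two stages.
\begin{itemize}
\item First reduce to bounded $\Phi$-gradients by truncating or approximating $\Phi$. One option is $\Phi_n:=\Phi*\phi_{1/n}$ locally on $K$; another is to use $\Phi_n\to\Phi$ together with the convexity of the form.
\item Alternatively, use the general fact that the bounded set $\{\Phi f_n\}$, for approximants in $\Coo(\Omega)$, lies in the Hilbert space $\cV_{w,0}$. It then has a weakly convergent subsequence, whose limit can be identified with $\Phi f$ through convergence in $\Dd'$. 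A weakly closed subspace contains the limit, so $\Phi f\in\cV_{w,0}$.
\end{itemize}
I would pursue the weak-compactness route first. Concretely, I would approximate $\Phi$ by $\Phi_n := \Phi*\phi_{1/n}\in C^\infty$ on a neighbourhood of $\supp f$. Then $\Phi_n f \in \Coo(\Omega)\subset\cV_{w,0}$, and the aim is a uniform bound on $\|\Phi_n f\|_{\cV_w}$. That bound needs $\int|P_1^{1/2}\nabla\Phi_n|^2|f|^2 w$ to stay bounded. The difficulty is that mollification does not commute with the pointwise bound \eqref{asm:Phi.sep} when $P_1$ and $V$ are rough. If this fails, the fallback is to replace $\Coo(\Omega)$ by the core $W^{1,\infty}_{\rm comp}(\Omega)$ from Lemma~\ref{lem:W.1.infty}, and to argue directly that the closure of $\{\Phi f\}$ lies in the $\cV_w$-closure through weak limits identified in $\Dd'(\Omega)$.

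\emph{Extension.} Once $\Phi f\in\cV_{w,0}$ is established for $f$ in a dense set, the estimate above extends by continuity. For general $f\in\cV_{w,0}$, take $f_n\to f$ in $\cV_w$. The sequence $\Phi f_n$ is then Cauchy in $\cV_{w,0}$, and its limit coincides with $\Phi f$, since $\Phi f_n\to\Phi f$ in $L^2_w$. Hence multiplication by $\Phi$ is a bounded operator on $\cV_{w,0}$.
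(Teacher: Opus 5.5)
Your estimate on $\Coo(\Omega)$ is the same computation the paper does. The paper runs it for arbitrary $f\in\cV_w$ (product rule, $|\Phi|\le1$ and \eqref{asm:Phi.sep}), so it proves $\|\Phi f\|_{\cV_w}\ls\|f\|_{\cV_w}$ on the larger space. It then simply restricts to $\cV_{w,0}$ and never discusses whether $\Phi\cV_{w,0}\subset\cV_{w,0}$. You correctly isolate that invariance as the real issue, but your proposal leaves it open, and none of your routes works:
\begin{itemize}
\item The weak-compactness argument presupposes $\Phi f_n\in\cV_{w,0}$ for $f_n\in\Coo(\Omega)$, which is exactly what has to be shown.
\item For $\Phi_n=\Phi*\phi_{1/n}$, nothing controls $\|fP_1^{1/2}\nabla\Phi_n\|_w$ uniformly, because mollification spreads $\nabla\Phi$ into regions where $P_1$ may be large.
\item $\Phi f\notin W^{1,\infty}_{\rm comp}(\Omega)$, since $\nabla\Phi$ is only in $\Loneloc$, so Lemma~\ref{lem:W.1.infty} does not apply.
\end{itemize}

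In fact this step cannot be closed under Assumption~\ref{asm:main.LM}~\ref{item:LM.reg}--\ref{asm:LM.nabla.Phi} alone, because it can fail (a Zhikov-type ``$H\neq W$'' effect). Take the following data:
\begin{itemize}
\item $d=2$, $\Omega=B_1(0)$, $w=1$, $0<\delta<\alpha<2$, and $S_\pm:=\{x\in\Omega:\pm x_2>|x_1|\}$;
\item $P=\omega I_{\C^2}$ with $\omega=|x|^{-\alpha}$ on $S_+\cup S_-$ and $\omega=|x|^{\alpha}$ elsewhere;
\item $V=|x|^{\alpha-2-\delta}$;
\item $\Phi:\Omega\to[-1,1]$ depending smoothly on the polar angle only, with $\Phi=\pm1$ on $S_\pm$.
\end{itemize}
Then (i)--(iv) hold: $\omega,\omega^{-1},V\in\Loneloc$ and $|\nabla\Phi|\ls|x|^{-1}$. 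Moreover $|P_1^{1/2}\nabla\Phi|\ls|x|^{\alpha/2-1}$ vanishes on $S_\pm$ and is $o(V^{1/2})$ at $0$.

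Now consider the functional
\[
\ell(h):=\int_{S_+}\frac{x\cdot\nabla h}{|x|^2}\,\dd x-\int_{S_-}\frac{x\cdot\nabla h}{|x|^2}\,\dd x .
\]
It is bounded on $\cV_w$, because $\int_{S_+\cup S_-}\omega^{-1}|x|^{-2}\,\dd x<\infty$. It vanishes on $\Coo(\Omega)$: integrating along rays, the two sectors give $-\frac\pi2\varphi(0)$ and $+\frac\pi2\varphi(0)$. Hence it vanishes on $\cV_{w,0}$. But for $\chi\in\Coo(\Omega)$ with $\chi(0)=1$ one finds $\ell(\Phi\chi)=-\pi$, so $\Phi\chi\in\cV_w\setminus\cV_{w,0}$.

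What is provable in this generality is $\Phi\in\cB(\cV_w)$, hence $\Phi\in\cB(\cV_{w,0},\cV_w)$; this is all the paper's computation actually gives. Invariance of $\cV_{w,0}$ needs extra regularity, e.g.\ $P_1,P_1^{-1}\in L^\infty_{\rm loc}(\Omega)$, as under Assumption~\ref{asm:main.gn}. In that case $\Phi f$, for $f\in\Coo(\Omega)$, is bounded, compactly supported and in $W^{1,2}$. Plain mollification as in Lemma~\ref{lem:W.1.infty} then puts it in $\cV_{w,0}$, and your extension step finishes the proof.
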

\begin{proof}
We show the claimed boundedness on the larger space $\cV_w$, see~\eqref{def.big.cV} and \eqref{cV.def}. Since $|\Phi| \leq 1$, for all $f \in \cV_{w}$ one has
\begin{equation}\label{Phi.bdd.1}
	\||V|^\frac12 \Phi f \|_{w} + 	\| \Phi f \|_{w} \le \|f\|_{\cV_{w}}
\end{equation}
and moreover, employing \eqref{asm:Phi.sep} in the second step,
\begin{equation}\label{Phi.bdd.2}
	\begin{aligned}
		\|  P_1^\frac12 \nabla (\Phi f)\|_{w}  & \leq 	\|  P_1^\frac12 \nabla f\|_{w}  + \|  f P_1^\frac12 \nabla \Phi \|_{w}\\
		 & \ls \|  P_1^\frac12 \nabla f\|_{w} + \||V|^\frac12 f\|_w + \|f\|_w \ls \|f\|_{\Vv_w}.
	\end{aligned}
\end{equation}
The claim follows by combining the above estimates.
\end{proof}

\begin{proposition}[Generalised weighted coercivity of $\mathbf t_w$]
	\label{prop:g.coer}
Let Assumption~{\rm\ref{asm:main.LM}} hold with~\eqref{eq:rem.rel.bdd.eps} replaced by~\eqref{eq:rem.rel.bdd} and let $\beta$ be as in~\eqref{ellipse.gen.beta} and~\eqref{ellipse.gen}. Then there exist $m_1,m_2,\gamma_1,\gamma_2 >0$,  depending (continuously) only on $\beta$ and the constants $\eps$, $\kappa_w$, $\tau_w$, $\vartheta_R$, $\kappa_R$, $\tau_R$, $C_{\Phi,\eps}$, $C_w$, $C_R$ and $C_P$, such that, for all $f \in \Coo(\Omega)$,
\begin{equation}\label{t.gcoer.1}
	\begin{aligned}
		& \Re \mathbf t_w(f,f) + \Im \mathbf t_w(\beta \Phi f,f) + \gamma_1 \| f\|_{w}^2  \geq m_1 \|f\|^2_{\Vv_w}, \\
		& \Re \mathbf t_w(f,f) + \Im \mathbf t_w(f,\beta \Phi f) + \gamma_2 \| f\|_{w}^2 \geq m_2 \|f\|^2_{\Vv_w}. \\  
	\end{aligned}
\end{equation}
\end{proposition}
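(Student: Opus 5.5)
The plan is to expand both left-hand sides for $f\in\Coo(\Omega)$ into three kinds of terms: the principal quadratic terms, the terms containing $\nabla w$, and lower-order remainders. I then show that the principal and $\nabla w$ terms together form a positive definite quadratic form in three norms, and that condition~\eqref{ellipse.gen} is exactly the positivity condition. Write
\begin{equation*}
X:=\|P_1^\frac12\nabla f\|_w,\qquad Y:=\|V_1^\frac12 f\|_w,\qquad Z:=\||V_2|^\frac12 f\|_w .
\end{equation*}
Since $|V|\le V_1+|V_2|$, we have $\|f\|_{\Vv_w}^2\le X^2+Y^2+Z^2+\|f\|_w^2$. It therefore suffices to bound the left-hand sides from below by $m(X^2+Y^2+Z^2)-\gamma\|f\|_w^2$.

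\textbf{Expansion.} As in~\eqref{P.term.1},
\begin{equation*}
\Re\mathbf t_w(f,f)=X^2+Y^2+\Re\langle P\nabla f,f\nabla w\rangle .
\end{equation*}
Using $\nabla(\Phi f)=\Phi\nabla f+f\nabla\Phi$, the first inequality involves
\begin{equation*}
\begin{aligned}
\Im\mathbf t_w(\beta\Phi f,f)&=\beta\langle \Phi V_2 f,f\rangle_w+\beta\langle \Phi P_2\nabla f,\nabla f\rangle_w+\beta\Im\langle P f\nabla\Phi,\nabla f\rangle_w\\
&\quad+\beta\Im\langle P\Phi\nabla f,f\nabla w\rangle+\beta\Im\langle Pf\nabla\Phi,f\nabla w\rangle .
\end{aligned}
\end{equation*}
For the second inequality, $\Im\mathbf t_w(f,\beta\Phi f)$ expands in the same way. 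Its potential term is $\beta\Im\langle Vf,\Phi f\rangle_w=\beta\langle\Phi V_2f,f\rangle_w$, which has the same sign, so both cases are handled identically.

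\textbf{Estimating the pieces.}
\begin{enumerate}[(a)]
\item By~\eqref{eq:phi.sgn.rem} and~\eqref{eq:rem.rel.bdd}, the potential term satisfies $\beta\langle\Phi V_2f,f\rangle_w\ge \beta Z^2-\beta\|R^\frac12 f\|_w^2\ge \beta(1-\tau_R)Z^2-\beta\vartheta_RX^2-\beta\kappa_RY^2-\beta C_R\|f\|_w^2$.
\item By~\eqref{asm:P.sect} and $|\Phi|\le1$, the $P_2$-term is at least $-\beta C_PX^2$.
\item By~\eqref{eq:P.bounds} and~\eqref{asm:Phi.sep}, the terms containing $\nabla\Phi$ are bounded by $(1+C_P)\,(\eps\||V|^\frac12 f\|_w+C_{\Phi,\eps}\|f\|_w)\,(X+\text{weight factor})$. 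For small $\eps$ these are absorbed into an arbitrarily small multiple of $X^2+Y^2+Z^2$ plus $C\|f\|_w^2$.
\item The two main weight terms are combined pointwise. For real $\Phi\in[-1,1]$ and $z:=\langle P\nabla f,\nabla w\rangle_{\C^d}\ov f$, one has $|\Re z+\beta\Phi\Im z|\le\sqrt{1+\beta^2}\,|z|$. With~\eqref{eq:P.bounds} and~\eqref{asm:w.sep}, their sum is bounded by
\begin{equation*}
\sqrt{1+\beta^2}\,(1+C_P)\,X\,(\kappa_wY+\tau_wZ)+\text{(terms with }C_w\text{)}.
\end{equation*}
The $C_w$ terms are again absorbed by Young's inequality.
\end{enumerate}

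\textbf{Positivity of the quadratic form.} Collecting (a)--(d), it remains to show that
\begin{equation*}
aX^2+bY^2+cZ^2-s\,X(\kappa_wY+\tau_wZ)
\end{equation*}
is positive definite, where
\begin{equation*}
a=1-\beta(C_P+\vartheta_R),\qquad b=1-\beta\kappa_R,\qquad c=\beta(1-\tau_R),\qquad s=\sqrt{1+\beta^2}\,(1+C_P).
\end{equation*}
By~\eqref{ellipse.gen.beta}, $a,b,c>0$. Minimising in $Y$ and $Z$, positivity is equivalent to
\begin{equation*}
s^2\Big(\frac{\kappa_w^2}{b}+\frac{\tau_w^2}{c}\Big)<4a .
\end{equation*}
Multiplying through by $bc$, this is exactly~\eqref{ellipse.gen}. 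Strictness of the inequality leaves a margin $m>0$. The lower bound then follows with $m_1=m/2$ after the small-$\eps$ absorptions above, and $\gamma_1$ collects the constants $C_R,C_{\Phi,\eps},C_w$. All constants depend continuously on the listed parameters.

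\textbf{Main obstacle.} The step I expect to be delicate is the bookkeeping in (d). Both weight contributions must be combined into the single factor $\sqrt{1+\beta^2}$ rather than bounded separately; bounding them separately would give $1+\beta$ and miss~\eqref{ellipse.gen}. Moreover, the cross terms from $\nabla\Phi$ and $C_w$ must consume only part of the strict margin. I would fix the margin first, then choose $\eps$ small enough, and only then set the $\gamma$'s.
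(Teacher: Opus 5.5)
Your proposal is correct and follows the paper's proof essentially step by step. It uses the same expansion, the same pointwise combination $\Re\langle(1-\ii\beta\Phi)P\nabla f,f\nabla w\rangle$ giving the factor $\sqrt{1+\beta^2}(1+C_P)$, the same treatment of the $R$, $P_2$ and $\nabla\Phi$ terms, and chooses $\eps,\delta$ small afterwards; the only cosmetic difference is that you check positivity of the limiting quadratic form by minimising in $Y,Z$ (a Schur complement), whereas the paper introduces Young parameters $\delta_1,\delta_2$ and shows in Lemma~\ref{lem:ellipse} that the resulting quadrant meets a half plane, which is the same condition~\eqref{ellipse.gen}.
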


\begin{proof}
First, taking the real part of~\eqref{t.def} with $f=g$ gives
\begin{equation} \label{re.est}
\Re \mathbf t_w(f,f) = \| P_1^\frac12 \nabla f\|_{w}^2 + \re \langle P \nabla f, f \nabla w \rangle + \|  V_1^\frac12f \|_{w}^2
\end{equation}
and we see that for the desired lower bound, the term in the middle of the right hand side needs to be controlled by the terms in $\|f\|_{\cV_w}^2$. Taking the imaginary part of~\eqref{t.def} with $(f,g) = (\Phi f,f)$, applying the product rule and~\eqref{eq:phi.sgn.rem} lead to
\begin{equation}\label{ImP.0}
	\begin{aligned}
		\im \mathbf t_w (\Phi f,f) & \geq \|  |V_2|^\frac 12 f\|_{w}^2  + \im \langle\Phi P  \nabla f , f \nabla w  \rangle  - \|R^\frac12 f\|_w^2 \\
		& \qquad  -  |\langle \Phi  P_2  \nabla f,  \nabla f \rangle_{w}|
		 - |\langle f P \nabla \Phi,  \nabla f \rangle_{w}| - |\langle f P \nabla \Phi , f \nabla w \rangle|.
	\end{aligned}
\end{equation}

While the last four terms on the right hand side will be estimated separately, the second term (times $\beta$) can be combined with the middle term in the right hand side of~\eqref{re.est}. Indeed, one  computes
\begin{equation}
	\begin{aligned} 
		\re \langle P \nabla f, f \nabla w \rangle +  \im \langle\beta\Phi P  \nabla f , f \nabla w \rangle & = \re \langle P \nabla f, f \nabla w \rangle - \re \left( \ii \langle\beta\Phi P  \nabla f , f \nabla w \rangle \right) \\
		& = \re \langle (1-\ii \beta \Phi) P \nabla f, f \nabla w \rangle. 
	\end{aligned}
\end{equation}
We can thus employ $|\Phi| \le 1$, inequalities~\eqref{eq:P.bounds} and~\eqref{asm:w.sep}, together with Cauchy--Schwarz' and Young's inequalities with $\delta_1, \delta_2, \delta >0$ (to be chosen suitably) to estimate the above as follows
\begin{equation}\label{real.im.est}
	\begin{aligned}
		& | \re \langle P \nabla f,  f \nabla w \rangle + \im \langle\beta \Phi P  \nabla f , f \nabla w \rangle| \\ 
		& \qquad \qquad  \le \|1-\ii \beta  \Phi\|_{L^\infty} \|P_1^{-\frac12} P P_1^{-\frac12} \|_{L^\infty} \langle |P_1^\frac12 \nabla f|, |f| |P_1^\frac12 \nabla w |  \rangle \\
		& \qquad \qquad  \le \sqrt{1+\beta^2} (1+C_P) \langle |P_1^\frac12 \nabla f|, (\kappa_w V_1^\frac12 + \tau_w |V_2|^\frac12 + C_w ) |f| \rangle_{w} \\
		& \qquad \qquad  \le \sqrt{1+\beta^2}(1+C_P)   \left( (\delta_1 + \delta_2 + \delta) \| P_1^\frac12 \nabla f\|^2_{w} + \frac{\kappa_w^2}{4\delta_1} \| V_1^\frac12 f \|_{w}^2 \right. \\
		& \qquad \qquad \qquad \qquad \qquad \qquad \qquad \qquad \left. + \frac{\tau_w^2}{4\delta_2} \| |V_2|^\frac12 f \|_{w}^2 + \frac{C_w^2}{4\delta} \| f \|_{w}^2  \right).
	\end{aligned}
\end{equation}

Next we estimate the remaining terms on the right hand side of~\eqref{ImP.0}. While the third term satisfies~\eqref{eq:rem.rel.bdd}, we deal with the fourth term by using $|\Phi|\le 1$ and~\eqref{asm:P.sect} to obtain
\begin{equation}\label{ImP.sec}
		|\langle \Phi P_2  \nabla f ,  \nabla f \rangle_{w}|
		\leq 	C_P \langle  P_1  \nabla f ,  \nabla f \rangle_{w} =  C_P \| P_1^\frac12 \nabla f\|_{w}^2.
\end{equation}
For the fifth term, we use~\eqref{eq:P.bounds} and~\eqref{asm:Phi.sep} together with $|V|^\frac12 \le V_1^\frac12 + |V_2|^\frac12$, as well as Cauchy--Schwarz' and Young's inequalities to arrive at 
\begin{equation}\label{Im.est}
	\begin{aligned}
		|\langle f P \nabla \Phi ,  \nabla f \rangle_{w}| & \le \|P_1^{-\frac12} P P_1^{-\frac12}\|_{L^\infty} \langle  |f| |P_1^{\frac12} \nabla \Phi|,  |P_1^\frac12 \nabla f| \rangle_{w} \\
		& \le (1+C_P) \langle   ( \eps V_1^\frac12 + \eps |V_2|^\frac12  + C_{\Phi,\eps}) |f|,  |P_1^\frac12 \nabla f|  \rangle_{w} \\
		& \le (1+C_P) \bigg( (\eps + \delta) \| P_1^\frac12 \nabla f\|^2_{w} + \frac\eps2 \| V_1^\frac12 f \|_{w}^2 \\
		& \qquad \qquad\qquad \qquad \qquad \quad + \frac\eps2 \| |V_2|^\frac12 f \|_{w}^2 + \frac{C_{\Phi,\eps}^2}{4\delta} \| f \|_{w}^2 \bigg)  .
	\end{aligned}
\end{equation}
The sixth term is estimated by means of~\eqref{eq:P.bounds},~\eqref{asm:Phi.sep} (again with $|V|^\frac12 \le V_1^\frac12 + |V_2|^\frac12$) and~\eqref{asm:w.sep}, Cauchy--Schwarz' and Young's inequalities as follows
\begin{equation}\label{Im.est.2}
\begin{aligned}
		& |\langle f P \nabla \Phi, f \nabla w \rangle| \\
		& \qquad  \le\|P_1^{-\frac12} P P_1^{-\frac12} \|_{L^\infty} \langle |f| |P_1^\frac12 \nabla \Phi|, |f| |P_1^{\frac12} \nabla w| \rangle \\
		&  \qquad \le (1+C_P) \langle (\eps V_1^\frac12 + \eps |V_2|^\frac12 + C_{\Phi,\eps})|f| ,  (\kappa_w V_1^\frac12 + \tau_w |V_2|^\frac12 + C_w)|f|\rangle_{w} \\
		& \qquad = (1+C_P) \, \bigg\{ \eps \kappa_w \| V_1^\frac12 f \|_{w}^2 + \eps \tau_w \| |V_2|^\frac12 f \|^2_{w} + C_{\Phi,\eps} C_w \|f\|_{w}^2 \\
		& \qquad \qquad  \qquad + \eps \left( \langle  V_1^\frac12 |f|, ( \tau_w |V_2|^\frac12 + C_w)|f|\rangle_{w} +  \langle  |V_2|^\frac12 |f|, (\kappa_w V_1^\frac12 + C_w)|f|\rangle_{w} \right)   \\
		& \qquad \qquad \qquad +  \langle C_{\Phi,\eps}  |f| ,  (\kappa_w V_1^\frac12 + \tau_w |V_2|^\frac12)|f|\rangle_{w} \bigg\}  \\
		& \qquad \le (1+C_P) \, \bigg\{ \eps \kappa_w \| V_1^\frac12 f \|_{w}^2 + \eps \tau_w \| |V_2|^\frac12 f \|^2_{w} + C_{\Phi,\eps} C_w \|f\|_{w}^2 \\
		& \qquad \qquad  \qquad + \eps \bigg( \bigg(1 + \frac{\kappa_w^2}2 \bigg) \|V_1^\frac12 f\|^2_{w}  + \bigg(1 + \frac{\tau_w^2}2 \bigg) \||V_2|^\frac12 f\|^2_{w}  +  C_w^2 \|f\|_{w}^2 \bigg) \\
		& \qquad \qquad \qquad + \delta \|V_1^\frac12 f\|^2_{w} + \delta \||V_2|^\frac12 f\|_{w}^2 + \frac{C_{\Phi,\eps}^2\kappa_w^2+ C_{\Phi,\eps}^2\tau_w^2}{4\delta}   \| f\|_{w}^2  \bigg\}. 
\end{aligned}
\end{equation}
The last inequality can be written as
\begin{equation}\label{ImP.2nd.2}
		|\langle f P \nabla \Phi, f \nabla w \rangle| \le  \eta_\kappa \| V_1^\frac12 f\|_{w}^2 + \eta_\tau \|| V_2|^\frac12 f\|_{w}^2 + C_{\eps,\delta}\|f\|_{w}^2 
\end{equation}
where we have set 
\begin{equation}\label{eta.5}
	\begin{aligned}
	\eta_\kappa & := (1+C_P) \left( \eps \left( 1 + \kappa_w + \frac{\kappa_w^2}{2} \right) + \delta \right),	\\
	\eta_\tau & := (1+C_P) \left( \eps \left( 1 + \tau_w + \frac{\tau_w^2}{2} \right) + \delta \right),	\\
	C_{\eps,\delta} & := (1+C_P) \left( C_{\Phi,\eps} C_w + \eps C_w^2 + \frac{C_{\Phi,\eps}^2\left(\kappa_w^2+\tau_w^2\right)}{4 \delta}\right). 
	\end{aligned}
\end{equation}
Note that $\eta_\kappa = \BigO(\eps+\delta)$ and $\eta_\tau = \BigO(\eps+\delta)$ as $\eps+\delta \to 0^+$.
\noeqref{real.im.est,ImP.sec,Im.est}
Combining the inequalities~\eqref{re.est}--\eqref{ImP.2nd.2} and~\eqref{eq:rem.rel.bdd}, we finally arrive at an estimate for the desired quantity which reads
\begin{equation}\label{gen.coer.final}
	\begin{aligned}
		& \re \mathbf t_w (f,f) + \im \mathbf t_w (\beta \Phi f,f) + \widetilde C_{\eps,\delta} \|f \|_{w}^2 \\
		& \qquad \qquad \quad  \ge \left( 1 - \eta_1 \right) \|P_1^\frac12 \nabla f \|_{w}^2  + (1- \eta_2) \| V_1^\frac 12 f \|_{w}^2 + (\beta - \eta_3) \| |V_2|^\frac12 f \|_{w}^2 
	\end{aligned}
\end{equation}
with the constants 
\begin{equation}\label{gcoer.mu.alph}
	\begin{aligned}
	\eta_1 & := \beta ( C_P + \vartheta_R) + (1+C_P) \sqrt{1+\beta^2} (\delta_1 + \delta_2)
	\\
	& \qquad \qquad \qquad \qquad \qquad + (1+C_P) \left( \beta \eps + (\sqrt{1+\beta^2} + \beta) \delta  \right), \\
	\eta_2 & := \beta \kappa_R +  \frac{ (1+C_P) \sqrt{1+\beta^2}\kappa_w^2}{4 \delta_1} + \beta \left( \frac{1+C_P }2 \eps+ \eta_\kappa \right),
	\\
	\eta_3 & := \beta \tau_R + \frac{ (1+C_P)  \sqrt{1+\beta^2} \tau_w^2}{4 \delta_2} 
	+ \beta \left( \frac{1+C_P }2 \eps+ \eta_\tau \right), 
	\\
	\widetilde C_{\eps,\delta} & := \beta C_R + (1+C_P)   \frac{ \sqrt{1+\beta^2}C_w^2+ \beta C_{\Phi,\eps}^2}{4 \delta}  + \beta C_{\eps,\delta}. \\
	& 
	\end{aligned}
\end{equation}

Since $V_1+|V_2| \ge |V|$, it remains to explain that the parameters $\delta_1, \delta_2, \delta, \eps$ can be selected such that $\eta_1, \eta_2<1$ and $\eta_3<\beta$. Considering that the $\eta_i = \eta_i(\eps,\delta)$ are continuous in $\eps$ and $\delta$ and setting
\begin{equation}
	\label{eq:gcoer.mu}
	\begin{aligned}
		\mu_1 & := \eta_1(0,0) =  \beta ( C_P + \vartheta_R) + (1+C_P) \sqrt{1+\beta^2} (\delta_1 + \delta_2), \\
		\mu_2 &  := \eta_2(0,0) = \beta \kappa_R + \frac{  (1+C_P) \sqrt{1+\beta^2}\kappa_w^2}{4 \delta_1},  \\
		\mu_3 & := \eta_3(0,0) = \beta \tau_R +  \frac{(1+C_P)  \sqrt{1+\beta^2} \tau_w^2}{4 \delta_2},
	\end{aligned}
\end{equation}
this can be done if $\mu_1, \mu_2 <1 $ and $\mu_3 <\beta$. According to Lemma~\ref{lem:ellipse} below, however, the latter are equivalent to~\eqref{ellipse.gen.beta} and~\eqref{ellipse.gen} in the assumptions.
Hence, the constants in the first line of~\eqref{t.gcoer.1} can be chosen as
\begin{equation}\label{eq:m.gamma.1}
	m_1 := \min \{1-\eta_1, 1-\eta_2, \beta-\eta_3\} >0, \quad \gamma_1 := \widetilde C_{\eps,\delta} + m_1 >0.
\end{equation}
Their continuous dependence on the parameters therein is obvious.

To verify the second inequality in \eqref{t.gcoer.1}, we take the imaginary part of~\eqref{t.def} with $(f,g) = (f,\Phi f)$ and employ~\eqref{eq:phi.sgn.rem} to estimate 
\begin{equation}\label{ImP.2nd.0}
	\begin{aligned}
\Im \mathbf t_w(f, \Phi f) & 
\geq \|  |V_2|^\frac 12 f\|_{w}^2 + \im \langle P \nabla f, \Phi f \nabla w \rangle
 -  \|R^\frac12 f\|_w^2 \\
 &\qquad \qquad \qquad \qquad \quad  - |\langle   P_2  \nabla f,  \Phi \nabla f \rangle_{w}| - |\langle  P  \nabla f, f \nabla \Phi \rangle_{w}| .
 \end{aligned}
\end{equation}
The terms on the right hand side are analogous to the ones estimated in the previous part (with the exception of~\eqref{ImP.2nd.2} which does not appear here). 
Using~\eqref{re.est}, \eqref{ImP.2nd.0}, \eqref{real.im.est}, \eqref{eq:rem.rel.bdd}, \eqref{ImP.sec} and an  inequality analogous to~\eqref{Im.est}, we arrive at
\begin{equation} 
	\begin{aligned}
		& \re \mathbf t_w (f,f) + \im \mathbf t_w(f, \beta \Phi f) + \widehat C_{\eps,\delta} \|f\|_{w}^2 \\
		& \qquad \qquad \qquad   \ge \left( 1 - \eta_{1} \right) \| P_1^\frac12 \nabla f\|_{w}^2  + ( 1 - \widetilde \eta_{2} ) \|V_1^\frac12 f\|_{w}^2 + (\beta - \widetilde \eta_{3}) \| |V_2|^\frac12 f\|_{w}^2 \\
	\end{aligned}
\end{equation}
with $\eta_1$ as in~\eqref{gcoer.mu.alph} and the modified constants
\begin{equation}
	\begin{aligned}
		\widetilde \eta_2 & := \beta \kappa_R +   \frac{(1+C_P)\sqrt{1+\beta^2} \kappa_w^2}{4 \delta_1} +   \frac{\beta (1+C_P)}2\eps  && < \eta_2,
		\\
		\widetilde \eta_3 & := \beta \tau_R + \frac{(1+C_P) \sqrt{1+\beta^2}\tau_w^2}{4 \delta_2} + \frac{\beta (1+C_P)}2 \eps && < \eta_3, 
		\\
		\widehat C_{\eps,\delta} & := \beta C_R + (1+C_P)  \frac{\sqrt{1+\beta^2}C_w^2+ \beta C_{\Phi,\eps}^2}{4 \delta} && \le \widetilde C_{\eps,\delta}.
	\end{aligned}
\end{equation}
In fact, the above correspond to setting $\eta_\kappa = \eta_\tau = C_{\eps,\delta}=0$ in~\eqref{gcoer.mu.alph}. Analogously to the first part of the proof, it follows that one can choose the parameters  $\delta_1, \delta_2, \delta,\eps>0$ such that $\eta_1<1$, $\widetilde \eta_2<1$ and $\widetilde \eta_3<\beta$. Hence, there exist $m_2, \gamma_2> 0$ such that the second inequality in~\eqref{t.gcoer.1} holds (one can in particular choose $m_2:=m_1$ and $\gamma_2:=\gamma_1$ as above). The proof is complete.
\end{proof}

\begin{lemma}
	\label{lem:ellipse}
	Let $\kappa_w,\tau_w,\vartheta_R,\kappa_R,\tau_R,C_P\ge 0$ and let $\beta$ be as in \eqref{ellipse.gen.beta}. Then condition~\eqref{ellipse.gen} is equivalent to the existence of $\delta_1, \delta_2>0$ such that $\mu_1<1$, $\mu_2<1$ and $\mu_3 <\beta$, where $\mu_1$, $\mu_2$ and $\mu_3$ are defined as in~\eqref{eq:gcoer.mu}. 	
\end{lemma}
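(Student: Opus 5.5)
The plan is to rewrite each of the three inequalities $\mu_1<1$, $\mu_2<1$, $\mu_3<\beta$ as a condition on $\delta_1$ and $\delta_2$ alone. This is possible because $\mu_1$ is affine in $\delta_1+\delta_2$, while $\mu_2$ and $\mu_3$ each depend on only one of $\delta_1,\delta_2$, through $1/\delta_1$ and $1/\delta_2$ respectively. Throughout, abbreviate
\begin{equation}
	A := (1+C_P)\sqrt{1+\beta^2} > 0 .
\end{equation}
By \eqref{ellipse.gen.beta}, $1-\beta(C_P+\vartheta_R)>0$ and $1-\beta\kappa_R>0$. Also $1-\tau_R>0$ since $\tau_R<1$, and $\beta>0$. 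All divisions below are therefore by positive quantities and preserve the direction of the inequalities.

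\emph{Step 1 (reformulation).} From \eqref{eq:gcoer.mu} we get three equivalent conditions:
\begin{equation}
	\mu_1<1 \iff \delta_1+\delta_2 < D := \frac{1-\beta(C_P+\vartheta_R)}{A},
\end{equation}
\begin{equation}
	\mu_2<1 \iff \delta_1 > a := \frac{A\kappa_w^2}{4(1-\beta\kappa_R)},
\end{equation}
\begin{equation}
	\mu_3<\beta \iff \delta_2 > b := \frac{A\tau_w^2}{4\beta(1-\tau_R)} .
\end{equation}
For instance, $\mu_3<\beta$ reads $A\tau_w^2/(4\delta_2) < \beta(1-\tau_R)$, and this is the third line. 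Here $D>0$ and $a,b\ge 0$.

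\emph{Step 2 (existence).} We claim that positive $\delta_1,\delta_2$ with $\delta_1>a$, $\delta_2>b$ and $\delta_1+\delta_2<D$ exist if and only if $a+b<D$.
\begin{itemize}
	\item Necessity: adding the two lower bounds gives $a+b<\delta_1+\delta_2<D$.
	\item Sufficiency: put $s:=(D-a-b)/3>0$ and take $\delta_1=a+s$, $\delta_2=b+s$. Both are strictly positive even when $\kappa_w=0$ or $\tau_w=0$, and $\delta_1+\delta_2 = a+b+2s < D$.
\end{itemize}
The degenerate cases $a=0$ or $b=0$ are therefore harmless, which is why $s$ is chosen strictly positive rather than $\delta_i=a,b$.

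\emph{Step 3 (identification with \eqref{ellipse.gen}).} Multiply the inequality $a+b<D$ by the positive number $4\beta(1-\beta\kappa_R)(1-\tau_R)/A$. This gives
\begin{equation}
	\beta\kappa_w^2(1-\tau_R) + \tau_w^2(1-\beta\kappa_R) < \frac{4\beta\,(1-\beta(C_P+\vartheta_R))(1-\beta\kappa_R)(1-\tau_R)}{A^2}.
\end{equation}
Since $A^2=(1+C_P)^2(1+\beta^2)$, this is exactly \eqref{ellipse.gen}. Every step is an equivalence, which proves the lemma.

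There is no genuine obstacle. The only points needing care are the signs of the factors one divides or multiplies by, which are guaranteed by \eqref{ellipse.gen.beta} and $\tau_R<1$, and the strict positivity of $\delta_1,\delta_2$ when $\kappa_w$ or $\tau_w$ vanishes.
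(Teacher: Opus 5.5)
Your proof is correct and follows the paper's argument: the three conditions become an open half-plane $\delta_1+\delta_2<D$ and an open quadrant $\delta_1>a$, $\delta_2>b$, whose intersection is nonempty exactly when the corner $(a,b)$ lies in the half-plane, and clearing denominators turns this into \eqref{ellipse.gen}. Your explicit choice $\delta_1=a+s$, $\delta_2=b+s$ and your use of the standing hypothesis $\tau_R<1$ from Remark~\ref{rem:gcoer} only spell out what the paper's geometric remark leaves implicit; that hypothesis is genuinely needed, although the lemma's statement omits it.
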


\begin{proof}
	The condition $\mu_1 <1$ can be written as
		\begin{equation}\label{delta.ell1}
		\delta_1 + \delta_2 < \frac{1-\beta(C_P + \vartheta_R)}{(1+C_P)\sqrt{1 + \beta^2}},
	\end{equation} 
	while $\mu_2<1$ and $\mu_3<\beta$ are equivalent to
	\begin{equation}\label{delta.ell2}
		\delta_1 > \frac{(1+C_P)\sqrt{1+\beta^2}\kappa_w^2}{4 (1-\beta\kappa_R)}, \qquad 
		\delta_2 > \frac{(1+C_P)\sqrt{1+\beta^2}\tau_w^2}{4\beta (1-\tau_R)}.
	\end{equation}
	While~\eqref{delta.ell2} describes an open quadrant in the $(\delta_1,\delta_2)$-plane, \eqref{delta.ell1} is the equation of an open half plane. Their intersection is non-empty if and only if the corner of the quadrant lies in the half plane, which is precisely the condition~\eqref{ellipse.gen}.
\end{proof}

\begin{remark}
	\label{rem:ecrit}
	
	The value of the critical constant $\ec$ in~\eqref{asm:Phi.sep}, see Remark~\ref{rem:gcoer}~\ref{item.ecrit}, can be determined from the proof of Proposition~\ref{prop:g.coer}. The arising restrictions on $\eps$ amount to 
 	\begin{equation}\label{delta.small.2}
 	\begin{aligned}
 		\eps & < (1-\mu_1) \frac1{\beta(1+C_P)}  , \\
 		\eps & < (1-\mu_2)\frac{2}{\beta (1+C_P)(3 + 2 \kappa_w + \kappa_w^2)} , \\
 		\eps  &  < (\beta-\mu_3)\frac{2}{\beta (1+C_P)(3 + 2 \tau_w + \tau_w^2)},
 	\end{aligned}
 	\end{equation}
	where $\mu_1,\mu_2 \in (0,1)$ and $\mu_3 \in (0,\beta)$ are as in~\eqref{eq:gcoer.mu}. Indeed, this can be seen by setting $\delta=0$ in the constrains $\eta_1,\eta_2<1$ and $\eta_3<\beta$, see~\eqref{gcoer.mu.alph}. The critical value $\ec$ can be obtained by maximising the above constraints with respect to the admissible $\delta_1$, $\delta_2$ and $\beta$ as in~\eqref{ellipse.gen.beta},~\eqref{delta.ell1},~\eqref{delta.ell2}.
\end{remark}

\begin{proof}[Proof of Theorem~{\rm\ref{thm:t}}]	
	We first remark that, assuming~\eqref{eq:rem.rel.bdd.eps} together with the existence of $0<\beta<1/C_P$ satisfying~\eqref{ellipse} imply the more general condition~\eqref{eq:rem.rel.bdd} with arbitrarily small constants $\vartheta_R$, $\kappa_R$, $\tau_R$ and thus also~\eqref{ellipse.gen.beta},~\eqref{ellipse.gen}.	Hence, Proposition~\ref{prop:g.coer} applies under the present assumptions.
	
	Let $\mathbf t_w$ be as in~\eqref{t.def} and ${\cV_{w,0}}$ as in~\eqref{cV.0.def}. By Lemmas~\ref{lem.cV.compl} and~\ref{lem:t.bdd}, the form $\mathbf t_w$ is bounded on the Hilbert space $\Vv:= \Vv_{w,0}$, the latter being dense and boundedly embedded in $\Hh:=\Ltw$. By Lemma~\ref{lem:Phi.bdd} and since $|\Phi|\le1$, the multiplier
	\begin{equation}
		\Phi_1=\Phi_2:=\beta \Phi \in \Bb(\Vv_{w,0})
	\end{equation}
	extends to a bounded operator on $\Ltw$ and we see that Theorems~\ref{thm:lm.0} and~\ref{thm:lm.1} apply with 
	\begin{equation}
		\mathbf a := \mathbf t_w + 
		\gamma \|\cdot \|_w^2, \qquad \gamma:=\max \{\gamma_1,\gamma_2\}, \qquad  m:= \min\{m_1,m_2\};
	\end{equation}
	note hereby that by the continuity of $\mathbf t_w$ on $\Vv_{w,0}$ it is sufficient to verify~\eqref{lm.gen.coev} on the dense subspace $\Coo(\Omega)$, that we have \eqref{t.gcoer.1}, and that
		\begin{equation}
			|\mathbf a(f,f)| + |\mathbf a(\beta \Phi f,f)| \geq \Re \mathbf t_w(f,f) + \Im \mathbf t_w(\beta \Phi f, f) + \gamma \|f\|_{w}^2,
		\end{equation}
	and analogously for the second inequality in~\eqref{lm.gen.coev} and~\eqref{t.gcoer.1}. It follows from~\eqref{lm.op} that the operator $A$ in $\Ltw$ given by
	\begin{equation}\label{eq:A.descr}
		\begin{aligned}
			\Dom(A) & = \Big\{  f \in {\cV_{w,0}} \,: \, \exists~\eta_f \in L^2_{w} (\Omega), \,\, \forall~ g \in \cV_{w,0}, \, \,\mathbf t_w (f,g) = \langle \eta_f, g \rangle_{w} \Big\}, \\
			A f &= \eta_f,
		\end{aligned}
	\end{equation}
	is closed, has non-empty resolvent set and dense domain in both $\Vv_{w,0} $ and $\Ltw$
	(the independence on the shift $\gamma$ of the operator domain and action  are easy to justify).
	
	It remains to prove that $A=T_w$ as in~\eqref{T.descr}. To this end, notice first that $-\nabla \cdot (P\nabla) + V$ in the sense of distributions is well-defined on whole $\Vv_{w}$; this follows in a standard way from Assumption~\ref{asm:main.LM}~\ref{item:LM.reg} and~\ref{item:asm.P}, in particular from $w^{-1} \in L^\infty_{\loc}(\Omega)$, the local integrability of $P_1$ and $V$, as well as~\eqref{eq:P.bounds} due to the sectoriality of $P$.
	Fix $f \in \Vv_{w,0}$. By Lemma~\ref{lem:W.1.infty} and~\eqref{eq:A.descr},  $f \in \Dom (A)$ if and only if there exists $\eta_f \in \Ltw$ such that 
	\begin{equation}
		\forall ~w^{-1}\varphi \in w^{-1}\Coo(\Omega) \,\,\, : \,\,\, \mathbf t_w (f,w^{-1}\varphi ) = \langle \eta_f, w^{-1} \varphi \rangle_{w} = \langle \eta_f, \varphi \rangle.
	\end{equation}
	However, since it holds  that
	\begin{equation}
	\begin{aligned}
		\langle \eta_f, \varphi \rangle  =	\mathbf t_w (f,w^{-1} \varphi) &  =\langle P \nabla f, \nabla (ww^{-1}\varphi) \rangle + \langle V f, w^{-1} \varphi \rangle_w \\
		& =   \int_\Omega \langle P\nabla f, \nabla \varphi \rangle_{\C^d} \, \dd x + \int_\Omega Vf \ov \varphi \, \dd x 
		= ( T_w f, \varphi )_{\cD'(\Omega) \times \cD(\Omega)}
	\end{aligned}
	\end{equation}
	 for any $\varphi \in \Coo(\Omega)$,
	 we have that $f \in \Dom (A)$ if and only if the distribution $T_w f =-\nabla \cdot (P \nabla f) + Vf$ is regular, belongs to $\Ltw$ and is equal to $\eta_f$. Consequently, $A=T_w$ and all claims are proven.
\end{proof}

\begin{proof}[Proof of Corollary~{\rm\ref{cor:res.bd}}]
		Given an arbitrary $\delta\in (0,1/C_P)$, it is clear that one can choose $\tau_w, \kappa_w, \beta < \delta$ such that~\eqref{ellipse} holds. By Theorem~\ref{thm:t}, $T_w$ has non-empty resolvent set and is densely defined in $\cV_{w,0}$ and $\Ltw$.
		
		To see the resolvent bound, consider $\la=r \e^{\ii \varphi}$ with a fixed angle $\varphi\in (\pi/2, 3\pi/2)$ and $r>0$.
		Applying Proposition~\ref{prop:g.coer}, it then follows from the second inequality in~\eqref{t.gcoer.1} that, for all $f \in \Vv_{w,0}$, 
		\begin{equation}
			\begin{aligned}
				& \re (\mathbf t_w - \la \|\cdot\|_w^2) (f,f) + \beta \im (\mathbf t_w - \la \|\cdot\|_w^2) ( f, \Phi f) \\
				& \qquad \qquad \qquad \ge 
				m_2 \|f\|^2_{\cV_w} + \big(-\gamma_2 + r (|\cos \varphi| - \beta |\sin\varphi|)  \big) \|f\|_w^2.
			\end{aligned}
		\end{equation}
		Upon selecting $\kappa_w$, $\tau_w$ and $\beta$ small enough such that $|\cos \varphi| - \beta |\sin\varphi| >0$, applying Theorem \ref{thm:lm.0} to $\mathbf a := \mathbf t_w - \la \|\cdot\|_w^2$, it follows that 
		\begin{equation}
			\label{eq:T.hat.res}
			(\widehat T_w - \la \iota_w^* \iota_w )^{-1} \in \Bb(\cV_{w,0}^*,\cV_{w,0}), \qquad \iota_w:= I_{\Vv_{w,0} \to L^2_w(\Omega)},
		\end{equation}
		for $r$ large enough, which in turn gives $\la \in \rho(T_w)$ by Theorem \ref{thm:lm.1}. Whenever $f \in \Dom (T_w)$, using Cauchy-Schwarz' inequality we further derive
		\begin{equation}
			\begin{aligned}
			\|(T_w- \la) f\|_w \|f\|_w & \gs |\langle (T_w- \la) f, f\rangle_w| + \beta | \langle (T_w- \la) f, \Phi f\rangle_w| \\
				& = |(\mathbf t_w - \la) (f,f)| + \beta| (\mathbf t_w - \la) (f,\Phi f)| \gs r \|f\|_w^2
			\end{aligned}
		\end{equation}
		for sufficiently large $r$. This implies the claimed resolvent bound.
\end{proof}

\subsection{Assumptions on $\Phi$ in terms of $V$}

\begin{lemma}
	\label{lem:R}
	Let $R \in L^1(\R)$ with $\operatorname{ess} \supp R \subset [x_0,x_0]$ for some $x_0>0$, and let $0<w \in L^\infty_{\rm loc}(\R)$ such that $w^{-1} \in L^\infty_{\rm loc}(\R)$. Then for every $\eps >0$ there exists $C_\eps\ge 0$ such that
	\begin{equation}
		\|R^\frac12 f\|_w^2 \le \eps \|f'\|_w^2 + C_\eps \|f\|_w^2, \qquad f \in \Coo(\R).
	\end{equation}
\end{lemma}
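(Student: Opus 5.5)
The plan is to reduce the weighted estimate to an unweighted one on a fixed compact interval and then use the one-dimensional Sobolev embedding $W^{1,2} \hookrightarrow L^\infty$ with an $\eps$-dependent constant. (We read the support condition as $\operatorname{ess}\supp R \subset [-x_0,x_0]$.) Put $I:=[-x_0-1,x_0+1]$. By the hypotheses on $w$ there are constants $0<c_I\le C_I$ with $c_I \le w \le C_I$ a.e.\ on $I$. Since $R$ vanishes outside $[-x_0,x_0]$, for $f \in \Coo(\R)$ we get
\begin{equation}
	\|R^\frac12 f\|_w^2 = \int_{-x_0}^{x_0} R |f|^2 w \,\dd x \le C_I \|R\|_{L^1} \|f\|_{L^\infty(I)}^2 .
\end{equation}
It therefore suffices to bound $\|f\|_{L^\infty(I)}^2$ by $\eps'\|f'\|_{L^2(I)}^2 + C_{\eps'}\|f\|_{L^2(I)}^2$ for arbitrary $\eps'>0$. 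Both norms on the right are then dominated by $c_I^{-1}$ times the corresponding weighted norms over $\R$, and choosing $\eps' = \eps c_I/(C_I\|R\|_{L^1})$ gives the claim.

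For the interpolation inequality, fix $x \in [-x_0,x_0]$ and $\ell \in (0,1]$. For every $y$ with $|y-x|\le \ell$ we have $|f(x)| \le |f(y)| + \int_{x-\ell}^{x+\ell}|f'|$. Squaring, applying Cauchy--Schwarz to get $\big(\int_{x-\ell}^{x+\ell}|f'|\big)^2 \le 2\ell\|f'\|_{L^2(I)}^2$, and then averaging in $y$ over $[x-\ell,x+\ell]\subset I$ yields
\begin{equation}
	|f(x)|^2 \le 4\ell \|f'\|_{L^2(I)}^2 + \ell^{-1}\|f\|_{L^2(I)}^2 .
\end{equation}
Taking $\ell = \min\{1,\eps'/4\}$ finishes the argument. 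Only the values of $f$ on $[-x_0,x_0]$ enter, which is why we averaged over windows inside the slightly enlarged interval $I$.

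No step is a genuine obstacle. The only points needing care are that $L^1$-integrability of $R$ suffices precisely because $f$ is bounded pointwise in one dimension, and that the local two-sided bounds on $w$ are used only on the compact set $I$. No admissibility condition on $w$ is needed.
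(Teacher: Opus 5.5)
Your proof is correct and is essentially the paper's: bound $\|R^{1/2}f\|_w^2$ by the local sup of $w$ times $\|R\|_{L^1}$ times $\sup|f|^2$ on the support, then apply the one-dimensional interpolation inequality $\sup|g|^2 \le \eps\|g'\|^2 + C_\eps\|g\|^2$, then return to weighted norms via the local bound on $w^{-1}$. The paper localises with a cutoff $\eta$ and quotes that inequality on $\R$ for $\eta f$, absorbing the $\eta' f$ term; you instead prove a local version directly by averaging over windows inside $[-x_0-1,x_0+1]$. One cosmetic point: your pointwise bound only covers $x\in[-x_0,x_0]$, so in the first display you should write $\sup_{[-x_0,x_0]}|f|^2$ rather than $\|f\|_{L^\infty(I)}^2$; that is all the display actually needs.
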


\begin{proof}
	Consider $\eta \in C_0^\infty(\R)$ satisfying $0 \leq \eta \leq 1$, $\eta =1$ on $[-x_0,x_0]$ and $\eta = 0$ outside $(-2x_0, 2 x_0)$. It is well-known that for every $\eps>0$ there exists $C_\eps \ge0$ such that 
	\begin{equation}
		\|\eta f \|_{L^\infty}^2 \leq \eps \|(\eta f)'\|^2 + C_\eps\| \eta f\|^2, \qquad f \in C_0^\infty(\R).
	\end{equation}
	Hence we arrive at 
	\begin{equation}
		\begin{aligned}
			\|R^\frac12 f \|_w^2 &\leq \|w\|_{L^\infty(-x_0,x_0)} \|R\|_{L^1} \|\eta f\|^2_{L^\infty} \\
			& \le C_{w,R} \left(\eps \|(\eta f)'\|^2 + C_\eps \|\eta f\|^2 \right)
			\\
			&	
			\leq C_{w,R} \|w^{-1}\|_{L^\infty(-2x_0,2x_0)} \left(\eps \|(\eta f)'\|^2_w + C_\eps \|\eta f\|^2_w\right)
			\\
			& \leq \eps C_{w,R,\eta} \|f'\|_w^2 + C_{w,R,\eta,\eps} \|f\|_w^2
		\end{aligned}
	\end{equation}
	for all $f \in C_0^\infty(\R)$.
\end{proof}

\begin{proof}[Proof of Proposition~{\rm\ref{prop:Phi.AH-like}}]
	By construction $|\Phi| \leq 1$. Since
	\begin{equation}
		\begin{aligned}
			\nabla \Phi &=  \frac{\nabla V_{2,r}}{(1+V_{1,r}^2 + V_{2,r}^2)^\frac12} -
			\frac{V_{2,r}(V_{1,r} \nabla V_{1,r} + V_{2,r} \nabla V_{2,r}  )}{(1+V_{1,r}^2 + V_{2,r}^2)^\frac32} 
			\\
			& = \frac{(1+ V_{1,r}^2) \nabla V_{2,r} - V_{1,r} V_{2,r} \nabla V_{1,r}}{(1+V_{1,r}^2 +  V_{2,r}^2)^\frac32}
		\end{aligned}
	\end{equation} 
	and $\nabla V_{j,r} \in L^1_{\rm loc} (\Omega)^d$, it follows that $\nabla \Phi \in \Loneloc(\Omega)$. Moreover, we have from \eqref{V.nabla.asm}
	\begin{equation}
		|P_1^\frac 12\nabla \Phi| \leq 
		\frac{(1+ V_{1,r}^2) |P_1^\frac 12 \nabla V_{2,r}|  + V_{1,r}|V_{2,r}| | P_1^\frac 12 \nabla V_{1,r}|}{(1+V_{1,r}^2 + V_{2,r}^2)^\frac32}
		\leq 	\eps  |V|^\frac12 +  C_{\nabla,\eps}; 
	\end{equation}
	thus \eqref{asm:Phi.sep} is satisfied.
	Regarding \eqref{eq:phi.sgn.rem} and \eqref{eq:rem.rel.bdd}, we use $|\Phi|\le 1$ and estimate
	\begin{equation}
		\begin{aligned}
			\Phi V_2  = \Phi V_{2,r} + \Phi V_{2,s} & \geq \frac{V_{2,r}^2}{(1+V_{1,r}^2 + V_{2,r}^2)^\frac12} - |V_{2,s}|
			\\
			& \ge |V_{2,r}| - \frac{1 + V_{1,r}^2}{(1+V_{1,r}^2 + V_{2,r}^2)^\frac12} - |V_{2,s}| 
			\\
			& \geq 
			|V_{2}| - V_{1,r} -2 |V_{2,s}|-1. 
		\end{aligned}
	\end{equation}
	Hence, employing \eqref{V1.rs.delta} and \eqref{V2s.rel.bdd}, we obtain that  $R:=V_{1,r} +2 |V_{2,s}|+1$ satisfies \eqref{eq:rem.rel.bdd} with the claimed constants.
\end{proof}

\subsection{Boundedness of compositions}

\begin{lemma}[Extension property for comparable weights]
	\label{lem:ext}
	Let Assumption~{\rm\ref{asm:main.LM}}~\ref{item:LM.reg}--\ref{item:asm.V.accr} and~\ref{asm:LM.w} hold for two weights $w_1$ and $w_2$ satisfying $w_1 \le C w_2$ with some $C>0$. Let $T_{w_1}$ in $L^2_{w_1}(\Omega)$ and $T_{w_2}$ in $L^2_{w_2}(\Omega)$ be defined as in \eqref{T.descr} with the respective weights. Then $T_{w_1}$ extends $T_{w_2}$, more precisely,
	\begin{equation}
		T_{w_2} f = T_{w_1} f, \qquad f \in \Dom(T_{w_2}) \subset \Dom (T_{w_1}).
	\end{equation}
\end{lemma}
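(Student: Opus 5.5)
The plan is to verify the two defining requirements in \eqref{T.descr}: if $f \in \Dom(T_{w_2})$, then $f \in \cV_{w_1,0}$ and $-\nabla\cdot(P\nabla f)+Vf \in L^2_{w_1}(\Omega)$. Once both hold, $T_{w_1}f = T_{w_2}f$ is automatic, because both actions are the same distribution $-\nabla\cdot(P\nabla f)+Vf$ in $\cD'(\Omega)$, and this distribution does not depend on the weight.

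The second requirement is immediate. If $g:=T_{w_2}f\in L^2_{w_2}(\Omega)$, then
\[
\|g\|_{w_1}^2=\int_\Omega |g|^2 w_1\,\dd x\le C\int_\Omega |g|^2 w_2\,\dd x=C\|g\|_{w_2}^2<\infty .
\]
So the main step is the inclusion of form domains $\cV_{w_2,0}\subset\cV_{w_1,0}$.

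For the larger spaces, the same pointwise comparison $w_1\le Cw_2$ gives $\cV_{w_2}\subset\cV_{w_1}$ together with the norm bound
\[
\|h\|_{\cV_{w_1}}\le C^{1/2}\|h\|_{\cV_{w_2}}, \qquad h\in\cV_{w_2}.
\]
This holds because every term in \eqref{cV.def} is an integral against the weight, and the other conditions in \eqref{def.big.cV} ($\nabla h\in \Loneloc(\Omega)^d$, measurability) do not involve the weight. Now take $f\in\cV_{w_2,0}$. By definition \eqref{cV.0.def} there are $f_n\in\Coo(\Omega)$ with $f_n\to f$ in $\cV_{w_2}$. The norm bound makes $(f_n)$ Cauchy in $\cV_{w_1}$, and $\cV_{w_1}$ is complete by Lemma~\ref{lem.cV.compl}, so $f_n\to\tilde f$ in $\cV_{w_1}$ for some $\tilde f\in\cV_{w_1,0}$.

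It remains to check $\tilde f=f$. Convergence in $\cV_{w_j}$ implies convergence in $L^2_{w_j}(\Omega)$, and by local boundedness of $w_j^{-1}$ this gives convergence in $L^2_{\rm loc}(\Omega)$, hence in $\cD'(\Omega)$. Distributional limits are unique, so $\tilde f=f$ a.e. Therefore $f\in\cV_{w_1,0}$, and the proof is complete.

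The only point needing care is the identification of the two limits. The closures are taken in different Hilbert spaces, and local boundedness of $w_j^{-1}$ is exactly what lets both limits be compared as the same distribution. No part of the argument uses Assumption~\ref{asm:main.LM}~\ref{asm:LM.w} beyond the well-definedness of the operators via Lemma~\ref{lem:t.bdd}.
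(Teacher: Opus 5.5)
Your proof is correct and is the paper's argument with the details written out: the paper likewise observes $L^2_{w_2}(\Omega)\subset L^2_{w_1}(\Omega)$, $\cV_{w_2}\subset\cV_{w_1}$ and $\cV_{w_2,0}\subset\cV_{w_1,0}$, and then reads the claim off from \eqref{T.descr}. One small simplification: since $f\in\cV_{w_2}\subset\cV_{w_1}$, applying your norm bound to $f_n-f$ gives $f_n\to f$ in $\cV_{w_1}$ directly, so the completeness and limit-identification step is not needed.
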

\begin{proof}
	Clearly, the assumptions on $w_1$ and $w_2$ imply that $L_{w_2}^2(\Omega) \subset L_{w_1}^2(\Omega)$, and it is easy to see that also $\Vv_{w_2} \subset \Vv_{w_1}$ and $\Vv_{w_2,0} \subset \Vv_{w_1,0}$, see \eqref{def.big.cV} and \eqref{cV.0.def}. The claim then readily follows from \eqref{T.descr}.
\end{proof}

\begin{lemma}[Cut-off of admissible weight]
	\label{lem:w.cut-off}
	Let Assumption~{\rm\ref{asm:main.LM}}~\ref{item:LM.reg}, \ref{item:asm.V.accr} and~\ref{asm:LM.w} hold. Then for any $n \in \N$
	\begin{equation}\label{chi}
		w_n := \chi_n (w), \qquad \chi_n : [0,\infty) \to [0,\infty)  , \qquad \chi_n (x) := 
		\begin{cases}
			x, & x \in [0,n], \\
			n, & x \in (n, \infty),
		\end{cases}
	\end{equation}
	is an admissible weight. More precisely, $w_n \in W^{1,\infty}_{\rm loc} (\Omega) \cap L^\infty (\Omega)$ is uniformly positive on compact subsets of $\Omega$ and
	it satisfies \eqref{asm:w.sep} with the same constants $\kappa_w$, $\tau_w$ and $C_w$ as the original weight $w$. 
\end{lemma}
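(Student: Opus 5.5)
The statement concerns the truncated weight $w_n=\chi_n(w)=\min\{w,n\}$. The plan is to regard $w_n$ as the composition of the Lipschitz function $\chi_n$ with $w$ and apply the chain rule for Sobolev functions. Since $\chi_n$ is piecewise linear with Lipschitz constant $1$, and $w\in W^{1,\infty}_{\rm loc}(\Omega)$, the chain rule for Lipschitz truncations (e.g.\ $\min\{w,n\}=w-(w-n)^+$ together with the standard formula for $\nabla (u^+)$) gives $w_n\in W^{1,\infty}_{\rm loc}(\Omega)$. It also gives the a.e.\ identity
\begin{equation}
	\nabla w_n = \mathbf 1_{\{w<n\}}\nabla w ,
\end{equation}
where we use that $\nabla w=0$ a.e.\ on the level set $\{w=n\}$ (Stampacchia). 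Boundedness is immediate from $0<w_n\le n$, so $w_n\in L^\infty(\Omega)$.

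Uniform positivity on compact sets also follows directly. If $K\subset\Omega$ is compact and $w\ge c_K>0$ on $K$, then $w_n\ge \min\{c_K,n\}>0$ on $K$. Hence $w_n^{-1}$ is locally bounded as well, and $w_n^{-1}=\max\{w^{-1},1/n\}\in W^{1,\infty}_{\rm loc}(\Omega)$ by the same chain-rule argument.

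It remains to verify \eqref{asm:w.sep} for $w_n$ with the original constants. We split $\Omega$ a.e.\ into $\{w<n\}$ and $\{w\ge n\}$.
\begin{itemize}
	\item On $\{w<n\}$ we have $w_n=w$ and $\nabla w_n=\nabla w$. Therefore $|P_1^{1/2}\nabla w_n|/w_n=|P_1^{1/2}\nabla w|/w$, and \eqref{asm:w.sep} holds unchanged.
	\item On $\{w\ge n\}$ we have $\nabla w_n=0$ a.e. The left-hand side vanishes, and the right-hand side is nonnegative, so the inequality holds with the same $\kappa_w,\tau_w,C_w$.
\end{itemize}

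The only mildly delicate point is the justification of the chain rule, in particular that $\nabla w_n$ vanishes a.e.\ on $\{w=n\}$. This is the standard result for truncations of $W^{1,1}_{\rm loc}$ functions (e.g.\ Gilbarg--Trudinger, Lemma 7.6 and Lemma 7.7), applied locally. Everything else in the argument is pointwise.
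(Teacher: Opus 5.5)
Your proposal is correct and takes essentially the paper's route. The paper likewise writes the truncation through a positive part, $w_n = n-(n-w)_+$, and invokes the Sobolev chain rule for $u_+$ (Edmunds--Evans, Prop.~V.2.6), which gives $\nabla w_n = \nabla w$ a.e.\ on $\{w<n\}$ and $\nabla w_n = 0$ a.e.\ elsewhere; your case split for \eqref{asm:w.sep} and your check of local positivity and boundedness just spell out what the paper leaves implicit.
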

\begin{proof}
	The claims follow easily from the properties of $\chi_n$ and $w$, cf.~\cite[Prop.~V.2.6]{EE} and consider that $w_n = n - (n-w)_+$.
\end{proof}

\begin{lemma}[Construction of extension $G_{\la}$]
	\label{lem:bdd.ext}
	Let the assumptions of Theorem~{\rm\ref{thm:comp}} be satisfied. Let moreover  $T_w$, $\Vv_{w,0}$ and $\mathbf t_w$ be as in~\eqref{T.descr},~\eqref{cV.0.def} and~\eqref{t.def} and let $\widehat T_w \in \Bb(\Vv_{w,0},\Vv_{w,0}^*)$ be the corresponding distributional operator, cf.~\eqref{T.hat}. Then 
	\begin{equation}
		B_1 f:= b_1 \cdot \nabla f + c_1 f, \qquad f \in \Vv_{w,0},
	\end{equation}
	is a bounded operator from $\Vv_{w,0}$ to $L^2(\Omega)$ and  
	\begin{equation}\label{B2.def}
		(B_2 f, \varphi)_{\Vv_{w,0}^*\times \Vv_{w,0}} : = - \langle f b_2, \nabla (w \varphi) \rangle + \langle c_2 f , \varphi \rangle_{w}, \quad f \in L^2(\Omega), \quad \varphi \in \Vv_{w,0},
	\end{equation}
	is a bounded operator from $L^2(\Omega)$ to $\Vv_{w,0}^*$. Moreover, with $\iota_w$ as in~\eqref{eq:T.hat.res}, we have
	\begin{equation}\label{ext}
		G_\la:=  B_1 (\widehat T_{w} - \la \iota_w^*\iota_w)^{-1}  B_2 \in \cB (L^2(\Omega)), \qquad \lambda \in \rho(T_{w}).
	\end{equation}
\end{lemma}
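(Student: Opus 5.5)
The proof splits into three separate boundedness statements: one for $B_1$, one for $B_2$, and one for the middle factor. The composition $G_\la$ is then bounded as a product of bounded maps.

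\emph{Boundedness of $B_1$.} For $f \in \Vv_{w,0}$ we write pointwise
\begin{equation}
	|b_1 \cdot \nabla f| = |\langle P_1^{\frac12}\nabla f, P_1^{-\frac12}\ov b_1\rangle_{\C^d}| \le w^{-\frac12}|P_1^{-\frac12}\ov b_1| \cdot w^{\frac12}|P_1^{\frac12}\nabla f|.
\end{equation}
This uses that $P_1^{\pm\frac12}$ are self-adjoint. By \eqref{comp.asm} the first factor is in $L^\infty$, so $\|b_1\cdot\nabla f\| \ls \|P_1^{\frac12}\nabla f\|_w$. Similarly,
\begin{equation}
	|c_1 f| = \frac{w^{-\frac12}|c_1|}{(|V|+1)^{\frac12}}\,(|V|+1)^{\frac12} w^{\frac12}|f|,
\end{equation}
which gives $\|c_1 f\| \ls \||V|^{\frac12}f\|_w + \|f\|_w$. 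Hence $\|B_1 f\| \ls \|f\|_{\Vv_w}$.

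\emph{Boundedness of $B_2$.} First we check that \eqref{B2.def} is well defined for $\varphi\in\Vv_{w,0}$. We expand $\nabla(w\varphi) = w\nabla\varphi + \varphi\nabla w$. The gradient term is bounded by
\begin{equation}
	|\langle f b_2, w\nabla\varphi\rangle| \le \langle |f|\, w^{\frac12}|P_1^{-\frac12}b_2|,\, w^{\frac12}|P_1^{\frac12}\nabla\varphi|\rangle \ls \|f\|\,\|P_1^{\frac12}\nabla\varphi\|_w .
\end{equation}
For the term with $\nabla w$ we write $\varphi\nabla w = w\varphi\,(\nabla w/w)$ and move $P_1^{\frac12}$ across as before. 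Admissibility \eqref{asm:w.sep} gives $|P_1^{\frac12}\nabla w|/w \ls |V|^{\frac12}+1$, so
\begin{equation}
	|\langle f b_2, \varphi \nabla w\rangle| \ls \langle |f|\, w^{\frac12}|P_1^{-\frac12}b_2|,\, (|V|^{\frac12}+1)w^{\frac12}|\varphi|\rangle \ls \|f\|\,\|\varphi\|_{\Vv_w}.
\end{equation}
For the $c_2$ term, \eqref{comp.asm} gives
\begin{equation}
	|\langle c_2 f,\varphi\rangle_w| \le \Big\|\frac{w^{\frac12}c_2}{(|V|+1)^{\frac12}}\Big\|_{L^\infty}\|f\|\,\|(|V|+1)^{\frac12}\varphi\|_w .
\end{equation}
Thus $(B_2 f,\cdot)$ is a bounded antilinear functional on $\Vv_{w,0}$ with norm $\ls\|f\|$, so $B_2\in\Bb(L^2(\Omega),\Vv_{w,0}^*)$.

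\emph{The middle factor and conclusion.} For $\la\in\rho(T_w)$ we need $(\widehat T_w-\la\iota_w^*\iota_w)^{-1}\in\Bb(\Vv_{w,0}^*,\Vv_{w,0})$. This is the step requiring care. By the representation Theorems~\ref{thm:lm.0} and~\ref{thm:lm.1}, which are used via the generalised coercivity of Proposition~\ref{prop:g.coer}, invertibility of $\widehat T_w-\la\iota_w^*\iota_w$ between $\Vv_{w,0}$ and $\Vv_{w,0}^*$ is equivalent to $\la\in\rho(T_w)$. This equivalence is stated in Theorem~\ref{thm:lm.1} and was already used in \eqref{eq:T.hat.res}.

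The main obstacle is to make sure this equivalence holds for every $\la\in\rho(T_w)$, not only for the large $|\la|$ produced by coercivity. If Theorem~\ref{thm:lm.1} only gives it for the coercive shift, we fix some $\la_0$ for which the bounded inverse exists. We then use the resolvent-type identity
\begin{equation}
	(\widehat T_w-\la)^{-1} = (\widehat T_w-\la_0)^{-1} + (\la-\la_0)\,(T_w-\la)^{-1}\iota_w(\widehat T_w-\la_0)^{-1}.
\end{equation}
Here $\iota_w(\widehat T_w-\la_0)^{-1}$ maps $\Vv_{w,0}^*$ boundedly into $L^2_w(\Omega)$. It remains to see that $(T_w-\la)^{-1}$ maps $L^2_w(\Omega)$ boundedly into $\Vv_{w,0}$. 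This follows from the closed graph theorem, since $\Dom(T_w)\subset\Vv_{w,0}$ and $T_w$ is closed. Composing the three bounded maps gives $G_\la\in\Bb(L^2(\Omega))$.
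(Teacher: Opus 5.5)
Your proposal is correct and follows the paper's proof. The estimates for $B_1$ and $B_2$ via \eqref{comp.asm} and \eqref{asm:w.sep} are the same as in the paper. Your treatment of the middle factor (coercive shift from Proposition~\ref{prop:g.coer}, then the resolvent-type identity with $(T_w-\la)^{-1}\in\cB(L^2_w(\Omega),\cV_{w,0})$) is precisely the content of Lemma~\ref{lem:res.id}, which the paper simply invokes.

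One presentational point: rather than writing the identity for $(\widehat T_w-\la\iota_w^*\iota_w)^{-1}$, which presupposes that inverse exists, define its right-hand side as an operator $R\in\cB(\cV_{w,0}^*,\cV_{w,0})$. Then check that $R$ is a two-sided inverse of $\widehat T_w-\la\iota_w^*\iota_w$, which is a short computation using the resolvent identity for $T_w$.
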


\begin{proof}
	Let $\la \in \rho(T_w)$ be arbitrary. It follows from Proposition~\ref{prop:g.coer} that $\widehat T_w$ satisfies the assumptions of Lemma~\ref{lem:res.id} and we thus have 
	\begin{equation}
		(\widehat T_w - \la \iota_w^*\iota_w)^{-1} \in \Bb(\Vv_{w,0}^*,\Vv_{w,0}).
	\end{equation}
	We show below that $B_1$ and $B_2$ are well-defined and 
	\begin{equation}
		B_1 \in \Bb(\Vv_{w,0},L^2(\Omega)), \qquad B_2 \in \Bb(L^2(\Omega), \Vv_{w,0}^*).
	\end{equation} 
	Altogether, this implies that the composition $G_\la$ is everywhere defined and bounded on $L^2(\Omega)$.

	To verify the claims on $B_1$, let $f \in \Vv_{w,0} \subset \Vv_{w}$, see~\eqref{def.big.cV}. Using~\eqref{comp.asm}, it then follows that
	\begin{equation}
	\begin{aligned}
		\| B_1 f \|^2  & \lesssim \int_\Omega | \langle w^{-\frac12} P_1^{-\frac12} \overline b_1  , w^\frac12 P_1^\frac12 \nabla f \rangle_{\C^d}|^2 \, \dd x + \int_\Omega \frac{|c_1|^2}{w (|V|+1)} w (|V|+1) |f|^2 \, \dd x\\
		& \le \|w^{-\frac12} P_1^{-\frac12} \overline b_1\|^2_{L^\infty} \|P_1^\frac12 \nabla f\|_{w}^2 \\
		& \qquad \qquad \qquad  + \| c_1  w^{-\frac12} (|V|+1)^{-\frac12} \|_{L^\infty}^2 \|(|V|+1)^\frac12 f \|_{w}^2  \lesssim \|f\|^2_{\cV_{w}}.
	\end{aligned}
	\end{equation}
	For $B_2$, fix $f \in L^2(\Omega)$ and let $\varphi \in \Vv_{w,0}$ be arbitrary. Then, employing~\eqref{comp.asm} and~\eqref{asm:w.sep}, we conclude
	\begin{equation}\label{B2.bdd}
	\begin{aligned}
		& |(B_2 f, \varphi)_{\Vv_{w,0}^* \times \Vv_{w,0}}| \\
		& \qquad  \qquad \lesssim | \langle f P_1^{-\frac12}b_2 , \varphi P_1^\frac12 \nabla w \rangle | + |\langle f P_1^{-\frac12} b_2, P_1^\frac12 \nabla \varphi \rangle_w| + | \langle c_2 f, \varphi \rangle_{w} |\\
		& \qquad \qquad  \ls \|f\| \|w^{\frac12} P_1^{-\frac12} b_2\|_{L^\infty} \Big(  \|(|V| + 1)^\frac12 \varphi\|_w + \|P_1^\frac12 \nabla \varphi\|_w \Big) \\
		& \qquad \qquad \quad \qquad \qquad + \|f\| \|w^\frac12 (|V|+1)^{-\frac12} c_2\|_{L^\infty} \|(|V|+1)^{\frac12} \varphi \|_w  \lesssim \|f\|\|\varphi \|_{\cV_{w}}. \\
	\end{aligned}
	\end{equation}
	This gives $B_2f \in \Vv_{w,0}^*$ with $\|B_2 f\|_{\Vv_{w,0}^*} \ls \|f\|$, thus the proof is complete.
\end{proof}

\begin{proof}[Proof of Theorem~{\rm\ref{thm:comp}}]
	First note that clearly 
	\begin{equation}
		(T-\la)^{-1} : L^2(\Omega) \to \Vv_{1,0} , \qquad \la \in \rho(T).
	\end{equation}
	Moreover, the assumptions~\eqref{eq:ext.reg} and~\eqref{comp.asm} together with $\Vv_{1,0} \subset \Vv_1$, see \eqref{def.big.cV}, guarantee that one has the mapping properties
	\begin{equation}
		\begin{aligned}
			\widetilde B_1 & := b_1 \cdot \nabla + c_1 && \hspace{-3mm} : \, \Vv_{1,0} && \hspace{-3mm} \to \,  \Loneloc(\Omega),  \\
			\widetilde B_2 & := \nabla \cdot b_2 + c_2 && \hspace{-3mm} : \, \Coo(\Omega) && \hspace{-3mm} \to \, L^2(\Omega);
		\end{aligned}
	\end{equation}
	indeed the claim for $\wt B_2$ is immediate and for $\wt B_1$, we notice that for any $f \in \Vv_1$
	\begin{equation}
		b_1 \cdot \nabla f = w^\frac12 \langle P_1^\frac12 \nabla f , w^{-\frac12}P_1^{-\frac12 } \overline b_1\rangle_{\C^d}  \in \Loneloc(\Omega)
	\end{equation}
	since $w \in L^\infty_{\rm loc}(\Omega)$, which in particular implies that $\Ran(F_\la) \subset \Loneloc(\Omega)$.
	
	 With $G_\la$ defined as in~\eqref{ext}, we will conclude the proof by showing that 
	\begin{equation}\label{ext.G.la}
		F_{\la_0} = \widetilde B_1 (T-\la_0)^{-1} \widetilde B_2  \subset G_{\la_0}
	\end{equation}
	for some $ \la_0 \in \rho(T) \cap \rho(T_w)$, which in particular implies that $\Ran (F_{\la_0})$ lies in $L^2(\Omega)$.
	To prove~\eqref{ext.G.la}, we first define an auxiliary operator for the cut-off weight $w_1$, see Lemma~\ref{lem:w.cut-off}. 
	Then by Proposition~\ref{prop:g.coer}, one can choose $\la_0<0$ with $|\la_0|$ sufficiently large such that 
	\begin{equation}
		\la_0 \in \rho(T) \cap \rho(T_w) \cap \rho(T_{w_1}),
	\end{equation}
	cf.~Theorems~\ref{thm:lm.0} and \ref{thm:lm.1}. For a fixed $f \in \Coo(\Omega)$ and arbitrary $\varphi \in \Coo(\Omega)$ we then conclude that, see \eqref{B2.def},
	\begin{equation}
		(B_2f, w^{-1}\varphi)_{\Vv_{w,0}^* \times \Vv_{w,0}} = - \langle b_2 f, \nabla \varphi \rangle + \langle c_2f,\varphi\rangle = \langle \widetilde B_2 f, \varphi \rangle = \langle \widetilde B_2 f, w^{-1}\varphi \rangle_w.
	\end{equation}
	Since $w^{-1} \Coo(\Omega)$ is dense in $\Vv_{w,0}$, see Lemma~\ref{lem:W.1.infty}, and since $f$ has compact support, we have
	\begin{equation}
		B_2 f = \widetilde B_2f \in L^2(\Omega) \cap \Ltw \subset L^2_{w_1}(\Omega);
	\end{equation}
	recall that the embedding $\iota_w^*$ of $\Ltw$ into $\Vv_{w,0}^*$, see~\eqref{eq:T.hat.res}, is realised by means of $f \equiv \langle f, \cdot \rangle_w$, cf.~\eqref{Gelfand}. Since  $T_{w_1}$ extends both $T$ and $T_w$ by Lemma~\ref{lem:ext}, we conclude
	\begin{equation}
		(\widehat T_w - \iota_w^*\iota_w \la_0)^{-1} B_2f = ( T_w - \la_0)^{-1} \widetilde B_2f =  ( T_{w_1} - \la_0)^{-1} \widetilde B_2f  =( T- \la_0)^{-1} \widetilde B_2f .
	\end{equation}
	This finally implies that $G_{\la_0}$ extends $F_{\la_0}$ since for $g \in \Vv_{w,0} \cap \Vv_{1,0}$ we clearly have $B_1 g = \widetilde B_1 g $.
\end{proof}

\subsection{Schatten classes}

The next lemma states that a  holomorphic family of forms which are coercive in the generalised sense generates an analytic family of unbounded operators in the sense of Kato~\cite[Sec.~VII.1.2]{Kato-1966}. This is a generalisation of the analogous result for holomorphic families of type (B), see~\cite[Sec.~VII.4.2]{Kato-1966}, where closed sectoriality (or equivalently coercivity) of the forms is imposed.

\begin{lemma}[Holomorphic families of generalised coercive forms]\label{lem:hol.form}
Let $\Theta \subset \C$ be open and non-empty. Let $\Vv$ be a Hilbert space and let $\cD \subset \Vv$ be a dense subspace of $\Vv$. Let $\mathbf a_\alpha$, $\alpha \in \Theta$, be a family of locally uniformly bounded sesquilinear forms on $\Vv$ such that for all $f \in \cD$ the function 
\begin{equation}\label{eq:hol.form}
\Theta \ni  \alpha \mapsto \mathbf a_\alpha (f, f) \in \C
\end{equation}
is holomorphic on $\Theta$. Let moreover $\cV$ be continuously embedded and dense in another Hilbert space $\cH$ and assume that Theorems {\rm\ref{thm:lm.0}} and {\rm\ref{thm:lm.1}} apply to $\mathbf a_\alpha$ for all $\alpha \in \Theta$ with multipliers $\Phi_1$, $\Phi_2$ and the constant $m>0$ independent of $\alpha$.
 Then the family $\{ A_\alpha \, : \, \alpha \in \Theta\}$ of associated unbounded (and boundedly invertible) operators in $\Hh$ defined as in~\eqref{lm.op} is analytic in the sense of \cite[Sec.~VII.4.2]{Kato-1966}.
\end{lemma}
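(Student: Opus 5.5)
Throughout, write $\widehat A_\alpha \in \Bb(\Vv,\Vv^*)$ for the operator induced by $\mathbf a_\alpha$, i.e.\ $(\widehat A_\alpha f,g)_{\Vv^*\times\Vv} = \mathbf a_\alpha(f,g)$, and $\iota$ for the embedding $\Vv\hookrightarrow\Hh$. The plan is to verify Kato's definition of analyticity in its resolvent form: a family of closed operators $A_\alpha$ with $0\in\rho(A_\alpha)$ is holomorphic if $\alpha\mapsto A_\alpha^{-1}\in\Bb(\Hh)$ is holomorphic. Theorems~\ref{thm:lm.0} and~\ref{thm:lm.1} already give that each $A_\alpha$ is closed and boundedly invertible, with $A_\alpha^{-1} = \iota\,\widehat A_\alpha^{-1}\iota^*$. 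Hence it suffices to show that $\alpha\mapsto\widehat A_\alpha^{-1}\in\Bb(\Vv^*,\Vv)$ is holomorphic, since composition with the fixed bounded maps $\iota,\iota^*$ preserves holomorphy.

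The first step is to show that $\alpha\mapsto\widehat A_\alpha\in\Bb(\Vv,\Vv^*)$ is holomorphic. By polarisation, $\alpha\mapsto\mathbf a_\alpha(f,g)$ is holomorphic for all $f,g\in\cD$. Local uniform boundedness and density of $\cD$ then extend this to all $f,g\in\Vv$, since a locally uniform limit of holomorphic functions is holomorphic. Weak holomorphy plus local boundedness in $\Bb(\Vv,\Vv^*)$ yields norm holomorphy by the standard result (Kato, Thm.~III.3.12).

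The second step is a uniform bound $\|\widehat A_\alpha^{-1}\|_{\Bb(\Vv^*,\Vv)}\le C$ that is locally uniform in $\alpha$. This follows from the generalised coercivity \eqref{lm.gen.coev} with constant $m$ and fixed multipliers $\Phi_1,\Phi_2$. For $f\in\Vv$,
\[
m\|f\|_\Vv^2 \le |\mathbf a_\alpha(f,f)|+|\mathbf a_\alpha(\Phi_1 f,f)| \le (1+\|\Phi_1\|)\|\widehat A_\alpha f\|_{\Vv^*}\|f\|_\Vv ,
\]
and similarly with $\Phi_2$ for the adjoint. This gives $\|\widehat A_\alpha^{-1}\|\le(1+\|\Phi_1\|)/m$ uniformly in $\alpha$.

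Finally, I would use the resolvent-type identity
\[
\widehat A_\alpha^{-1}-\widehat A_{\alpha_0}^{-1}=\widehat A_\alpha^{-1}(\widehat A_{\alpha_0}-\widehat A_\alpha)\widehat A_{\alpha_0}^{-1}.
\]
With the uniform bound, it gives continuity of $\alpha\mapsto\widehat A_\alpha^{-1}$, and dividing by $\alpha-\alpha_0$ shows differentiability with derivative $-\widehat A_{\alpha_0}^{-1}\widehat A_{\alpha_0}'\widehat A_{\alpha_0}^{-1}$. Equivalently, one can expand in a Neumann series locally. This yields holomorphy of $A_\alpha^{-1}=\iota\widehat A_\alpha^{-1}\iota^*$, and hence analyticity of $\{A_\alpha\}$.

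The main obstacle I expect is the first step: upgrading holomorphy of the quadratic forms on the dense set $\cD$ to norm holomorphy of $\widehat A_\alpha$. The key points are that polarisation stays within $\cD$ because $\cD$ is a subspace, and that the Vitali/Montel-type passage to the closure relies on the local uniform boundedness hypothesis.
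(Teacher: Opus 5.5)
Your proposal is correct and follows essentially the same route as the paper: norm holomorphy of $\widehat A_\alpha\in\Bb(\Vv,\Vv^*)$ via polarisation and Kato's Thm.~III.3.12, then holomorphy of $\widehat A_\alpha^{-1}$ and hence of $A_\alpha^{-1}=\iota\,\widehat A_\alpha^{-1}\iota^*$, and finally Kato's resolvent characterisation (Thm.~VII.1.3). There is one small slip: the estimate $|\mathbf a_\alpha(\Phi_1 f,f)|\le\|\Phi_1\|\,\|\widehat A_\alpha f\|_{\Vv^*}\|f\|_\Vv$ does not hold as written, because the multiplier sits in the first slot; it is the second inequality in \eqref{lm.gen.coev} that gives $\|\widehat A_\alpha^{-1}\|\le(1+\|\Phi_2\|)/m$, and in any case this uniform bound is not needed, since invertibility at each $\alpha$ together with norm continuity of $\widehat A_\alpha$ already yields holomorphy of the inverse by a local Neumann series.
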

\begin{proof}
	Notice first that the operators $\widehat A_\alpha \in \cB (\cV, \cV^*)$ corresponding to $\mathbf a_\alpha$ as in~\eqref{T.hat} form a holomorphic family of bounded operators in the sense of~\cite[Sec.\ VII.1.1]{Kato-1966}, see~\cite[Thm.~III.3.12]{Kato-1966} and the paragraph below, together with a standard polarisation argument. The inverses of the associated operators
	\begin{equation}
		A_\alpha^{-1} = I_{\cV\to \Hh} \widehat A_\alpha^{-1} (I_{\cV\to \Hh})^* \in \cB (\cH),
	\end{equation}
	cf.~\eqref{res.A.hat}, are then a holomorphic family of bounded operators on $\Hh$. The claim then follows from \cite[Thm.~VII.1.3]{Kato-1966}.
\end{proof}

\begin{lemma}[Boundedness of $w^{\frac {\alpha}2}$ and $w^{-\frac\alpha2}$]
	\label{lem:w.alph}
	Let Assumption {\rm\ref{asm:main.LM}}~\ref{item:LM.reg},~\ref{item:asm.V.accr} and~\ref{asm:LM.w} hold and let $\alpha \in \C$. Then $w^{\re \alpha}$ is an admissible weight. More precisely, $w^{\re \alpha} \in W^{1,\infty}_{\rm loc} (\Omega)$ is uniformly positive on compact subsets of $\Omega$ and 
 	\begin{equation}
		\label{re.alph.asm}
		\frac{| P_1^{\frac12}\nabla w^{ \re \alpha}|}{w^{\re \alpha}} \le  |\re \alpha| (\kappa_w V_1^\frac12 + \tau_w |V_2|^\frac12 + C_w).
	\end{equation}
	Moreover, the spaces $\cV_{1,0}$ and $\cV_{w^{\re \alpha},0}$ defined as in \eqref{cV.0.def} with the respective weights are isomorphic via
	\begin{equation}\label{w.alph.bdd}
		w^{\frac\alpha2} \in \cB(\cV_{w^{\re \alpha},0}, \cV_{1,0}), \qquad w^{-\frac\alpha2} \in \cB(\cV_{1,0}, \cV_{w^{\re \alpha},0}).
	\end{equation}
\end{lemma}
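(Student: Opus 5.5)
The plan has three parts: the admissibility of $w^{\re\alpha}$ is a pointwise computation, and the isomorphism \eqref{w.alph.bdd} follows from a product rule estimate plus a density argument.

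\textbf{Step 1: admissibility of $w^{\re\alpha}$.} Write $s := \re \alpha \in \R$. Since $w \in W^{1,\infty}_{\rm loc}(\Omega)$ is locally bounded and uniformly positive on compacts, the map $t \mapsto t^s$ is smooth on a neighbourhood of the local range of $w$. The chain rule for Sobolev functions then gives $w^s \in W^{1,\infty}_{\rm loc}(\Omega)$ with
\begin{equation}
\nabla w^s = s\, w^{s-1} \nabla w .
\end{equation}
Local uniform positivity of $w^s$ is immediate. Dividing by $w^s$ yields
\begin{equation}
\frac{|P_1^{\frac12}\nabla w^s|}{w^s} = |s|\, \frac{|P_1^{\frac12}\nabla w|}{w},
\end{equation}
and \eqref{asm:w.sep} then gives \eqref{re.alph.asm}.

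\textbf{Step 2: the bound for $w^{\frac\alpha2}$ on $\Coo(\Omega)$.} Note $|w^{\frac\alpha2}| = w^{\frac s2}$, since $w^{\frac\alpha2} = \e^{\frac\alpha2 \log w}$. Take $f \in \Coo(\Omega)$ and set $g := w^{\frac\alpha2} f$. Then $g \in W^{1,\infty}_{\rm comp}(\Omega)$, so $g \in \cV_{1,0}$ by Lemma~\ref{lem:W.1.infty} with trivial weight. The zeroth-order terms are isometric:
\begin{equation}
\|g\| = \|f\|_{w^s}, \qquad \||V|^{\frac12} g\| = \||V|^{\frac12} f\|_{w^s}.
\end{equation}
For the gradient term, $\nabla g = w^{\frac\alpha2}\nabla f + \frac\alpha2 w^{\frac\alpha2} f \frac{\nabla w}{w}$, so
\begin{equation}
\|P_1^{\frac12}\nabla g\| \le \|P_1^{\frac12}\nabla f\|_{w^s} + \frac{|\alpha|}{2}\Big\| f\, \frac{|P_1^{\frac12}\nabla w|}{w}\Big\|_{w^s}.
\end{equation}
Bound the last factor with \eqref{asm:w.sep}, using $\kappa_w V_1^{\frac12} + \tau_w |V_2|^{\frac12} + C_w \ls |V|^{\frac12} + 1$. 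This gives $\|g\|_{\cV_1} \ls \|f\|_{\cV_{w^s}}$.

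\textbf{Step 3: the reverse bound and density.} The same computation with $-\alpha$ in place of $\alpha$, moving from weight $1$ to weight $w^s$, gives $\|w^{-\frac\alpha2} h\|_{\cV_{w^s}} \ls \|h\|_{\cV_1}$ for $h \in \Coo(\Omega)$. Here I use that $w^{-\frac\alpha2}h \in W^{1,\infty}_{\rm comp}(\Omega) \subset \cV_{w^s,0}$ by Lemma~\ref{lem:W.1.infty}, now with weight $w^s$, which is admissible by Step 1.

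I expect the extension to the closures to be the main obstacle, because the estimates hold only on $\Coo(\Omega)$. The densities in Lemma~\ref{lem:W.1.infty} are taken in $\cV_{w,0}$-norms, and the computation of Step 2 has to be extended to $W^{1,\infty}_{\rm comp}(\Omega)$. I would argue as follows. The estimates of Step 2 hold verbatim for all $f$ in the larger space $\cV_{w^s}$, since the product rule is valid there because $w^{\frac\alpha2}$ is locally Lipschitz and $\nabla f \in \Loneloc(\Omega)^d$. So $w^{\frac\alpha2}$ is bounded $\cV_{w^s} \to \cV_1$. By continuity it maps $\overline{\Coo(\Omega)}^{\cV_{w^s}} = \cV_{w^s,0}$ into the $\cV_1$-closure of $w^{\frac\alpha2}\Coo(\Omega) \subset W^{1,\infty}_{\rm comp}(\Omega) \subset \cV_{1,0}$, hence into $\cV_{1,0}$. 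The map $w^{-\frac\alpha2}$ is handled symmetrically. Since the two maps are mutually inverse, $\cV_{1,0}$ and $\cV_{w^{\re\alpha},0}$ are isomorphic.
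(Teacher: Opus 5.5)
Your proposal is correct and follows essentially the same route as the paper: the chain rule $\nabla w^{\re\alpha} = \re\alpha\, w^{\re\alpha-1}\nabla w$ gives admissibility, and the isomorphism comes from the product-rule estimate on $\Coo(\Omega)$ together with $W^{1,\infty}_{\rm comp}(\Omega)\subset\cV_{1,0}$, with the reverse map handled analogously. The differences are cosmetic. You bound the lower-order gradient term directly by the triangle inequality and the admissibility bound, where the paper uses a Young-inequality absorption. You also make the passage from $\Coo(\Omega)$ to the closures explicit by proving the bound on the larger space $\cV_{w^{\re\alpha}}$, a step the paper leaves implicit.
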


\begin{proof}
	The claimed regularity of $w^{\re\alpha}$ follows from $w \in W^{1,\infty}_{\rm loc} (\Omega)$	and  
	\begin{equation}
		\nabla (w^{\re \alpha}) =  \re \alpha \, w^{\re \alpha -1}\, \nabla w 
	\end{equation}
	which, using \eqref{asm:w.sep}, moreover gives the bound in \eqref{re.alph.asm}. Clearly, $w^{\re \alpha}$ is uniformly positive on compact subsets of $\Omega$ since $w$ is.
	
	To prove the claims in \eqref{w.alph.bdd}, it is sufficient to show the required inequalities on the dense subspace $\Coo(\Omega)$. For the first claim, with $f \in \Coo(\Omega)$  we estimate
	\begin{equation} \label{w.alph.1}
		\begin{aligned}
			\| w^{\frac\alpha2} f \|_{\cV_1}^2 & = \| P_1^\frac12 \nabla (w^{\frac\alpha2} f)\|^2 + \| |V|^\frac12 w^{\frac\alpha2} f\|^2 + \|w^{\frac\alpha2} f\|^2 \\
			& \lesssim \|f P_1^\frac12 \nabla (w^{\frac\alpha2}) \|^2 + \| f \|^2_{\cV_{w^{\re \alpha}}}.
		\end{aligned}
	\end{equation}
	Notice  that $w^{\frac\alpha2} f \in W^{1,\infty}_{\operatorname{comp}} (\Omega) \subset \Vv_{1,0}$ by Lemma \ref{lem:W.1.infty} and 
	\begin{equation}\label{nabla.w.alph}
		\nabla(w^{\frac\alpha2}) = {\frac\alpha2} w^{{\frac\alpha2}-1} \nabla w.
	\end{equation}
	Using \eqref{nabla.w.alph}, \eqref{asm:w.sep}, Cauchy--Schwarz' and Young's inequalities, it follows that
	\begin{equation}\label{w.alph.2}
		\begin{aligned}
			\| f P_1^\frac12 \nabla w^{\frac\alpha2}\|^2 & \le \langle |f| |P_1^\frac12 \nabla (w^{\frac\alpha2})|, |f P_1^\frac12 \nabla (w^{\frac\alpha2})|\rangle \\
			& \le \frac{|\alpha|}{2} \langle  w^{\frac{\re \alpha}2} (\kappa_w V_1^\frac12 + \tau_w |V_2|^\frac12 + C_w)  |f|, |f P_1^\frac12 \nabla (w^{\frac\alpha2}) | \rangle \\
			& \le \frac{|\alpha|^2}{8\delta} \left( \kappa_w^2 \| V_1^\frac12 f\|^2_{w^{\re \alpha}} + \tau_w^2 \|  |V_2|^\frac12 f\|^2_{w^{\re \alpha}} +  C_w^2 \| f \|^2_{w^{\re \alpha}} \right) \\
			& \quad \qquad\qquad \qquad \qquad \qquad \qquad + \frac{ 3 \delta}{2} \| P_1^\frac12 \nabla (w^\frac\alpha 2) f\|^2
		\end{aligned}
	\end{equation}
	for any $\delta> 0$. Hence, choosing $\delta$ small enough such that $3\delta <2$ and combining \eqref{w.alph.1} with \eqref{w.alph.2}, we arrive at the first claim in~\eqref{w.alph.bdd}. The proof of the second one is analogous.
\end{proof}

\begin{lemma}[Unitary equivalence of $T_{w^{\re \alpha}}$ and  $S_\alpha$]\label{lem:Sw.form}
Let the hypotheses of Theorem~{\rm\ref{thm:t}} hold (where \eqref{eq:rem.rel.bdd.eps} and \eqref{ellipse} are replaced by~\eqref{eq:rem.rel.bdd}, \eqref{ellipse.gen.beta} and~\eqref{ellipse.gen}). Let $\cV_{1,0}$ and  $\mathbf t_{w^{\re \alpha}}$ be as in \eqref{cV.0.def} and \eqref{t.def} with the respective weights. Define the bounded sesquilinear forms
\begin{equation}
	\mathbf s_\alpha (f,g) := \mathbf t_{w^{\re \alpha}} (w^{-\frac\alpha2} f, w^{-\frac\alpha2} g), \qquad f,g \in \cV_{1,0}, \qquad \alpha \in\C,
\end{equation}
and let $S_\alpha$ be the associated operator in $L^2(\Omega)$ defined as in~\eqref{lm.op}.
Then there exists $\delta>0$ such that the family $\{S_\alpha \, : \, \alpha \in M_{\delta}\}$, where 
\begin{equation}
	M_{\delta} := \{ z \in \C \, : \, |\re z| < 1 + \delta, \, \,  |\Im z|<\delta \}, 
\end{equation} 
is analytic in the sense of \cite[Sec.~VII.1.2]{Kato-1966}. Moreover, every $S_\alpha$ is unitarily equivalent to $T_{w^{\re \alpha}}$ in $L^2_{w^{\re \alpha}}(\Omega)$, defined as in~\eqref{T.descr}, by means of
\begin{equation}\label{un.equiv}
	S_\alpha = w^{\frac\alpha2} T_{w^{\re \alpha}} w^{-\frac\alpha2}, \qquad \alpha \in M_\delta.
\end{equation}
\end{lemma}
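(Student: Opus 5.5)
The argument splits into three steps: verify that Lemma~\ref{lem:hol.form} applies to $\{\mathbf s_\alpha\}$, which gives analyticity; then identify $S_\alpha$ with $w^{\frac\alpha2} T_{w^{\re\alpha}} w^{-\frac\alpha2}$; finally observe that this conjugation is unitary.

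\textbf{Step 1: boundedness and holomorphy of $\mathbf s_\alpha$.} By Lemma~\ref{lem:w.alph}, $w^{-\frac\alpha2}$ maps $\cV_{1,0}$ boundedly onto $\cV_{w^{\re\alpha},0}$, and its operator norm is locally uniform in $\alpha$. Together with Lemma~\ref{lem:t.bdd}, the forms $\mathbf s_\alpha$ are therefore bounded on $\cV_{1,0}$, locally uniformly in $\alpha$. For holomorphy I take $\cD=\Coo(\Omega)$ and $f\in\cD$. Expanding with \eqref{nabla.w.alph} gives
\begin{equation}
	\mathbf s_\alpha(f,f)=\langle P(\nabla f-\tfrac\alpha2 \tfrac{\nabla w}{w} f),\nabla f+\tfrac{\ov\alpha}2\tfrac{\nabla w}{w}f\rangle+\langle Vf,f\rangle .
\end{equation}
The conjugation $\ov{\ov\alpha}=\alpha$ in the second slot makes this a polynomial in $\alpha$ alone. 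Indeed, $w^{\re\alpha}|w^{-\frac\alpha2}|^2=1$, and the combination $w^{-\frac{\ov\alpha}{2}}w^{\re\alpha}=w^{\frac{\alpha}{2}}$ enters antilinearly. So $\alpha\mapsto\mathbf s_\alpha(f,f)$ is entire.

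\textbf{Step 2: uniform generalised coercivity on $M_\delta$ (the main obstacle).} Lemma~\ref{lem:hol.form} requires the hypotheses of Theorems~\ref{thm:lm.0} and~\ref{thm:lm.1} for $\mathbf s_\alpha$, with multipliers and coercivity constant $m$ independent of $\alpha$. I would take $\Phi_1=\Phi_2=\beta\Phi$; multiplication by $\Phi$ commutes with $w^{-\frac\alpha2}$. The model case is real $\alpha=s\in[0,1]$. By Lemma~\ref{lem:w.alph}, $w^s$ has constants $s\kappa_w,s\tau_w,sC_w$, and the strict inequality \eqref{ellipse.gen} (or \eqref{ellipse}) is monotone in $\kappa_w,\tau_w$, so it remains true. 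Proposition~\ref{prop:g.coer} then gives the coercivity for $\mathbf t_{w^s}$, and transporting via the norm equivalence of Lemma~\ref{lem:w.alph} gives it for $\mathbf s_s$, after a shift by $\gamma$. Strictness of the inequality also leaves room to enlarge $s$ to $(-\delta,1+\delta)$; for negative $s$ I use $|s|$, since $w^{-1}$ is admissible with the same constants.

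The imaginary part $\im\alpha$ is where care is needed. Comparing $\mathbf s_\alpha$ with $\mathbf s_{\re\alpha}$, the difference consists of first-order cross terms with coefficient $\ii\im\alpha\,\nabla w/w$ and a zeroth-order term $(\im\alpha)^2|\nabla w/w|^2$. Using \eqref{asm:w.sep}, Cauchy--Schwarz and Young, both are bounded by $\BigO(|\im\alpha|)\|f\|_{\cV_1}^2$. For $\delta$ small this perturbation is absorbed into $m$, so I take $m:=\frac12\min_{s}m(s)$ and $\gamma:=\sup_s\gamma(s)$, using the continuous dependence of the constants in Proposition~\ref{prop:g.coer}. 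With the fixed shift $\gamma$ included, Lemma~\ref{lem:hol.form} yields analyticity of $S_\alpha+\gamma$, hence of $S_\alpha$.

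\textbf{Step 3: identification and unitarity.} For $f\in\cV_{1,0}$ I would show that $f\in\Dom(S_\alpha)$ with $S_\alpha f=h$ holds exactly when $w^{-\frac\alpha2}f\in\Dom(T_{w^{\re\alpha}})$ with $T_{w^{\re\alpha}}w^{-\frac\alpha2}f=w^{-\frac\alpha2}h$. For any $g\in\cV_{1,0}$, write $g=w^{-\frac{\alpha}2}\tilde g$ with $\tilde g$ ranging over $\cV_{w^{\re\alpha},0}$ (Lemma~\ref{lem:w.alph}). Then
\begin{equation}
	\mathbf s_\alpha(f,g)=\langle h,g\rangle \iff \mathbf t_{w^{\re\alpha}}(w^{-\frac\alpha2}f,\tilde g)=\langle w^{-\frac\alpha2}h,\tilde g\rangle_{w^{\re\alpha}},
\end{equation}
because $\langle h,w^{-\frac\alpha2}\tilde g\rangle=\langle w^{-\frac\alpha2}h,\tilde g\rangle_{w^{\re\alpha}}$. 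The operator $T_{w^{\re\alpha}}$ is the operator of $\mathbf t_{w^{\re\alpha}}$ by the proof of Theorem~\ref{thm:t}, which gives \eqref{un.equiv}. Finally, $w^{\frac\alpha2}:L^2_{w^{\re\alpha}}(\Omega)\to L^2(\Omega)$ is unitary because $|w^{\frac\alpha2}|^2=w^{\re\alpha}$.
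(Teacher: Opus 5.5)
Your proposal is correct and follows the paper's overall structure: holomorphy via the explicit expansion of $\mathbf s_\alpha(f,f)$ on $\Coo(\Omega)$, uniform generalised coercivity from Proposition~\ref{prop:g.coer} applied to $\mathbf t_{w^{\re\alpha}}$, then Lemma~\ref{lem:hol.form}, then the identification through the isomorphism of Lemma~\ref{lem:w.alph}.

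The one real difference is how you handle $\im\alpha$. You treat it as a perturbation of the real case: you bound $\mathbf s_\alpha-\mathbf s_{\re\alpha}$ by $\BigO(|\im\alpha|)$ in the form norm on $\cV_{1,0}$ and absorb it into $m$. This works, since polarization gives the sesquilinear bound needed for the term $\mathbf s_\alpha(\beta\Phi f,f)$. However, the "main obstacle" is not actually there. The paper transports coercivity directly for complex $\alpha$. With $g=w^{-\frac\alpha2}f$ one has, by definition,
\begin{itemize}
\item $\mathbf s_\alpha(f,f)=\mathbf t_{w^{\re\alpha}}(g,g)$ and $\mathbf s_\alpha(\beta\Phi f,f)=\mathbf t_{w^{\re\alpha}}(\beta\Phi g,g)$;
\item $\|f\|=\|g\|_{w^{\re\alpha}}$ and $\langle\beta\Phi f,f\rangle=\langle\beta\Phi g,g\rangle_{w^{\re\alpha}}$.
\end{itemize}
So \eqref{w.alph.coerc.1}, which only sees $\re\alpha$, yields \eqref{w.alph.coerc.2} with constant $m\|w^{\frac\alpha2}\|^{-2}_{\cV_{w^{\re\alpha},0}\to\cV_{1,0}}$. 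The imaginary part enters only through this locally bounded norm. The paper's argument is therefore shorter and needs no smallness of $|\im\alpha|$ for coercivity beyond boundedness of the parameter set. Your route adds an extra estimate and an extra smallness condition, with no advantage here.

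There are two slips to fix.
\begin{itemize}
\item In Step 3 the exponent has the wrong sign. Lemma~\ref{lem:w.alph} gives $w^{\frac\alpha2}\colon\cV_{w^{\re\alpha},0}\to\cV_{1,0}$, so you should write $g=w^{\frac\alpha2}\tilde g$, i.e.\ $\tilde g=w^{-\frac\alpha2}g$. Then $\mathbf s_\alpha(f,g)=\mathbf t_{w^{\re\alpha}}(w^{-\frac\alpha2}f,\tilde g)$ and $\langle h,w^{\frac\alpha2}\tilde g\rangle=\langle w^{-\frac\alpha2}h,\tilde g\rangle_{w^{\re\alpha}}$. The identity you wrote, $\langle h,w^{-\frac\alpha2}\tilde g\rangle=\langle w^{-\frac\alpha2}h,\tilde g\rangle_{w^{\re\alpha}}$, compares $\int h\,w^{-\frac{\ov\alpha}2}\ov{\tilde g}$ with $\int h\,w^{\frac{\ov\alpha}2}\ov{\tilde g}$ and is false.
\item $M_\delta$ requires $|\re\alpha|<1+\delta$, so $\re\alpha$ goes down to $-1-\delta$, not just the interval $(-\delta,1+\delta)$ you state. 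Your remark that $w^{s}$ with $s<0$ has constants $|s|\kappa_w,|s|\tau_w,|s|C_w$ already covers this; just state the interval accordingly.
\end{itemize}
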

\begin{proof}
	By Lemma~\ref{lem:w.alph}, $w^{\re \alpha}$ is an admissible weight and in particular satisfies condition \eqref{asm:w.sep} with $\kappa_w, \tau_w$ and $C_w$ replaced by
	\begin{equation}\label{w.alpha.const}
			\kappa_\alpha := |\re\alpha| \kappa_w, \qquad \tau_\alpha := |\re\alpha| \tau_w, \qquad C_\alpha := |\re\alpha| C_w,
	\end{equation}
	see \eqref{re.alph.asm}. Hence, from Lemmas \ref{lem:t.bdd} and \ref{lem:w.alph}, it follows that the forms $\mathbf s_\alpha$ are well-defined and bounded on $\cV_{1,0}$. To prove the analyticity of the associated operators, we apply Lemma~\ref{lem:hol.form} after a suitable $\alpha$-independent shift. The needed holomorphy of the forms $\mathbf s_\alpha$ on $\Theta:=M_\delta$ (in fact even on $\C$) follows readily since for every $f  \in\cD:=\CcOm$ one computes
	 \begin{equation}
	 \label{s.hol.2}
	 \begin{aligned}
	 		\mathbf s_\alpha (f,f) & = \langle P \nabla (w^{-\frac\alpha2}f), \nabla (w^{-\frac\alpha2}fw^{\re \alpha}) \rangle + \langle V w^{-\frac\alpha2}f, w^{-\frac\alpha2}f\rangle_{w^{\re \alpha}} \\
	 		& =  \langle P \nabla (w^{-\frac\alpha2} f), \nabla (w^{\frac{\ov\alpha}2} f  )\rangle + \langle V f, f\rangle \\
	 		& = \frac{\alpha}2 \big(\langle P \nabla f,  w^{-1} f \nabla w \rangle - \langle  f P \nabla w , w^{-1} \nabla f \rangle\big)  \\
	 		& \qquad \qquad - \frac{\alpha^2}4 \langle f P \nabla w,w^{-2} f\nabla w \rangle  + \langle P \nabla f, \nabla f \rangle + \langle V f, f\rangle.
	 \end{aligned}
	 \end{equation}
	Moreover, using \eqref{eq:P.bounds} and \eqref{asm:w.sep}, it is straightforward to see that all quadratic forms in the last line above are bounded w.r.t.~$\|\cdot\|_{\cV_{1}}$, and thus the $\mathbf s_\alpha$ are locally uniformly bounded in $\alpha$.
	To apply Lemma~\ref{lem:hol.form} (with $\Vv=\Vv_{1,0}$ and $\Hh=L^2(\Omega)$), it suffices to find a shift $\gamma \in \C$, uniform in $\alpha$, such that the forms $\mathbf a_\alpha := \mathbf s_\alpha + \gamma \|\cdot \|^2$ are generalised coercive with a suitable multiplier. The size of $\delta>0$ is determined such that this can be achieved via the estimates in Proposition~\ref{prop:g.coer}.
	
	Indeed, we choose $\delta >0$ so small that, with $\beta$ as in~\eqref{ellipse.gen.beta}, condition~\eqref{ellipse.gen} holds for all $\alpha \in M_{2\delta}$ with $\kappa_\alpha$ and $\tau_\alpha$ as in~\eqref{w.alpha.const} instead of $\kappa_w$ and $\tau_w$; this is possible due to the continuity of~\eqref{ellipse.gen} in $\kappa_w$ and $\tau_w$. Proposition~\ref{prop:g.coer} is thus applicable to the weighted form $\mathbf t_{w^{\re \alpha}}$ and it follows that there exist $m>0$ and $\gamma\ge0$ (note that one can choose $m_1=m_2$ and $\gamma_1=\gamma_2$, see the proof of Proposition~\ref{prop:g.coer}) such that for all $\alpha \in M_\delta$ and $g \in \cV_{w^{\re \alpha},0}$ we have
	\begin{equation}\label{w.alph.coerc.1}
		\begin{aligned}
			\re t_{w^{\re \alpha}} (g,g) + \im \mathbf t_{w^{\re \alpha}} (\beta \Phi g, g) + \gamma \|g\|_{w^{\re \alpha}}^2 & \ge m \|g \|_{\cV_{w^{\re \alpha}}}^2, \\
			\re t_{w^{\re \alpha}} (g,g) + \im \mathbf t_{w^{\re \alpha}} ( g, \beta \Phi g) + \gamma\|g\|_{w^{\re \alpha}}^2 & \ge m \|g\|_{\cV_{w^{\re \alpha}}}^2.
		\end{aligned}
	\end{equation}
	Here the constants $m$ and $\gamma$ can be chosen independently of $\alpha$ due to continuity and since $\overline{M_{\delta}} \subset M_{2\delta}$ is compact. Considering Lemma~\ref{lem:w.alph}, this leads to
	\begin{equation}\label{w.alph.coerc.2}
		\begin{aligned}
			|\mathbf s_{\alpha} (f,f) + \gamma \|f\|^2 |  + | \mathbf s_\alpha ( \beta\Phi f, f) + \gamma  \langle \beta\Phi f, f \rangle |& \ge m \|w^\frac{\alpha}{2} \|_{\Vv_{w^{\re\alpha},0}\to \Vv_{1,0}}^{-2} \|f \|_{\cV_1}^2, \\
			|\mathbf s_\alpha (f,f) + \gamma \|f\|^2 |  + | \mathbf s_\alpha (f,\beta\Phi f) +  \gamma \langle f, \beta\Phi f \rangle |& \ge m \|w^\frac{\alpha}{2} \|_{\Vv_{w^{\re\alpha},0}\to \Vv_{1,0}}^{-2} \|f\|_{\cV_1}^2,
		\end{aligned}
	\end{equation}
	for all $\alpha \in M_\delta$ and $f =w^\frac{\alpha}{2}g\in \cV_{1,0}$. Since $|\Phi|\le 1$ is bounded both on $\cV_{1,0}$ and $L^2(\Omega)$, see Lemma~\ref{lem:Phi.bdd}, it follows from Lemma~\ref{lem:hol.form} that $S_\alpha + \gamma$, and thus also $S_\alpha$ form a holomorphic family for $\alpha \in M_\delta$.
	
	It remains to show the identity in \eqref{un.equiv}, which yields the claimed unitary equivalence since
	\begin{equation}
		w^{\frac{\alpha}{2}} : L^2_{w^{ \re \alpha}} (\Omega) \to L^2(\Omega)
	\end{equation}
	is clearly unitary. By definition, $f \in \Dom (S_\alpha)$ with $S_\alpha f = \eta$ if and only if one has $f \in \Vv_{1,0}$ and
	\begin{equation}
		\mathbf s_\alpha (f, \varphi) = \langle \eta, \varphi\rangle
	\end{equation}
	for all $\varphi \in \Vv_{1,0}$, cf.~\eqref{lm.op}. By Lemma~\ref{lem:w.alph}, the multiplication $w^{-\frac{\alpha}{2}} : \Vv_{1,0}\to \Vv_{w^{\re\alpha},0}$ is bijective, thus this is further equivalent to $g:= w^{-\frac\alpha2} f \in \Vv_{w^{\re\alpha},0}$ such that
	\begin{equation}
		\mathbf t_{w^{\re \alpha}} (g, \psi)  = \langle  w^{-\frac\alpha2} \eta, \psi \rangle_{w^{\re \alpha}}
	\end{equation}
	for all $\psi := w^{-\frac{\alpha}{2}} \varphi \in \Vv_{w^{\re \alpha},0}$. This, however, is equivalent to
	\begin{equation}
		g = w^{-\frac\alpha2} f\in \Dom (T_{w^{\Re \alpha}}), \qquad T_{w^{\Re \alpha}} g = T_{w^{\Re \alpha}} w^{-\frac\alpha2} f = w^{-\frac{\alpha}{2}} \eta = w^{-\frac{\alpha}{2}} S_\alpha f.
	\end{equation}
	(Recall that in the proof of Theorem~\ref{thm:t}, it is justified that the operator associated with $\mathbf t_{w^{\re \alpha}}$ in $L^2_{w^{\re \alpha}}(\Omega)$ coincides with $T_{w^{\re \alpha}}$ defined as in~\eqref{T.descr}.) This proves identity~\eqref{un.equiv}.
\end{proof}

\begin{proof}[Proof of Theorem~{\rm\ref{thm:inv.Sp}}]
	Let $T_{w}$ in $\Ltw$ be as in~\eqref{T.descr}, $S_1 = w^\frac12  T_w w^{-\frac12}$ in $L^2(\Omega)$ as in Lemma~{\rm\ref{lem:Sw.form}} with $\alpha=1$. By unitary equivalence of $T_w$ and $S_1$, it suffices to prove the claims for $S_1$. The resolvent of $S_1$ is given by
	\begin{equation}\label{res.emb}
		(S_1 - \lambda)^{-1} = \iota_1 (\widehat S_1 -\lambda \iota_1^*\iota_1 )^{-1} \iota_1^*, \qquad \iota_1= I_{\cV_{1,0} \to L^2(\Omega)},
	\end{equation}
	for $\la \in \rho(S_1)$, see~\eqref{T.hat},~\eqref{Gelfand},~\eqref{res.A.hat} and Lemma~\ref{lem:res.id}. 
	\begin{enumerate}[\upshape (i), wide]
	\item The resolvent of $S_1$ is compact since $(\widehat S_1 - \lambda \iota_1^*\iota_1)^{-1}$ is bounded from $\cV_{1,0}^*$ to $\cV_{1,0}$ and $\iota_1$ is compact by assumption (and so is its adjoint).
		\item 	The assumption implies that also $\iota_1^*\in \cS_{2p} (L^2(\Omega), \cV_{1,0}^*)$. Hence by \eqref{res.emb} and H\"older's inequality for Schatten classes, see~e.g.~\cite[Thm.\ 3.23]{Weidmann-2003}, we arrive at
		\begin{equation*}
			\|(S_1 - \lambda)^{-1} \|_{\cS_p} \ls \| \iota_1  \|_{\cS_{2p}}^2 \|(\widehat S_1 - \lambda \iota_1^*\iota_1)^{-1}\| < \infty.
		\end{equation*}
		\item Using the assumption and standard inequalities for the singular values, see \cite[Cor.~II.2.2]{Gohberg-1969} and~\cite[Prop.~VI.1.3]{Gohberg-1990}, we obtain
	\begin{equation}
	\begin{aligned}
	s_{m+n-1} ( \iota_1  (\widehat S_1 - \lambda \iota_1^*\iota_1 )^{-1} \iota_1^* ) &  \leq \|(\widehat S_1 - \lambda \iota_1^*\iota_1 )^{-1}\| s_m( \iota_1) s_n( \iota_1) \ls \frac{1}{m^ \frac1{2p}} \frac{1}{n^\frac1{2p}}
	\end{aligned}
	\end{equation}
	for $m,n \in \N$. Applying the above with $m=k \in\N$ and $n=k$ or $n= k+1$, we conclude that 
		\begin{equation}
			s_l ((S_1-\la)^{-1})\ls l^{-\frac1p}, \qquad l \in \N,
		\end{equation}
		i.e.~$(S_1 - \lambda)^{-1}  \in S_{p,\infty}(L^2(\Omega))$. \qedhere
	\end{enumerate}
\end{proof}

\begin{proof}[Proof of Corollary~{\rm\ref{cor:Sp}}]
	Let $S:= -\Delta + |V|$ be the standard self-adjoint Dirichlet realisation in $L^2(\Omega)$ defined by the Friedrichs extension, and let $\iota_1 := I_{\cV_{1,0} \to L^2(\Omega)}$, see~\eqref{cV.0.def}.
\begin{enumerate}[\upshape (i), wide]
\item Applying Theorem~\ref{thm:Sp}, we arrive at
\begin{equation}\label{eq:Sp}
	(S+1)^{-1} \in \cS_p(L^2(\Omega)), \qquad (S+1)^{-\frac 12} \in \cS_{2p}(L^2(\Omega)).
\end{equation}
Moreover, by the second representation theorem~\cite[Thm.~VI.2.23]{Kato-1966} and the construction of $\cV_{1,0}$, see~\eqref{cV.0.def}, we have
\begin{equation}
	\|(S+1)^\frac12 f \|^2 = \|f\|_{\cV_1}^2, \qquad f \in \Dom ((S+1)^\frac12) = \cV_{1,0}.
\end{equation}
Since~$(S+1)^\frac12$ is isometric from $\Vv_{1,0}$ to $\LOm$ and we have \eqref{eq:Sp}, we obtain that
\begin{equation*}
	(S+1)^{-\frac 12}(S+1)^\frac12 = \iota_1 \in S_{2p} (\cV_{1,0}, L^2(\Omega)). 
\end{equation*}	
Hence \eqref{Tw.Sp.cor} follows from Theorem~\ref{thm:inv.Sp}.
\item Assume next that  $|V(x)|  + 1 \gs \langle x\rangle^\gamma$ for a.e.~$x \in \Omega$ and with some $\gamma>0$. 
Let $\wt S = -\Delta + |x|^\gamma$ be the self-adjoint Dirichlet realisation in $\LOm$. Since $(S+1)^\frac12 $ is isometric from $\Vv_{1,0}$ to $\LOm$ and $(\wt S+1)^\frac 12 (S+1)^{-\frac12}$ is bounded on $\LOm$, we have, see \cite[Prop.VI.1.3]{Gohberg-1990},
\begin{equation}\label{sk.ineq.gam}
	\begin{aligned}
		s_k\left( \iota_1 \right) & = 
		s_k \left(  (\wt S+1)^{-\frac 12} (\wt S+1)^\frac 12 (S+1)^{-\frac 12}(S+1)^\frac12 \right)
		\\
		& 
		\leq 
		\|(\wt S+1)^\frac 12 (S+1)^{-\frac 12}\|
		s_k \left( (\wt S+1)^{-\frac 12} \right)
		\\
		& = \|(\wt S+1)^\frac 12 (S+1)^{-\frac 12}\|
		s_k \left( (\wt S+1)^{-1} \right)^\frac 12, \qquad k \in \N.
	\end{aligned}	
\end{equation}	
Thus it remains to investigate the eigenvalues of $\wt S$.

First by Young's inequality and using polar coordinates, we get
\begin{equation}\label{int.schat.gam}
	\begin{aligned}
		\int_{\R^d} \int_{\Omega} \left( \langle \xi\rangle^2 + \langle x \rangle^\gamma \right)^{-p} \dd x \, \dd \xi & \ls \int_{\R^d} \int_{\Omega} \left( \left( \langle \xi\rangle  \langle x \rangle \right)^{\frac{2\gamma}{2+\gamma}} \right)^{-p} \dd x \, \dd \xi \\
		& \ls \left(\int_0^\infty  (r+1)^{d-1-\frac{2\gamma p}{2+\gamma}}\dd r\right)^{2}.
	\end{aligned}
\end{equation}
Since~\eqref{int.schat.gam} converges for $p>p_{\gamma,d}$, we have $(\wt S+1)^{-1} \in \cS_p(\LOm)$ by Theorem~\ref{thm:Sp}. Hence $\iota_1 \in \cS_{2p}(\LOm)$ by \eqref{sk.ineq.gam} and \eqref{Tw.Sp.cor.gam} follows by Theorem~\ref{thm:inv.Sp}.

Finally, let $\Omega = \Rd$ and \eqref{V.lb.gamma} hold with $\gamma>1$. 
 By Theorem~\ref{thm:Sp.inf} and straightforward manipulations using polar coordinates, we obtain that 
\begin{equation}
\begin{aligned}
N(\la) = \{ \la_k \in \sigma(\wt S) \, : \, \la_k < \la \} 
& \approx
\int_{\{|x|^\gamma \leq \la\}} (\la - |x|^\gamma)^\frac d2 \, \dd x 
\\
& \approx
\la^{p_{\gamma,d}} \int_0^1 (1-t^\gamma)^\frac d2 t^{d-1} \, \dd t, \qquad \la>1;
\end{aligned}
\end{equation}
for $d=1$, the claim is valid as well, cf.~\cite[Rem.~after Thm.~XIII.81]{Reed4} or more precise asymptotics in \cite[Chap.~VII]{Titchmarsh-1962-book1}. It follows that there exists $k_0 \in \N$ such that, for every fixed $k \ge k_0$ and $\eps>0$ small enough, we have
\begin{equation}
	k = N(\la_k+\eps) \ls (\la_k+\eps)^{p_{\gamma,d}} \to  \la_k^{p_{\gamma,d}}, \qquad \eps \to 0.
\end{equation}
Hence
\begin{equation}
s_k\left((\wt S+1)^{-1}\right) = (\la_k+1)^{-1} \ls k^{-\frac{1}{p_{\gamma,d}}}, \qquad k \in \N,	
\end{equation}
and so \eqref{Tw.Sp.cor.gam.inf} is justified by \eqref{sk.ineq.gam} and Theorem~\ref{thm:inv.Sp}.
\qedhere
\end{enumerate}
\end{proof}

\subsection{Invariance of discrete spectra and eigenfunctions}

\begin{proof}[Proof of Theorem~{\rm\ref{thm:spd.inv}}]
	Our method is based on the analytic family  $\{S_\alpha\, :\, \alpha \in M_\delta\}$ from Lemma~\ref{lem:Sw.form}, which satisfies
		\begin{equation}\label{eq:sp.uni.equ}
			S_{\alpha_0+ \ii \mu} = w^{\frac{\alpha_0+ \ii \mu}2} T_{w^{\alpha_0}} w^{-\frac{\alpha_0+ \ii \mu}2} = w^{\frac{\ii \mu}2} S_{\alpha_0} w^{-\frac{\ii \mu}2}
		\end{equation}
		for every $\alpha_0 \in [0,1]$ and $\mu \in (-\delta, \delta)$. This in particular implies that $S_{\alpha_0+ \ii \mu}$ and $S_{\alpha_0}$ are unitarily equivalent and thus have identical spectral properties.
	
	\begin{enumerate}[\upshape (a), wide]
		
		\item Invariance of discrete eigenvalues and their multiplicities:
		
	We first prove a local result. For every fixed $r>0$ and $z \in \C\setminus \Sigma$ with $\overline{B_r(z)} \cap \Sigma = \emptyset$ and every $\alpha_0 \in [0,1]$, we show that there exists $\nu = \nu(\alpha_0,r,z)>0$ such that 
	\begin{equation}\label{eq:ev.local}
		m_{\rm a} (\la, S_\alpha) = m_{\rm a} (\la, S_{\alpha_0}), \qquad \la \in \sigma (S_{\alpha_0}) \cap B_r(z) = \sigma (S_{\alpha}) \cap B_r(z),
	\end{equation}
	for all $\alpha \in B_\nu(\alpha_0)$.
	To justify this, we suitably adapt a complex scaling argument in~\cite[Sec.~XIII.10]{Reed4}, cf.~\cite[Sec.~VII.1.3]{Kato-1966}. 
	
	Since $S_{\alpha_0}$ has discrete spectrum in $\C\setminus\Sigma$ and $\{S_\alpha\, :\, \alpha \in M_\delta\}$ is analytic, there exist $\eps\ge0$ and $\nu>0$ such that 
	\begin{equation}
		\Gamma:= \partial B_{r+\eps} (z) \subset \rho(S_{\alpha}), \qquad \alpha \in B_\nu(\alpha_0),
	\end{equation}
	and the rank of the corresponding spectral projections is constant, i.e.
	\begin{equation}
		\label{P.alph.def}
		P_\alpha := - \frac{1}{2\pi \ii} \int_{\Gamma} (S_\alpha - \la)^{-1} \dd \la, \qquad \operatorname{rank} P_\alpha =: k \in \N,
	\end{equation}
	see~\cite[Sec.~VII.1.3]{Kato-1966}.
	For every $\alpha \in B_\nu(\alpha_0)$, the $k$-dimensional subspace $\Ran (P_\alpha)$ is left invariant by $S_\alpha$ and  the spectrum of $S_\alpha$ within the contour $\Gamma$ consists of the $k$ eigenvalues (counted with algebraic multiplicities) of the finite rank operator $A_\alpha:= S_\alpha P_\alpha$, namely
	\begin{equation}
		\sigma(S_\alpha) \cap B_{r+\eps} (z) = \big\{\la \in \C \, : \, p_\alpha(\la) := \det (A_\alpha - \la) = 0 \big\},
	\end{equation}
	where $p_\alpha$ is a polynomial of degree $k$. Since the coefficients of  $p_\alpha$ can be written as polynomials in $\operatorname{tr} (A_\alpha^l)$ with different $l \in \N$, see~\cite{Kalman-2000-73} or \cite[Thm.~XIII.108]{Reed4}, they depend holomorphically on $\alpha \in B_\nu(\alpha_0)$. For $\mu \in (-\nu, \nu)$, it follows from \eqref{eq:sp.uni.equ} that
	\begin{equation}
		A_{\alpha_0+\ii \mu} = w^{\frac{\ii \mu}2} S_{\alpha_0} w^{-\frac{\ii \mu}2} P_{\alpha_0+\ii \mu} =  w^{\frac{\ii \mu}2} S_{\alpha_0} P_{\alpha_0}w^{-\frac{\ii \mu}2}  = w^{\frac{\ii \mu}2} A_{\alpha_0}w^{-\frac{\ii \mu}2},
	\end{equation}
	where the identity in the middle is a consequence of the definition of $P_\alpha$, \eqref{eq:sp.uni.equ} and the unitarity of $w^{-\frac{\ii \mu}2}$, in more detail
	\begin{equation}
		\label{P.inv.im}
		\begin{aligned}
			w^{-\frac{\ii \mu}2} P_{\alpha_0+\ii \mu} & = - \frac{1}{2 \pi \ii} \int_\Gamma w^{-\frac{\ii \mu}2} (S_{\alpha_0+\ii \mu}-\la)^{-1} \dd \la \\
			& = - \frac{1}{2 \pi \ii} \int_\Gamma  (S_{\alpha_0}-\la)^{-1} w^{-\frac{\ii \mu}2} \dd \la = P_{\alpha_0}  w^{-\frac{\ii \mu}2}.
		\end{aligned}
	\end{equation}
	In other words, also $A_{\alpha_0+\ii \mu}$ and $A_{\alpha_0}$ are unitarily equivalent and thus
	\begin{equation}
		\operatorname{tr} (A_{\alpha_0+\ii \mu}^l) = \operatorname{tr} \left(w^{-\frac{\ii \mu}2} A_{\alpha_0+\ii \mu}^l w^{\frac{\ii \mu}2}\right) = \operatorname{tr} (A_{\alpha_0}^l), \qquad l \in\N. 
	\end{equation} 
	This implies that
	\begin{equation}
		p_{\alpha_0+\ii \mu} \equiv p_{\alpha_0}, \qquad \mu \in (-\nu,\nu),
	\end{equation}
	and thus by the identity theorem for holomorphic functions, we conclude that in fact $p_\alpha \equiv p_{\alpha_0}$ for all $\alpha \in B_\nu(\alpha_0)$.	Hence, the (discrete) spectra of $S_\alpha$ and $S_{\alpha_0}$ coincide inside the contour $\Gamma$, as well as the corresponding algebraic multiplicities. This proves the local claim~\eqref{eq:ev.local}, which in turn by a compactness argument implies that for every compact $K \subset \C\setminus \Sigma$ there exists $\eta=\eta(K)>0$ such that for all $\alpha \in B_\eta([0,1])$ we have
	\begin{equation}
		\label{eq:ev.comp}
		m_{\rm a} (\la, S_\alpha) = m_{\rm a} (\la, S_0), \qquad \la \in \sigma(S_\alpha)\cap B_\eta(K) = \sigma(S_0) \cap B_\eta(K).
	\end{equation}
	
	To prove the full claim, we introduce a distribution which contains all information about the (discrete) spectrum of $S_{\alpha_0}$ outside $\Sigma$. More precisely, let
	\begin{equation}
		\Psi_{\alpha_0} := \sum_{\la \in \sigma (S_{\alpha_0}) \setminus \Sigma} m_{\rm a} (\la, S_{\alpha_0}) \delta(x-\la) \, \in \, \Dd'(\C \setminus \Sigma), \qquad \alpha_0 \in [0,1],
	\end{equation}
	where we identify $\C \simeq \R^2$ when test functions or distributions are concerned. Since there are only finitely many eigenvalues of $S_{\alpha_0}$ lying in a compact set $K \subset \C\setminus \Sigma$, it is clear that $\Psi_{\alpha_0}$ is indeed a distribution. To conclude the proof, recalling that $S_{\alpha_0}$ is unitarily equivalent to $T_{w^{\alpha_0}}$, see~\eqref{un.equiv}, it suffices to show that $\Psi_{\alpha_0}$ is constant, i.e.~that for every $\varphi \in\Dd(\C\setminus\Sigma)$ the function
	\begin{equation}
		[0,1] \ni \alpha_0 \mapsto f_\varphi (\alpha_0):=(\Psi_{\alpha_0}, \varphi )_{\Dd'(\C\setminus\Sigma) \times \Dd(\C\setminus\Sigma) }  
	\end{equation}
	is constant. This, however, is immediate from \eqref{eq:ev.comp} with $K = \supp \varphi$ and
	\begin{equation}
		f_\varphi(\alpha_0) =  \sum_{\la \in \sigma (S_{\alpha_0}) \cap \supp \varphi} m_{\rm a} (\la, S_{\alpha_0}) \varphi(\la).
	\end{equation}

	\item Invariance of eigenfunctions:
		
		We use the notation from (a) and follow the strategy of O'Connors Lemma, see \cite{OConnor-1973-32} and \cite[p.~196]{Reed4}. Let $\la_0$ be a fixed eigenvalue in $\sigma(S_{0}) \setminus \Sigma$, with algebraic multiplicity $m \in \N$. By \eqref{eq:ev.comp}, there exist $r, \eta>0$ such that 
		\begin{equation}
				\sigma (S_{\alpha}) \cap \overline{B_{r}(\la_0)} = \{\la_0\}, \qquad m_{\rm a} (\la_0, S_\alpha) = m_{\rm a} (\la_0, S_{0}), 
		\end{equation}
		for all $\alpha \in B_{\eta} ([0,1])$. Let $P_\alpha$ be the corresponding Riesz projections as in \eqref{P.alph.def} with the contour $\Gamma = \partial B_{r} (\la_0)$, which depend holomorphically on $\alpha \in B_\eta([0,1])$, see \cite[Sec.VII.1.3]{Kato-1966}.
		
		By the density of $\CcOm$ in $L^2(\Omega)$ and the boundedness of $P_0$, there exists $\{\phi_1,\dots,\phi_m\} \subset \CcOm$ such that \{$P_0 \phi_1, \dots, P_0 \phi_m\}$ is a basis of $\Ran (P_0)$ (since the vectors $P_0 \phi_j$, $j=1,\dotsc,m$, can be chosen to approximate a selected orthonormal basis of $\Ran(P_0)$ to an arbitrary  precision).
		For $\psi \in \Ran(P_0)$, there are $\beta_j \in \C$, $j=1,\dots,m$, such that
		\begin{equation}
			\psi = \sum_{j=1}^m \beta_j P_0 \phi_j.
		\end{equation}
		Since $\phi_j \in L^2_{w^{\Re \alpha}}(\Omega)$ and  $Q_\alpha:=w^{-\frac\alpha2} P_\alpha w^{\frac\alpha2}$ is the Riesz projection  of $T_{w^{\Re \alpha}}$ corresponding to $\la_0$, see~\eqref{un.equiv}, we can define a function		
		\begin{equation}
			\psi(\alpha) := \sum_{j=1}^m \beta_j w^{-\frac \alpha 2} P_\alpha w^{\frac \alpha 2} \phi_j \in \Ran (Q_\alpha), \qquad \alpha \in B_{\eta} ([0,1]).
		\end{equation}

		We show that $\psi(\alpha)$, viewed as element of $\LolocOm \subset \cD'(\Omega)$, is independent of $\alpha$. To this end, for every $\varphi \in \cD(\Omega)$, define a function
		\begin{equation}
			g_\varphi(\alpha) := ( \psi(\alpha),\varphi )_{\cD' (\Omega)\times \cD(\Omega)}, \qquad \alpha \in B_{\eta} ([0,1]).
		\end{equation}
		Note first that, for $u \in C^\infty_0(\Omega)$, one can show by standard arguments that $\alpha \to w^{\pm \frac\alpha 2} u$ is holomorphic with values in $L^2(\Omega)$. Since the $P_\alpha$ are holomorphic with values in $\cB(L^2(\Omega))$, it is then straightforward to see that the function
		\begin{equation}
			B_{\eta} ([0,1]) \ni \alpha \to g_\varphi (\alpha) = \sum_{j=1}^m \beta_j \langle P_\alpha w^\frac\alpha 2 \phi_j, \overline{w^{-\frac\alpha 2} }\varphi \rangle_{L^2(\Omega)}
		\end{equation}
		is holomorphic. Moreover, from \eqref{P.inv.im}, for any $\alpha_0 \in [0,1]$ and $\mu \in (-\eta,\eta)$ we have $\psi(\alpha_0 + \ii \mu) = \psi(\alpha_0)$,
		hence $g_\varphi (\alpha_0 + \ii \mu) = g_\varphi (\alpha_0)$. By the identity theorem for holomorphic functions, $g_\varphi$ is constant on $B_{\eta} ([0,1])$ and thus $\psi(\alpha)$ is a constant distribution in $\alpha$. In particular,  
		\begin{equation}
			 \psi = \psi(0) = \psi(1)  \in \Ran(Q_1),
		\end{equation} 
		which implies $\Ran (P_0) \subset \Ran (Q_1)$ and hence $\Ran (P_0) = \Ran (Q_1)$ as both spaces have dimension $m$. More precisely, the (finite dimensional) root subspaces of $S_0=T$ and $T_w$ coincide, and since their actions are given by the same weak differential expression, see \eqref{T.descr}, they have the same Jordan structure \eqref{gen.ef} and the proof is complete.  \qedhere
		\end{enumerate}	
		
\end{proof}

\subsection{Graph norm separation}

\begin{lemma}[Core of $T_w$]\label{lem:core}
	Let Assumption~{\rm\ref{asm:main.gn}}~\ref{item:asm.GN.reg} and~\ref{item:asm.P1.ell} be satisfied. Then $T_w$ in~\eqref{T.descr} is well-defined and
	\begin{equation}\label{cD.def}
		\cD_w :=\Big\{ f \in \Dom(T_w) \, : \,  \operatorname{ess} \supp f \text{ {\rm is bounded in }} \R^d\Big\}
	\end{equation}
	is dense in $\big(\Dom(T_w),(\|T_w \cdot\|_w^2+\|\cdot\|^2_w)^\frac12\big)$. 
	
	Moreover, if $\Omega = \Rd$, then $C_0^\infty(\Rd)$ is dense in $\big(\cD_w,(\|T_w \cdot\|_w^2+\|\cdot\|^2_w)^\frac12\big)$.
\end{lemma}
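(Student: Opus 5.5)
The proof has three parts: well-definedness, a cut-off argument giving density of $\cD_w$, and a mollification argument based on Friedrichs' commutator lemma for the case $\Omega=\Rd$.

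\emph{Well-definedness.} By Assumption~\ref{asm:main.gn}~\ref{item:asm.P1.ell} we have $|\nabla f|\le \delta_P^{-1/2}|P_1^{1/2}\nabla f|$. Since $w^{-1}\in L^\infty_{\rm loc}(\Omega)$, every $f\in\cV_{w,0}$ (indeed every $f\in\cV_w$) lies in $W^{1,2}_{\rm loc}(\Omega)$. Since $P\in W^{1,\infty}(\Omega)$ and $V\in L^\infty_{\rm loc}(\Omega)$, the expression $-\nabla\cdot(P\nabla f)+Vf$ is therefore a well-defined distribution. So $T_w$ in~\eqref{T.descr} makes sense as an operator in $L^2_w(\Omega)$, and the condition~\eqref{ellipse} is not needed for this part.

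\emph{Density of $\cD_w$.} Fix $\xi\in\Coo(\Rd)$ with $0\le\xi\le1$ and $\xi=1$ on $B_1(0)$, and set $\xi_n:=\xi(\cdot/n)$. For $f\in\Dom(T_w)$ put $f_n:=\xi_n f$, which has bounded essential support.
\begin{enumerate}[\upshape (1)]
\item We show $f_n\in\cV_{w,0}$. Multiplication by a bounded Lipschitz function with bounded gradient is bounded on $\cV_w$, by the same computation as in Lemma~\ref{lem:Phi.bdd} with $|P_1^{1/2}\nabla\xi_n|\lesssim 1$. It maps $\Coo(\Omega)$ into $W^{1,\infty}_{\rm comp}(\Omega)\subset\cV_{w,0}$ (Lemma~\ref{lem:W.1.infty}), hence it preserves $\cV_{w,0}$.
\item In $\cD'(\Omega)$ the product rule gives
\begin{equation}
T_w f_n=\xi_n T_w f-\langle P\nabla f,\nabla\xi_n\rangle_{\C^d}^{\sim}-\langle P\nabla \xi_n,\nabla f\rangle_{\C^d}^{\sim}-f\,\nabla\cdot(P\nabla\xi_n),
\end{equation}
where $\sim$ denotes the bilinear (non-conjugated) pairings.
\item We bound the commutator terms. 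Using $|\nabla\xi_n|\lesssim n^{-1}$, $|\nabla^2\xi_n|\lesssim n^{-2}$ and $P\in W^{1,\infty}$, their $L^2_w$-norm is at most $\lesssim n^{-1}(\|\nabla f\|_w+\|f\|_w)\lesssim n^{-1}\|f\|_{\cV_w}$. This tends to $0$, and in particular these terms lie in $L^2_w$, so $f_n\in\cD_w$.
\item By dominated convergence, $\xi_nT_wf\to T_wf$ and $f_n\to f$ in $L^2_w(\Omega)$. Hence $f_n\to f$ in the graph norm.
\end{enumerate}

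\emph{The case $\Omega=\Rd$.} Let $f\in\cD_w$ and set $f_\eps:=f*\phi_\eps\in\Coo(\Rd)$ with a standard mollifier $\phi_\eps$.
\begin{enumerate}[\upshape (1)]
\item Reduction to $L^2$. All $f_\eps$ are supported in a fixed compact set $K$, and on $K$ the weights $w$ and $w^{-1}$ are bounded. So it suffices to prove $f_\eps\to f$ and $T_wf_\eps\to T_wf$ in $L^2(K)$.
\item The zeroth order terms are immediate: $f\in W^{1,2}(\Rd)$ by ellipticity and compact support, so $f_\eps\to f$ in $W^{1,2}$. Also $Vf_\eps\to Vf$ in $L^2$ since $V\in L^\infty(K)$.
\item The principal part is the main obstacle, namely showing $\nabla\cdot(P\nabla f_\eps)\to\nabla\cdot(P\nabla f)$ in $L^2$. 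We write
\begin{equation}
\nabla\cdot(P\nabla f_\eps)=\phi_\eps*\big(\nabla\cdot(P\nabla f)\big)+\sum_{i,j}\big[\partial_i P_{ij}\,\cdot\,,\phi_\eps*\big]\partial_j f .
\end{equation}
The first term converges to $\nabla\cdot(P\nabla f)$ in $L^2$, because $\nabla\cdot(P\nabla f)=Vf-T_wf\in L^2(K)$.
\item For the commutator term we invoke Friedrichs' lemma. If $a$ is Lipschitz and $u\in L^2$, then $\partial_i(a(\phi_\eps*u))-\phi_\eps*\partial_i(au)\to0$ in $L^2$. We apply it with $a=P_{ij}$ and $u=\partial_jf\in L^2$.
\end{enumerate}
If one prefers not to cite Friedrichs' lemma, it is proved by a uniform $L^2$ bound on the commutator, from the Lipschitz bound on $a$ and Young's inequality, together with convergence to $0$ on the dense set $\Coo$. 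Combining the steps gives $T_wf_\eps\to T_wf$ in $L^2_w$, which completes the density of $C_0^\infty(\Rd)$ in $\cD_w$.
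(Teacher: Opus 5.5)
Your proof is correct. The well-definedness and the cut-off parts agree with the paper. The paper proves $f_n\in\cV_{w,0}$ by approximating with $f_m\phi(\cdot/n)$, which is your observation that multiplication by $\xi_n$ is bounded on $\cV_w$ and preserves $\Coo(\Omega)$. It then bounds the commutator terms uniformly in $\Ltw$ with a prefactor $1/n$, as you do.

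The genuine difference is in the case $\Omega=\Rd$.
\begin{itemize}
\item \emph{The paper's route.} It first shows $f\in W^{2,2}(\Rd)$ for $f\in\cD_w$, citing interior $W^{2,2}$-regularity (Gilbarg--Trudinger, Thm.~8.8) with a proof ``modified in a straightforward way'' to complex $P$ whose real part is uniformly elliptic. It then expands $-\nabla\cdot(P\nabla f_\eps)=-\sum_{i,j}\big(P_{ij}\partial_{ij}f_\eps+(\partial_iP_{ij})\partial_jf_\eps\big)$ and passes to the limit term by term.
\item \emph{Your route.} You split off $\phi_\eps*\big(\nabla\cdot(P\nabla f)\big)$, which converges because $\nabla\cdot(P\nabla f)=Vf-T_wf\in L^2$ with compact support. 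You remove the remainder with Friedrichs' commutator lemma, which needs only $\partial_jf\in L^2$ and $P_{ij}\in W^{1,\infty}$. Your sketch of that lemma is complete: a uniform $L^2$ bound via $\int|\partial_i\phi_\eps(z)||z|\,\dd z=\BigO(1)$ and Young's inequality, plus convergence on $\Coo$.
\end{itemize}

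What each buys:
\begin{itemize}
\item Your argument never touches second derivatives of $f$ and is self-contained. It does not care that $P$ is complex and non-symmetric, so it avoids adapting the elliptic regularity theorem, which the paper only asserts.
\item The paper's argument gives extra structural information: compactly supported elements of $\Dom(T_w)$ lie in $W^{2,2}(\Rd)$. Your argument does not yield this, but the lemma does not need it.
\end{itemize}
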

\begin{proof}
	Notice first that Assumption~{\rm\ref{asm:main.gn}}~\ref{item:asm.GN.reg},~\ref{item:asm.P1.ell} imply Assumption~\ref{asm:main.LM}~\ref{item:LM.reg},~\ref{item:asm.P} and thus $\Vv_w$ in~\eqref{def.big.cV}, $\Vv_{w,0}$ in~\eqref{cV.0.def} and $T_w$ in~\eqref{T.descr} are well-defined (with the standard distributional action), see Lemma~\ref{lem.cV.compl} and cf.~the proof of Theorem~\ref{thm:t}.
	
	The justification of the first density claim proceeds by a standard cut-off strategy, cf.~\cite[proof of Lem.~3.6]{Krejcirik-2017-221} or~\cite[proof of~Thm.\ 8.2.1, Part 1]{Davies-1995}.
	Fix $f \in \Dom (T_w)$ and let $\phi \in\Coo(\R^d)$ with $\phi =1$ on $B_1(0)$ and $\supp \phi \subset B_2(0)$. Define
	\begin{equation}
		f_n(x):= f(x) \phi\left( \frac xn \right), \qquad x \in \Omega, \qquad n \in \N;
	\end{equation}
	then $f \in L^2_w(\Omega)$, $\operatorname{ess} \supp f_n$ is bounded and clearly $f_n \to f$  in $\Ltw$ as $n \to \infty$ by dominated convergence. 	
	To see that $f_n \in \Dom(T_w)$ for fixed $n \in \N$, using that $f \in \cV_w$, $P \in L^\infty(\Omega)$  
	and 
	\begin{equation}\label{eq:nabla.fn}
		\nabla f_{n} = \phi\left(\frac xn\right) \nabla f + \frac 1n f \nabla \phi \left(\frac xn \right) \in \Loneloc(\Omega),
	\end{equation}
	it follows that $f_{n} \in \Vv_w$. Moreover, since $f \in \Vv_{w,0}$, there exists $\{f_{m}\}_m \subset \Coo(\Omega)$ with $f_m \to f$ in $\Vv_w$ as $m \to \infty$. Setting
	\begin{equation}
		f_{n,m} (x) := f_m(x) \phi \left(\frac xn\right) \in \Coo(\Omega), \qquad m \in \N,
	\end{equation}
	using $P\in L^\infty(\Omega)$ and the analogue of~\eqref{eq:nabla.fn} for $\nabla f_{n,m}$, one sees $f_{n,m} \to f_n$ in $\Vv_w$ as $m \to \infty$, which gives $f_n\in \Vv_{w,0}$. Using identity~\eqref{eq:nabla.fn}, we further derive
	\begin{equation}\label{eq:core.approx}
		\begin{aligned}
		-\nabla \cdot (P\nabla f_n) + Vf_n & = \phi \left( \frac xn \right) T_w f - \frac1n \bigg\{ \nabla \phi \left(\frac xn\right) \cdot (P \nabla f)   \\
		& \qquad \quad  + \nabla f \cdot \left(P\nabla \phi \left(\frac xn\right) \right) + f \nabla \cdot \left(P\nabla \phi\left(\frac xn\right)\right)\bigg\}.
		\end{aligned}
	\end{equation}
	By $f \in \Vv_{w}$, writing $\nabla f = P_1^{-\frac 12} P_1^\frac12 \nabla f$ and using \eqref{asm:gn.P} as well as $P_{ij} \in W^{1,\infty}(\Omega)$,  we have that the curly bracket is bounded in $\Ltw$ uniformly in $n \in \N$; in particular, it follows that $f_n \in \Dom(T_w)$. By dominated convergence we moreover have 
	\begin{equation}
		\phi \left( \frac xn\right) T_w f \to  T_w f \quad \text{in} \quad \Ltw,
	\end{equation}
	thus $T_wf_n \to T_wf$ in $\Ltw$ as $ n \to \infty$ and the first density claim is proven.
	
	For the second density claim, let $\Omega = \Rd$ and $f \in \cD_w$ be arbitrary. In particular, $\operatorname{ess} \supp f$ is compact and since $V, w^{-1} \in L^\infty_{\rm loc} (\R^d)$, writing $\nabla f = P_1^{-\frac12} P_1^\frac12 \nabla f$ and using \eqref{asm:gn.P}, one has 
	\begin{equation}\label{eq:f.dom.sep}
			Vf, \,\, -\nabla \cdot(P \nabla f), \,\, f, \, \, |\nabla f|  \, \in L^2_w(\R^d) \cap L^2(\R^d).
		\end{equation}	
		
	For $\eps>0$, let $\phi_\eps$ be a standard mollifier on $\Rd$, see~\cite[Def.\ 2.28]{Adams-2003}. Then
	\begin{equation}
		f_\eps := f * \phi_\eps \in C_0^\infty(\Rd) \subset \cV_{w,0},
	\end{equation}
	and considering $V, w \in L^\infty_{\rm loc} (\R^d)$ and $P_{ij} \in W^{1,\infty}(\Rd)$, \eqref{eq:f.dom.sep} and standard properties of mollifiers, see e.g.~\cite[Thm.~2.29]{EE}, we also have 
	\begin{equation}
		\label{eq:f.eps.dom}
		V f_\eps, \,\, -\nabla \cdot(P \nabla f_\eps) \in L^2_w(\Rd)
	\end{equation}
	as well as $f_\eps \to f$ and $V f_\eps \to V f$ in $L^2_w(\R)$ as $\eps \to 0$; in particular, $f_\eps\in \Dom (T_w)$. 
	It further follows from~\eqref{eq:f.dom.sep},~$P_{ij} \in W^{1,\infty}(\Rd)$ and~\eqref{asm:gn.P} that $f \in W^{2,2}(\Rd)$, see e.g.~\cite[Thm.~8.8]{Gilbarg-1983} and notice that its proof can be modified in a straightforward way under the present assumptions (i.e.~$P_1$ uniformly elliptic).	
	Using $P_{ij} \in W^{1,\infty}(\Rd)$ and $w \in L^{\infty}_{\rm loc} (\Rd)$, we finally arrive at 
	\begin{equation}
		\begin{aligned}
			-	\nabla \cdot(P\nabla f_\eps) &= -\sum_{i,j=1}^d 
			\left(
			P_{ij} \partial_{ij} f_\eps + (\partial_i P_{ij}) \partial_j f_\eps  
			\right)
			\\
			&\qquad \quad \to
			-\sum_{i,j=1}^d 
			\left(
			P_{ij} \partial_{ij} f + (\partial_i P_{ij}) \partial_j f 
			\right)
			=- \nabla \cdot  (P\nabla f)
		\end{aligned}
	\end{equation}
	in $L^2_w(\Rd)$, and thus $f_\eps \to f$ in $\Vv_w$ as $\eps \to 0$. 
\end{proof}

\begin{lemma}[Graph norm separation]
	\label{lem:graph.norm}
	Let Assumption~{\rm\ref{asm:main.gn}} be satisfied with $\eps:= \eps_V+ \eps_w$ as in~\eqref{eps.asm} and let $\cD_w$ be as in \eqref{cD.def}. Then for every $\delta>0$ there exists $C(\delta)> 0$ with
	\begin{equation}\label{gnorm.sep.lem}
		\norm{ T_w f}_{w}^2 \ge (1 - \mu_1 - \delta) \norm{\nabla \cdot (P \nabla f)}_{w}^2 + (1 - \mu_2 - \delta) \norm{Vf}_{w}^2 - C(\delta) \norm{f}_{w}^2
	\end{equation}
	for all $f \in \cD_w$, where $\mu_1, \mu_2 \in (0,1)$ are as in~\eqref{eq:mu} and \eqref{delta} below.
\end{lemma}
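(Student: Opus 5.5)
The plan is to expand the square of $T_w f = -\nabla\cdot(P\nabla f) + Vf$ in $\Ltw$ and control the mixed term by integration by parts. For $f\in\cD_w$, write $L f := -\nabla\cdot(P\nabla f)$. Then
\begin{equation}
\|T_w f\|_w^2 = \|Lf\|_w^2 + \|Vf\|_w^2 + 2\Re\langle Lf, Vf\rangle_w ,
\end{equation}
and it suffices to show
\begin{equation}
2\Re\langle Lf, Vf\rangle_w \ge -\mu_1\|Lf\|_w^2 - \mu_2\|Vf\|_w^2 - \delta(\|Lf\|_w^2+\|Vf\|_w^2) - C(\delta)\|f\|_w^2 .
\end{equation}
By Lemma~\ref{lem:core} we may first work with $f\in C_0^\infty(\Rd)$ (or with bounded support and $W^{2,2}_{\rm loc}$ regularity) and then pass to $\cD_w$ by approximation. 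On bounded sets $V,\nabla V,w,\nabla w$ are bounded, so all integrations by parts below are justified.

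Integrating by parts gives
\begin{equation}
\langle Lf, Vf\rangle_w = \langle P\nabla f, \nabla(\ov V w f)\rangle = \int \ov V w\,\langle P\nabla f,\nabla f\rangle_{\C^d} + \int w\,\ov f\,\langle P\nabla f,\nabla \ov V\rangle + \int \ov V \ov f\,\langle P\nabla f, \nabla w\rangle .
\end{equation}
For the first term, generalised accretivity \eqref{asm:VP.accr} gives $\Re\,\ov V\langle P\nabla f,\nabla f\rangle = |V|\Re\langle \e^{-\ii\arg V}P\nabla f,\nabla f\rangle\ge 0$, so it can be dropped. The other two terms are bounded using \eqref{asm:V.sep.gn} and \eqref{asm:w.sep.gn} together with $|P\xi|\le(1+\delta_P^{-1}\|P_2\|_{L^\infty})|P_1^{1/2}\xi|\,|P_1^{1/2}\cdot|$-type bounds. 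Concretely, $|\langle P\nabla f,\zeta\rangle|\le (1+\delta_P^{-1}\|P_2\|_{L^\infty})|P_1^{1/2}\nabla f||P_1^{1/2}\zeta|$. Their sum is therefore bounded by
\begin{equation}
(1+\delta_P^{-1}\|P_2\|_{L^\infty})\int w\,|f|\,|P_1^{1/2}\nabla f|\,\big(\eps|V|^{3/2}+C|V|+C\big),\qquad \eps=\eps_V+\eps_w .
\end{equation}

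The key quantity to control is $\int w |V|^{3/2}|f||P_1^{1/2}\nabla f|$ by $\|Lf\|_w\|Vf\|_w$. Cauchy--Schwarz gives $\le \|Vf\|_w\,\||V|^{1/2}P_1^{1/2}\nabla f\|_w$. I would then estimate $\||V|^{1/2}P_1^{1/2}\nabla f\|_w^2\le \Re\int |V| w\langle P\nabla f,\nabla f\rangle$-type quantities by testing $Lf$ against $|V| f$:
\begin{equation}
\Re\langle Lf,|V|f\rangle_w = \Re\int |V|w\langle P\nabla f,\nabla f\rangle + \text{terms with }\nabla|V|,\ \nabla w ,
\end{equation}
with $\nabla|V|$ again controlled by \eqref{asm:V.sep.gn}. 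The error terms are of the same structure with a factor $\eps$. This leads to a quadratic inequality of the form $X^2\le \|Lf\|_w\|Vf\|_w + c\,\eps\, X\|Vf\|_w + \text{l.o.t.}$ for $X:=\||V|^{1/2}P_1^{1/2}\nabla f\|_w$, solving to $X\lesssim$ a root involving $\eps$. The lower-order terms with $|V|$ and constants are absorbed into $\delta(\|Lf\|_w^2+\|Vf\|_w^2)+C(\delta)\|f\|_w^2$ via Young's inequality and interpolation, $\|V^{1/2}f\|^2\le \delta\|Vf\|^2+C\|f\|^2$. Plugging in and applying Young with an optimised parameter gives $2\Re\langle Lf,Vf\rangle_w\ge -\mu_1\|Lf\|_w^2-\mu_2\|Vf\|_w^2-\dots$, where $\mu_1,\mu_2$ are explicit functions of $\eps(1+\delta_P^{-1}\|P_2\|_{L^\infty})$. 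These are defined to be the constants in \eqref{eq:mu}, and the condition $\mu_1,\mu_2<1$ is exactly \eqref{eps.asm}, the threshold $2-\sqrt2$ arising from solving that quadratic inequality.

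The main obstacle is this last bookkeeping step: getting the sharp quadratic inequality for $X$ and optimising Young's parameters so that the constants come out below $1$ precisely under \eqref{eps.asm}. I would first treat the case $P=I$, $w=1$ to recover the known $2-\sqrt2$ bound, and then track the factor $1+\delta_P^{-1}\|P_2\|_{L^\infty}$.
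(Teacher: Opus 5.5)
Your argument is essentially the paper's: expand the square, drop $\Re\langle P\nabla f,V\nabla f\rangle_w\ge 0$ by \eqref{asm:VP.accr}, bound the $\nabla V$, $\nabla w$ terms by $(1+\delta_P^{-1}\|P_2\|_{L^\infty})\langle|P_1^{1/2}\nabla f|,(\eps|V|^{3/2}+C_{w,\eps}|V|+C_{V,\eps})|f|\rangle_w$, and close the loop by testing against $|V|f$. The only difference is that you solve the resulting quadratic inequality for $\||V|^{1/2}P_1^{1/2}\nabla f\|_w$ exactly, whereas the paper uses Young's inequality with the parameters $\delta_1,\delta_2$ of \eqref{delta}; your version is at least as sharp, degenerates precisely at $\eps(1+\delta_P^{-1}\|P_2\|_{L^\infty})=2-\sqrt2$ (at the ratio $\|\nabla\cdot(P\nabla f)\|_w/\|Vf\|_w=\sqrt2-1$), and therefore gives \eqref{gnorm.sep.lem} with the paper's $\mu_1,\mu_2$.

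When writing it out, fix two points. First, for general $\Omega$ you cannot reduce to $C_0^\infty$ functions (Lemma~\ref{lem:core} provides this only for $\Omega=\R^d$), and $w,\nabla w$ need not be bounded near $\partial\Omega$. So justify the integrations by parts as the paper does, via $\langle-\nabla\cdot(P\nabla f),g\rangle_w=\langle P\nabla f,\nabla(gw)\rangle$ with $g=f,\,Vf,\,|V|f\in\cV_{w,0}$. Second, the lower-order term $C_{V,\eps}\langle|P_1^{1/2}\nabla f|,|f|\rangle_w$ needs a bound on the unweighted $\|P_1^{1/2}\nabla f\|_w$; this comes from additionally testing against $f$ (cf.~\eqref{sep.est.2}), not from your interpolation remark.
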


\begin{proof}[Proof of Lemma~{\rm\ref{lem:graph.norm}}]
	Fix $f \in \Dd_w$, then the boundedness of $\operatorname{ess} \supp f$ and 
	$V\in W^{1,\infty}_{\rm loc} (\overline \Omega)$ guarantee that 
	\begin{equation}\label{eq:bdd.supp}
		Vf, \, -\nabla \cdot (P\nabla f) \in \Ltw .
	\end{equation}
	
	To justify integrating by parts in what follows, observe first that, by \eqref{eq:bdd.supp}, for every $g \in \cV_{w,0}$ we have
	\begin{equation}\label{IBP}
		\langle -\nabla \cdot (P\nabla f), g \rangle_w = {\mathbf t}_w(f,v) - \langle Vf, g \rangle_w = \langle P \nabla f , \nabla (gw) \rangle.
	\end{equation}
	In~\eqref{sep.est.9}, \eqref{sep.est.2} and~\eqref{sep.est.6} below, we apply the above formula with $g=Vf$, $g=f$ and $g=|V|f$, respectively.
	
	To verify that $Vf \in \cV_{w,0}$, it is easy to check that $Vf \in \cV_w$ using $P \in L^\infty (\Omega)$ and $V \in W^{1,\infty}_{\loc} (\overline\Omega)$, see~\eqref{def.big.cV}.
	Let further $\{f_n\}_{n}\subset \Coo(\Omega)$ be a sequence approximating $f$ in $\Vv_w$. Since $f$ has bounded support, we can choose $f_n$ supported in a fixed bounded neighbourhood of $\operatorname{ess} \supp f$ (or otherwise use a standard cut-off strategy and $P \in L^\infty(\Omega)$ to modify $f_n$ accordingly). By Lemma~\ref{lem:W.1.infty} and $V \in W^{1,\infty}_{\rm loc} (\overline \Omega)$, it follows that
	\begin{equation*}
		Vf_n \in W^{1,\infty}_{\operatorname{comp}}(\Omega) \subset \cV_{w,0}, \qquad n \in \N.
	\end{equation*}
	Using $P \in L^\infty(\Omega)$, $V \in W^{1,\infty}_{\rm loc} (\overline \Omega)$ and $f_n \to f$ in $\cV_w$, one then shows by dominated convergence that $Vf_n \to Vf$ in $\cV_w$ as $n \to \infty$.
	The claim $|V|f\in \Vv_{w,0}$ is justified analogously.	

For the graph norm estimate, we start by computing 
\begin{equation}\label{sep.est.1} 
	\|(-\nabla \cdot (P \nabla) + V) f\|_w^2 
	= \| \nabla \cdot (P \nabla f)\|_w^2 + \|Vf\|_w^2   
	- 2 \re \langle \nabla \cdot (P \nabla f), Vf\rangle_w.
\end{equation}
We need to bound below the last term on the right hand side in terms of the first two positive terms. Integrating by parts leads to
\begin{equation}\label{sep.est.9}
		-  \re \langle \nabla \cdot (P \nabla f), Vf\rangle_w 
	\ge \re  \langle P \nabla f, V \nabla f \rangle_w 
	- |\langle P \nabla f, f (w \nabla V + V \nabla w ) \rangle| .
\end{equation}
The first term on the right of~\eqref{sep.est.9} can be dropped since by \eqref{asm:VP.accr}
\begin{equation}\label{sep.est.10}
	\Re \langle P \nabla f, V \nabla f\rangle_w = \Re \langle  \e^{-\ii \arg V}  P (\abs{V}^\frac{1}{2}\nabla f) ,\abs{V}^\frac{1}{2} \nabla f \rangle_w \ge 0.
\end{equation}
Notice next that, using $P \in L^\infty(\Omega)$, \eqref{asm:gn.P} and~\eqref{eq:sec.matrix}, it is elementary to derive 
\begin{equation}\label{eq:est.CP}
	\|P_1^{-\frac12} P P_1^{-\frac12}\|_{L^\infty} \le 1+C_P, \qquad C_P := \delta_P^{-1}\|P_2\|_{L^\infty}\ge 0.
\end{equation}
Employing \eqref{asm:V.sep.gn} and \eqref{asm:w.sep.gn}, one can thus estimate the second term on the right of~\eqref{sep.est.9} as follows
\begin{equation}\label{sep.est.11}
	\begin{aligned}
		& |\langle P \nabla f, f ( w \nabla V + V \nabla w ) \rangle| \\
		& \qquad \qquad \le \|P_1^{-\frac12} P P_1^{-\frac12}\|_{L^\infty} \langle |P_1^\frac{1}{2} \nabla f|, |f| ( w |P_1^{\frac{1}{2}}\nabla V| + |V| |P_1^{\frac{1}{2}} \nabla w|) \rangle 		\\
		& \qquad \qquad \le (1+C_P) \langle |P_1^\frac{1}{2} \nabla f|,  (\eps |V|^\frac{3}{2} + C_{w,\eps} |V| + C_{V,\eps} ) |f|\rangle_w.
		\end{aligned}
	\end{equation}

To bound the last term, more careful estimates with several loops are needed. First, using Cauchy--Schwarz' and Young's inequalities, we obtain
	\begin{equation}
		\label{eq:sep.main.term}
		\begin{aligned}
			& \langle |P_1^\frac{1}{2} \nabla f|, (\eps |V|^\frac{3}{2} + C_{w,\eps} |V| + C_{V,\eps} )|f| \rangle_w\\
		& \qquad \qquad\qquad \qquad\le \eps \delta_1  \| |V|^\frac12  P_1^\frac{1}{2} \nabla f \|_w^2 + \left(  \frac{C_{w,\eps}^2}{4 \delta}+ \frac{1} 2 \right) \|  P_1^\frac{1}{2} \nabla f \|_w^2 \\
		& \qquad \qquad\qquad \qquad\qquad \qquad \quad  + \left( \frac{\eps}{4 \delta_1} + \delta \right) \| Vf\|_w^2 + \frac{C_{V,\eps}^2}{2} \|f\|_w^2 ,
	\end{aligned}
\end{equation}
where $\delta_1,\delta > 0$ are arbitrary (to be chosen suitably later). We need to derive estimates for the first and second term on the right hand side of \eqref{eq:sep.main.term}.
For the second term, integration by parts gives
	\begin{equation}\label{sep.est.2}
		\begin{aligned}
			\|P_1^\frac{1}{2} \nabla f\|_w^2 
			= \langle P_1 \nabla f, \nabla f\rangle_w 
			& \le |\langle P\nabla f, \nabla f\rangle_w| \\
			& \le | \langle \nabla \cdot (P \nabla f), f\rangle_w| + |\langle P \nabla f,  f\nabla  w \rangle |.
		\end{aligned}
	\end{equation}
	Using \eqref{eq:est.CP},  \eqref{asm:w.sep.gn}, Cauchy--Schwarz' and Young's inequalities we further obtain
	\begin{equation}\label{sep.est.3}
		\begin{aligned}
			|\langle  P \nabla f, f \nabla w \rangle | & \le \|P_1^{-\frac12} P P_1^{-\frac12} \|_{L^\infty} \langle |P_1^\frac{1}{2} \nabla f|, |f| |P_1^{\frac12}\nabla w| \rangle \\ 
			& \le (1+C_P)\langle |P_1^\frac{1}{2} \nabla f|, (\eps_w |V|^\frac12 + C_{w,\eps}) |f|\rangle_w  \\
			& \le  (1+C_P) \Bigg\{ \widetilde \delta \| |V|^\frac12 P_1^\frac{1}{2} \nabla f  \|_w^2 +  \widetilde\delta\| P_1^\frac{1}{2} \nabla f \|_w^2 \\
			& \qquad \qquad \qquad \qquad \qquad \qquad \qquad + \frac {\eps_w^2 + C_{w,\eps}^2}{4 \widetilde\delta}  \|f\|_w^2 \Bigg\}
		\end{aligned}
	\end{equation}
	with $\widetilde\delta>0$ to be selected suitably. Putting together \eqref{sep.est.2} and \eqref{sep.est.3}, using Cauchy--Schwarz' and Young's inequalities and choosing $\widetilde\delta$ small enough such that $ \widetilde\delta (1+C_P)<1$ eventually leads to 
	\begin{equation}\label{sep.est.4}
		\begin{aligned}
			\|P_1^\frac{1}{2} \nabla f\|_w^2 & \le \frac 1 {1 -   \widetilde\delta(1+C_P)} \Bigg\{\widetilde \delta \|\nabla \cdot (P\nabla f )\|_w^2 + \frac{1}{4\widetilde\delta} \|f\|_w^2  +  \\
			& \qquad\qquad\qquad\quad (1+C_P) \Bigg( \widetilde\delta \||V|^\frac12  P_1^\frac{1}{2} \nabla f\|_w^2 + \frac {\eps_w^2 + C_{w,\eps}^2}{4\widetilde\delta} \|f\|_w^2   \Bigg) \Bigg\} .\\
		\end{aligned}
	\end{equation}
	Combining this with \eqref{eq:sep.main.term} we arrive at
	\begin{equation}\label{eq:sep.main.2}
		\begin{aligned}
			& \langle |P_1^\frac{1}{2} \nabla f|, (\eps |V|^\frac{3}{2} + C_{w,\eps} |V| + C_{V,\eps} ) |f| \rangle_w\\
			&  \qquad\quad \le   \frac{\widetilde \delta   }{1-\widetilde\delta (1+C_P)} \left(\frac{C_{w,\eps}^2}{4 \delta}+ \frac{1} 2  \right) \|\nabla \cdot (P\nabla f)\|_w^2  \\
			& \qquad\quad \qquad \qquad+ \left( \frac{\eps}{4 \delta_1} + \delta\right) \|Vf\|_w^2+ \eta \| |V|^\frac12  P_1^\frac{1}{2} \nabla f \|_w^2 + C_{\delta,\widetilde \delta} \|f\|_w^2 
		\end{aligned}
	\end{equation}
	with the constants $\eta, C_{\delta,\widetilde \delta}>0$ given as
	\begin{equation}
		\begin{aligned}
			\eta & :=  \left(\frac{C_{w,\eps}^2}{4  \delta}+ \frac{1} 2  \right) \frac{\widetilde\delta(1+C_P)   }{1-\widetilde\delta (1+C_P)} + \eps\delta_1 , \\
			C_{\delta,\widetilde \delta} & := \left(\frac{C_{w,\eps}^2}{4  \delta}+ \frac{1} 2  \right) \frac {1+ (1+C_P)\big(\eps_w^2 + C_{w,\eps}^2\big)}{4\widetilde\delta \big(1-\widetilde\delta(1+C_P)\big)} + \frac{C_{V,\eps}^2}{2};
		\end{aligned}
	\end{equation}
	notice that, e.g.~with the choice $\widetilde \delta = \delta^2$, we have $\eta = \eps \delta_1+ \BigO (\delta)$ for $\delta \to 0^+$.
	
	To estimate the first term in~\eqref{eq:sep.main.term} we integrate by parts, use Cauchy--Schwarz' and Young's inequalities, \eqref{eq:est.CP} as well as \eqref{asm:V.sep.gn} and \eqref{asm:w.sep.gn} to further derive
	\begin{equation}\label{sep.est.6}
		\begin{aligned}
			& \| |V|^\frac12 P_1^\frac{1}{2} \nabla f  \|_w^2   = \langle  P_1 (|V|^\frac12 \nabla f) ,  |V|^\frac12 \nabla f  \rangle_w \le |\langle  P (|V|^\frac12 \nabla f) ,  |V|^\frac12 \nabla f  \rangle_w | \\[2mm]
			& \qquad\quad\le |\langle  \nabla \cdot (P \nabla f), |V| f \rangle_w|    +|\langle P \nabla f, f ( |V| \nabla w  + w \nabla |V| ) \rangle | \\
			& \qquad\quad\le \delta_2 \|\nabla \cdot (P\nabla f )\|_w^2 + \frac1{4\delta_2} \|Vf\|_w^2 \\
			& \qquad\qquad\qquad +\|P_1^{-\frac12} P P_1^{-\frac12} \|_{L^\infty} \langle | P_1^\frac 12 \nabla f|, |f| (|V| |P_1^{\frac12}\nabla w|  + w |P_1^{\frac12} \nabla |V|| ) \rangle \\
			& \qquad\quad\le \delta_2 \|\nabla \cdot (P\nabla f )\|_w^2 + \frac1{4\delta_2} \|Vf\|_w^2 \\
			& \qquad\qquad\qquad + (1+C_P) \langle  | P_1^\frac 12 \nabla f|, (\eps |V|^\frac32 + C_{w,\eps} |V| + C_V) |f| \rangle_w
		\end{aligned}
	\end{equation} 
	with a suitable $\delta_2>0$ to be selected. Combining~\eqref{eq:sep.main.2} and \eqref{sep.est.6}, we obtain that 
	\begin{equation}\label{sep.est.12}
		\begin{aligned}
		 & \langle |P_1^\frac{1}{2} \nabla f|,  (\eps |V|^\frac{3}{2} + C_{w,\eps} |V| + C_{V,\eps} ) |f| \rangle_w\\
		 & \le \frac{1}{1-\eta (1+C_P)} \Bigg\{  \left( \eta\delta_2 + \frac{\widetilde\delta   }{1-\widetilde\delta (1+C_P)} \left(\frac{C_{w,\eps}^2}{4 \delta}+ \frac{1} 2  \right)  \right) \|\nabla \cdot (P\nabla f)\|_w^2  \\
		 &  \quad \qquad \qquad\qquad \quad + \left(  \frac{\eta}{4\delta_2}+   \frac{\eps}{4 \delta_1} +  \delta\right)   \|Vf\|_w^2 + C_{\delta,\widetilde \delta} \|f\|_w^2 \Bigg\}
		\end{aligned}
	\end{equation}
	as long as $\eta<1/(1+C_P)$. \noeqref{sep.est.9,sep.est.10} From \eqref{sep.est.1}--\eqref{sep.est.11} and \eqref{sep.est.12} it finally follows that
		\begin{equation}\label{graph.norm.sep}
			\begin{aligned}
				\| T_w f\|_w^2  \ge (1-\eta_1) \|\nabla \cdot (P \nabla f) \|_w^2 + (1-\eta_2) \|Vf\|_w^2 - \frac{2(1+C_P)C_{\delta,\widetilde\delta}}{1-\eta (1+C_P)} \|f\|_w^2
			\end{aligned}
		\end{equation}
	with the constants $\eta_1$, $\eta_2>0$ given as
		\begin{equation}
			\begin{aligned}
					\eta_1 & := \frac{2(1+C_P)}{1-\eta (1+C_P)} \left(\eta\delta_2 +  \frac{\widetilde\delta   }{1-\widetilde\delta (1+C_P)} \left(\frac{C_{w,\eps}^2}{4  \delta}+ \frac{1} 2  \right)   \right), \\
					\eta_2 & := \frac{2(1+C_P)}{1-\eta (1+C_P)} \left(\frac{\eta}{4\delta_2} + \frac{\eps}{4 \delta_1} + \delta\right).
			\end{aligned}
		\end{equation}
	
	We discuss the behaviour of $\eta$, $\eta_1$ and $\eta_2$. For fixed $\delta_1,\delta_2$, when setting $\widetilde \delta = \delta^2$ (any power $> 1$ would work), the constant $\eta = \eta(\delta)$ is continuous in $\delta$ around zero. Hence, if 
	\begin{equation}
		\eta(0) = \eps \delta_1 <\frac{1}{1+C_P}
	\end{equation}
	then $\eta < 1/(1+C_P)$ for small $\delta$ and also $\eta_1 = \eta_1 (\delta)$ and $\eta_2 = \eta_2(\delta)$ are continuous in $\delta$ around zero. Moreover,~\eqref{gnorm.sep.lem} is valid with
	\begin{equation}\label{eq:mu}
		\begin{aligned}
			\mu_1 & := \eta_1(0) = \frac{2 \eps (1+C_P) \delta_1 \delta_2}{1-\eps(1+C_P)\delta_1}, \\
			  \mu_2 & := \eta_2(0) = \frac{2 \eps(1+C_P) }{1 - \eps(1+C_P)\delta_1} \left( \frac{\delta_1}{4 \delta_2} + \frac{1}{4 \delta_1} \right).
		\end{aligned}
	\end{equation}
	It remains to discuss that one can select $\delta_1$ and $\delta_2$ such that $\eps \delta_1 < 1/(1+C_P)$ and $\mu_1,\mu_2 <1$. The latter are equivalent to
	\begin{equation}\label{eps.min}
		\eps < \frac{1}{(1+C_P)}\min \left\{\frac{1}{\delta_1(1+ 2 \delta_2 )}, \frac{2\delta_1\delta_2}{\delta_1^2 (1+2\delta_2) + \delta_2} \right\} .
	\end{equation}
	Maximising the minimum along the curve where its entries are equal leads to the choice
	\begin{equation}\label{delta}
		\delta_1 := \frac12, \qquad \delta_2:=\frac{1+\sqrt2}{2};
	\end{equation}
	(in fact, one can show by elementary considerations that the maximum of \eqref{eps.min} is indeed attained on this curve such that the above choice optimises~\eqref{eps.min}). Plugging this into \eqref{eps.min} leads to the restriction \eqref{eps.asm}. Since for such $\eps$ and $\delta_1$ as in~\eqref{delta} we have $\eps \delta_1 < 1/(1+C_P)$, the claim is proven.
\end{proof}

\begin{proof}[Proof of Theorem~{\rm\ref{thm:T.graph.sep}}]
	According to Lemma~\ref{lem:core}, $T_w$ is well-defined. Moreover, the space on the right of \eqref{eq:dom.T.gn} is clearly included in $\Dom (T_w)$. 
	
	On the other hand, for arbitrary $f \in \Dom (T_w)$ by Lemma~\ref{lem:core} there exists $\{f_n\}_n \subset \Dd_w$ such that $f_n \to f$ and $T_w f_n \to T_w f$ in $\Ltw$ as $n \to \infty$. From~\eqref{gnorm.sep.lem} with $f_n$ it follows that $\{Vf_n\}_n$ is Cauchy in $\Ltw$ and thus converges to some $g\in \Ltw$. Using $V \in L^\infty_{\loc}(\Omega)$, we have  
	\begin{equation}
		\langle g, \varphi\rangle_w = \lim_{n \to \infty} \langle Vf_n, \varphi\rangle_w = \lim_{n \to \infty}\langle f_n, \overline V \varphi\rangle_w= \langle f, \overline V\varphi\rangle_w= \langle Vf, \varphi\rangle_w
	\end{equation}  
	for any $\varphi \in \Coo(\Omega)$. By density of $ \Coo(\Omega)$ in $\Ltw$, see e.g.~\cite[Ex.~1.5.3~(c)]{BEH}, it follows that $Vf = g \in \Ltw$ and thus also
	\begin{equation}
		- \nabla \cdot (P \nabla f_n) = T_w f_n - Vf_n \to T_wf - Vf = -\nabla \cdot (P \nabla f) \in \Ltw
	\end{equation}
	converges $\Ltw$. This implies the equality in~\eqref{eq:dom.T.gn}. Moreover, from the above it follows that one can take the limit as $n \to \infty$ in \eqref{gnorm.sep.lem} with $f_n$ and arrives at the claimed graph norm estimate. The last density claim follows from Lemma~\ref{lem:core}.
\end{proof}

\section{Applications and examples}
\label{sec:appl}

\subsection{Completeness of eigensystems of Schr\"odinger operators in $L^2_{w}(\Omega)$}
\label{ssec:complete}

For one dimensional Schr\"odinger operators, we combine Corollaries~\ref{cor:Sp},~\ref{cor:res.bd} and Theorem~\ref{thm:comp.eigensys} to establish the completeness of their eigensystems in $L^2_w(\Omega)$ with suitable weights (depending on the strength of the potential).
More precisely, we consider $T_w$ with $\Omega=\R$, $P=1$ and
\begin{equation}\label{ex:compl.V}
	V \in L^2_{\rm loc}(\R), \qquad  \re V \ge 0, \qquad |V(x)|+1 \gs \langle x\rangle^\gamma, \qquad x \in  \R,
\end{equation} 
with some $\gamma>0$, and the family of weights
\begin{equation}\label{ex:compl.w}
	w(x) = \exp(\pm \langle x \rangle^\alpha), \qquad x \in \R, \qquad 0 <\alpha < 1+ \frac \gamma 2.
\end{equation}
This setting in particular covers the well-known example of the imaginary cubic oscillator $V(x)=\ii x^3$, see \cite{Lidskii-1960-9, Siegl-2012-86, Almog-2015-40} for results on the completeness of the eigensystem in $L^2(\R)$; the proposition below establishes in particular the completeness result for the imaginary cubic oscillator in the weighted spaces $L^2_{w}(\R)$ with the weights~\eqref{ex:compl.w} where $0<\alpha < 1+ 3 /2$.

\begin{proposition}
	Let $\Omega=\R$, $P=1$, and $V$, $w$ be as in~\eqref{ex:compl.V},~\eqref{ex:compl.w} with $\gamma>2$ and assume there exists $\Phi: \R \to [-1,1]$ satisfying Assumption~{\rm\ref{asm:main.LM}}~\ref{asm:LM.nabla.Phi},~\ref{asm:LM.Phi.V2}, (for sufficient conditions, see ~Section~{\rm\ref{sssec:Phi.V}}). Then $T_w$ in $\Ltw$, defined as in \eqref{T.descr}, is densely defined with compact resolvent and the linear span of its root functions (generalised eigenfunctions) is dense in $\Ltw$.
\end{proposition}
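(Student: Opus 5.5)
The plan is to combine Corollary~\ref{cor:Sp}, Corollary~\ref{cor:res.bd} and the abstract completeness result Theorem~\ref{thm:comp.eigensys} from the appendix. I expect the latter to be of Keldysh/Lidskii type: if the resolvent is in a Schatten class $\cS_p$ and the resolvent norm grows at most polynomially along finitely many rays that split $\C$ into sectors of opening smaller than $\pi/p$, then the root functions are complete.

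\textbf{Admissibility of the weight.} First I check that $w$ satisfies Assumption~\ref{asm:main.LM}~\ref{asm:LM.w} with $\kappa_w,\tau_w$ arbitrarily small. We have
\begin{equation}
	\frac{|w'|}{w} = \alpha |x| \langle x\rangle^{\alpha-2} \le \alpha \langle x\rangle^{\alpha-1}.
\end{equation}
Since $\alpha-1<\gamma/2$ and $|V|^\frac12+1 \gs \langle x\rangle^{\gamma/2}$, for every $\eps>0$ we get $\alpha\langle x\rangle^{\alpha-1}\le \eps \langle x\rangle^{\gamma/2} + C_\eps \ls \eps(|V|^\frac12+1)+C_\eps$. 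Together with $|V|^\frac12\le V_1^\frac12+|V_2|^\frac12$, this gives \eqref{asm:w.sep} with $\kappa_w=\tau_w\ls\eps$ and some $C_w$. Parts \ref{item:LM.reg}--\ref{item:asm.V.accr} of Assumption~\ref{asm:main.LM} are immediate for $P=1$ and $V\in L^2_{\rm loc}$, $\Re V\ge0$. Parts \ref{asm:LM.nabla.Phi} and \ref{asm:LM.Phi.V2} are assumed. Hence Corollary~\ref{cor:res.bd} applies: $T_w$ is densely defined, $\rho(T_w)\ne\emptyset$, and
\begin{equation}
	\|(T_w-r\e^{\ii\varphi})^{-1}\|\ls r^{-1}
\end{equation}
along every ray with $\varphi\in(\pi/2,3\pi/2)$. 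Since the resolvent bound is needed along several rays, I fix finitely many such angles first and then take $\kappa_w,\tau_w,\beta$ small enough to serve all of them simultaneously; this is legitimate because $\eps$ is arbitrary.

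\textbf{Schatten class.} Corollary~\ref{cor:Sp}~(ii) with $d=1$ gives $(T_w-\la)^{-1}\in\cS_p$ for every
\begin{equation}
	p>p_{\gamma,1}=\frac{\gamma+2}{2\gamma},
\end{equation}
and $\partial\Omega=\emptyset$ causes no trouble. Compactness of the resolvent follows. Because $\gamma>2$, we have $p_{\gamma,1}<1$, so we may pick $p<1$, and then $\pi/p>\pi$.

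\textbf{Completeness.} The resolvent bound holds on rays filling the open left half-plane. Choosing two rays with angles $\pi/2+\eta$ and $3\pi/2-\eta$, the complement splits into one sector of opening $\pi-2\eta+\ldots$ (through the left half-plane, where more rays can be inserted) and one sector of opening $\pi+2\eta$ containing the right half-plane. I need $\pi+2\eta<\pi/p$, which is achievable for $\eta$ small because $p<1$. This is exactly where $\gamma>2$ enters. Theorem~\ref{thm:comp.eigensys} then yields density of the span of the root functions in $\Ltw$.

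The main obstacle is matching the precise hypotheses of Theorem~\ref{thm:comp.eigensys}, in particular whether it requires a resolvent bound $\ls r^{N}$ on the rays (which we have with $N=-1$) and sector openings strictly below $\pi/p$. Checking the admissibility bound for both signs of the exponent is routine, since $w$ and $w^{-1}$ are admissible simultaneously.
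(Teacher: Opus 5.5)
Your proposal is correct and follows the paper's proof. Both use admissibility of $w$ with arbitrarily small $\kappa_w,\tau_w$, the ray bound from Corollary~\ref{cor:res.bd} in the left half-plane, $\cS_p$ with $p<1$ from Corollary~\ref{cor:Sp} (since $p_{\gamma,1}<1$ exactly when $\gamma>2$), and then Theorem~\ref{thm:comp.eigensys}. The obstacle you flag is harmless: $r^{-1}\lesssim r^{0}$ for large $r$, so $N=0\in\N$ works, and $T_w$ does not depend on the auxiliary constants $\kappa_w,\tau_w,\beta$, so the ray bounds for different angles all concern the same operator.
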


\begin{proof}
	Note that Assumption~\ref{asm:main.LM} holds, where for the weights in~\eqref{ex:compl.w} the condition~\eqref{asm:w.sep} is satisfied with arbitrarily small $\tau_w$ and $\kappa_w$. It follows from Corollary~\ref{cor:res.bd} and Theorem~\ref{thm:comp.eigensys} that $T_w$ is densely defined in $L^2_w(\R)$ and that the associated eigensystem is complete if the resolvent is of Schatten class $\cS_p$ with $p<1$. The latter, however, is a consequence of Corollary~\ref{cor:Sp} since 
	\begin{equation}
		p_{\gamma,1} = \frac{2+\gamma}{2\gamma} <1
	\end{equation}
	due to the restriction $\gamma>2$.
\end{proof}

\subsection{Matrix differential operator with non-symmetric off-diagonal}
\label{ssec:ex.1}

As an example with highly non-symmetric off-diagonal, we consider the operator matrix
\begin{equation}\label{cA.nsym}
	\cA =
	\begin{pmatrix}
		-\partial_x^2  & \ii \sinh x
		\\
		x^2    & 0
	\end{pmatrix}
\end{equation}
in $L^2(\R) \oplus L^2(\R)$. It does not satisfy any standard relative boundedness structures (diagonal/off-diagonal/upper/lower dominance, see \cite[Chap.~2]{Tretter-2008}) nor the assumptions of \cite[Thm.~2.3.3, 2.3.7]{Tretter-2008} allowing to relate the spectral properties of $\cA$ and its Schur or quadratic complements.

Our goal here is to apply the results on Schur complement dominance, see Section~\ref{ssec:Schur.dom}, to find a densely defined (Dirichlet) realisation of $\cA$ in $L^2(\R) \oplus L^2(\R)$ with non-empty resolvent set. The analysis is based on the first Schur complement
\begin{equation}\label{Sla.ex.1.def}
	S(\la) \equiv T_w (\la)  = -\partial_x^2 - \la + \frac{\ii }{\la}  x^2 \sinh x, \qquad \la \in \C \setminus\{0\},
\end{equation}
which is realised by Theorem~\ref{thm:t} in a weighted space $L^2_w(\R)$ with a weight
\begin{equation}\label{w.offd.def}
	w(x) = \frac1{1+ \sinh^2(x)}, \qquad x \in \R,
\end{equation}
(which balances the non-symmetric off-diagonal of $\cA$). An important role is played by its operator domain, which according to Theorem \ref{thm:T.graph.sep} is the ($\la$-independent) space
\begin{equation}\label{dom:T.Schur}
	\Dd_S:=\Dom(T_w) = \big\{ f \in \cV_{w,0} \, : \, -f'' \in L^2_w(\R), \, \, x^2\sinh x f \in L^2_w(\R) \big\},
\end{equation}
see~\eqref{cV.0.def}, and its graph norm is equivalent to
\begin{equation}\label{opm.1.DS}
	\|f\|_S^2 := \|f''\|_w^2 + \|x^2 \sinh x f\|^2_w + \|f\|^2_w, \qquad f \in \Dom(T_w).
\end{equation}

We remark (without further details) that since the spectra of~\eqref{Sla.ex.1.def} in $L_w^2(\R)$ and $L^2(\R)$ coincide by Theorem~\ref{thm:inv.Sp}, the spectral equivalence in Theorem~\ref{thm:Schur.dom} can be used to eventually relate the spectrum of $\cA$ to the spectrum of $T_1(\cdot)$ as in~\eqref{Sla.ex.1.def} with $w\equiv1$ realised in $L^2(\R)$. 

\begin{proposition}\label{prop:ex.non-sym.off-diag}
	Let $w$ be as in \eqref{w.offd.def} and let $(\Dd_S,\|\cdot\|_S)$ be the Hilbert space in~\eqref{dom:T.Schur},~\eqref{opm.1.DS}.
	Let $\cA$ be the operator in $L^2 (\R) \oplus L^2(\R)$ acting as in \eqref{cA.nsym} and with the domain
	\begin{equation}\label{ex.SCD.dom}
		\Dom(\cA) := \big\{ (f,g) \in \cD_S \oplus L^2(\R) \, : \, -f'' + \ii \sinh x g \in L^2(\R) \big\}.
	\end{equation} 
	Then $\cA$ is closed, has non-empty resolvent set and its domain is dense in both $\Dd_S \oplus L^2(\R) $ and $L^2(\R) \oplus L^2(\R) $. 
\end{proposition}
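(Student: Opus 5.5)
The plan is to apply the abstract Schur complement dominance result of Section~\ref{ssec:Schur.dom} (Theorem~\ref{thm:Schur.dom}) to the block operator matrix
\[
\cA=\begin{pmatrix} A & B\\ C & D\end{pmatrix},\qquad A=-\partial_x^2,\quad B=\ii\sinh x,\quad C=x^2,\quad D=0 .
\]
The required ingredients are the following. The first Schur complement $S(\la)=A-\la-B(D-\la)^{-1}C$ should be a closed operator with $\la$-independent domain $\cD_S$ and non-empty resolvent set for some $\la\neq 0$. The first column entries should be bounded from $\cD_S$ into $L^2(\R)$. The dominance of the coupling should be realised through the weight: the factor $w^{1/2}\approx 1/\cosh x$ should absorb the growth of $\sinh x$, so that $S(\la)$ acting in $L^2_w(\R)$ corresponds to the first row of $\cA$ acting in $L^2(\R)$.

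\textbf{Step 1: realise $S(\la)$.} We realise $S(\la)=-\partial_x^2-\la+\frac{\ii}{\la}x^2\sinh x$ through Theorems~\ref{thm:t} and~\ref{thm:T.graph.sep}, with $P=1$ (so $C_P=0$ and $\delta_P=1$) and potential $V_\la=\frac{\ii}{\la}x^2\sinh x+c$. The constant shift $c>0$ and a choice of $\la$ (for instance $\la=-\ii\mu$ with $\mu>0$, so that $V_\la=-\frac{1}{\mu}x^2\sinh x$) are arranged so that the generalised accretivity condition \eqref{asm:VP.accr} holds. Next we check Assumption~\ref{asm:main.gn}:
\begin{itemize}
\item $|\nabla V|\lesssim(x^2+|x|)\cosh x\le \eps|V|^{3/2}+C$ for arbitrarily small $\eps$, since $|V|^{3/2}\sim |x|^3\e^{3|x|/2}$;
\item $|w'|/w=2|\tanh x|\le 2\le \eps_w|V|^{1/2}+C$ with $\eps_w$ arbitrarily small.
\end{itemize}
Hence \eqref{eps.asm} holds. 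For Theorem~\ref{thm:t} we choose $\Phi$ as in Example~\ref{ex:Phi.1d}, or use $\Phi=0$ if $V_\la$ is real, and the admissibility constants $\kappa_w,\tau_w$ are arbitrarily small. This gives the domain $\cD_S$ in \eqref{dom:T.Schur}, which is independent of $\la$ because $\Dom(V_\la)$ is, together with the norm equivalence \eqref{opm.1.DS}. By Theorem~\ref{thm:t} we also get $\rho(T_w(\la))\neq\emptyset$ and, after possibly adjusting $\mu$, even $0\in\rho(T_w(\la))$.

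\textbf{Step 2: the off-diagonal bounds.} The key identity is
\[
\|\sinh x\, g\|_w=\Big\|\frac{\sinh x}{\cosh x}\,g\Big\|\le\|g\|,
\]
so $B$ is bounded from $L^2(\R)$ into $L^2_w(\R)$. For $f\in\cD_S$ we have
\[
\|x^2f\|\lesssim\|x^2\sinh x\,f\|_w+\|f\|_w ,
\]
since $|x^2|\lesssim|x^2\sinh x|/\cosh x$ for $|x|\ge1$ and $f$ is controlled on $[-1,1]$; thus $C\in\cB(\cD_S,L^2(\R))$. In the same way, $f\in\cD_S$ together with $-f''+\ii\sinh x\,g\in L^2$ encodes the first row of the domain \eqref{ex.SCD.dom}.

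\textbf{Step 3: resolvent of $\cA$.} For $\la\neq0$ we use the Frobenius–Schur factorisation with $(D-\la)^{-1}=-\la^{-1}$. The candidate resolvent is built from $S(\la)^{-1}$, $B$, $C$ and $\la^{-1}$:
\[
(\cA-\la)^{-1}(u,v)=(f,g),\qquad f=S(\la)^{-1}\big(u+\la^{-1}Bv\big),\qquad g=\la^{-1}(Cf-v).
\]
Here $S(\la)^{-1}$ is taken in $L^2_w(\R)$. The input $u+\la^{-1}Bv$ lies in $L^2_w(\R)$ because $L^2\subset L^2_w$ (as $w\le1$) and by Step~2.

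\textbf{Main obstacle.} The delicate point is that the first row must be recovered in $L^2(\R)$, not merely in $L^2_w(\R)$. The equation $-f''+\ii\sinh x\,g=u+\la f$ holds because $S(\la)f=u+\la^{-1}Bv$ expands to exactly this identity, and its right-hand side lies in $L^2(\R)$ since $f\in L^2$ by the bound in Step~2. This shows that the factorisation is consistent with the domain \eqref{ex.SCD.dom} and that the candidate operator is a bounded two-sided inverse, which gives closedness and $\la\in\rho(\cA)$; this is precisely what Theorem~\ref{thm:Schur.dom} packages.

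\textbf{Density.} Density of $\Dom(\cA)$ in $\cD_S\oplus L^2(\R)$ and in $L^2(\R)\oplus L^2(\R)$ is obtained from the density of $\cD_S$ in $L^2_w(\R)$ and of $C_0^\infty$ in $\cD_S$ (Theorem~\ref{thm:T.graph.sep}). Concretely, given $(f,g)$ we approximate $g$ by functions $g_n\in C_0^\infty$. For each such pair we correct the first component inside $\cD_S$ so that the compatibility condition $-f''+\ii\sinh x\,g\in L^2$ holds, again using the bounded resolvent constructed above.
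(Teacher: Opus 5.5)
Your architecture is the paper's: Theorem~\ref{thm:Schur.dom} with $\wh A=-\partial_x^2$, $\wh B=\ii\sinh x$, $\wh C=x^2$, $\wh D=0$ and $w=1/\cosh^2 x$. The bounds $\|\sinh x\, g\|_w\le\|g\|$ and $\|x^2f\|\lesssim\|f\|_S$ in your Step~2 are correct, and Step~3 is the Frobenius--Schur inversion that the theorem packages. Your density argument also works and is simpler than you make it: $\Coo(\R)\oplus\Coo(\R)\subset\Dom(\cA)$ needs no correction, and $\Coo(\R)$ is dense in $(\cD_S,\|\cdot\|_S)$ by the last claim of Theorem~\ref{thm:T.graph.sep}.

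\textbf{Step~1 fails as written.} Since $\Re(\ii/\la)=\Im\la/|\la|^2$ and $x^2\sinh x$ is unbounded both above and below, the potential $-\la+\frac{\ii}{\la}x^2\sinh x$ of $S(\la)$ can be made accretive by a constant shift only when $\la\in\R$. Your example $\la=-\ii\mu$ is the worst case. It gives $\ii\mu-\mu^{-1}x^2\sinh x$, whose real part tends to $-\infty$ as $x\to+\infty$. So neither $V_1\ge 0$ from Assumption~\ref{asm:main.LM} nor \eqref{asm:VP.accr} holds for any shift $c$; the real operator $-\partial_x^2-\mu^{-1}x^2\sinh x$ is even in the limit circle case at $+\infty$. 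Consequently Theorems~\ref{thm:t} and~\ref{thm:T.graph.sep} give you neither the realisation of $S(\la)$ nor the domain $\cD_S$ with the norm equivalence \eqref{opm.1.DS}. You must take $\la$ real, and in fact $\la<0$ as in the paper. Then $V_1=-\la>0$, and $V_2=\la^{-1}x^2\sinh x$ changes sign at $0$. That sign change requires the multiplier of Example~\ref{ex:Phi.1d} (with signs reversed), not $\Phi=0$.

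\textbf{The invertibility of $S(\la)$ at some $\la$ is unproved.} Your claim ``after possibly adjusting $\mu$, even $0\in\rho(T_w(\la))$'' is exactly the step that produces a point of $\rho(\cA)$, and it does not follow from Theorem~\ref{thm:t}. For each fixed $\la$ that theorem only gives $\rho(T_w(\la))\neq\emptyset$, i.e.\ \eqref{Schur.asm} with some $z_\la$. That yields the equivalence $\la\in\rho(\cA)\iff 0\in\rho(S(\la))$, but no $\la$ with $0\in\rho(S(\la))$. Nor is $S(\la)$ a shift of a fixed operator, since the coupling coefficient $\la^{-1}$ also varies.

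The paper closes this step by checking that every constant entering Proposition~\ref{prop:g.coer} is uniform in $\la<-1$:
\begin{itemize}
\item $\kappa_w=\tau_w=0$ and $C_w=\|w'/w\|_{L^\infty}$;
\item $C_{\Phi,\eps}=\|\Phi'\|_{L^\infty}$;
\item $C_{R,\eps}$ is uniform because $R$ is supported in $[-x_0,x_0]$ with $\|R\|_{L^1}\lesssim|\la|^{-1}$.
\end{itemize}
Hence $m_1,\gamma_1$ do not depend on $\la$, while $\|f\|_{\cV_w}^2\ge\|V_1^{1/2}f\|_w^2=|\la|\,\|f\|_w^2$. For $|\la|$ large the shift $\gamma_1$ is absorbed, the unshifted form is generalised coercive, and Theorems~\ref{thm:lm.0} and~\ref{thm:lm.1} give $0\in\rho(T_w(\la))$. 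With these two repairs your Step~3 goes through as written.
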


\begin{proof}
	The claims follow by applying Theorem~\ref{thm:Schur.dom}. In order to define $\cA$ and $S(\cdot)$ as considered therein, we need to introduce suitable spaces and distribution valued operators. In particular, we employ Theorems~\ref{thm:t} and~\ref{thm:T.graph.sep} to obtain the desired realisation of the Schur complement $T_w(\la)$ in $L^2_w(\R)$ and describe its operator domain.

	\begin{enumerate}[\upshape (a), wide] 
		\item \emph{Schur complement in weighted space.}
		For $\la < -1$, we introduce the Dirichlet realisation of~\eqref{Sla.ex.1.def} in $L^2_w(\R)$ with the weight~\eqref{w.offd.def} and establish the graph norm separation. To this end, we need to verify that the assumptions of Theorems~\ref{thm:t} and~\ref{thm:T.graph.sep} hold with $\Omega = \R$ and
		\begin{equation}
			P=1, \qquad V_1 = - \la, \qquad V_2(x) = \frac{1}\la x^2 \sinh x, \qquad x \in \R.
		\end{equation}
		Clearly, $P$, $V$ and $w$ satisfy Assumptions~\ref{asm:main.LM}~\ref{item:LM.reg}--\ref{item:asm.V.accr} and~\ref{asm:main.gn}~\ref{item:asm.GN.reg}--\ref{item:VP.accr}. Moreover, with the multiplier $\Phi$ and $R$ from Example~\ref{ex:Phi.1d}, it is elementary to see that the conditions \eqref{eq:phi.sgn.rem}, \eqref{eq:rem.rel.bdd.eps} and \eqref{asm:Phi.sep} hold with uniform constants $C_{\Phi,\eps}$ and $C_{R,\eps}$ in $\la <-1$. Moreover, 
	\begin{equation}
	|V_2'(x)| = \frac{1}{|\la|}|2x \sinh x + x^2 \cosh x| \ls |V_2(x)| + 1, \qquad x \in \R,
	\end{equation}
	hence \eqref{asm:V.sep.gn} holds with arbitrarily small $\eps_V$ and uniformly in $\la <-1$. Finally, the weight $w$ from \eqref{w.offd.def} satisfies Assumptions~\ref{asm:main.LM}~\ref{item:LM.reg} and~\ref{asm:main.gn}~\ref{item:asm.GN.reg} and it is admissible according to \eqref{asm:w.sep} and \eqref{asm:w.sep.gn} with $\kappa_w=\tau_w = \eps_w= 0$ therein; indeed, 
	\begin{equation}\label{w'.w.off.diag}
	\frac{|w'(x)|}{w(x)} \leq \frac{2|\sinh x \cosh x|}{1+ \sinh^2 x} \ls 1, \qquad x \in \R.
	\end{equation}
	It follows that \eqref{ellipse} with $\beta = 1$ and~\eqref{eps.asm} hold, such that Theorems~\ref{thm:t} and~\ref{thm:T.graph.sep} apply. In particular, $\Dom(T_w(\la))$ is given by~\eqref{dom:T.Schur}, it is dense in both $\cV_{w,0}$ and  $L^2_w(\R)$, and~\eqref{opm.1.DS} is equivalent to the graph norm of $T_w(\la)$. Since $T_w(\la)$ has non-empty resolvent set, it is closed and hence
	\begin{equation}\label{Tw.bdd.dom}
		T_w (\la) \in \Bb(\Dd_S,L^2_w(\R)).
	\end{equation}
		
		\item \emph{Spaces.} Besides $\Dd_S$, we introduce the remaining spaces needed for Assumption~\ref{asm:schur.dom}~\ref{item.spaces}. Namely,
		\begin{equation}
			\cH_1  = \cH_2 : = L^2(\R), \qquad \cD_2 =\cD_{-2}:=L^2(\R), \qquad \cD_{-S}: =L^2_w(\R).
		\end{equation}
		Notice that $\Dd_S \subset L^2(\R)$ is not immediate. However, it is straightforward to check that 	
		\begin{equation}
			\frac{x^4 \sinh^2 x}{1+\sinh^2 x} + \frac1{1+\sinh^2 x}   \approx x^4 + 1, \qquad x \in \R,
		\end{equation}
		hence
		\begin{equation}\label{S.norm.equiv}
			\|f\|^2_{S} \approx \|f''\|_w^2 + \|x^2 f\|^2 + \|f\|^2, \qquad f \in \cD_S.
		\end{equation}
		
		Considering~\eqref{dom:T.Schur} with $\Vv_{w,0} \subset \Vv_w$, see~\eqref{def.big.cV}, and~\eqref{opm.1.DS}, this implies $\|f\| \ls \|f\|_S$ for all $f \in \Dd_S \subset L^2 (\R)$ and the inclusion is dense since $\Coo(\R) \subset \Dd_S$. The continuous and dense embedding $L^2(\R) \subset L^2_w(\R)$ is clear; recall that $\Coo(\R)$ is dense in $L^2_w(\R)$, see e.g.~\cite[Ex.~1.5.3~(c)]{BEH}.

		\item \emph{Distributional operator matrix and Schur complement.} 
		We define the entries of $\wh \cA$ in Assumption~\ref{asm:schur.dom}~\ref{item.entries.1} and verify their mapping properties. In detail, 
		the needed boundedness for $\widehat A := -\partial_x^2$ follows from
		\begin{equation}
			\| \wh Af\|_{\cD_{-S}} = \|f'' \|_w \leq \|f \|_{S}, \qquad f \in  \Dd_S.
		\end{equation}
		The operator $\wh D:= 0$ is trivially bounded as claimed, and $\widehat \rho(D) = \C \setminus \{0\}$, see \eqref{eq:hat.rho}.
		Next, the multiplication $\wh B : = \ii \sinh x$ satisfies
		\begin{equation}
			\|\wh B f \|_{\cD_{-S}} = \| \ii \sinh x f \|_{w} \leq \|f\|, \qquad f \in L^2(\R),
		\end{equation}
		and from \eqref{S.norm.equiv} we obtain for $\wh C:=x^2$ that 
		\begin{equation}
			\| \wh C f\| = \| x^2 f\| \ls \|f\|_{S}, \qquad f \in \cD_{S}.
		\end{equation}
		It follows that the above defined operators satisfy Assumption~\ref{asm:schur.dom}~\ref{item.entries.1}, such that we can introduce $\widehat S(\la) \in \cB(\cD_{S}, L^2_w(\R))$ by the formula~\eqref{eq:def.hat.S} for all $\la \neq 0$. Considering $\la < -1$, one can easily check that, see~\eqref{T.descr},
		\begin{equation}
			\widehat S(\la) f = -f'' - \la  f  + \frac{\ii }{\la}  x^2 \sinh x f =  T_w(\la) f , \qquad f \in \Dd_S = \Dom (T_w).
		\end{equation}
		
		\item \emph{Bounded invertibility of $\wh S(\la)$.}		
		To apply Theorem~\ref{thm:Schur.dom}, we justify that there exists a sufficiently negative $\la <-1$ such that 
		\begin{equation}
			\wh S(\la) = T_w(\la) \in \cB(\cD_{S}, L^2_w(\R))
		\end{equation}
		is boundedly invertible. To this end, it suffices to show that $0 \in \rho(T_w(\la))$. The latter follows from Theorems~\ref{thm:lm.0}, \ref{thm:lm.1} and the coercivity estimates \eqref{t.gcoer.1} which hold with constants $\gamma_j$, $m_j$, $j=1,2$, independent of $\la < -1$ (since Assumption~\ref{asm:main.LM} is satisfied uniformly in $\la$, see above). 
		Indeed, we obtain for all $f \in \cV_{w,0}$ that
		\begin{equation}
			\Re \mathbf t_w(f,f) + |\Im \mathbf t_w(\Phi f,f)| \geq 
			m_1 \|f\|_{\cV_w}^2 -  \gamma_1\|f\|_w^2. 
		\end{equation}
		Employing \eqref{w'.w.off.diag}, we obtain
		\begin{equation}
		\begin{aligned}
		\Re \mathbf t_w(f,f) & \geq \|f'\|_w^2 - |\langle f', w' w^{-1} f \rangle_w| +|\la|\|f\|_w^2
		\\
		& 		
		\geq \frac12 \|f'\|_w^2 + \left(|\la|- 2 \|w' w^{-1}\|_{L^\infty}  \right) \|f\|_w^2 
		\\ &\geq
		\left(|\la|- 2 \|w' w^{-1}\|_{L^\infty}  \right) \|f\|_w^2. 
		\end{aligned}	
		\end{equation}
		Thus, for all $\la< -\la_0$ with $\la_0$ sufficiently large,
		\begin{equation}
		\begin{aligned}
		\Re \mathbf t_w(f,f) + |\Im \mathbf t_w(\Phi f,f)| &\geq  
		\frac12 \left(
			m_1 \|f\|_{\cV_w}^2 + \left(|\la| -  \gamma_1 - 2 \|w' w^{-1}\|_{L^\infty}\right)\|f\|_w^2
				\right)
		\\
		&\geq \frac{m_1}2 \|f\|_{\cV_w}^2;
			\end{aligned}
		\end{equation}
		the second inequality in \eqref{lm.gen.coev} is verified analogously.
		
	\end{enumerate}

	In summary, the assumptions of Theorem~\ref{thm:Schur.dom} are satisfied with $\Sigma = (-\infty,\la_0) $ for some $\la_0 <-1$, where \eqref{Schur.asm} holds with $z_\la = 0$. It follows that the operator matrix $\cA$ in \eqref{cA.nsym} and \eqref{ex.SCD.dom} has the claimed properties; in particular $(-\infty,\la_0) \subset \rho(S(\cdot))$ and hence also $(-\infty,\la_0) \subset \rho(\Aa)$. Notice that \eqref{ex.SCD.dom} indeed coincides with the maximal domain in~\eqref{eq:Schur.max.dom}.
\end{proof}

\subsection{Diagonally dominant matrix Schr\"odinger operator in a weighted space}

Similarly as in Section~\ref{ssec:ex.1}, a densely defined realisation of the matrix Schr\"odinger operator

\begin{equation}\label{diag.dom.weight}
	\cA =
	\begin{pmatrix}
		-\partial_x^2 + \ii x^3 & x
		\\
		x^5 & -\partial_x^2 + x^4
	\end{pmatrix}
\end{equation}
in $L^2(\R) \oplus L^2(\R)$ with non-empty resolvent set can be found using Schur complement dominance. However, one can also select suitable weights $w_1$ and $w_2$ such that~\eqref{diag.dom.weight}  becomes diagonally dominant in the product space $L_{w_1}^2(\R) \oplus L_{w_2}^2(\R)$. 

To be more precise, consider the Dirichlet realisations of
\begin{equation}
	A:=-\partial_x^2 + \ii x^3, \qquad D:=-\partial_x^2 + x^4
\end{equation}
in the respective spaces $L^2_{w_1}(\R)$ and  $L^2_{w_2}(\R)$ with $w_1 (x): =\langle x \rangle^\frac 52$ and $w_2 (x):= \langle x \rangle^{-\frac 52}$ for  $x \in \R$. Indeed, it is straightforward to show that Theorems~\ref{thm:t} and~\ref{thm:T.graph.sep} apply. From the graph norm estimate in the latter one derives
\begin{equation}\label{ex.2.AD.est}
	\begin{aligned}
		\|Af\|_{{w_1}} + \|f\|_{{w_1}} & \gs \|f''\|_{{w_1}} + \| \langle x \rangle ^3 f\|_{{w_1}}, \quad & f \in & ~\Dom(A),
		\\
		\|Dg\|_{{w_2}} + \|f\|_{{w_2}} & \gs \|g''\|_{{w_2}} + \| \langle x \rangle^4 g\|_{{w_2}}, \quad & g \in & ~\Dom(D).
	\end{aligned}
\end{equation}
Moreover, we consider the multiplication operators
\begin{equation}
	B:=  x :L^2_{w_2} (\R) \to  L^2_{w_1} (\R), \qquad C:= x^5 : L^2_{w_1} (\R) \to  L^2_{w_2} (\R),
\end{equation}
defined on their maximal domains (which by~\eqref{ex.2.AD.est} and~\eqref{ex.2.BC.est} below contain the domains of $D$ and~$A$, respectively). Using H\"older's and Young's inequalities, we see that for any $\eps>0$ there exists $C_{\eps}\ge 0$ such that
\begin{equation}\label{ex.2.BC.est}
	\begin{aligned}
		\|Cf\|_{{w_2}}  \ls  \|\langle x \rangle^\frac52 f\|_{{w_1}} & \ls \eps \| \langle x \rangle^3 f\|_{{w_1}} + C_{\eps} \| f\|_{{w_1}}, \quad & f \in & ~\Dom(A), 
		\\
		\|Bg\|_{{w_1}} \ls  \|\langle x \rangle^\frac72 g\|_{{w_2}} & \ls \eps \| \langle x \rangle^4 g\|_{{w_2}} + C_{\eps} \| g\|_{{w_2}}, \quad & g \in &~\Dom(D). 
	\end{aligned}
\end{equation}
Combining \eqref{ex.2.AD.est} and \eqref{ex.2.BC.est}, diagonal dominance of order zero is established.

We remark that the effect of considering weighted spaces can equivalently be explained as a transformation of $\cA$ from $L_{w_1}^2(\R) \oplus L_{w_2}^2(\R)$ to $L^2(\R) \oplus L^2(\R)$, i.e.~by employing the conjugation
\begin{equation*}
\diag (w_1, w_2)^\frac12 \cA \diag ( w_1, w_2)^{-\frac12}
	= 	\begin{pmatrix}
		w_1^\frac12(-\partial_x^2 + \ii x^3)w_1^{-\frac12} &  w_1^\frac12 x  w_2^{-\frac12}
		\\
		w_2^\frac12 x^5  w_1^{-\frac12} & w_2^\frac12 (-\partial_x^2 + x^4) w_2^{-\frac12}
	\end{pmatrix}.
\end{equation*}
Similarly to choosing suitable constants $w_1, w_2 >0$ e.g.\ in~\cite{Rasulov-2018-48}, we select weights $w_1$ and $w_2$ in order to partially balance the off-diagonal terms.

\subsection{Damped wave equation in weighted space with accretive and differential damping}
\label{ssec:dwe}

We consider the following damped wave equation
\begin{equation}\label{DWE.eq}
	u_{tt}(t,x) + \left(a_1(x) + \ii a_2(x) - \nabla_x \cdot ( a_0(x) \nabla_x)\right) u_t (x,t)= (\Delta_x - q(x)) u(t,x)
\end{equation}
for $t>0$ and $x \in \Omega$ with an open set $\Omega \subset \Rd$, imposing Dirichlet boundary conditions on $\partial \Omega$. Here we assume  that
\begin{equation}\label{dwe.reg}
	0 \le a_1 ,q \in L^1_{\rm loc}(\Omega), \quad a_2, |\nabla a_2| \in L^1_{\rm loc} (\Omega;\R), \quad 0 \le a_0 \in L^1_{\rm loc}(\Omega)^{d \times d}.
\end{equation}
By a standard procedure, \eqref{DWE.eq} can be rewritten as a system
\begin{equation}
	\partial_t \begin{pmatrix}
		u_1 \\ u_2 
	\end{pmatrix}
	= \cA 
	\begin{pmatrix}
		u_1 \\ u_2 
	\end{pmatrix}
\end{equation}
where $\cA$ is the operator matrix
\begin{equation}\label{DWE.cA.def}
	\cA: =
	\begin{pmatrix}
		0 & I
		\\
		\Delta -q & - (a_1 + \ii a_2 - \nabla \cdot ( a_0 \nabla))
	\end{pmatrix}.
\end{equation}
Our goal is to show that a suitable realisation of $-\cA$ generates a semigroup in the product space 
\begin{equation}\label{DWE.H}
	\cH_w := \cW_w \oplus L_{w}^2(\Omega),
\end{equation}
where $\cW_w$ is the Hilbert space  completion
\begin{equation}\label{DWE.cW_w.def}
	\Ww_w := \overline{\Coo(\Omega)}^{\|\cdot \|_{\Ww_w}}, \qquad  \|f\|_{\cW_w}^2 :=  \|\nabla f\|_w^2 + \| q^\frac12  f\|_w^2,
\end{equation}
with a suitably chosen weight $w$. To this end, we employ dominance of the second Schur complement; see Section~\ref{ssec:Schur.dom} with the roles of $\Hh_1$ and $\Hh_2$ exchanged and the Assumptions and results understood accordingly.

The main idea is to find a good domain for the operator matrix in terms of its second Schur complement, which formally reads as the differential expression
\begin{equation}
	S(\la) = -\frac{1}{\la} \left( -\nabla \cdot \big((I_{\C^d} + \la a_0 )\nabla\big) + q + \la (a_1 + \ii a_2) + \la^2\right),
\end{equation}
depending on the spectral parameter $\la >0$, in the weighted space $\Ltw$. Considering that $T_w = - \la S(\la)$ with the coefficients
\begin{equation}
	\label{DWE.T.coeff}
	P:= I_{\C^d}+\la a_0, \qquad V_1:= q+ \la a_1 + \la^2 , \qquad V_2 := \la a_2, \qquad \la >0,
\end{equation}
we base the analysis on our previous results about the Dirichlet realisation of $T_w$ and its properties. An important role is played by its form domain
\begin{equation}\label{DWE.cD_S.def}
	\Dd_S : = \left(\Vv_{w,0}, \|\cdot \|_S\right),
\end{equation}
i.e.~$\cV_{w,0}$ in~\eqref{cV.0.def} and~\eqref{DWE.T.coeff} equipped with the equivalent ($\la$-independent) norm
\begin{equation}\label{DWE.S.norm}
	\|f\|_{S}^2 : =  \|(I_{\C^d}+a_0)^\frac12 \nabla f\|_w^2 + \| q^\frac12  f\|_w^2  + \||a_1+\ii a_2|^{\frac 12}f\|_w^2 + \|f\|_w^2.
\end{equation}

The method of Schur dominance in Section~\ref{ssec:Schur.dom} then leads to $\Aa$ in~\eqref{DWE.cA.def} having the desired properties on
\begin{equation}
	\label{DWE.dom.Aa}
	\Dom (\Aa) := \Big\{(f,g) \in \Ww_w \oplus \Dd_S \, : \, (\Delta - q) f - (a_1 + \ii a_2 - \nabla \cdot (a_0\nabla) )g \in \Ltw \Big\},
\end{equation}
with its action understood in a weak way; for details see~\ref{item.DWE.op} in the proof of Proposition~\ref{prop:DWE} below.

\begin{proposition}\label{prop:DWE}
	Assume that $a_0$, $a_1$, $a_2$, $q$ are as in~\eqref{dwe.reg} and $w \in W^{1,\infty}_{\rm loc}(\Omega)$ is uniformly positive on compact subsets of $\Omega$. Let further $\cW_w$, $\Hh_w$, $\cD_{S}$ be as in \eqref{DWE.cW_w.def}, \eqref{DWE.H}, \eqref{DWE.cD_S.def}, and let $\cA$ be as in \eqref{DWE.cA.def}, \eqref{DWE.dom.Aa}. Suppose further that
	\begin{enumerate}[\upshape (i)]
		\item there exists $C_a \ge0$ and, for every $\delta >0$ there exists $C_\delta \ge 0$, such that
		\begin{equation}\label{asm.DWE.a2}
			\begin{aligned}
				|\nabla a_2 |& \le (1+a_2^2)^\frac32 \left(\delta q^\frac12 + C_a  |a_1+\ii a_2|^\frac12 + C_\delta\right), \\
				|a_0^\frac12 \nabla a_2 |& \le  (1+a_2^2)^\frac32 \left( \delta |a_1+\ii a_2|^\frac12 + C_\delta\right);
			\end{aligned}
		\end{equation}
		\item there exist $c, c_0>0$ and $\eps_0\in (0,2)$ such that
		\begin{equation}\label{DWE.w.asm}
			\frac{|\nabla w|}{w} \le c  (a_1 + 1)^\frac 12, \qquad 
			\frac{| a_0^\frac12  \nabla w|}{w} \leq \eps_0  (a_1 + c_0)^\frac12.
		\end{equation}	
	\end{enumerate}
	Then $\cA$ generates a $C_0$-semigroup in $\Hh_w$ and $\Dom(\Aa)$ is dense in $\cW_w \oplus \cD_{S}$ and~$\cH_w$.
\end{proposition}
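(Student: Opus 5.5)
The plan is to apply the Schur complement dominance result of Section~\ref{ssec:Schur.dom} (Theorem~\ref{thm:Schur.dom}), with the roles of $\cH_1,\cH_2$ exchanged, to obtain closedness, density of $\Dom(\cA)$ and $(\la_0,\infty)\subset\rho(-\cA)$ together with a resolvent bound. Generation will then come from a Hille--Yosida/Lumer--Phillips type argument: I expect $\cA-\omega$ to be dissipative up to a bounded shift in a suitably renormed $\cH_w$, since the weighted energy identity only produces lower-order error terms involving $\nabla w/w$.

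\textbf{Step 1: the Schur complement in $L^2_w(\Omega)$.} For $\la>0$ large, I would verify Assumption~\ref{asm:main.LM} with the coefficients~\eqref{DWE.T.coeff}.
\begin{itemize}
\item Sectoriality holds with $C_P=0$, since $P=I+\la a_0$ is self-adjoint and positive definite.
\item As multiplier I take the AH-type choice of Proposition~\ref{prop:Phi.AH-like} with $V_{1,r}=0$ and $V_{2,r}=\la a_2$, i.e.\ $\Phi=\la a_2/\sqrt{1+\la^2a_2^2}$, so that $\delta=0$. Condition~\eqref{V.nabla.asm} becomes $|P_1^{1/2}\nabla(\la a_2)|\le(1+\la^2a_2^2)^{3/2}(\eps|V|^{1/2}+C)$.
\item Since $|P^{1/2}\xi|\le|\xi|+\la^{1/2}|a_0^{1/2}\xi|$, this follows from~\eqref{asm.DWE.a2}. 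The $C_a|a_1+\ii a_2|^{1/2}$ term is absorbed by taking $\la$ large, because $|V|^{1/2}\gtrsim \la^{1/2}|a_1+\ii a_2|^{1/2}$. The $\delta q^{1/2}$ term is bounded by $\delta|V|^{1/2}$, and constants are absorbed into $\la$.
\end{itemize}
Care is needed with the $\la$-scaling. I would actually work with $S(\la)$ normalized by $\la$, and track that $\eps$ in~\eqref{asm:Phi.sep} can be made small uniformly.

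For admissibility of the weight, $|P_1^{1/2}\nabla w|/w\le |\nabla w|/w+\la^{1/2}|a_0^{1/2}\nabla w|/w$. By~\eqref{DWE.w.asm} this is at most $c(a_1+1)^{1/2}+\eps_0\la^{1/2}(a_1+c_0)^{1/2}$. Comparing with $V_1^{1/2}=(q+\la a_1+\la^2)^{1/2}$, we get $\kappa_w\le\eps_0+o(1)$ and $\tau_w=0$ as $\la\to\infty$.

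Since $\eps_0<2$, condition~\eqref{ellipse} with $C_P=0$ reduces to $\kappa_w<2$ (Remark~\ref{rem:gcoer}~\ref{item.rem.opt}). Hence Theorem~\ref{thm:t} and Proposition~\ref{prop:g.coer} apply with constants uniform in large $\la$. Exactly as in the proof of Proposition~\ref{prop:ex.non-sym.off-diag}(d), the coercivity estimates then give bounded invertibility of $\widehat S(\la)\colon\cD_S\to\cD_S^*$ for $\la\ge\la_0$. In addition, they give the quantitative bound $\|\widehat S(\la)^{-1}\|\lesssim$ (an explicit power of $\la$).

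\textbf{Step 2: spaces and entries.} I would set $\cD_2=\cD_S$ from~\eqref{DWE.cD_S.def} and $\cD_{-S}=\cD_S^*$, with $\cH_1=\cW_w$ and $\cH_2=L^2_w(\Omega)$. The entries are:
\begin{itemize}
\item $\widehat D=-(a_1+\ii a_2-\nabla\cdot a_0\nabla)\in\cB(\cD_S,\cD_S^*)$, defined via the weighted form as in~\eqref{t.def};
\item $\widehat C=\Delta-q\colon\cW_w\to\cD_S^*$, bounded because the term $\langle\nabla f,g\nabla w\rangle$ is controlled by $(a_1+1)^{1/2}|g|$, using~\eqref{DWE.w.asm} and~\eqref{DWE.S.norm};
\item $\widehat A=0$, and $B=I\colon\cD_S\to\cW_w$, bounded since $\|\cdot\|_{\cW_w}\le\|\cdot\|_S$.
\end{itemize}
Theorem~\ref{thm:Schur.dom} then yields that $\cA$ with domain~\eqref{DWE.dom.Aa} is closed, has non-empty resolvent set, and is densely defined in $\cW_w\oplus\cD_S$ and in $\cH_w$.

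\textbf{Step 3: generation, the main obstacle.} Theorem~\ref{thm:Schur.dom} only provides $\rho(\cA)\neq\emptyset$, not resolvent bounds of Hille--Yosida type. I would therefore try quasi-dissipativity directly. For $(f,g)\in\Dom(\cA)$, compute $\re\langle\cA(f,g),(f,g)\rangle_{\cH_w}$.
\begin{itemize}
\item The symmetric $\langle\nabla g,\nabla f\rangle_w$ terms cancel, modulo commutator terms containing $\nabla w/w$.
\item The damping yields $-\|a_1^{1/2}g\|_w^2-\|a_0^{1/2}\nabla g\|_w^2$, plus $\re\langle a_0\nabla g,g\nabla w\rangle$ and an imaginary $a_2$-part, which drops out in the real part.
\item The weight errors are estimated by~\eqref{DWE.w.asm}: $|\langle\nabla f,g\nabla w\rangle|\le c\|\nabla f\|_w\|(a_1+1)^{1/2}g\|_w$, which is not obviously absorbable. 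Also, $|\langle a_0\nabla g,g\nabla w\rangle|\le\eps_0\|a_0^{1/2}\nabla g\|_w\|(a_1+c_0)^{1/2}g\|_w$, and the constant $\eps_0<2$ is exactly what Young's inequality needs to absorb it into $\|a_0^{1/2}\nabla g\|^2+\|a_1^{1/2}g\|^2$ up to $C\|g\|_w^2$.
\end{itemize}
The $c$-term is the obstacle. I would first try an equivalent energy norm with a small cross term $\mu\re\langle f,g\rangle_w$, in the spirit of the generalised-coercivity multipliers. If that fails, I would instead derive the resolvent bound $\|(\cA-\la)^{-1}\|\le(\la-\omega)^{-1}$ from Step~1's uniform coercivity through the Schur factorization, and conclude by Lumer--Phillips on $\cH_w$ with the modified inner product.
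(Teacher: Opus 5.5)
Your overall route is the paper's: the second Schur complement via Theorem~\ref{thm:Schur.dom} with $\cH_1=\cW_w$, $\cH_2=L^2_w(\Omega)$, and the identification $-\la\widehat S(\la)=\widehat T_w$ with the coefficients \eqref{DWE.T.coeff}. Your weight admissibility ($\kappa_w\le\eps_0+o(1)$, $\tau_w=0$), your choice of entries, and your Young estimate of the $a_0$-term are all fine. However, two steps do not go through as written.

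\emph{Step 1: the multiplier is mis-scaled.} For $\Phi=\la a_2(1+\la^2a_2^2)^{-1/2}$ you get $\nabla\Phi=\la\nabla a_2\,(1+\la^2a_2^2)^{-3/2}$, which equals $\la\nabla a_2$ on $\{a_2=0\}$. There \eqref{asm.DWE.a2} only gives $|\nabla a_2|\le\delta q^{1/2}+C_a a_1^{1/2}+C_\delta$. So $|P_1^{1/2}\nabla\Phi|$ can be as large as $\la^{1/2}C_a(\la a_1)^{1/2}$, while $|V|^{1/2}=(q+\la a_1+\la^2)^{1/2}$ there. The effective $\eps$ in \eqref{asm:Phi.sep} is then of order $\la^{1/2}C_a$ and cannot stay below the $\la$-independent $\ec$. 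A concrete case: $d=1$, $a_0=q=0$, $a_1=x^2$, $a_2=\sin(x^2/2)$ satisfies (i) with $C_a=1$. The $q$- and $a_0$-terms similarly pick up factors $\la$ and $\la^{3/2}$, which force $\delta\sim\eps/\la$ and leave $C_{\Phi,\eps}$ uncontrolled. The bound $(1+a_2^2)^{3/2}\le(1+\la^2a_2^2)^{3/2}$ gains nothing where $a_2=0$. Normalising $S(\la)$ does not help either, since \eqref{asm:Phi.sep} is invariant under $(P,V)\mapsto(sP,sV)$. The paper uses the $\la$-independent $\Phi_a=a_2(1+a_2^2)^{-1/2}$. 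Using $|V|\ge\max\{q,\la|a_1+\ii a_2|,\la^2\}$, this gives, for $\la$ large,
\[
|P_1^{1/2}\nabla\Phi_a|\le\delta q^{1/2}+\big(\la^{-1/2}C_a+\delta\big)\la^{1/2}|a_1+\ii a_2|^{1/2}+C_\delta\big(1+\la^{1/2}\big)\le\eps_0|V|^{1/2},
\]
so $C_{\Phi,\eps_0}=0$. The only cost is $R=|\Phi_aV_2-|V_2||\le\la$, i.e.\ $C_R=\la$. Hence $\gamma_1=\gamma_2=O(\la)$, which is still beaten by $m_1\|V_1^{1/2}f\|_w^2\ge m_1\la^2\|f\|_w^2$.

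\emph{Step 3: generation is left open, but the term you call the obstacle is harmless.} Since $\|\nabla f\|_w\le\|(f,g)\|_{\cH_w}$,
\[
|\langle\nabla f,g\nabla w\rangle|\le\frac{1}{4\delta}\|\nabla f\|_w^2+\delta c^2\big(\|a_1^{1/2}g\|_w^2+\|g\|_w^2\big).
\]
The first term is absorbed by the shift $\mu\|(f,g)\|_{\cH_w}^2$. The second is absorbed by the damping $\|a_1^{1/2}g\|_w^2$, together with your $a_0$-estimate, as soon as $\delta c^2+\eps_0^2/4<1$.

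This gives accretivity of $\mu-\cA$ in the \emph{original} norm. Prove it first on $\Coo(\Omega)^2$, extend to $\cW_w\oplus\cD_S$ using $\widehat\cA\in\cB(\cW_w\oplus\cD_S,\cW_w\oplus\cD_S^*)$, and hence obtain it on $\Dom(\cA)$. No renormed inner product and no resolvent bound from the Schur factorisation are needed.

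Moreover, Theorem~\ref{thm:Schur.dom} gives more than $\rho(\cA)\neq\emptyset$. It gives the whole half-line $[\la_2,\infty)\subset\rho(\cA)$ on which $\widehat S(\la)$ is invertible, and this is exactly the range condition. So $\mu-\cA$ is m-accretive, and Lumer--Phillips gives the $C_0$-semigroup.
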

\begin{proof}
	The proof is based on Theorem~\ref{thm:Schur.dom} and split into several parts justifying its assumptions and applicability.
	
	\begin{enumerate}[\upshape (a), wide] 
		
		\item \emph{Spaces.} We start by introducing the Gelfand triples and operators in Assumption~\ref{asm:schur.dom}. Let $\Hh_1 := \Ww_w$, $\Hh_2:=\Ltw$ and  $\Dd_{1} = \Dd_{-1}:= \Ww_w$, i.e.~we do not employ a non-trivial triple in the first component. For the second component, $\Dd_S$ is defined in~\eqref{DWE.cD_S.def} and we set $\Dd_{-S} := \Dd_S^*$ to be its (anti-)dual space. Then Assumption~\ref{asm:schur.dom}~\ref{item.spaces} is clearly satisfied, where the inclusions
		\begin{equation}
			\Dd_S\subset \Ltw \subset \Dd_S^*
		\end{equation}
		are understood as in~\eqref{Gelfand}; recall that $\Dd_S=\Vv_{w,0}$ with \eqref{DWE.T.coeff} up to equivalence of norms, and that~$\Vv_{w,0} \subset \Ltw$ is dense and boundedly embedded.  
		
		\item \label{item.DWE.op} \emph{Operator matrix.} To define the operators in Assumption~\ref{asm:schur.dom}~\ref{item.entries.1}, we first note that one can boundedly embed $\Dd_S$ as a (dense) subspace of $\Ww_w$; this is a consequence of~\eqref{DWE.S.norm} being a closable form in $\Ww_w$, cf.~\cite[Prop.~4.6]{Gerhat-2024-286}. We then define
		\begin{equation}
			\widehat A:= 0 \in \Bb(\cW_w), \qquad \widehat B:=I_{\Dd_S\to\Ww_w} \in \Bb(\cD_{S},\cW_w).
		\end{equation}
		It follows that $\widehat A$ satisfies an analogue of \eqref{eq:hat.rho} for $\la \in \rho(\widehat A) = \C \setminus\{0\}$. The remaining two entries are introduced weakly. In detail, 
		\begin{equation}
			\widehat C:= \Delta -q \in \Bb(\cW_w,\cD_S^*)
		\end{equation}
		is the unique bounded extension of
		\begin{equation}
			( (\Delta-q) f,g)_{\cD_{S}^* \times \cD_{S}} := \langle \nabla f, \nabla (gw) \rangle - \langle qf, g \rangle_{w}, \quad f \in C_0^\infty(\Omega), \quad g \in \cD_S.
		\end{equation}
		Indeed, this is well-defined since 
		\begin{equation}
			\begin{aligned}
				|\langle \nabla f, \nabla (gw) \rangle - \langle qf, g \rangle_{w}| & \leq |\langle \nabla f, \nabla g \rangle_{w}| + |\langle \nabla f, g \nabla w \rangle| + | \langle qf,  g \rangle_w | \\
				& \ls \|f\|_{\cW_w} \|g\|_{{S}}
			\end{aligned}
		\end{equation}
		by Cauchy--Schwarz and the first inequality in~\eqref{DWE.w.asm}.
		Finally, we define	
		\begin{equation}\label{DWE.D}
			\widehat D:=- (a_1 + \ii a_2 - \nabla \cdot (a_0 \nabla))  \in \Bb(\cD_{S},\cD_S^*)
		\end{equation}
		by 
			\begin{equation}
				( - (a_1 + \ii a_2 - \nabla \cdot (a_0 \nabla)) f,g)_{\cD_{S}^* \times \cD_{S}} := - \langle  (a_1 + \ii a_2) f, g \rangle_w - \langle a_0\nabla f, \nabla (gw) \rangle
			\end{equation}
			for $f,g \in \cD_S$. To verify the boundedness property \eqref{DWE.D}, it is sufficient to use Cauchy--Schwarz' inequality and the second inequality in~\eqref{DWE.w.asm} to estimate
		\begin{equation}
			\begin{aligned}
				&|((a_1 + \ii a_2 - \nabla \cdot( a_0 \nabla)) f, g )_{\cD_S^* \times \cD_S} | 
				\\	
				&  \qquad	\quad\qquad	\quad \leq \||a_1 + \ii a_2|^\frac12 f\|_{w} \||a_1 + \ii a_2|^\frac12 g\|_{w} 
				\\ & \qquad	\qquad \qquad \qquad \qquad 
				+ \|a_0^\frac12 \nabla f\|_{w}  \|a_0^\frac12 \nabla g\|_{w}  + \|a_0^\frac12 \nabla f\|_{w} \|g w^{-1} a_0^\frac12  \nabla w \|_w
				\\ & \qquad	\quad\qquad	\quad \ls \|f\|_{{S}} \|g\|_{{S}}.
			\end{aligned}
		\end{equation}
		Hence, Assumption~\ref{asm:schur.dom} fully holds in the above setting with the distribution valued operator matrix
			\begin{equation}\label{DWE.bdd.cA.def}
				\widehat \cA \in \Bb(\cD, \cD_-), \qquad \cD:= \cW_w \oplus \cD_S, \qquad \cD_- := \cW_w \oplus \cD_S^* = \cD^*.
			\end{equation}
			It can be shown analogously as in Theorem~\ref{thm:t} that the domain of the associated maximal operator $\cA$ in $\cH_w$ defined in~\eqref{Aa.S.max} and \eqref{eq:Schur.max.dom} is given by \eqref{DWE.dom.Aa}, and that its action is \eqref{DWE.cA.def} understood in the standard distributional way. Notice that the latter is well-defined since~\eqref{dwe.reg} guarantees that 
			\begin{equation}
				(a_1 + \ii a_2)g \in \Loneloc(\Omega), \qquad a_0 \nabla g \in \Loneloc(\Omega)^d, \qquad g \in \Dd_S \subset \Vv_w,
			\end{equation}
			see~\eqref{def.big.cV} with~\eqref{DWE.T.coeff}, while for $f \in \cW_w$, $(\Delta+q)f$ is the limit of $(\Delta+q)f_n$ in $\cD'(\Omega)$, where $\{f_n\}_n \subset C_0^\infty(\Omega)$ approximates $f$ in $\cW_w$; cf.~\cite[App.~A]{Arnal-Gerhat-Royer-Siegl-2026-arxiv} for an explicit description of the completion $\cW_w$ and the action of $\Delta-q$ on it (in the case $w=1$).

		\item \label{mu.accr} \emph{Accretivity.} We show that $-\cA + \mu$ is accretive for sufficiently large $\mu>0$.	Let $f_1,f_2 \in C_0^\infty(\Omega)$ and compute
		\begin{equation}
			\begin{aligned}
				& ( -\widehat \cA (f_1,f_2), (f_1,f_2) )_{\cD^*\times \cD} 
				\\
				& \qquad \qquad  = 
				- \langle f_2,f_1 \rangle_{\cW_w} - ( (\Delta-q) f_1 - (a_1+\ii a_2 - \nabla \cdot (a_0 \nabla)) f_2, f_2 )_{\cD_S^* \times \cD_S}
				\\
				&  \qquad \qquad = - \langle \nabla f_2, \nabla f_1 \rangle_{w} - \langle qf_2,  f_1 \rangle_{w}  
				+ \langle \nabla f_1, \nabla f_2 \rangle_{w} 
				\\
				& \qquad \qquad\qquad \qquad + \langle \nabla f_1, f_2   \nabla w \rangle + \langle qf_1, f_2 \rangle_{w} + \| a_1^\frac12 f_2 \|^2_{w} 
				\\
				&  \qquad \qquad\qquad \qquad			   
				+ \ii \langle  a_2 f_2, f_2 \rangle_{w}   + \|a_0^\frac12 \nabla f_2\|^2_{w}   + \langle a_0 \nabla f_2,  f_2 \nabla w \rangle.
			\end{aligned}
		\end{equation}	
		For $\mu>0$ and denoting $\iota_\cA:= I_{\cD \to \cH_w}$, Cauchy--Schwarz' and Young's  inequalities with $\delta>0$ then yield
		\begin{equation}\label{A.accr}
			\begin{aligned}
				&\Re ( (-\widehat \cA+\mu \iota_\cA^* \iota_\cA) (f_1,f_2), (f_1,f_2) )_{\cD^*\times \cD} 
				\\
				& \qquad \geq  - \frac{1}{4 \delta} \|\nabla f_1\|_{w}^2 
				- \delta \|  f_2 w^{-1} \nabla w\|_{w}^2
				+  \|a_1^\frac12  f_2\|^2_{w} - \frac14 \| f_2 w^{-1} a_0^\frac12\nabla w  \|_{w}^2 
				\\ 
				& \qquad  \qquad\quad
				+ \mu \left( \|\nabla f_1\|_{w}^2 + \|q^\frac12 f_1\|_{w}^2 + \|f_2\|_{w}^2 \right).
			\end{aligned}
		\end{equation}	
		Using~\eqref{DWE.w.asm}, choosing $\delta$ sufficiently small and $\mu$ sufficiently large, we see that the right hand side of \eqref{A.accr} is non-negative. By~\eqref{DWE.bdd.cA.def} and the density of $\Coo(\Omega)$ in both $\cD_S$ and $L^2_{w} (\Omega)$, this can be extended to $(f_1, f_2) \in \cD$. For $(f_1, f_2)\in \Dom (\cA)$, this gives
		\begin{equation}
			\begin{aligned}
				& \Re \langle (\cA+\mu) (f_1,f_2), (f_1,f_2) \rangle_{\cH_w} 
				\\
				& \qquad \qquad \quad = \Re ( (-\widehat \cA+\mu \iota_\cA^* \iota_\cA) (f_1,f_2), (f_1,f_2) )_{\cD^*\times \cD} 
				\ge 0.
			\end{aligned}
		\end{equation} 	
		
		\item \label{item.DWE.S.T} \emph{Schur complement.} Defined by the analogous formula to~\eqref{eq:def.hat.S}, the second Schur complement is
		\begin{equation}
			\widehat S(\la) := - (a_1 + \ii a_2 - \nabla \cdot (a_0 \nabla)) - \la + \frac{1}{\la} (\Delta-q)_{\restriction_{\cD_S}} \in \Bb(\cD_{S},\cD_{S}^*).
		\end{equation}
		While the above is defined for $\la \in \C \setminus\{0\}$, we consider only $\la >0$, since then $\|\cdot\|_S$ is equivalent to $\|\cdot \|_{\Vv_w}$ defined with the data~\eqref{DWE.T.coeff}. One can see easily from the definitions that 
		\begin{equation}
			-\la (\widehat S(\la) f,g )_{\Dd_S^*\times \Dd_S} = ( \widehat T_w f, g)_{\Vv_{w,0}^*\times \Vv_{w,0}} := \mathbf t_w (f,g), \qquad f, g \in\Coo(\Omega),
		\end{equation}
		cf.~\eqref{T.hat}. By density of $\Coo(\Omega)$ in $\Dd_S$, we obtain
		\begin{equation}\label{eq:equiv.S.T}
			- \la   \widehat S(\la) = (I_{\Dd_S \to \Vv_{w,0}} )^*	\widehat T_w I_{\Dd_S \to \Vv_{w,0}}, \qquad \la >0. 
		\end{equation}
		In particular, $\widehat S(\la)$ is boundedly invertible if and only if $\widehat T_w$ is.

		\item \emph{Bounded invertibility.} We show that, for $\la >0$ large enough, $\widehat T_w = \widehat T_w(\la)$ is boundedly invertible. To this end, we apply Theorem~\ref{thm:t} in its extended version in Remark~\ref{rem:gcoer}~\ref{asm.gen.LM} (more precisely Proposition~\ref{prop:g.coer}) with the coefficients as in~\eqref{DWE.T.coeff}.
		To verify the assumptions, we employ the multiplier
		\begin{equation}
			\Phi_a := \frac {a_2} {\sqrt {1+a_2^2}}.
		\end{equation}
		
		We determine the respective constants in Assumption~\ref{asm:main.LM} and Remark~\ref{rem:gcoer}~\ref{asm.gen.LM}. Clearly, $C_P=0$ and it is straightforward to see that the conditions \eqref{eq:phi.sgn.rem} and \eqref{eq:rem.rel.bdd} hold with $R=|\Phi V_2 - |V_2||$, $\vartheta_R=\kappa_R=\tau_R=0$ and $C_R=\la$. The weight $w$ is admissible since for all $\la> \la_0$ with $\la_0>1$ sufficiently large
		\begin{equation}
			\begin{aligned}
				\frac{|P_1^\frac12 \nabla w |}{w} & \leq c (a_1 + 1)^\frac12 + \la^\frac12 \eps_0 (a_1 + c_0)^\frac12 
				\le 
				\left(\eps_0 + \la^{-\frac 12}c \right) \left(q+ \la a_1 + \la^2 \right)^\frac12.
			\end{aligned}
		\end{equation}
		Hence, since $\eps_0<2$ by assumption, \eqref{asm:w.sep} is satisfied with $\kappa_w = \kappa_w(\la_0) <2$, $\tau_w=0$ and $C_w =  0$ for all $\la>\la_0$. Thus we can find $\beta = \beta(\la_0) \in (0,1)$ such that \eqref{ellipse.gen} holds. 
		
		The already fixed values of $C_P$, $\tau_R$, $\vartheta_R$, $\kappa_R$, $\kappa_w$, $\tau_w$ and $\beta$ determine the value of $\ec = \ec(\la_0)$ (it is important that this is independent of $\la >\la_0$), see Remark~\ref{rem:gcoer}~\ref{item.ecrit}. Fix $\eps_0 \in (0,\ec)$. To justify \eqref{asm:Phi.sep}, we find
		\begin{equation}
			|P_1^\frac12 \nabla \Phi_a| = \langle (I_{\C^d} + \la a_0) \nabla \Phi_a, \nabla \Phi_a \rangle_{\C^d}^\frac12 
			\leq (1+a_2^2)^{-\frac 32} \left(
			|\nabla a_2| + \la^\frac12 |a_0^\frac 12 \nabla a_2|\right).
		\end{equation}
		Using~\eqref{asm.DWE.a2}, one estimates
		\begin{equation}
			|P_1^\frac12 \nabla \Phi_a| 
			\leq \delta q^\frac12 + \left(\la^{-\frac12} C_a + \delta \right)
			\la^\frac12 |a_1+ \ii a_2|^\frac12 + C_\delta\left(1+\la^\frac12\right).
		\end{equation}
		Selecting $\delta$ small enough, we see that there exists $\la_1 = \la_1(\eps_0) > \la_0$ such hat~\eqref{asm:Phi.sep} holds with $C_{\Phi,\eps_0}=0$ for every $\la > \la_1$.		
		
		In summary, we proved that there exists $\la_1>0$ such that the form $\mathbf t_w (\la) $ satisfies Proposition~\ref{prop:g.coer} for all $\la>\la_1$; the constants $m_1$, $m_2$ therein are independent of $\la$ and  $\gamma_1$, $\gamma_2$ depend only through $C_R = \la$. Observing formulas~\eqref{gcoer.mu.alph},~\eqref{eq:m.gamma.1}, and that one can select $\gamma_2 = \gamma_1$, it follows that $\gamma_i = \BigO(\lambda)$ as $\la \to \infty$. Since
		\begin{equation}
			\|f \|^2 _{\cV_{w}} \geq \|V_1^\frac12 f\|_w^2 \geq \la^2 \|f\|_w^2, \qquad f \in \cV_{w,0},
		\end{equation}
		we arrive at 
		\begin{equation}
			\Re \mathbf t_w(f,f) + \Im \mathbf t_w(\beta \Phi f,f)  \ge \frac{m_1}2 \|f \|^2 _{\cV_{w}} + \left( \frac{m_1}{2} \la^2 - \gamma_1 \right) \|f \|^2_{w} \gtrsim \|f \|^2 _{\cV_{w}},
		\end{equation}
		for all sufficiently large $\la$. Analogously, for all sufficiently large $\la$ we also have 
		\begin{equation}
			\Re \mathbf t_w(f,f) + \Im \mathbf t_w ( f, \beta \Phi f)  \gtrsim \|f \|^2 _{\cV_{w}}.
		\end{equation}
		Applying Theorem~\ref{thm:lm.0}, we indeed justified that $\widehat T_w (\la)$, and thus by~\eqref{eq:equiv.S.T}  also $\widehat S(\la)$, is boundedly invertible for $\la$ sufficiently large.
		
		\item \emph{Dominance of Schur complement.}		
		In the previous steps, we verified the assumptions of Theorem~\ref{thm:Schur.dom} with $\Sigma = [\la_2, \infty)$ and a suitable $\la_2>0$, and with $z_\la = 0$ in~\eqref{Schur.asm}. This gives $\Sigma \subset \rho(\Aa)$ and the density of $\Dom (\cA)$ in both $\cW_w \oplus \cD_S$ and $\cH_w$. Since we proved in~\ref{mu.accr} that $-\cA+\mu$ is accretive in $\cH_w$, it is m-accretive. Hence, $-\cA$ indeed generates a $C_0$-semigroup, see~e.g.~\cite[Thm.~8.3.4]{Davies-2007}. \qedhere
	\end{enumerate}
\end{proof}

\appendix

\section{Used known results}
\label{app:prelim}

	\subsection{Generalised coercivity}
	\label{ssec:gen.coer}
	
	We first recall the following generalised representation theorems from \cite{Almog-2015-40}.

	\begin{theorem}[{\cite[Thm.\ 2.1]{Almog-2015-40}}] \label{thm:lm.0}
		Let $\cV$ be a Hilbert space and let $\mathbf a$ be a bounded sesquilinear form on $\cV$. Assume there exist $\Phi_1,\Phi_2 \in \mathcal{B}(\mathcal{V})$ and $m> 0$ such that for all $f \in \mathcal{V}$ we have 
		\begin{equation}\label{lm.gen.coev}
			\begin{aligned}
				\abs{\mathbf a(f,f)} + \abs{\mathbf a(\Phi_1f,f)} &\geq m \|f\|_\mathcal{V}^2 \,,
				\\
				\abs{\mathbf a(f,f)} + \abs{\mathbf a(f,\Phi_2f)} &\geq m \|f\|_\mathcal{V}^2 \,.
			\end{aligned}
		\end{equation}
		Then the operator
		\begin{equation}\label{T.hat}
			\widehat A \in \cB (\cV, \cV^*), \qquad ( \widehat A f, g )_{\cV^* \times \cV} := \mathbf a(f,g), \qquad f,g \in \cV,
		\end{equation}
		is boundedly invertible.
	\end{theorem}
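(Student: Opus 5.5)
The plan is the standard closed-range argument behind the Lax--Milgram lemma: use the second inequality in \eqref{lm.gen.coev} to bound $\widehat A$ from below, and the first inequality to bound its adjoint from below. Boundedness of $\widehat A \in \cB(\cV,\cV^*)$ is immediate, since $\mathbf a$ is bounded: $\|\widehat A f\|_{\cV^*} \le \|\mathbf a\|\,\|f\|_\cV$.

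\emph{Step 1: lower bound for $\widehat A$.} For $f \in \cV$, the definition \eqref{T.hat} gives $|\mathbf a(f,f)| = |(\widehat A f, f)_{\cV^*\times\cV}| \le \|\widehat A f\|_{\cV^*}\|f\|_\cV$. Similarly, since $\Phi_2 \in \cB(\cV)$,
\[
|\mathbf a(f,\Phi_2 f)| = |(\widehat A f,\Phi_2 f)_{\cV^*\times\cV}| \le \|\Phi_2\|\,\|\widehat A f\|_{\cV^*}\|f\|_\cV .
\]
Inserting both into the second line of \eqref{lm.gen.coev} yields
\[
m\|f\|_\cV \le (1+\|\Phi_2\|)\,\|\widehat A f\|_{\cV^*}, \qquad f \in \cV .
\]
Hence $\widehat A$ is injective and has closed range.

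\emph{Step 2: dense range via the adjoint.} Since $\cV$ is a Hilbert space, it is reflexive, and the Banach adjoint $\widehat A^* \in \cB(\cV,\cV^*)$ is characterised by
\[
(\widehat A^* g, f)_{\cV^*\times\cV} = \overline{(\widehat A f, g)_{\cV^*\times\cV}} = \overline{\mathbf a(f,g)} .
\]
Some care with the antilinear convention is needed here, but only moduli enter the argument. With this identity,
\[
|\mathbf a(\Phi_1 f,f)| = |(\widehat A^* f,\Phi_1 f)| \le \|\Phi_1\|\,\|\widehat A^* f\|_{\cV^*}\|f\|_\cV ,
\qquad
|\mathbf a(f,f)| \le \|\widehat A^* f\|_{\cV^*}\|f\|_\cV .
\]
The first line of \eqref{lm.gen.coev} then gives $m\|f\|_\cV \le (1+\|\Phi_1\|)\|\widehat A^* f\|_{\cV^*}$, so $\widehat A^*$ is injective. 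Because $\ker \widehat A^*$ is the annihilator of $\Ran \widehat A$, the range of $\widehat A$ is dense in $\cV^*$.

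\emph{Step 3: conclusion.} Steps 1 and 2 together show that $\widehat A$ is bijective. Its inverse is bounded, with
\[
\|\widehat A^{-1}\|_{\cV^*\to\cV} \le \frac{1+\|\Phi_2\|}{m},
\]
either directly from the bound in Step 1 or by the open mapping theorem.

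The only subtle point is Step 2. The first inequality involves $\mathbf a(\Phi_1 f,f)$, which evaluates $\widehat A$ at $\Phi_1 f$ rather than at $f$, so it cannot be read as a bound on $\widehat A f$. It must instead be interpreted as a lower bound on the adjoint. This is precisely why two separate inequalities, with multipliers on opposite sides of the form, are required.
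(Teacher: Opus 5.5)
Your argument is correct. The second inequality in \eqref{lm.gen.coev} gives $m\|f\|_{\cV} \le (1+\|\Phi_2\|)\|\widehat A f\|_{\cV^*}$, hence injectivity and closed range. The first inequality gives the analogous lower bound for the adjoint $g \mapsto \overline{\mathbf a(\cdot,g)}$, hence dense and therefore full range.

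The paper does not prove this theorem itself; it quotes it from Almog--Helffer. Your closed-range plus adjoint-injectivity argument is the standard proof of this generalised Lax--Milgram statement.
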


	If $\cV \subset \cH$ is continuously embedded and dense in another Hilbert space $\cH$, then one can identify $\cH$ with its (anti-)dual $\cH^*$ in a standard way and consider
	\begin{equation}\label{Gelfand}
		\cH \ni f \equiv \langle f, \cdot \rangle_\cH \in \cH^*, \qquad \cV \subset \cH \equiv \cH^* \subset \cV^*.
	\end{equation}
	In the above Hilbert space triple, the operator $\widehat A$ in~\eqref{T.hat} naturally defines a maximal restriction in $\cH$, and the respective inverses are related by the formula
	\begin{equation}\label{res.A.hat}
		A^{-1} := I_{\Vv\to \Hh} \widehat A^{-1} (I_{\Vv\to \Hh})^{*}.
	\end{equation}
	Here $I_{\Vv\to \Hh}$ denotes the bounded embedding $\cV \hookrightarrow \cH$ and its adjoint is the (bounded) restriction operator $\cH \hookrightarrow \Vv^*$. Under suitable assumptions, $A$ is boundedly invertible in $\cH$.

	\begin{theorem}[{\cite[Thm.\ 2.2]{Almog-2015-40}}] \label{thm:lm.1}
		In addition to the assumptions of Theorem~{\rm\ref{thm:lm.0}}, let $\mathcal{V} \subset \cH$ be continuously embedded and dense in another Hilbert space $\cH$ and assume that $\Phi_1$, $\Phi_2$ extend to bounded operators on $\cH$. Then the operator in $\cH$ defined by 
		\begin{equation}\label{lm.op}
			\begin{aligned}
				\Dom (A) & := \big\{ f\in \cV \, : \, \exists~\eta_f \in\cH,  \,\,  \forall~g \in \cV,  \, \, \mathbf a (f, g) = \langle \eta_f, g \rangle_\cH \big\}, \\
				Af & := \eta_f,
			\end{aligned}
		\end{equation}
		is boundedly invertible and its domain is dense in $\cV$ and $\cH$.
	\end{theorem}

	To obtain the results for Schr\"odinger operators with accretive potentials $V$ in $L^2(\Omega)$ mentioned in the introduction, $\Phi_1$ and $\Phi_2$ can be selected as the multiplication operator
	\begin{equation}\label{Phi.AH}
		\Phi:= \Phi_1 = \Phi_2 =\frac{\im V}{\sqrt{1+|V|^2}}.
	\end{equation}

	\subsection{Schur complement dominant operator matrices}
	\label{ssec:Schur.dom}
	
	We recall claims from \cite{Gerhat-2024-286} which are relevant for the applications in Sections~\ref{ssec:ex.1}--\ref{ssec:dwe}. Employing suitable Gelfand triples, they allow us to introduce operator matrices with non-empty resolvent set in the product space $\cH := \cH_1 \oplus \cH_2$ of two complex Hilbert spaces $\cH_1$ and $\cH_2$.
	
	\begin{asm-sec} \label{asm:schur.dom}
		\begin{enumerate}[\upshape (i), wide]
			\item \label{item.spaces} Let $\Dd_S$, $\Dd_2$, $\Dd_{-S}$, and $\Dd_{-2}$ be complex Hilbert spaces such that, with continuous embeddings having dense ranges, 
			\begin{equation}\label{eq:triplets}
				\Dd_S \subset \Hh_1 \subset \Dd_{-S}, \qquad \Dd_2 \subset \Hh_2 \subset \Dd_{-2}.
			\end{equation}
			\item \label{item.entries.1} Suppose that
			\begin{equation}
				\begin{aligned}
					\widehat A & \in \Bb(\Dd_S, \Dd_{-S}), & \quad \widehat B & \in \Bb (\Dd_2, \Dd_{-S}), \\
					\widehat C & \in \Bb (\Dd_S, \Dd_{-2}), & \quad \widehat D  & \in \Bb (\Dd_2, \Dd_{-2}). \\
				\end{aligned}
			\end{equation}
		\end{enumerate}
	\end{asm-sec}

	Under Assumption~\ref{asm:schur.dom}, we define the operator matrix 
	\begin{equation}
		\widehat \Aa := \left(
		\begin{array}{cc}
			\widehat A & \widehat B \\
			\widehat C & \widehat D
		\end{array}
		\right) \in \Bb (\Dd, \Dd_-), \qquad
		\Dd  := \Dd_S \oplus \Dd_2, \qquad
		\Dd_-  := \Dd_{-S} \oplus \Dd_{-2}.
	\end{equation}
	Its first Schur complement
	\begin{equation}
		\label{eq:def.hat.S}
		\quad 
		\widehat S (\la) := \widehat A - \la- \widehat B (\widehat D-\la)^{-1} \widehat C \in \Bb (\Dd_S, \Dd_{-S})
	\end{equation}
	is defined for spectral parameters
	\begin{equation}\label{eq:hat.rho}
		\la \in \rho (\widehat D) := \big\{ z \in \C \, : \, (\widehat D- z)^{-1} \in \Bb (\Dd_{-2}, \Dd_{2}) \big\}.
	\end{equation}
	Finally, the corresponding (unbounded) maximal operators acting in $\Hh$ and $\Hh_1$ are 
	\begin{equation}\label{Aa.S.max}
		\Aa := \widehat \Aa\vert_{\Dom(\Aa)}, \qquad S (\la):= \widehat S(\la)\vert_{\Dom (S(\la))},
	\end{equation}
	on their respective domains
	\begin{equation}\label{eq:Schur.max.dom}
		\begin{aligned}
			\Dom (\Aa) & := \big\{ (f,g) \in \Dd \, : \, \widehat\Aa (f,g) \in \Hh \big\},\\
			\Dom (S(\la)) & := \big\{ f \in \Dd_S \, : \, \widehat S(\la)f \in \Hh_1 \big\}. 
		\end{aligned}
	\end{equation}
	The spectra of $\cA$ and $S(\cdot)$ are related by the following theorem.
	
	\begin{theorem}[{\cite[Cor.\ 3.4~(ii), Cor.\ 3.5, Cor.\ 3.6, Cor.\ 3.7]{Gerhat-2024-286}}]
		\label{thm:Schur.dom}
		Let Assumption~{\rm{\ref{asm:schur.dom}}} be satisfied and let $\widehat S(\cdot)$, $ \rho(\widehat D)$, $\Aa$ and $S(\cdot)$ be as in~\eqref{eq:def.hat.S},~\eqref{eq:hat.rho},~\eqref{Aa.S.max} and \eqref{eq:Schur.max.dom}. Let $\Sigma \subset \rho(\widehat D)$ be such that
		\begin{equation}\label{Schur.asm}
			\forall~\la \in \Sigma \quad \exists~z_\la \in \C \quad : \quad (\widehat S(\la) - z_\la)^{-1} \in \Bb (\Dd_{-S}, \Dd_S).
		\end{equation}
		Then the (point and second essential) spectra of $\cA$ and $S(\cdot)$ are equivalent on $\Sigma$. In detail,
		\begin{equation}
			\begin{aligned}
				& \la \in \sigma(\cA) \,\,  &&  \iff \quad 0 \in \sigma (S (\la)), \\
				& \la \in \spp(\cA) \,\, && \iff \quad 0 \in \spp (S (\la)), \\
				& \la \in \sigma_{\operatorname{e2}}(\cA) \,\, &&  \iff \quad 0 \in \sigma_{\operatorname{e2}} (S (\la)),
			\end{aligned}
		\end{equation}
		for all $\la \in \Sigma$; see~\cite[Chap.~IX]{EE} for the definition of $\sigma_{{\rm e}2}(\cdot)$. Moreover, if there exists $\la \in \Sigma \neq \emptyset$ with $0 \in \rho(S(\la))$, then $\Dom (\Aa)$ is dense in both $\Dd$ and $\cH$.
\end{theorem}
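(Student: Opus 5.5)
The statement is Theorem~\ref{thm:Schur.dom}, quoted from \cite{Gerhat-2024-286}. The plan is to prove it directly from a factorisation of the operator matrix. Fix $\la \in \Sigma \subset \rho(\widehat D)$. In $\Bb(\Dd,\Dd_-)$ one has the Frobenius--Schur factorisation
\begin{equation}
	\widehat\Aa - \la = \begin{pmatrix} I & \widehat B(\widehat D-\la)^{-1} \\ 0 & I \end{pmatrix}
	\begin{pmatrix} \widehat S(\la) & 0 \\ 0 & \widehat D-\la \end{pmatrix}
	\begin{pmatrix} I & 0 \\ (\widehat D-\la)^{-1}\widehat C & I \end{pmatrix}.
\end{equation}
The outer factors are boundedly invertible. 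The right one acts on $\Dd_S\oplus\Dd_2$ and the left one on $\Dd_{-S}\oplus\Dd_{-2}$, because $(\widehat D-\la)^{-1}\widehat C\in\Bb(\Dd_S,\Dd_2)$ and $\widehat B(\widehat D-\la)^{-1}\in\Bb(\Dd_{-2},\Dd_{-S})$.

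The factorisation has to be restricted to the maximal operators in $\Hh$. For $(f,g)\in\Dd$, set $\tilde g := g + (\widehat D-\la)^{-1}\widehat C f$. A direct computation gives
\begin{equation}
	(\widehat\Aa-\la)(f,g) = \big(\widehat S(\la)f + \widehat B(\widehat D-\la)^{-1}h,\ h\big), \qquad h := (\widehat D-\la)\tilde g.
\end{equation}
Hence $(\widehat\Aa-\la)(f,g)\in\Hh_1\oplus\Hh_2$ if and only if $h\in\Hh_2$ and $\widehat S(\la)f + \widehat B(\widehat D-\la)^{-1}h\in\Hh_1$.

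From this:
\begin{itemize}
	\item Kernels correspond bijectively through $f\mapsto (f,-(\widehat D-\la)^{-1}\widehat Cf)$. This gives the equivalence for $\spp$.
	\item To solve $(\cA-\la)(f,g)=(u,v)\in\Hh$, take $h=v$ and solve $\widehat S(\la)f = u - \widehat B(\widehat D-\la)^{-1}v$. The right hand side lies only in $\Dd_{-S}$, not necessarily in $\Hh_1$.
\end{itemize}

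This last point is the main obstacle. Invertibility of $S(\la)$ in $\Hh_1$ alone does not obviously solve an equation with data in $\Dd_{-S}$. Hypothesis \eqref{Schur.asm} is exactly what handles it. Since $(\widehat S(\la)-z_\la)^{-1}\in\Bb(\Dd_{-S},\Dd_S)$, the operator $\widehat S(\la)$ is Fredholm-equivalent between $\Dd_S\to\Dd_{-S}$ and its maximal restriction $S(\la)$ in $\Hh_1$. More precisely, if $0\in\rho(S(\la))$, a resolvent-type identity
\[
	\widehat S(\la)^{-1} = (\widehat S(\la)-z_\la)^{-1} - z_\la\, S(\la)^{-1}(\widehat S(\la)-z_\la)^{-1}
\]
shows that $\widehat S(\la)^{-1}$ exists in $\Bb(\Dd_{-S},\Dd_S)$. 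The identity is well defined because $(\widehat S(\la)-z_\la)^{-1}$ maps into $\Dd_S\subset\Hh_1$.

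With $\widehat S(\la)^{-1}\in\Bb(\Dd_{-S},\Dd_S)$ in hand, the inverse $(\cA-\la)^{-1}$ is written explicitly from the factorisation, and it is bounded in $\Hh$ by the continuity of the embeddings. The converse implication $\la\in\rho(\cA)\Rightarrow 0\in\rho(S(\la))$ comes from restricting $(\cA-\la)^{-1}$ to data $(u,0)$ and reading off the first component.

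For $\sigma_{\rm e2}$, I would use the characterisation by singular (Weyl) sequences. A bounded non-compact sequence with $(\cA-\la)x_n\to0$ in $\Hh$ is transferred through the factorisation to one for $S(\la)$, and back again. I would control the $\Dd_S$-norms through $(\widehat S(\la)-z_\la)^{-1}$, using
\[
	\|f_n\|_{\Dd_S}\lesssim \|\widehat S(\la)f_n\|_{\Dd_{-S}}+\|f_n\|_{\Hh_1}.
\]

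For the density claim, fix $\la$ with $0\in\rho(S(\la))$. Take $\Dd_S\oplus\Dd_2$-dense data: pick $f\in\Dom(S(\la))$ and $h\in\Hh_2$ with $(\widehat D-\la)^{-1}h$ arbitrary in a dense subset of $\Dd_2$, and solve for $f$ correspondingly. Then $\Dom(S(\la))$ being dense in $\Dd_S$, which follows from $S(\la)^{-1}$ mapping $\Hh_1$ densely into $\Dd_S$, together with the triangular structure gives density of $\Dom(\cA)$ in $\Dd$, and hence in $\Hh$.
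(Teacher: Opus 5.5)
The paper does not prove this theorem itself; it imports it from \cite{Gerhat-2024-286}. Your route is in line with that source: you use the Frobenius--Schur factorisation of $\widehat\Aa-\la$ in $\Bb(\Dd,\Dd_-)$, together with a resolvent identity to pass from $\widehat S(\la)$ to its maximal restriction $S(\la)$. That identity is exactly Lemma~\ref{lem:res.id} with $\widehat T=\widehat S(\la)$ and $\la_0=z_\la$. Your treatment of $\spp$ and $\sigma$ is correct. To complete the inverse, check by direct computation that your $R$ satisfies $\widehat S(\la)R=I$ on all of $\Dd_{-S}$. Also note that $\widehat S(\la)$ is injective on $\Dd_S$, because any kernel element already lies in $\Dom(S(\la))$.

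The step that does not work as written is transferring a singular sequence of $\Aa$ to one of $S(\la)$ in the $\sigma_{\rm e2}$ part. Let $(f_n,g_n)\in\Dom(\Aa)$ be singular. Put $h_n:=\widehat Cf_n+(\widehat D-\la)g_n\to0$ in $\Hh_2$ and $u_n:=\widehat S(\la)f_n+\widehat B(\widehat D-\la)^{-1}h_n\to0$ in $\Hh_1$. Then $\widehat S(\la)f_n=u_n-\widehat B(\widehat D-\la)^{-1}h_n$ lies only in $\Dd_{-S}$. So in general $f_n\notin\Dom(S(\la))$, and nothing can be read off from the first components.

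Your claim that $\widehat S(\la)$ and $S(\la)$ are ``Fredholm-equivalent'' would cover this, but you only assert it; your resolvent identity gives invertibility, not the $\Phi_+$ property. The fix uses \eqref{Schur.asm} once more. Set $k_n:=(\widehat S(\la)-z_\la)^{-1}\widehat B(\widehat D-\la)^{-1}h_n$. Then $k_n\to0$ in $\Dd_S$, since $h_n\to0$ in $\Hh_2\hookrightarrow\Dd_{-2}$, and
\[
 \widehat S(\la)(f_n+k_n)=u_n+z_\la k_n\in\Hh_1 .
\]
Hence $f_n+k_n\in\Dom(S(\la))$, $S(\la)(f_n+k_n)\to0$, and $f_n+k_n\rightharpoonup0$.

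You also need $\liminf_n\|f_n\|_{\Hh_1}>0$. Suppose not; then along a subsequence $\|f_n\|_{\Dd_S}\lesssim\|\widehat S(\la)f_n\|_{\Dd_{-S}}+\|f_n\|_{\Hh_1}\to0$. This gives $g_n=(\widehat D-\la)^{-1}(h_n-\widehat Cf_n)\to0$, contradicting $\|(f_n,g_n)\|=1$.

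For the reverse direction, $(f_n,-(\widehat D-\la)^{-1}\widehat Cf_n)$ works. To make the second components weakly null in $\Hh_2$ you need $f_n\rightharpoonup0$ in $\Dd_S$, not just in $\Hh_1$. This follows from boundedness in $\Dd_S$ (your estimate) and injectivity of $\Dd_S\hookrightarrow\Hh_1$.

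Finally, the singular-sequence characterisation of $\sigma_{\rm e2}$ presupposes closed operators. $S(\la)$ is closed because $z_\la\in\rho(S(\la))$. Closedness of $\Aa$ follows from the factorisation and \eqref{Schur.asm} by a graph-limit argument of the same type.

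Your density paragraph is muddled. The first component of an element of $\Dom(\Aa)$ is $f_0-\widehat S(\la)^{-1}\widehat B\tilde g$ with $f_0\in\Dom(S(\la))$, not itself an element of $\Dom(S(\la))$. The conclusion is nevertheless right, and it follows directly. We have $\Dom(\Aa)=(\widehat\Aa-\la)^{-1}\Hh$, the map $(\widehat\Aa-\la)^{-1}$ is an isomorphism $\Dd_-\to\Dd$, and $\Hh$ is dense in $\Dd_-$. This is the same argument as in the proof of Lemma~\ref{lem:res.id}.
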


The following useful lemma establishes an equivalence between invertibility of a distributional operator in a Gelfand triple and the associated operator.

\begin{lemma}
	\label{lem:res.id}
	Let $\Dd \subset \Hh \subset \Dd_-$ be a triple of Hilbert spaces where the respective embeddings are continuous and have dense range. Let $\widehat T \in \Bb (\Dd, \Dd_-)$ and define
	\begin{equation}
		T := \widehat T|_{\Dom (T)}, \qquad \Dom (T) := \big\{ f \in \Dd \, : \, \widehat T f \in \Hh \big\}.
	\end{equation}
	Assume there exists $\la_0 \in \C$ such that
	\begin{equation}
		(\widehat T-\la_0 )^{-1} \in \Bb(\Dd_-,\Dd).
	\end{equation}
	Then, for all $\la \in \C$, cf.~\eqref{eq:hat.rho},
	\begin{equation}
		\la \in \rho(T) \quad \iff \quad \la \in \rho(\widehat T) \quad : \iff \quad (\widehat T-\la)^{-1} \in \Bb(\Dd_-,\Dd).
	\end{equation}
\end{lemma}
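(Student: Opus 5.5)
The plan is a perturbation argument: compare $\widehat T-\la$ with $\widehat T-\la_0$. I write $\iota$ for the embedding $\Dd\hookrightarrow\Hh$, so $\iota^*:\Hh\to\Dd_-$ is the restriction, and read $\widehat T-\la$ as $\widehat T-\la\,\iota^*\iota$. The first step is to establish
\begin{equation*}
	(T-\la_0)^{-1}=\iota(\widehat T-\la_0)^{-1}\iota^* \in \Bb(\Hh),
\end{equation*}
so that $\la_0\in\rho(T)$. For $h\in\Hh$, put $f:=(\widehat T-\la_0)^{-1}\iota^* h\in\Dd$. Then $\widehat T f=\iota^*(h+\la_0 f)\in\Hh$, hence $f\in\Dom(T)$ and $(T-\la_0)f=h$. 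Injectivity of $T-\la_0$ on $\Dom(T)$ follows from injectivity of $\widehat T-\la_0$.

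\emph{Direction ``$\Leftarrow$''.} Suppose $(\widehat T-\la)^{-1}\in\Bb(\Dd_-,\Dd)$. Repeating the computation above with $\la$ in place of $\la_0$ shows that $\iota(\widehat T-\la)^{-1}\iota^*$ is a bounded inverse of $T-\la$ in $\Hh$, so $\la\in\rho(T)$.

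\emph{Direction ``$\Rightarrow$''.} Suppose $\la\in\rho(T)$, and write $R:=(T-\la)^{-1}\in\Bb(\Hh)$ and $\widehat R_0:=(\widehat T-\la_0)^{-1}$. I will use the candidate inverse
\begin{equation*}
	\widehat R := \widehat R_0 + (\la-\la_0)\,R_{\Dd}\,\iota\widehat R_0,
\end{equation*}
where $R_{\Dd}$ is $R$ viewed as a map $\Hh\to\Dd$. This is the second-resolvent-type identity, obtained formally by expanding $\widehat T-\la=(\widehat T-\la_0)-(\la-\la_0)\iota^*\iota$.

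Everything hinges on \textbf{$R$ being bounded from $\Hh$ into $\Dd$}, which I expect to be the main obstacle. The closed graph theorem gives it: $R:\Hh\to\Dd$ is everywhere defined, and it is closed as a map into $\Dd$. Indeed, if $h_n\to h$ in $\Hh$ and $Rh_n\to g$ in $\Dd$, then $Rh_n\to g$ in $\Hh$ since $\Dd\hookrightarrow\Hh$ continuously. Closedness of $R$ in $\Hh$ (it is bounded) then gives $g=Rh$.

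With this in hand, $\widehat R\in\Bb(\Dd_-,\Dd)$. The remaining step is to verify $(\widehat T-\la)\widehat R=I$ and $\widehat R(\widehat T-\la)=I$ by direct algebra. For the left-inverse identity, note that for $u\in\Dd$,
\begin{equation*}
	(\la-\la_0)\iota u+(T-\la_0)\cdots
\end{equation*}
is not needed in full: it suffices to show injectivity of $\widehat T-\la$, since the right-inverse identity yields surjectivity. For injectivity, if $(\widehat T-\la)u=0$ with $u\in\Dd$, then $\widehat T u=\la\iota^*\iota u\in\Hh$. Hence $u\in\Dom(T)$ with $(T-\la)u=0$, so $u=0$. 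The right-inverse identity follows from $(\widehat T-\la_0)\widehat R_0=I$ together with $(\widehat T-\la)R=\iota^*$ on $\Hh$, which is just the definition of $T$.
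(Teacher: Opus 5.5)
Your proof is correct (the stray half-formula before the injectivity step should simply be deleted) and follows the paper's route. You use the same candidate inverse $\widehat R_0+(\la-\la_0)(T-\la)^{-1}\widehat R_0$ from the second resolvent identity, and the same key fact $(T-\la)^{-1}\in\cB(\Hh,\Dd)$, which the paper cites from \cite[Lem.~2.12]{Gerhat-2024-286} and you derive via the closed graph theorem.

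The only difference is the final verification. The paper checks both compositions on the dense subspaces $\Dom(T)\subset\Dd$ and $\Hh\subset\Dd_-$ and extends by continuity. Your exact right-inverse identity on all of $\Dd_-$ together with injectivity needs no density of $\Dom(T)$ in $\Dd$; combined with the bounded inverse theorem, it would even make your closed-graph step unnecessary.
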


\begin{proof}
	If $\la \in \rho(T)$, then $\la \in \rho(\widehat T)$ by the analogous formula to~\eqref{res.A.hat}. To see the other implication, observe first that $\Dom(T)$ is dense in $\Dd$; this follows from
	\begin{equation}
		\Dom (T) =  (\widehat T-\la_0)^{-1} \Hh,
	\end{equation}
	the fact that $\widehat T-\la_0$ is an isomorphism between $\Dd$ and $\Dd_-$ and the density of $\Hh $ in $\Dd_-$. Next, using the resolvent identity, we have
	\begin{equation}
		(T - \lambda)^{-1} = (T - \lambda_0)^{-1} + (\lambda - \lambda_0) (T - \lambda)^{-1} (T - \lambda_0)^{-1}  \subset R,
	\end{equation}
	where the extension $R$ is constructed as
	\begin{equation}
		R := (\widehat T-\lambda_0)^{-1} + (\lambda - \lambda_0) (T - \lambda)^{-1} (\widehat T-\lambda_0)^{-1} \in \cB (\Dd_-, \Dd);
	\end{equation}
	notice hereby that 
	\begin{equation}
		(T-\la)^{-1} \in \Bb(\Hh, \Dd),
	\end{equation}
	cf.~\cite[Lem.~2.12]{Gerhat-2024-286}. Since the respective compositions
	\begin{equation}
		R(\widehat T-\lambda ) \in \cB (\Dd), \qquad (\widehat T-\lambda ) R \in \cB (\Dd_-),
	\end{equation}
	are equal to $I_{\Dd}$ and $I_{\Dd_-}$  on the dense subspaces $\Dom (T) \subset \Dd$   and $\Hh \subset \Dd_-$,  we infer, cf.~\cite[Lem.~2.13]{Gerhat-2024-286}, 
	\begin{equation*}
		R = (\widehat T-\lambda )^{-1}. \qedhere
	\end{equation*}
\end{proof}

\subsection{Schatten class and completeness of eigensystem}

\begin{theorem}[{\cite[Thm.~1.3]{Almog-2015-40}, \cite{Combes-1978-111}}]\label{thm:Sp}
	Let $\Omega \subset \Rd$ be open with $\partial \Omega \in C^{2,\alpha}$ for some $\alpha>0$. Let $ 0 \le Q \in L^2_{\rm loc}(\Omega)$
	and for $p>0$
	\begin{equation}\label{Sp.crit}
		\int_{\Omega \times \R^d} (|\xi|^2 + Q(x) + 1)^{-p} \, \dd x \, \dd \xi < \infty.
	\end{equation}
	Then for the self-adjoint Dirichlet realisation $ S:=-\Delta + Q$ in $L^2(\Omega)$ we have 
	\begin{equation}
		(S+1)^{-1} \in \cS_p(L^2(\Omega)).
	\end{equation}
\end{theorem}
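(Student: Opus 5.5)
The statement to be proved is Theorem~\ref{thm:Sp}, which the paper quotes from \cite[Thm.~1.3]{Almog-2015-40} and \cite{Combes-1978-111}. The plan is to deduce the $\cS_p$ property from a phase-space trace bound, combined with a localisation argument near the boundary.

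\emph{Reduction to a trace.} Since $S\ge 0$ is self-adjoint, $(S+1)^{-1}\in\cS_p$ if and only if $\operatorname{tr}(S+1)^{-p}<\infty$. We first work on $\Rd$. Extend $Q$ by zero, or better by a large constant outside $\Omega$, so that Dirichlet conditions become irrelevant in the comparison. For $p\ge 1$ one can use a Lieb--Thirring/Berezin type inequality:
\[
\operatorname{tr}\,\phi(-\Delta+Q)\le (2\pi)^{-d}\int \phi(|\xi|^2+Q(x))\,\dd x\,\dd\xi
\]
for suitable convex $\phi$. Taking $\phi(t)=(t+1)^{-p}$, which is convex and decreasing, this gives the claim directly via the Berezin--Lieb inequality. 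The justification uses coherent states $e_{x,\xi}$. By Jensen's inequality (convexity of $\phi$) we have $\langle \phi(S)e,e\rangle\ge\phi(\langle Se,e\rangle)$, which points the wrong way for an upper bound. So instead we use the dual Berezin inequality: for convex $\phi$, $\operatorname{tr}\phi(S)\le\int\phi(\text{lower symbol})$. For this we need $S\ge \mathrm{Op}^{\rm anti-Wick}(|\xi|^2+\tilde Q)$ with a smoothed $\tilde Q = Q * g$. Since $Q$ is only $L^2_{\rm loc}$, this smoothing step requires $Q$ to be comparable to its local average; this is where the integral condition \eqref{Sp.crit} must be shown to be stable under replacing $Q$ by $\tilde Q$ up to constants.

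\emph{Main obstacle.} The real difficulty is that $Q$ has no regularity, so pointwise comparison with its averages fails. I would follow the Combes-type approach: use Dirichlet bracketing on unit cubes $K_j$. On each cube, $S\ge \bigoplus_j(-\Delta_{N,K_j}+Q)$, and by min-max the eigenvalues of $S$ dominate those of the decoupled Neumann operators. Hence
\[
\operatorname{tr}(S+1)^{-p}\le\sum_j\operatorname{tr}(-\Delta_{N,K_j}+Q+1)^{-p}.
\]
For each cube, we bound the eigenvalues of $-\Delta_N+Q$ below by $\sim k^{2/d}+\bar Q_j$, up to constants, where $\bar Q_j$ is a suitable local lower average. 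Summing then reproduces $\int(|\xi|^2+Q+1)^{-p}$. The step I expect to be hardest is making $\bar Q_j$ controlled by \eqref{Sp.crit}. For this I would subdivide cubes adaptively, Fefferman--Phong style, at scale $r$ chosen so that $r^{-2}\approx$ the average of $Q$ on the cube, and use a Poincaré-type inequality to get a lower bound on the cube. Here $L^2_{\rm loc}$ (rather than $L^1_{\rm loc}$) is needed to ensure that $Q\in\Dom$ issues and the form core $C_0^\infty$ behave.

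\emph{Boundary.} The $C^{2,\alpha}$ regularity of $\partial\Omega$ is used only to flatten the boundary locally and to cover $\Omega$ by cubes and boundary half-cubes with uniformly controlled Neumann/Dirichlet Weyl constants. Dirichlet conditions only raise eigenvalues, so boundary cubes satisfy the same local estimates.
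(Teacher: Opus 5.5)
The paper does not prove this statement; it only quotes it from \cite{Almog-2015-40,Combes-1978-111}. Your reduction to $\operatorname{tr}(S+1)^{-p}<\infty$ and the Neumann-bracketing inequality are fine. However, neither of your two routes reaches a proof, and in both the failure at the step you flag is structural, not technical.

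\emph{Berezin--Lieb route.} (Its upper bound uses the upper, i.e.\ anti-Wick, symbol, not the lower one.) The inequality $S+1\ge\mathrm{Op}^{\mathrm{aW}}(|\xi|^2+\tilde Q+\kappa)$ with $\kappa>0$ means $\tilde Q*g+c_g+\kappa\le Q+1$ a.e. Here $g$ is the coherent-state density and $\mathrm{Op}^{\mathrm{aW}}(|\xi|^2)=-\Delta+c_g$; the requirement $c_g<1$ prevents you from taking $g$ narrow. Hence $\tilde Q*g<1$ wherever $Q=0$. By Jensen, every set of positive measure on which $Q=0$ then forces a contribution, bounded below by a constant depending only on $g$, to $\int(\tilde Q+\kappa)^{d/2-p}\,\dd x$ from a neighbourhood of fixed size. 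Take $Q$ vanishing on infinitely many tiny, well-separated holes of summable measure and large elsewhere: it satisfies \eqref{Sp.crit}, but no admissible $\tilde Q$ does.

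\emph{Bracketing route.} Your ``Fefferman--Phong style'' step needs $\lambda_1(-\Delta_N^K+Q)\gtrsim r^{-2}$ on a cube $K$ of side $r$ where the mean of $Q$ is $\approx r^{-2}$. This is false for general $0\le Q$. For $d\ge3$, the spike $Q=M\mathbf 1_{B_\rho}$ with $M\rho^d=r^{d-2}$ has mean $\approx r^{-2}$ on $K$. Yet the test function vanishing on $B_\rho$ and equal to $1$ off $B_{2\rho}$ gives $\lambda_1\lesssim\rho^{d-2}r^{-d}\to0$ as $\rho\to0$. Such bounds require reverse-H\"older or doubling hypotheses on $Q$, which you do not have. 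Moreover, a stopping rule based on the mean of $Q$ does not exclude final cubes of side $r<1$ on which $Q$ is small. Each of these costs $\gtrsim1$ in the trace (the Neumann constant mode) against a share $\lesssim r^d$ of the right-hand side.

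\emph{Two further errors.} Extending $Q$ by a finite constant outside $\Omega$ makes the phase-space integral infinite whenever $|\R^d\setminus\Omega|=\infty$ (e.g.\ for bounded $\Omega$). The Dirichlet condition has to be encoded as $Q=+\infty$ off $\Omega$. Also, local $C^{2,\alpha}$ regularity gives no uniform constants on unbounded $\Omega$, and it is not needed anyway.

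The missing idea is to average $Q$ along Brownian paths rather than in space, where Jensen's inequality points the right way:
\[
\e^{-\int_0^tQ(b_s)\,\dd s}\le\frac1t\int_0^t\e^{-tQ(b_s)}\,\dd s .
\]
Insert this into the Feynman--Kac formula for the Dirichlet semigroup (killing on exit from $\Omega$); equivalently, use Golden--Thompson and domain monotonicity of the heat kernel. For every open $\Omega$ and every $0\le Q\in L^1_{\rm loc}(\Omega)$ this gives
\[
\operatorname{tr}\e^{-tS}\le(4\pi t)^{-\frac d2}\int_\Omega\e^{-tQ(x)}\,\dd x=(2\pi)^{-d}\iint_{\Omega\times\R^d}\e^{-t(|\xi|^2+Q(x))}\,\dd x\,\dd\xi,\qquad t>0.
\]
Then $(S+1)^{-p}=\Gamma(p)^{-1}\int_0^\infty t^{p-1}\e^{-t}\e^{-tS}\,\dd t$ yields $\operatorname{tr}(S+1)^{-p}\le(2\pi)^{-d}\iint_{\Omega\times\R^d}(|\xi|^2+Q+1)^{-p}\,\dd x\,\dd\xi$, which is the claim. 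This is the classical phase-space bound of the type proved in \cite{Combes-1978-111}, and it needs no regularity of $Q$ or $\partial\Omega$. Note also that \eqref{Sp.crit} forces $p>d/2$, so your case $p\ge1$ is beside the point.

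If you prefer to keep bracketing, it can be repaired, but only with ingredients your sketch lacks:
\begin{enumerate}
\item the level-set Poincar\'e inequality $\lambda_1(-\Delta_N^K+Q)\gtrsim\min\{m,r^{-2}\}$ whenever $|\{Q\ge m\}\cap K|\ge\frac12|K|$, which holds for every $Q\ge0$;
\item a Calder\'on--Zygmund stopping rule, starting from unit cubes, on the additive quantity $\int_K(1+Q)^{d/2-p}\,\dd x$ (set to $0$ off $\Omega$): stop once it is $\ge c_0r^{2p}$.
\end{enumerate}
This yields $\lambda_1\gtrsim r^{-2}$ on every final cube of side $r<1$, and makes each cube's trace contribution $\lesssim\int_K(1+Q)^{d/2-p}\,\dd x$.
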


\begin{theorem}[{\cite[Thm.~XIII.81]{Reed4}}] \label{thm:Sp.inf}
For $d \geq 2$, let $Q : \Rd \to \R$ be measurable and obey (a.e.~in $\Rd$)
\begin{equation}
\begin{aligned}	
c_1 (|x|^\beta -1) & \leq Q(x) \leq c_2 (|x|^\beta + 1),
\\
|Q(x)-Q(y)| & \leq c_3 \max \{ |x|, |y| \}^{\beta-1} |x-y|,
\end{aligned}	
\end{equation}
for some $\beta>1$ and constants $c_1, c_2, c_3 >0$. Let $\wt S= - \Delta + Q$ be the self-adjoint realisation in $L^2(\Rd)$ and let
\begin{equation}
	N(\la) = \# \{ \la_k \in \sigma(\wt S) \, : \, \la_k <\la   \}, \qquad \la > 0,
\end{equation}
where the eigenvalues $\la_k$, $k \in \N$, are ordered in a non-decreasing way and repeated according to their multiplicity. Then
\begin{equation}
N(\la) = \frac{|B_1(0)|}{(2 \pi)^d} \int_{\{Q(x) \leq \la \}} (\la-Q(x))^\frac d2 \, \dd x \, \big(1+o(1)\big), \qquad \la \to + \infty.
\end{equation}
\end{theorem}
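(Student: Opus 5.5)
The plan is the classical Dirichlet--Neumann bracketing argument, following \cite[Sec.~XIII.15]{Reed4}. We localise $\wt S$ to cubes on which $Q$ is almost constant and compare with explicitly solvable problems. We then show that the resulting Riemann sums converge to the phase-space integral
\begin{equation}
	I(\la) := \frac{|B_1(0)|}{(2\pi)^d}\int_{\{Q\le \la\}} (\la - Q(x))^{\frac d2}\,\dd x .
\end{equation}
All counting functions are defined via min-max, so every comparison below is a form inequality.

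\emph{Step 1 (scaling and bracketing).} Fix $\la$ large and set $R_\la := (2\la/c_1+1)^{1/\beta}$. Then $Q\ge 2\la$ for $|x|\ge R_\la$, while $Q\le \la$ forces $|x|\lesssim \la^{1/\beta}$. Cover $\R^d$ by a grid of closed cubes $\{K_j\}$ of side $\ell=\ell(\la)$. We choose $\ell = \la^{1/\beta}\la^{-\eta}$ with a small $\eta>0$, so that simultaneously
\begin{equation}
	\ell\sqrt{\la}\to\infty, \qquad \osc_{K_j} Q \le c_3 (2R_\la)^{\beta-1}\sqrt d\,\ell = o(\la) \quad \text{for } K_j\cap B_{2R_\la}\ne\emptyset .
\end{equation}
The first condition holds since $1/\beta+1/2-\eta>0$; the second holds since $(\beta-1)/\beta + 1/\beta - \eta<1$. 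This is where $\beta>1$ and the Lipschitz-type bound enter.

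Imposing additional Dirichlet, resp.\ Neumann, conditions on the cube faces gives
\begin{equation}
	\sum_j N\big(\la; -\Delta_{K_j}^{\rm D} + \sup_{K_j} Q\big) \le N(\la) \le \sum_j N\big(\la; -\Delta_{K_j}^{\rm N} + \inf_{K_j} Q\big).
\end{equation}
For the Neumann sum, cubes in $\{|x|\ge R_\la\}$ contribute nothing, because the lowest Neumann eigenvalue is at least $\inf Q\ge 2\la$ there. Hence only $O\big((R_\la/\ell)^d\big) = O(\la^{d\eta})$ cubes matter.

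\emph{Step 2 (explicit cube counts).} On a cube of side $\ell$ with constant potential $q<\la$, the Dirichlet and Neumann eigenvalues are $\pi^2|n|^2/\ell^2+q$, with $n\in\N^d$ and $n\in\N_0^d$ respectively. Lattice point counting gives
\begin{equation}
	N = \frac{|B_1(0)|}{(2\pi)^d}\,\ell^d (\la-q)^{\frac d2} + O\Big(1+\big(\ell\sqrt{\la-q}\big)^{d-1}\Big).
\end{equation}
Summing over the $O(\la^{d\eta})$ relevant cubes, the total error is $O\big(\la^{d\eta}(1+(\ell\sqrt\la)^{d-1})\big)$. This must be compared with the main term, which is of order $\la^{d\eta}(\ell\sqrt\la)^d$, so the error is negligible by $\ell\sqrt\la\to\infty$.

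\emph{Step 3 (Riemann sums, the main obstacle).} It remains to show that
\begin{equation}
	\sum_j \ell^d \big(\la - \sup_{K_j} Q\big)_+^{\frac d2} \quad\text{and}\quad \sum_j \ell^d\big(\la - \inf_{K_j}Q\big)_+^{\frac d2}
\end{equation}
are both $I(\la)(1+o(1))$ after the normalisation $|B_1(0)|(2\pi)^{-d}$. The difficulty is the boundary layer where $\la-Q$ is small, since there the $o(\la)$ oscillation is not small relative to $\la-Q$.

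I would handle this by sandwiching. Since $\osc Q\le\eps\la$ for large $\la$, the two sums are squeezed between $I((1-\eps)\la)$ and $I((1+\eps)\la)$. The two-sided bound $c_1(|x|^\beta-1)\le Q\le c_2(|x|^\beta+1)$ gives $I(\la)\approx \la^{d/2+d/\beta}$. The monotonicity and homogeneity of $(\la-Q)_+^{d/2}$ then give the estimate below, which closes the argument as $\eps\to0$:
\begin{equation}
	I((1+\eps)\la)-I((1-\eps)\la)\le C\eps\,\la^{\frac d2+\frac d\beta}\le C'\eps\, I(\la).
\end{equation}
To justify this increment bound, compare $(\la'-Q)^{d/2}$ at $\la'=(1\pm\eps)\la$ pointwise. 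On $\{Q\le\la\}$ the pointwise difference is $O(\eps\la^{d/2})$. The set where $\la(1-\eps)<Q\le\la(1+\eps)$ has volume $O(\la^{d/\beta})$ and integrand $O\big((\eps\la)^{d/2}\big)$. Multiplying each by the volume $O(\la^{d/\beta})$ gives a contribution $O(\eps\la^{d/2+d/\beta})$.

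Combining Steps 1--3 with Step 2's error bound yields $N(\la)=I(\la)(1+o(1))$.
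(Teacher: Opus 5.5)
Your proposal is correct and follows essentially the same route as the cited source, Dirichlet--Neumann bracketing on cubes whose side length grows with $\la$, with the oscillation of $Q$ kept at $o(\la)$ by the Lipschitz-type bound; the paper itself gives no proof and only quotes Reed--Simon. One cosmetic fix: on cubes meeting $B_{2R_\la}$ the oscillation bound should use $(2R_\la+\sqrt d\,\ell)^{\beta-1}$ rather than $(2R_\la)^{\beta-1}$, which is harmless, and your pointwise increment estimate in Step~3 is exactly where the hypothesis $d\ge 2$ is used.
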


\begin{theorem}[{\cite[Cor.~XI.9.31]{DS2}}]
	\label{thm:comp.eigensys}
	Let $T$ be a densely defined unbounded operator in a Hilbert space $\Hh$ and assume that there is  $p>0$ with
	\begin{equation}
		(T-\la)^{-1} \in \Ss_p (\Hh), \qquad \la \in\rho(T).
	\end{equation}
	If there exist $\varphi_1 , \dotsc , \varphi_k \in (-\pi,\pi]$ and $N \in \N_0$ such that the angle between two adjacent rays $\{ r\e^{\ii \varphi_j}\, : \, r>0\}$ is at most $\pi/p$ and
	\begin{equation}
		\| (T-r \e^{\ii \varphi_j})^{-1}\| \ls r^N , \qquad r \to \infty,
	\end{equation}
for all $j = 1, \dotsc, k$, then the eigensystem of $T$ is complete, i.e.~the space of root functions (generalised eigenfunctions) of $T$ is dense in $\Hh$.
\end{theorem}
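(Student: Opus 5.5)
The plan is the classical Keldysh--Carleman argument: test the resolvent against a vector orthogonal to all root functions, show the resulting scalar function is entire of controlled growth, and use Phragm\'en--Lindel\"of to force it to vanish. Fix $\la_0 \in \rho(T)$ and set $K := (T-\la_0)^{-1} \in \Ss_p(\Hh)$. Then for $\mu := \la - \la_0$ one has $(T-\la)^{-1} = K(I-\mu K)^{-1}$. Let $g \in \Hh$ be orthogonal to every root function of $T$; we aim to show $g=0$. For arbitrary $h \in \Hh$ consider
\begin{equation*}
	F_h(\la) := \langle (T-\la)^{-1} h, g\rangle, \qquad \la \in \rho(T).
\end{equation*}

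\textbf{Step 1: $F_h$ is entire.} Since $K$ is compact, $\sigma(T)$ consists of isolated eigenvalues of finite algebraic multiplicity. At such an eigenvalue $\la_j$, the principal part of the Laurent expansion of the resolvent is $\sum_{k\ge1}(\la-\la_j)^{-k}(T-\la_j)^{k-1}P_{\la_j}$. Each coefficient has range in the root space $\Ran P_{\la_j}$, which is orthogonal to $g$. Hence every singularity of $F_h$ is removable.

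\textbf{Step 2: growth of $F_h$.} Let $n := \lceil p\rceil$ and let $D(\mu) := \det_n(I-\mu K)$ be the regularised Fredholm determinant. This is an entire function, and the Carleman-type inequality gives
\begin{equation*}
	\|D(\mu)(I-\mu K)^{-1}\| \le \exp\big(\Gamma_p |\mu|^p \|K\|_{\Ss_p}^p\big).
\end{equation*}
Refining this by splitting $K$ into a finite-rank part and a part of small $\Ss_p$-norm shows that both $D$ and $D\,F_h$ are entire of order $\le p$ and of \emph{minimal type}, i.e.\ $\exp(o(|\mu|^p))$. Since $F_h = (D F_h)/D$ is entire, the standard results on quotients of entire functions give that $F_h$ is entire of order $\le p$ and minimal type as well. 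The route here is either Hadamard factorisation, or a Cartan minimum-modulus lower bound for $|D|$ away from small exceptional discs combined with the maximum principle.

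\textbf{Step 3: Phragm\'en--Lindel\"of.} On each ray $\{r\e^{\ii\varphi_j}\}$ the assumption gives $|F_h(\la)| \lesssim r^N\|h\|\|g\|$. Adjacent rays bound sectors of opening at most $\pi/p$. In each such sector $F_h$ has order $\le p$ with minimal type, so Phragm\'en--Lindel\"of applies even in the borderline case of opening exactly $\pi/p$, and yields $|F_h(\la)| \lesssim |\la|^N$ throughout. By Liouville, $F_h$ is a polynomial of degree $\le N$. Its Taylor coefficients at $\la_0$ are $\langle K^{k+1}h, g\rangle$, so
\begin{equation*}
	\langle K^{m} h, g\rangle = 0 \quad \text{for all } m > N+1 \text{ and all } h \in \Hh .
\end{equation*}
Thus $g \perp \Ran K^{m}$. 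Since $T$ is densely defined, $\ker K^* = (\Ran K)^\perp = (\Dom T)^\perp = \{0\}$, so $K^*$ is injective and $\Ran K^m$ is dense. Therefore $g = 0$, which proves the completeness of the eigensystem.

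The main obstacle is Step 2: obtaining the \emph{minimal-type} order-$p$ bound for the quotient $F_h$, which is needed because the sector openings may equal $\pi/p$ exactly. I would first try the decomposition $K = K_{\rm fin} + K_\eps$ with $\|K_\eps\|_{\Ss_p} < \eps$, treating the finite-rank factor via an explicit finite determinant. I would then use Cartan's lemma to control $1/|D|$ off discs of small total radius.
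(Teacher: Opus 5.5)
Your argument is the classical Keldysh--Carleman proof. The paper gives no proof of this statement and simply quotes Dunford--Schwartz. Step~1, the Phragm\'en--Lindel\"of/Liouville step, and the final density argument via $\ker K^*=(\Dom T)^\perp=\{0\}$ are correct. You are also right that minimal type is genuinely needed when an opening equals $\pi/p$.

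\textbf{The gap: the planned route in Step~2 does not give minimal type.} The problem is the minimal-type bound for $\det(\cdot)\,(I-\mu K)^{-1}$ with a \emph{fixed} splitting $K=K_{\rm fin}+K_\eps$, where $r:=\operatorname{rank}K_{\rm fin}$.
\begin{itemize}
\item You factor $I-\mu K=(I-\mu K_\eps)\bigl(I-\mu(I-\mu K_\eps)^{-1}K_{\rm fin}\bigr)$ and invert the second factor through the finite determinant/adjugate. This costs a factor $\|(I-\mu K_\eps)^{-1}\|^{r}$.
\item On good circles you only know $\log\|(I-\mu K_\eps)^{-1}\|\lesssim\eps^p|\mu|^p$. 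So the method yields type $\lesssim r\,\eps^p$, not $o(1)$.
\item For every rank-$r$ operator $F$ one has $\|K-F\|_{\Ss_p}^p\ge\sum_{j>r}s_j(K)^p$. Hence $r\eps^p\ge r\sum_{j>r}s_j(K)^p$, which does not tend to $0$, e.g.\ for $s_j(K)^p=1/(j\log^2 j)$.
\item That example is exactly the borderline case $K\in\Ss_p\setminus\bigcup_{q<p}\Ss_q$, which is where minimal type matters.
\end{itemize}

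\textbf{Two ways to repair it.}

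(a) Let the truncation depend on $R=|\mu|$. Put all $s_j\ge(4eR)^{-1}$ into $K_{\rm fin}$. Then $\|(I-\mu K_\eps)^{-1}\|\le 2$ for $|\mu|\le 2eR$, and the finite part contributes only $\sum_{s_j\ge(4eR)^{-1}}\log(1+4eRs_j)=o(R^p)$, plus your Cartan lower bound for its determinant.

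(b) More simply, fix an integer $m\ge p$ and replace $\det_n(I-\mu K)$ by $\Delta(\mu):=\det(I-\mu^mK^m)$.
\begin{itemize}
\item By Horn's inequality, $\sum_j s_j(K^m)^{p/m}\le\|K\|_{\Ss_p}^p$, so $K^m\in\Ss_1$.
\item The inequality $\|\det(I-A)(I-A)^{-1}\|\le\prod_j(1+s_j(A))$ gives
\[
\log\bigl\|\Delta(\mu)(I-\mu^mK^m)^{-1}\bigr\|\le\sum_j\log\bigl(1+|\mu|^m s_j(K^m)\bigr)=o(|\mu|^p).
\]
The last step is dominated convergence: $t\mapsto\log(1+t)/t^{p/m}$ is bounded on $(0,\infty)$ and tends to $0$ at $\infty$.
\item Since $(I-\mu K)^{-1}=(I-\mu^mK^m)^{-1}\sum_{k<m}\mu^kK^k$, both $\Delta$ and $\Delta F_h$ are entire of order $\le p$ and minimal type.
\item Your Cartan quotient argument then applies verbatim, using $\Delta(0)=1$.
\end{itemize}

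A minor point: for $p>1$ the Carleman bound you quote needs an additive constant, e.g.\ $\exp\bigl(\Gamma_p(|\mu|\|K\|_{\Ss_p}+1)^p\bigr)$. The stated version already fails for small $\mu K$ when $p=2$. This does not affect the growth argument.
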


{\footnotesize
\bibliographystyle{acm}
\bibliography{references}
}

\end{document}